\newtheorem{theorem}{Theorem}[section]
\newtheorem{lemma}[theorem]{Lemma}
\newtheorem{proposition}[theorem]{Proposition}
\theoremstyle{definition}
\theoremstyle{remark}
\newtheorem{remark}[theorem]{Remark}
\numberwithin{equation}{section}
\begin{document}

\title{On Asymptotic Dynamics for $L^2$ Critical Generalized KdV equations with a Saturated perturbation}\def\rightmark{DYNAMICS FOR CRITICAL GKDV EQUATIONS}

\author{Yang LAN}

\address{Laboratoire de Mathematiques D'Orsay, Universit\'{e} Paris-Sud, Orsay, France}

\keywords{gKdV, $L^2$-critical, saturated perturbation, dynamics near ground state, blow down}

\subjclass[2010]{Primary 35Q53; Secondary 35B20, 35B40, 37K40}

\email{yang.lan@math.u-psud.fr}

\begin{abstract}
In this paper, we consider the $L^2$ critical gKdV equation with a saturated perturbation: $\partial_t u+(u_{xx}+u^5-\gamma u|u|^{q-1})_x=0$, where $q>5$ and $0<\gamma\ll1$. For any initial data $u_0\in H^1$, the corresponding solution is always global and bounded in $H^1$. This equation has a family of solitons, and our goal is to classify the dynamics near soliton. Together with a suitable decay assumption, there are only three possibilities: (i) the solution converges asymptotically to a solitary wave, whose $H^1$ norm is of size $\gamma^{-2/(q-1)}$, as $\gamma\rightarrow0$; (ii) the solution is always in a small neighborhood of the modulated family of solitary waves, but blows down at $+\infty$; (iii) the solution leaves any small neighborhood of the modulated family of the solitary waves.
	
This extends the classification of the rigidity dynamics near the ground state for the unperturbed $L^2$ critical gKdV (corresponding to $\gamma=0$) by Martel, Merle and Rapha\"{e}l. However, the blow-down behavior (ii) is completely new, and the dynamics of the saturated equation cannot be viewed as a perturbation of the $L^2$ critical dynamics of the unperturbed equation. This is the first example of classification of the dynamics near ground state for a saturated equation in this context. The cases of $L^2$ critical NLS and $L^2$ supercritical gKdV, where similar classification results are expected, are completely open.
\end{abstract}
\maketitle
\section{Introduction}
\subsection{Setting of the problem}
Let us consider the following Cauchy problem:
\begin{equation}\label{CCPG}\tag{${\rm gKdV}_{\gamma}$}\begin{cases}
\partial_t u +(u_{xx}+u^5-\gamma u|u|^{q-1})_x=0, \quad (t,x)\in[0,T)\times\mathbb{R},\\
u(0,x)=u_0(x)\in H^1(\mathbb{R}),
\end{cases}
\end{equation}
with $q>5$ and $0<\gamma\ll1$.

The equation has two conservation laws, i.e. the mass and the energy:
\begin{gather*}
M(u(t))=\int u(t)^2=M_0,\\
E(u(t))=\frac{1}{2}\int u_x(t)^2-\frac{1}{6}\int u(t)^6+\frac{\gamma}{q+1}\int |u(t)|^{q+1}=E_0.
\end{gather*}
 
We can see that the solution of \eqref{CCPG} is always global in time and bounded in $H^1$. First of all, \eqref{CCPG} is locally wellposed in $H^1$ due to \cite{Ka,KPV}, i.e. for any $u_0\in H^1$, there exists a unique strong solution in $C([0,T),H^1)$, with either $T=+\infty$ or $T<+\infty$ and $\lim_{t\rightarrow T}\|u_x(t)\|_{L^2}=+\infty.$ Since $\gamma>0$, $q>5$, the  mass and energy conservation laws ensure that for all $t\in[0,T)$,
$$\|u_x(t)\|^2_{L^2}\lesssim |E_0|+\gamma^{-\frac{4}{q-5}}M_0<+\infty,$$
so $T=+\infty$ and $u(t)$ is always bounded in $H^1$.

This equation does not have a standard scaling rule, but has the following pseudo-scaling rule: for all $\lambda_0>0$, if $u(t,x)$ is a solution to \eqref{CCPG}, then
\begin{equation}\label{C1}
u_{\lambda_0}(t,x)=\lambda_0^{-\frac{1}{2}}u(\lambda_0^{-3}t,\lambda_0^{-1}x),
\end{equation}
is a solution to
\begin{equation*}\begin{cases}
\partial_t v +(v_{xx}+v^5-\lambda_0^{-m}\gamma v|v|^{q-1})_x=0, \quad (t,x)\in[0,\lambda_0^{-3}T)\times\mathbb{R},\\
v(0,x)=\lambda_0^{-\frac{1}{2}}u_0(\lambda_0^{-1}x)\in H^1(\mathbb{R}),
\end{cases}
\end{equation*}
with 
\begin{equation}
m=\frac{q-5}{2}>0.
\end{equation}
The pseudo-scaling rule \eqref{C1} leaves the $L^2$ norm of the initial data invariant. 

There is a special class of solutions. We first introduce the ground state $\mathcal{Q}_{\omega}$ for $0\leq\omega<\omega^*\ll1$, which is the unique radial nonnegative solution with exponential decay to the following ODE%
\footnote{The existence of such $\mathcal{Q}_{\omega}$ was proved in \cite[Section 6]{BL}, but, in this paper we will give an alternative proof for the existence.}%
:
$$\mathcal{Q}_{\omega}''-\mathcal{Q}_{\omega}+\mathcal{Q}_{\omega}^5-\omega \mathcal{Q}_{\omega}|\mathcal{Q}_{\omega}|^{q-1}=0.$$
Then for all $\lambda_0>0$, $t_0\in\mathbb{R}$, $x_0\in\mathbb{R}$ with $\lambda_0^{-m}\gamma<\omega^*$, the following is a solution to \eqref{CCPG}: 
$$u(t,x)=\lambda_0^{-\frac{1}{2}}\mathcal{Q}_{\lambda_0^{-m}\gamma}\big(\lambda_0^{-1}(x-x_0)-\lambda_0^{-3}(t-t_0)\big).$$
A solution of this type is called a {\it solitary wave} solution.

\subsection{On the critical problem with saturated perturbation}
The saturated perturbation was first introduced for the nonlinear Schr\"{o}dinger (NLS):
\begin{equation}\label{CNLS}\tag{${\rm NLS}$}
i\partial_t u +\Delta u+g(|u|^2)u=0, \quad (t,x)\in[0,T)\times\mathbb{R}^d.
\end{equation}

In many applications, the leading order approximation of the nonlinearity, $g(s)$, is
the power nonlinearity, i.e. $g(s)=\pm s^{\sigma}$. For example, $g(s) = s$, leads to the focusing cubic
NLS equation, which appears in many contexts. 

But such approximation may lead to nonphysical predictions. For example, from \cite{Fibich,MR1,MRS2,SS}, for NLS with critical or supercritical focusing nonlinearities (i.e. $g(s)=s^{\sigma}$ with $\sigma d\geq2$), blow up may occur. However, this contradicts with the experiments in the optical settings \cite{JR}, which shows that there is no ``singularity'' and the solution always remains bounded.

One way to correct this model is to replace the power nonlinearities by saturated nonlinearities. A typical example%
\footnote{See \cite{GA,0MRS} for other kind of saturated perturbations.}
 is $g(s)=s^{\sigma}-\gamma s^{\sigma+\delta}$, with $\delta>0$, $\gamma>0$. Similar as \eqref{CCPG}, in this case any $H^1$ solution to \eqref{CNLS} is global in time and bounded in $H^1$.

On the other hand, the saturated perturbation is also related to the problem of continuation after blow up time. This kind of problems arising in physics is poorly understood even at a formal level. One approach is to consider the solution $u_{\varepsilon}(t)$ to the following critical NLS with saturated perturbation:
$$\begin{cases}i\partial_t u +\Delta u+|u|^{\frac{4}{d}}u-\varepsilon|u|^{q}u=0, \quad (t,x)\in[0,T)\times\mathbb{R}^d,\\
u(0,x)=u_0(x)\in H^1(\mathbb{R}^d),
\end{cases}$$
where $4/d<q<4/(d-2)$. Suppose the solution $u(t)$ to the unperturbed NLS (i.e. $\varepsilon=0$) with initial data $u_0$, blows up in finite time $T<+\infty$. Then, it is easy to see that for all $\varepsilon>0$, $u_{\varepsilon}(t)$ exists globally in time and for all $t<T$,
$$\lim_{\varepsilon\rightarrow0}u_{\varepsilon}(t)=u(t),\; \text{in }H^1.$$
Now, we may consider the following limit:
$$\lim_{\varepsilon\rightarrow0}u_{\varepsilon}(t),\quad t>T,$$ to see whether the limiting function exists and in what sense it satisfies the critical NLS. Such construction for blow up solutions using the Virial identity was given by Merle \cite{M2}. Alternative way to construct the approximate solution $u_{\varepsilon}(t)$ can also be found in \cite{M3,M6,MRS3}. But, this only remains for very special cases. General constructions of this type are mostly open. In all cases, the asymptotic behavior of the approximate solution $u_{\varepsilon}(t)$ is crucial in the analysis.

Therefore, the asymptotic dynamics of dispersive equations with a saturated perturbation becomes a natural question.

\subsection{Results for $L^2$ critical gKdV equations}
Let us recall some results for the following $L^2$ critical gKdV equations:
\begin{equation}\label{CCP}\tag{${\rm gKdV}$}\begin{cases}
\partial_t u +(u_{xx}+u^5)_x=0, \quad (t,x)\in[0,T)\times\mathbb{R},\\
u(0,x)=u_0(x)\in H^1(\mathbb{R}).
\end{cases}
\end{equation}

This equation is $L^2$ critical, since for all $\lambda>0$,
$$u_{\lambda}(t,x)=\lambda^{-\frac{1}{2}}u(\lambda^{-3}t,\lambda^{-1}x),$$
is still a solution to \eqref{CCP} and $\|u_{\lambda}\|_{L^2}=\|u\|_{L^2}.$

There is a special class of solutions i.e. the solitary waves, which is given by 
$$u(t,x)=\frac{1}{\lambda_0^{1/2}}Q\bigg(\frac{x-x_0-\lambda_0^{-2}(t-t_0)}{\lambda_0}\bigg),$$
with
$$Q(x)=\bigg(\frac{3}{\cosh^2(2x)}\bigg)^{\frac{1}{4}},\quad Q''-Q+Q^5=0.$$
The function $Q$ is called the ground state.

From variational arguments \cite{W1}, we know that if $\|u_0\|_{L^2}<\|Q\|_{L^2}$, then the solution to \eqref{CCP} is global in time and bounded in $H^1$, while for $\|u_0\|_{L^2}\geq\|Q\|_{L^2}$, blow up may occurs. The blow up dynamics for solution with slightly supercritical mass:
\begin{equation}
\|Q\|_{L^2}<\|u_0\|_{L^2}<\|Q\|_{L^2}+\alpha^*
\end{equation}
has been developed in a series paper of Martel and Merle \cite{MM4,MM5,MM3,M1}. In particular, they prove the existence of blow up solutions with negative energy, and give a specific description of the blow up dynamics and the formation of singularity.
 
In \cite{MMR1,MMR2,MMR3}, Martel, Merle and Rapha\"{e}l give a exclusive study of the asymptotic dynamics near the ground state $Q$.

More precisely, consider the following initial data set
$$\mathcal{A}_{\alpha_0}=\bigg\{u_0\in H^1\Big|u_0=Q+\varepsilon_0,\;\|\varepsilon_0\|_{H^1}<\alpha_0,
\;\int_{y>0}y^{10}\varepsilon_0^2<1\bigg\},$$
and the following $L^2$ tube around the solitary wave family
$$\mathcal{T}_{\alpha^*}=\bigg\{u_0\in H^1\Big|\inf_{\lambda_0>0,x_0\in\mathbb{R}}\bigg\|u_0-\frac{1}{\lambda_0^{\frac{1}{2}}}Q\bigg(\frac{x-x_0}{\lambda_0}\bigg)\bigg\|_{L^2}<\alpha^*\bigg\}.$$
Then we have:
\begin{theorem}\label{CT1}
	For $0<\alpha_0\ll\alpha^*\ll1$, and $u_0\in\mathcal{A}_{\alpha_0}$, let $u(t)$ be the corresponding solution to \eqref{CCP}, and $0<T\leq+\infty$ be the maximal lifetime. Then one of the following scenarios occurs:\\
	{\rm(Blow up)}: The solution $u(t)$ blows up in finite time $0<T<+\infty$ with
	$$\|u(t)\|_{H^1}=\frac{\ell(u_0)+o(1)}{T-t},\quad \ell(u_0)>0.$$
	In addition, for all $t<T$, $u(t)\in\mathcal{T}_{\alpha^*}$.\\
    {\rm(Soliton)}: The solution is global, and for all $t<T=+\infty$, $u(t)\in\mathcal{T}_{\alpha^*}$. In addition, there exist a constant $\lambda_{\infty}>0$ and a $C^1$ function $x(t)$ such that
    \begin{gather*}
    \lambda_{\infty}^{\frac{1}{2}}u(t,\lambda_{\infty}\cdot+x(t))\rightarrow Q\text{ in }H^{1}_{{\rm loc}},\text{ as }t\rightarrow+\infty,\\
    |\lambda_{\infty}-1|\lesssim\delta(\alpha_0),\quad x(t)\sim\frac{t}{\lambda_{\infty}^2},\text{ as }t\rightarrow+\infty.
    \end{gather*}
    {\rm(Exit)}: For some finite time $0<t^*<T$, $u(t^*)\notin\mathcal{T}_{\alpha^*}$.
    
	Morever, the scenarios (Blow up) and (Exit) are stable by small perturbation in $\mathcal{A}_{\alpha_0}$.
\end{theorem}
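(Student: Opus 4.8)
The plan is to follow the strategy of Martel--Merle--Rapha\"el, which rests on a modulation decomposition of the flow governed essentially by one dynamical parameter, combined with a mixed energy--virial Lyapunov functional. As long as $u(t)\in\mathcal{T}_{\alpha^*}$, decompose
\[
u(t,x)=\frac{1}{\lambda(t)^{1/2}}\big(Q_{b(t)}+\varepsilon(t)\big)\!\left(\frac{x-x(t)}{\lambda(t)}\right),
\]
where $Q_b$ is a refined, approximately self-similar deformation of $Q$ (with $Q_0=Q$), and $\lambda,b,x$ are fixed by imposing orthogonality of $\varepsilon$ to a finite family of directions: those generated by scaling and translation, plus the two extra directions attached to the $b$-family. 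Passing to the rescaled time $s$ with $\tfrac{ds}{dt}=\lambda^{-3}$, one derives the modulation system, schematically $\big|\tfrac{\lambda_s}{\lambda}+b\big|+\big|\tfrac{x_s}{\lambda}-1\big|\lesssim\|\varepsilon\|_{\mathrm{loc}}$ and $|b_s+c_0 b^2|\lesssim\|\varepsilon\|_{\mathrm{loc}}^2+(\text{small})$, where $\|\cdot\|_{\mathrm{loc}}$ is a suitable exponentially weighted local $L^2$ norm, with the a priori bound $\|\varepsilon(t)\|_{H^1}\lesssim\delta(\alpha^*)$ on the tube.

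Next, build a functional of the form
\[
\mathcal{F}=\int\!\Big(\varepsilon_y^2+\varepsilon^2\psi_B-\tfrac13\big[(Q_b+\varepsilon)^6-Q_b^6-6Q_b^5\varepsilon\big]\Big)-\,b\!\int(\chi_B\varepsilon)\,\Lambda(\chi_B\varepsilon),
\]
with cutoffs $\psi_B,\chi_B$ at scale $B\sim|b|^{-1}$. Two properties must be proved: \emph{coercivity} --- under the orthogonality conditions, $\mathcal{F}\gtrsim\|\varepsilon\|_{H^1_{\mathrm{loc}}}^2$, the single negative direction of the linearized operator $L=-\partial_y^2+1-5Q^4$ being absorbed by the virial term together with the sign of $b$; and \emph{almost monotonicity} --- $\tfrac{d}{ds}\mathcal{F}\lesssim b\,\mathcal{F}\cdot(\text{small})+(\text{integrable errors})$, so that $\mathcal{F}$, after a harmless renormalization and correction by the conserved mass and energy, is non-increasing up to controllable terms. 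The decay hypothesis $\int_{y>0}y^{10}\varepsilon_0^2<1$ is propagated by companion weighted monotonicity estimates; this is what makes the error terms integrable and closes the bootstrap.

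With these tools the trichotomy follows. Let $[0,T^{\mathrm{ex}})$ be the maximal interval on which $u(t)\in\mathcal{T}_{\alpha^*}$. If $T^{\mathrm{ex}}<T$ then $u(t^*)\notin\mathcal{T}_{\alpha^*}$ for some $t^*$, which is the \emph{(Exit)} case. Otherwise $u(t)$ stays in the tube for all $t<T$, and the behavior is decided by $b$. If $b(s)>0$ for all $s$, the law $b_s\approx-c_0 b^2$ gives $b(s)\sim 1/(c_0 s)$, hence $\lambda_s/\lambda\approx-b$ forces $\lambda(s)\to0$; translating back to $t$ yields a finite $T<+\infty$ with $\lambda(t)\sim C(T-t)$, i.e. $\|u(t)\|_{H^1}\sim\ell(u_0)/(T-t)$, and $u(t)\in\mathcal{T}_{\alpha^*}$ up to $T$ --- the \emph{(Blow up)} case. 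If instead $b(s_1)\le0$ at some $s_1$, a ``no return'' argument, again from the monotonicity of $\mathcal{F}$ and the modulation equations, shows that either $u$ leaves the tube in finite time (back to Exit), or $b(s)\to0$ and $\lambda(s)\to\lambda_\infty>0$ as $s\to+\infty$ (which corresponds to $t\to+\infty$); then coercivity forces $\mathcal{F}\to0$, hence $\varepsilon(t)\to0$ in $H^1_{\mathrm{loc}}$, giving $\lambda_\infty^{1/2}u(t,\lambda_\infty\cdot+x(t))\to Q$ in $H^1_{\mathrm{loc}}$, with $|\lambda_\infty-1|\lesssim\delta(\alpha_0)$ from the conservation laws and $x(t)\sim t/\lambda_\infty^2$ from the translation equation --- the \emph{(Soliton)} case. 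Both \emph{(Blow up)} (open condition: $b>0$ throughout with $\lambda\to0$) and \emph{(Exit)} (open condition: $u(t^*)\notin\mathcal{T}_{\alpha^*}$ for some finite $t^*$) are then stable under small perturbations in $\mathcal{A}_{\alpha_0}$ by continuous dependence of the flow and of the modulation parameters; \emph{(Soliton)} is the threshold between them and is not asserted to be stable.

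The heart of the matter, and the main obstacle, is engineering $Q_b$ and $\mathcal{F}$ so that coercivity and monotonicity hold \emph{simultaneously}. This demands precise spectral information on $L$ (its kernel $Q'$, its unique negative eigenvalue, and the generalized-kernel relation $L\Lambda Q=-2Q$), a delicate localization at scale $B\sim|b|^{-1}$ to control the slowly decaying tail of $Q_b$ --- which is exactly what produces the leading $b_s\approx-c_0 b^2$ law and the sign conditions entering the coercivity --- and meticulous tracking of every error term in the modulation system, closed via the weighted $L^2$ bounds. Establishing the ``no return'' mechanism that cleanly separates \emph{(Soliton)} from \emph{(Exit)} once $b\le0$ is the second subtle point.
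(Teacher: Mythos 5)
Theorem~\ref{CT1} is not proved in this paper; it is quoted from Martel--Merle--Rapha\"el and the paper only adapts their architecture to the saturated equation in Sections~2--4 (Proposition~\ref{CP3}, Proposition~\ref{CP4}, Proposition~\ref{CP5}). Your sketch does capture that architecture in broad strokes --- modulation decomposition with a deformed profile $Q_b$, modulation equations in rescaled time, a coercive Lyapunov functional whose almost-monotonicity closes a bootstrap, and a trichotomy driven by the parameter $b$ --- and the acknowledgment that the hard technical work sits in constructing $Q_b$ and the functional is correct. So the overall plan is the right one.

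There are, however, two real inaccuracies. First, the separation criterion as you state it is wrong. You write that (Blow up) corresponds to ``$b(s)>0$ for all $s$'' and that once $b(s_1)\le 0$ the solution is in (Exit) or (Soliton). In fact the trichotomy in MMR (and in the paper's Proposition~\ref{CP5}, which specializes to it at $\gamma=0$) is governed by whether $b$ ever \emph{dominates} the error, i.e.\ by the sign of $b(t_1^*)$ at the first time $t_1^*$ with $|b(t_1^*)|\ge C^*\big(\mathcal{N}_1(t_1^*)+b^2(t_1^*)\big)$, and by the almost-conserved quantity $b/\lambda^2$ (equation~\eqref{CLOBOLS}); the (Soliton) regime is precisely the threshold case where $|b|$ \emph{never} exceeds $C^*(\mathcal{N}_1+b^2)$. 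A solution can have $b(s)>0$ for all $s$ and still be in the (Soliton) regime if $b$ stays smaller than the error norm, so your test ``$b>0$ everywhere $\Rightarrow$ Blow up'' is not a valid dichotomy, and the ``no return from $b\le 0$'' mechanism you invoke does not separate (Soliton) from (Exit) --- they are separated by whether $b$ ever dominates, not by its sign at some time. Second, the Lyapunov functional and scales are not quite as you write them: the functional actually used (see~\eqref{C34}) is a weighted $H^1$-type quantity with fixed-scale weights $\psi_B$, $\varphi_{i,B}$ where $B>100$ is a universal constant, with no explicit virial term $-b\int(\chi_B\varepsilon)\Lambda(\chi_B\varepsilon)$; the $b$-dependent cutoff $\chi(|b|^\beta y)$ with $\beta=3/4$ appears in the \emph{profile} $Q_b$, not in the functional, and the ``virial'' effect is encoded in the polynomial growth of $\varphi_i$ to the right. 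These are not cosmetic: putting the cutoff in the functional at scale $|b|^{-1}$ would not give the required monotonicity.
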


In \cite{MMNR}, Martel, Merle, Nakanishi and Rapha\"{e}l proved that the initial data in $\mathcal{A}_{\alpha_0}$ which corresponds to the (Soliton) regime is a codimension one threshold submanifold between (Blow up) and (Exit).

\begin{theorem}\label{CT2}
	Let 
	$$\mathcal{A}_{\alpha_0}^{\perp}=\bigg\{\varepsilon_0\in H^1\Big|\|\varepsilon_0\|_{H^1}< \alpha_0,\int_{y>0}y^{10}\varepsilon_0^2<1,(\varepsilon_0,Q)=0\bigg\}.$$
	Then there exist $\alpha_0>0$, $\beta_0>0$, and a $C^1$ function $A$:
	$$\mathcal{A}_{\alpha_0}^{\perp}\rightarrow (-\beta_0,\beta_0),$$
	such that for all $\gamma_0\in\mathcal{A}_{\alpha_0}^{\perp}$ and $a\in[-\beta_0,\beta_0]$, the solution of \eqref{CCP} corresponding to $u_0=(1+a)Q+\gamma_0$ satisfies:\\
	{\rm-(Soliton)} if $a=A(\gamma_0)$;\\
	{\rm-(Blow up)} if $a>A(\gamma_0)$;\\
	{\rm-(Exit)} if $a<A(\gamma_0)$.
	
	In particular, let
	$$\mathcal{Q}=\Big\{u_0\in H^1\big|\exists \lambda_0,x_0,\text{ such that }u_0=\lambda_0^{-\frac{1}{2}}Q\big(\lambda_0^{-1}(x-x_0)\big)\Big\}.$$
	then there exists a small neighborhood $\mathcal{O}$ of $\mathcal{Q}$ in $H^1\cap L^{2}(y_{+}^{10}dy)$ and a codimension one $C^1$ submanifold $\mathcal{M}$ of $\mathcal{O}$, such that $\mathcal{Q}\subset\mathcal{M}$ and for all $u_0\in\mathcal{O}$ the corresponding solution of \eqref{CCP} is in the (Soliton) regime if and only if $u_0\in\mathcal{M}$.
\end{theorem}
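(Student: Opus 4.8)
To prove Theorem~\ref{CT2}, the plan is to deduce it from Theorem~\ref{CT1} together with the quantitative description of the flow near $Q$ from \cite{MMR1,MMR2,MMR3}, along the lines of \cite{MMNR}. The starting point is that, after modulating out the scaling and translation invariances, the dynamics of \eqref{CCP} linearized at $Q$ has a \emph{single} unstable direction, and that the trichotomy of Theorem~\ref{CT1} is driven by a scalar parameter measuring the amplitude of the solution along that direction. Concretely, for $u_0\in\mathcal{A}_{\alpha_0}$ and as long as $u(t)\in\mathcal{T}_{\alpha^*}$, I would use the modulated decomposition $u(t,x)=\lambda(t)^{-1/2}(Q+\varepsilon)(t,(x-x(t))/\lambda(t))$ with the orthogonality conditions of \cite{MMR1} and the scaling parameter $b(t)\sim-\lambda_s/\lambda$; the refined analysis of \cite{MMR1} provides eigendirections $\mathcal{Y}_{\pm}$ and a gap $e_0>0$ such that, in suitable coordinates $\varepsilon=a_+\mathcal{Y}_++a_-\mathcal{Y}_-+\widetilde{\varepsilon}$, the reduced system reads $\tfrac{d}{ds}a_\pm=\pm e_0 a_\pm+(\text{quadratic and better})$, and the (Blow up), (Soliton), (Exit) alternatives correspond respectively to $a_+(s)$ eventually exceeding $+\kappa$, staying in $(-\kappa,\kappa)$ for all $s$, or eventually going below $-\kappa$, for a fixed small $\kappa$.

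Next I would study the curve $a\mapsto u_0^{(a)}=(1+a)Q+\gamma_0$. The orthogonality $(\gamma_0,Q)=0$ is precisely what makes the $Q$-component of $u_0^{(a)}-Q$ equal to $aQ$, so projecting the data gives $a_+(0)=c_0\,a+\ell(\gamma_0)+o(a)$ with $c_0\neq0$; thus the curve is transverse to the vanishing set of the unstable mode. By the stability part of Theorem~\ref{CT1}, the sets $B(\gamma_0)=\{a:(\text{Blow up})\}$ and $E(\gamma_0)=\{a:(\text{Exit})\}$ are open in $[-\beta_0,\beta_0]$. They are non-empty: for $a$ near $\beta_0$ one computes $E(u_0^{(a)})<0$, since $E(Q)=0$, $E((1+a)Q)=\tfrac12\big((1+a)^2-(1+a)^6\big)\!\int Q_x^2<0$ for $a>0$ (using $\int Q^6=3\int Q_x^2$), and the $\gamma_0$-cross terms are negligible, so blow up is forced by \cite{MMR1}; and for $a$ near $-\beta_0$ the solution is ejected from the tube by \cite{MMR1,MMR3}. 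Using $a_+(s)\sim e^{e_0 s}(a_+(0)+o(1))$ as long as $u(t)\in\mathcal{T}_{\alpha^*}$, together with $\partial_a a_+(0)\approx c_0\neq0$, one shows that $B(\gamma_0)$ and $E(\gamma_0)$ are \emph{ordered} open intervals $(A(\gamma_0),\beta_0]$ and $[-\beta_0,A(\gamma_0))$; by connectedness of $[-\beta_0,\beta_0]$ the remaining value $a=A(\gamma_0)$ is non-empty and unique, and by the trichotomy it is exactly the (Soliton) regime.

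For the $C^1$ regularity of $A$ and the geometric statement, I would identify $A(\gamma_0)$ with the unique parameter for which $u_0^{(A(\gamma_0))}$ lies on the local center-stable manifold $W^{cs}$ of $Q$, i.e. the set of data whose forward flow stays in $\mathcal{T}_{\alpha^*}$ with $|a_+|<\kappa$ for all times. I would construct $W^{cs}$ near $Q$ by a Lyapunov--Perron scheme (or graph transform) for the \eqref{CCP} flow, with the two center directions carried by the modulation parameters $(\lambda,x)$ and the single unstable direction removed; this produces a codimension-one $C^1$ graph. Since the curve $a\mapsto u_0^{(a)}$ meets $W^{cs}$ transversally, the implicit function theorem gives $A\in C^1$. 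Finally, because scaling and translation are symmetries of \eqref{CCP} that preserve the (Soliton) property and intertwine the decomposition, the local threshold manifolds glue along the soliton orbit into a single codimension-one $C^1$ submanifold $\mathcal{M}=\bigcup_{\lambda_0>0,\,x_0\in\mathbb{R}}(\text{symmetry image of }W^{cs})$ of a neighborhood $\mathcal{O}$ of $\mathcal{Q}$, with $\mathcal{Q}\subset\mathcal{M}$ and $u_0\in\mathcal{M}\iff u(t)$ in the (Soliton) regime.

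I expect the main obstacle to be the quantitative dynamical input, not the soft topology above: establishing, uniformly along the whole family $u^{(a)}$ and up to the exit time, that the flow in $\mathcal{T}_{\alpha^*}$ really has exactly one unstable mode with a uniform spectral gap, and that the bootstrap controlling $\widetilde{\varepsilon}$ — built on the monotonicity of the localized virial and energy functionals of \cite{MMR1} — closes independently of $a$. A second, more technical difficulty is to run the Lyapunov--Perron construction in the low-regularity space in which the flow and the decomposition are actually controlled, namely $H^1\cap L^2(y_+^{10}\,dy)$, and to check that the nonlinear terms and the modulation map are $C^1$ there; this is what makes the $C^1$ (rather than merely Lipschitz) regularity of $A$ and $\mathcal{M}$ genuinely delicate.
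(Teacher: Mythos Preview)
The first thing to note is that Theorem~\ref{CT2} is \emph{not proved in this paper}. It is quoted in the introduction as a result of Martel, Merle, Nakanishi and Rapha\"el \cite{MMNR}, in order to motivate the later remark that the (Blow down) regime for \eqref{CCPG} should likewise be a codimension-one threshold. There is therefore no ``paper's own proof'' to compare your proposal against; the comparison has to be with \cite{MMNR} itself.

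With that caveat, your sketch contains a structural misconception about the mechanism. You model the dynamics near $Q$ as hyperbolic, with genuine eigendirections $\mathcal{Y}_\pm$ and an exponential rate $e_0>0$, and then appeal to a Lyapunov--Perron/center-stable manifold construction in the style of Nakanishi--Schlag. For the $L^2$-\emph{critical} gKdV this picture is not correct: the linearized operator $L=-\partial_y^2+1-5Q^4$ has no negative eigenvalue giving an exponentially unstable mode in the sense you use (its only nonpositive spectrum is the kernel $Q'$ and the $L\Lambda Q=-2Q$ scaling relation). The instability that separates (Blow up)/(Soliton)/(Exit) in \cite{MMR1,MMNR} is \emph{not} spectral-hyperbolic; it is encoded in the modulation parameter $b$ through the degenerate ODE $b_s+2b^2\approx 0$, coupled to $\lambda_s/\lambda\approx -b$. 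The scalar that plays the role of your $a_+$ is essentially $b/\lambda^2$ (or equivalently the sign of $b$ at a separation time), and its ``growth'' is algebraic, not $e^{e_0 s}$. Consequently a standard Lyapunov--Perron argument does not apply directly, and the $C^1$ regularity of $A$ in \cite{MMNR} is obtained by a different, more hands-on route: one differentiates the flow map with respect to $a$, controls the linearized equation for $\partial_a u$ with the same monotonicity machinery as for $\varepsilon$, and shows that the map $a\mapsto b(t)$ (at suitable times) is strictly monotone. This is what gives both the uniqueness of the threshold value $A(\gamma_0)$ and its differentiable dependence on $\gamma_0$.

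The topological part of your outline (openness of (Blow up) and (Exit), nonemptiness, connectedness forcing a nonempty complement which must be (Soliton)) is fine and is indeed how the existence of \emph{some} threshold value is obtained. What needs to be replaced is the dynamical core: drop the $\mathcal{Y}_\pm$/spectral-gap picture, work instead with the modulation system $(\lambda,b,\varepsilon)$ and the monotonicity functionals of \cite{MMR1}, and prove monotonicity in $a$ by differentiating the decomposition. The energy sign argument you give for (Blow up) at $a$ near $\beta_0$ is correct and is the standard entry point.
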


\subsection{Statement of the main result}
The aim of this paper is to classify the dynamics of \eqref{CCPG} near the ground state $Q$ for \eqref{CCP}, when $\gamma$ is small enough. The main idea is that the defocusing term $\gamma u|u|^{q-1}$ has weaker nonlinear effect than the focusing term $u^5$. So, we may expect that \eqref{CCPG} has similar separation behavior as \eqref{CCP}, when $\gamma$ is small.

More precisely, we fix a small universal constant $\omega^*>0$ (to ensure the existence of the ground state $\mathcal{Q}_{\omega}$), and then introduce the following $L^2$ tube around $\mathcal{Q}_{\gamma}$:
$$\mathcal{T}_{\alpha^*,\gamma}=\bigg\{u_0\in H^1\Big|\inf_{\lambda_0>0,\lambda_0^{-m}\gamma<\omega^*,x_0\in\mathbb{R}}\bigg\|u_0-\frac{1}{\lambda_0^{\frac{1}{2}}}\mathcal{Q}_{\lambda_0^{-m}\gamma}\bigg(\frac{x-x_0}{\lambda_0}\bigg)\bigg\|_{L^2}<\alpha^*\bigg\}.$$
Then we have:
\begin{theorem}[Dynamics in $\mathcal{A}_{\alpha_0}$]\label{CMT}
For all $q>5$, there exists a constant $0<\alpha^*(q)\ll1$, such that if $0<\gamma\ll\alpha_0\ll\alpha^*<\alpha^*(q)$, then for all $u_0\in\mathcal{A}_{\alpha_0}$, the corresponding solution $u(t)$ to \eqref{CCPG} has one and only one of the following behaviors:\\
	{\rm-(Soliton):} For all $t\in[0,+\infty)$, $u(t)\in\mathcal{T}_{\alpha^*,\gamma}$. Moreover,  there exist a constant $\lambda_{\infty}\in(0,+\infty)$ and a $C^1$ function $x(t)$ such that
	\begin{gather}
	\lambda_{\infty}^{\frac{1}{2}}u(t,\lambda_{\infty}\cdot+x(t))\rightarrow \mathcal{Q}_{\lambda_{\infty}^{-m}\gamma} \text{ in } H^1_{{\rm loc}}, \text{ as $t\rightarrow +\infty$};\label{C11}\\
	x(t)\sim\frac{t}{\lambda_{\infty}^2},\quad\text{as $t\rightarrow +\infty$}.
	\end{gather}
	{\rm-(Blow down):} For all $t\in[0,+\infty)$, $u(t)\in\mathcal{T}_{\alpha^*,\gamma}$. Moreover, there exist two $C^1$ functions $\lambda(t)$ and $x(t)$, such that
	\begin{gather}
	\lambda^{\frac{1}{2}}(t)u(t,\lambda(t)\cdot+x(t))\rightarrow Q\text{ in } H^1_{{\rm loc}}, \text{ as $t\rightarrow +\infty$};\\
	\lambda(t)\sim t^{\frac{2}{q+1}},\quad x(t)\sim t^{\frac{q-3}{q+1}},\quad\text{as $t\rightarrow +\infty$},
	\end{gather}
	{\rm-(Exit):} There exists a $0<t^*_{\gamma}<+\infty$ such that $u(t^*_{\gamma})\notin\mathcal{T}_{\alpha^*,\gamma}$.
	
	There exist solutions associated to each regime. Moreover, the regime (Soliton) and (Exit) are stable under small perturbation in $\mathcal{A}_{\alpha_0}.$ 
\end{theorem}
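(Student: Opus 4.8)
The plan is to follow the strategy developed by Martel–Merle–Raphaël for the unperturbed critical gKdV, but carefully tracking the contributions of the saturating term $\gamma u|u|^{q-1}$ through the pseudo-scaling law \eqref{C1}. First I would set up the nonlinear decomposition of the solution: for $u(t)\in\mathcal{T}_{\alpha^*,\gamma}$, write
$$u(t,x)=\frac{1}{\lambda^{1/2}(t)}\big(\mathcal{Q}_{\lambda^{-m}(t)\gamma}+\varepsilon\big)\Big(t,\frac{x-x(t)}{\lambda(t)}\Big),$$
choosing the modulation parameters $(\lambda(t),x(t))$ and a rescaled time $s$ with $ds/dt=\lambda^{-3}$ so that $\varepsilon$ satisfies suitable orthogonality conditions (the standard ones: against $\Lambda Q$, $y\Lambda Q$, or the adapted directions used in the MMR scheme). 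The key new feature is that the effective nonlinearity seen by the profile depends on $\lambda$ through $\omega(s)=\lambda^{-m}\gamma$, so $\mathcal{Q}_{\omega}$ is itself slowly modulated; I would record the expansions $\mathcal{Q}_\omega = Q + \omega R + O(\omega^2)$ and $\partial_\omega$-derivative bounds (using the alternative existence construction promised in the footnote) to control the extra terms this generates in the equation for $\varepsilon$.

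Next I would derive the modulation equations and the energy/virial estimates. The modulation equations take the schematic form $\lambda_s/\lambda + b = O(\text{small})$, $b_s + $ (spectral term) $ = O(\text{small}) + c\gamma\lambda^{-m}$, where the crucial sign comes from the fact that the saturating term contributes with a definite (defocusing) sign — this is exactly what prevents finite-time blow up and replaces it by the blow-down alternative. I would then establish a monotonicity formula: a mixed energy-virial Lyapunov functional $\mathcal{F}$ (as in MMR), adapted so that the $\gamma$-dependent corrections are absorbed, satisfying $d\mathcal{F}/ds \le 0$ up to controlled errors, together with $L^2$-weighted monotonicity on the right to control the dispersive tail. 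From the sign of $b$ one extracts the trichotomy: either $b(s)\to 0$ with $\lambda\to\lambda_\infty>0$ (Soliton), or $b(s)<0$ is driven by the $\gamma\lambda^{-m}$ term forcing $\lambda\to+\infty$ with the precise rate $\lambda\sim t^{2/(q+1)}$ (Blow down — one integrates $\lambda_t \sim \gamma \lambda^{1-m-3} = \gamma\lambda^{-(q-1)/2}$, recalling $m=(q-5)/2$, which gives $\lambda^{(q+1)/2}\sim t$ hence the stated rate, and $x(t)=\int \lambda^{-2}\sim t^{(q-3)/(q+1)}$), or $\varepsilon$ leaves the tube in finite time (Exit).

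For the existence of solutions in each regime and the stability statements, I would argue as follows. The (Exit) regime is open by continuity of the flow in $H^1$ and the defining open condition, hence stable; a soft topological/Brouwer-type argument on the sign of the "exit parameter" in a one-parameter family $(1+a)Q+\gamma_0$ produces at least one solution that never exits, which by the monotonicity dichotomy must be either (Soliton) or (Blow down). To separate these last two and get existence of genuine blow-down solutions, I would run a bootstrap/compactness construction backward: prescribe the asymptotic behavior $\lambda\sim t^{2/(q+1)}$, $b\sim \tfrac{2}{q+1}\lambda^{m}\cdot$(const), build approximate solutions on $[T_n,+\infty)$ with uniform estimates from the monotonicity functional, and pass to the limit $T_n\to 0$; the (Soliton) solutions are obtained on the codimension-one threshold exactly as the boundary case.

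The main obstacle I anticipate is the sharp control of the modulation parameter $b$ and the proof that the $\gamma\lambda^{-m}$ forcing term genuinely dominates in the blow-down regime rather than being swamped by the nonlinear interaction terms $O(\varepsilon^2)$, $O(b^2)$, etc. In the unperturbed problem the analogous term is absent and blow-down does not occur; here one must show the a priori estimates on $\varepsilon$ (from the Lyapunov functional) are strong enough, uniformly as $\gamma\to 0$ and as $\lambda\to\infty$, that the law $b_s \approx c\gamma\lambda^{-m}$ closes. Since $\lambda^{-m}\to 0$ along the blow-down, the forcing is itself decaying, so the bootstrap must be structured around the correct self-similar variables (e.g. tracking $b\lambda^{-m}$ or an equivalent renormalized quantity) and the error hierarchy must be arranged so that every remainder is $o$ of the main term after this renormalization — this bookkeeping, rather than any single estimate, is the delicate heart of the argument.
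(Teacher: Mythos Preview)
Your overall architecture is right in spirit (modulation, monotonicity, then a trichotomy), but the separation mechanism you describe does not match what actually happens, and this leads you to misidentify which regime is the threshold.

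\textbf{The separation quantity is not the sign of $b$ but of $b+c_1\omega$.} The modulation law for $b$ is not of the form $b_s+\text{(spectral)}=O(\text{small})+c\gamma\lambda^{-m}$ with an additive $\omega$-forcing; rather, the localized profile $Q_{b,\omega}=\mathcal{Q}_\omega+b\chi_b P_\omega$ (which your decomposition omits) yields the sharp law $b_s+2b^2+c_0\omega_s=O(\text{small})$, with $\omega_s\approx m\omega b$. Combining this with $\lambda_s/\lambda\approx -b$ gives that
\[
\frac{b+c_1\omega}{\lambda^2},\qquad c_1=\frac{m}{m+2}G'(0)>0,
\]
is approximately conserved. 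The trichotomy is then read off from the sign of $b(t_1^*)+c_1\omega(t_1^*)$ relative to $C^*(\mathcal{N}_1+b^2+\omega^2)$: positive gives (Soliton), negative gives (Exit), and the boundary case where this quantity stays small for all time gives (Blow down). In particular, in all three regimes $b$ eventually becomes $\le 0$ and in both (Blow down) and (Exit) $\lambda\to+\infty$, so the sign of $b$ alone cannot distinguish them; what separates (Blow down) from (Exit) is whether $-b\sim\omega$ (threshold) or $-b\gg\omega$.

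\textbf{You have the threshold regime backwards.} You write that ``(Soliton) solutions are obtained on the codimension-one threshold exactly as the boundary case,'' but here (Soliton) is \emph{open and stable}, and (Blow down) is the codimension-one threshold between (Soliton) and (Exit). This is the opposite of the unperturbed picture in Theorem~\ref{CT1}, and is the whole point of the paper: the saturation converts the open (Blow up) regime into an open (Soliton) regime. Consequently your proposed existence argument is off: the paper obtains (Blow down) solutions not by a backward construction but simply by connectedness of $\mathcal{A}_{\alpha_0}$, after exhibiting explicit solutions in each of the open regimes (the traveling wave $\mathcal{Q}_\gamma(x-t)$ for Soliton, and any $u_0$ with $\|u_0\|_{L^2}<\|Q\|_{L^2}$ for Exit). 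Your blow-down rate computation $\lambda_t\sim\gamma\lambda^{-(q-1)/2}$ is correct, but it comes from $b\approx -c_1\omega$ along the threshold, not from a forcing term in the $b_s$ equation.
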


\noindent {\it Comments on Theorem \ref{CMT}:} 

{\it 1. Classification of the flow near ground state.} Theorem \ref{CMT} gives a detailed description of the flow near the ground state $\mathcal{Q}_{\gamma}$ of \eqref{CCPG}. This kind of problems has attracted considerable attention especially for the dispersive equations. For example, Nakanishi and Schlag \cite{NS1,NS3,NS2} for Klein-Gordon and mass supercritical nonlinear Schr\"{o}dinger equations; Merle-Rapha\"{e}l \cite{FMR,MR3,MR2,MR1,MR5,MR4} and Merle-Rapha\"{e}l-Szeftel \cite{MRS3} for mass critical nonlinear Schr\"{o}dinger equations; Martel-Merle-Rapha\"{e}l \cite{MMR1,MMR2} for $L^2$ critical gKdV equations; Kenig-Merle \cite{KM2} and Duyckaerts-Merle \cite{DM2} for energy-critical nonlinear Schr\"odinger equations; Kenig-Merle \cite{KM1}, Duyckaerts-Merle \cite{DM1} and Krieger-Nakanishi-Schlag \cite{KNS,KNS1} for energy-critical wave equations; Collot-Merle-Rapha\"{e}l \cite{CMR} for energy critical nonlinear heat equations. Note that the fact that the regime (blow down) near the ground state is a codimension one threshold submanifold of initial data in $\mathcal{A}_{\alpha_0}$ could be proved similarly as in \cite{MMNR}.

{\it 2. Asymptotic stability of solitons for \eqref{CCPG}.} Since the (Soliton) regime is open, Theorem \ref{CMT} also implies the asymptotic stability of the soliton $\mathcal{Q}_{\gamma}$ for \eqref{CCPG} under some suitable decay assumption. Recall that from \cite{MM2}, the soliton $Q$ for the unperturbed critical gKdV equation is not stable in $H^1$.

{\it 3. Blow down behaviors.} Theorem \ref{CMT} shows that a saturated perturbation may lead to some chaotic behaviors (i.e. the blow down behaviors), which does not seem to appear in the unperturbed case. Examples for solution with a blow down behavior was also found by Donninger, Krieger \cite{DK} for energy critical wave equations. While for mass critical NLS, the blow down behavior can be obtained as the pseudo-conformal transformation of the log-log regime%
\footnote{See \cite[(1.16)]{MRS3} for example.}%
. However, Theorem \ref{CMT} is the first time that this type of blow down behavior is obtained in the context of a saturated perturbation. Furthermore, in Theorem \ref{CMT}, the (blow down) regime is a codimension one threshold between two stable ones, which is in contrast with the mass critical nonlinear Schr\"{o}dinger case, where the blow down regime is stable.
\\

Now we consider the case when $\gamma\rightarrow 0$. As we mentioned before, the defocusing term $\gamma u|u|^{q-1}$ has weaker nonlinear effect than the focusing term $u^5$. So the results in Theorem \ref{CMT} are expected to be a perturbation of the one in Theorem \ref{CT1}.

More precisely, we have:

\begin{theorem}\label{CST}
Let us fix a nonlinearity $q>5$, and choose $0<\alpha_0\ll\alpha^*<\alpha^*(q)$ as in Theorem \ref{CMT}. For all $u_0\in\mathcal{A}_{\alpha_0}$, let $u(t)$ be the corresponding solution of \eqref{CCP}, and $u_{\gamma}(t)$ be the corresponding solution of \eqref{CCPG}. Then we have:
\begin{enumerate}
	\item If $u(t)$ is in the (Blow up) regime defined in Theorem \ref{CT1}, then there exists $0<\gamma(u_0,\alpha_0,\alpha^*,q)\ll\alpha_0$ such that if $0<\gamma<\gamma(u_0,\alpha_0,\alpha^*,q)$, then $u_{\gamma}(t)$ is in the (Soliton) regime defined in Theorem \ref{CMT}. Moreover, there exist constants $d_i=d_i(u_0,q)>0$, $i=1,2$, such that
	\begin{equation}
	\label{C12}
	d_1\gamma^{\frac{2}{q-1}}\leq\lambda_{\infty}\leq d_2\gamma^{\frac{2}{q-1}},
	\end{equation}
	where $\lambda_{\infty}$ is the constant defined in \eqref{C11}.
	\item If $u(t)$ is in the (Exit) regime defined in Theorem \ref{CT1}, then there exists $0<\gamma(u_0,\alpha_0,\alpha^*,q)\ll\alpha_0$ such that if $0<\gamma<\gamma(u_0,\alpha_0,\alpha^*,q)$, then $u_{\gamma}(t)$ is in the (Exit) regime defined in Theorem \ref{CMT}.
\end{enumerate} 
\end{theorem}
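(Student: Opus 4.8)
The plan is to run the modulation-theoretic machinery of Theorem \ref{CMT} and Theorem \ref{CT1} \emph{in parallel} on a common time interval, and to exploit the fact that the saturated term $\gamma u|u|^{q-1}$ contributes, in the rescaled variables, a factor $\lambda^{-m}\gamma$ that is negligible as long as $\lambda$ stays bounded below. Concretely, for $u_0\in\mathcal{A}_{\alpha_0}$ decompose both $u(t)$ and $u_\gamma(t)$ near the soliton manifold via the standard geometric decomposition $u_\gamma(t,x)=\lambda_\gamma^{-1/2}(t)\big(\mathcal{Q}_{\lambda_\gamma^{-m}\gamma}+\varepsilon_\gamma\big)\big(t,\lambda_\gamma^{-1}(t)(x-x_\gamma(t))\big)$ with the usual orthogonality conditions, and similarly for $u(t)$ with $\gamma=0$, $\mathcal{Q}_0=Q$. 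One first shows, by continuous dependence of the flow and of the ground states $\mathcal{Q}_\omega$ on $\omega$ (recall $\|\mathcal{Q}_\omega-Q\|_{H^1}\lesssim\omega$), that on any fixed compact time interval $[0,t_0]$ the parameters $(\lambda_\gamma,x_\gamma,\varepsilon_\gamma)\to(\lambda,x,\varepsilon)$ as $\gamma\to0$, uniformly; this is the soft part.

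For part (2), the (Exit) scenario is detected at a \emph{finite} time $t^*<T$: by definition $u(t^*)\notin\mathcal{T}_{\alpha^*}$, i.e. the rescaled error first reaches size $\alpha^*$ at $t^*$. Since $\lambda(t)$ is bounded on $[0,t^*]$ (staying in a tube), the perturbation term in $(\mathrm{gKdV}_\gamma)$ is $O(\gamma)$ in the relevant norms on this interval, so $u_\gamma(t^*)$ is within $O(\gamma)+o_{\gamma\to0}(1)$ of $u(t^*)$ in the relevant (local $L^2$ plus $H^1$) topology, hence for $\gamma$ small enough $u_\gamma$ also exits $\mathcal{T}_{\alpha^*,\gamma}$ near time $t^*$. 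One must check the exit is \emph{transversal} — that at $t^*$ the distance is strictly increasing — which follows from the exit characterization in \cite{MMR1} (the (Exit) regime is open in $\mathcal{A}_{\alpha_0}$), so a genuinely $C^1$-close flow cannot avoid it. This is essentially a stability statement and should be routine.

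For part (1), the real work is that the (Blow up) regime for $(\mathrm{gKdV})$ has $\lambda(t)\to0$ in finite time, so the crude bound on the perturbation term \emph{fails} precisely where it matters, and indeed Theorem \ref{CMT} tells us $u_\gamma$ does \emph{not} blow up — it is a soliton with $\lambda_\infty\sim\gamma^{2/(q-1)}$. The plan is: (a) use the $\gamma=0$ blow-up analysis to get, for times $t$ with $\lambda(t)$ down to some threshold $\lambda\sim\eta$ (with $\eta$ small but fixed), that $u_\gamma$ tracks $u$ and in particular $\lambda_\gamma(t)$ also descends to $\sim\eta$ while the error stays small and the sign of the energy-type quantity is inherited; (b) once $\lambda_\gamma$ is small, switch to the Lyapunov/monotonicity functional built in the proof of Theorem \ref{CMT} — the key algebraic point is that the modulation equation for $\lambda_\gamma$ acquires the extra term proportional to $\lambda_\gamma^{-m}\gamma$ (coming from the scaling defect $m=(q-5)/2$), which acts as a \emph{restoring force} preventing $\lambda_\gamma\to0$ and forcing convergence to the equilibrium $\lambda_\infty$ where the focusing contraction balances the saturated dilation; this balance is exactly $\lambda_\infty^{-m}\gamma\sim\lambda_\infty^2$, i.e. $\lambda_\infty\sim\gamma^{2/(q-1)}$ after using $m+2=(q-1)/2$. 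The two-sided bound \eqref{C12} then comes from quantifying that the energy $E_0=E(u_0)$ (independent of $\gamma$ to leading order, $|E_\gamma(u_0)-E(u_0)|\lesssim\gamma$) pins down the constant $\ell(u_0)$ in Theorem \ref{CT1}, and that $\ell(u_0)$ and $\lambda_\infty$ are related through the conserved mass/energy at the equilibrium, giving $d_1,d_2$ depending only on $u_0$ and $q$.

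The main obstacle is step (a)--(b) of part (1): one needs a uniform-in-$\gamma$ bootstrap that carries the $\gamma=0$ description (the almost self-similar profile, the sign and size of the excess energy, the control of $\varepsilon$ in the weighted norms) down to the scale $\lambda\sim\eta$ \emph{independent of $\gamma$}, and then a clean matching to the large-time analysis of Theorem \ref{CMT}. The delicate point is that "blow up in finite time $T$" for $\gamma=0$ versus "soliton as $t\to+\infty$" for $\gamma>0$ means the relevant time intervals are not the same and the matching happens in the self-similar variable $s=\int_0^t\lambda^{-3}$, where both solutions live for $s\in[0,+\infty)$; one shows that in the $s$-variable, $u_\gamma$ stays $O(\gamma\cdot(\text{something integrable}))$-close to $u$ up to the scale $\lambda\sim\eta$, using that $\int \lambda^{-m}\gamma\,ds$ stays small on that range because $\lambda$ is bounded below by $\eta$ there. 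After that threshold the dynamics of $u_\gamma$ is governed purely by Theorem \ref{CMT} and no longer by proximity to $u$.
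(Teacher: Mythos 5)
The crucial insight your proposal misses is that you never need to track $u_\gamma$ close to $u$ into the regime where $\lambda$ is small. In the paper, the handoff from ``comparison with $u$'' to ``autonomous dynamics of $u_\gamma$'' happens much earlier: by \cite[Section 4.4]{MMR1}, the separation condition that characterizes (Blow up) for the unperturbed equation, namely $b(T^*_1)\geq 2C^*\mathcal{N}_1(T^*_1)$, is already achieved at a \emph{fixed finite time} $T^*_1<T$ at which $|\lambda(T^*_1)-1|\lesssim\delta(\alpha_0)$ (see \eqref{C53}). On the compact interval $[0,T^*_1]$ the unperturbed solution has finite Strichartz-type norms ($\Omega_{[0,T^*_1]}(u)\leq C(u_0,q)$), so the modified $H^1$ perturbation theory (Proposition~\ref{CP6}) applied with forcing $e=\gamma u|u|^{q-1}$ gives $\sup_{[0,T^*_1]}\|u-u_\gamma\|_{H^1}\lesssim\gamma^{1/2}$. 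Via the continuous dependence of the geometric decomposition (Lemma~\ref{CL3}, \eqref{C206}) this transfers the strict inequality $b_\gamma(T^*_1)+c_1\omega_\gamma(T^*_1)\geq C^*(\mathcal{N}_{1,\gamma}+b_\gamma^2+\omega_\gamma^2)$ to $u_\gamma$. From that time on, no proximity to $u$ is used at all: Proposition~\ref{CP5} (the rigidity for the saturated equation, proved independently in Section~4) classifies $u_\gamma$ as (Soliton), and \eqref{C468} gives $\lambda_\infty\sim(c_1\gamma/\ell^*)^{1/(m+2)}$ with $\ell^*=\big(b(t_1^*)+c_1\omega(t_1^*)\big)/\lambda^2(t_1^*)$. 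Since as $\gamma\to0$ this $\ell^*$ converges to $b(T^*_1)/\lambda^2(T^*_1)$, a positive number depending only on $u_0$, and $m+2=(q-1)/2$, you get the two-sided bound \eqref{C12}.

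Your step (a)--(b), following $u_\gamma$ near $u$ down to a threshold $\lambda\sim\eta$ and then ``switching'' to the Lyapunov functional, is exactly the extra work the paper's strategy avoids. As you yourself flag, that matching in the $s$-variable would require a bootstrap uniform in $\gamma$ across a range where the unperturbed solution is approaching its blow-up profile; the paper sidesteps this entirely by exploiting that the trichotomy in Theorem~\ref{CMT} is already resolved at scale $\lambda\sim1$. Your heuristic balance $\lambda_\infty^{-m}\gamma\sim\lambda_\infty^2$ does give the right exponent, but identifying the constants $d_1,d_2$ via $E_0$ is not how the paper proceeds: they follow from the quantitative control of $\ell^*$ (which is continuous in the data, hence $\gamma$-independent to leading order). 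Finally, the claim that $\|u(t^*)-u_\gamma(t^*)\|_{H^1}=O(\gamma)$ ``because $\lambda$ is bounded'' is not a one-liner: the paper devotes Proposition~\ref{CP6} and Lemma~\ref{CL8} (a Strichartz-based long-time perturbation theory with the saturated nonlinearity in the auxiliary spaces) to make this rigorous, and without some version of that the argument for part (2) is incomplete as well.
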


\begin{remark}
We can see from Theorem \ref{CST} that \eqref{CCPG} is a perturbation of \eqref{CCP} as $\gamma\rightarrow0$: the (Soliton) regime of \eqref{CCPG} ``converges'' to the (Blow up) regime of \eqref{CCP}, and the (Exit) regime ``converges'' to the (Exit) regime of \eqref{CCP}. 
\end{remark}

\begin{remark}
Theorem \ref{CST} is the first result of this type for nonlinear dispersive equations. One may also expect similar results for the critical NLS or the slightly supercritical gKdV cases. But they are still completely open. 

Indeed, for critical NLS, Malkin \cite{Mal} predicted a similar asymptotic behavior for the solution to the saturated problem of critical NLS in the log-log region. However, due to the different structure between NLS and gKdV, it seems hard to apply the strategy in this paper to the NLS case.

{While for slightly supercritical gKdV case, the stable self-similar blow-up dynamics is well studied in \cite{L1}. But, due to the fact that the self-similar profile constructed in \cite[Theorem 3]{K} is not in the energy space $H^1$, we have to choose a suitable cut-off as an approximation of this profile. As a consequence, this generates some error terms that are hard to control, which makes it impossible to consider the saturated  problem in this case. However, in \cite{Stru}, Strunk proved the local wellposedness result for supercritical gKdV in a space that contains the self-similar profile, which provides an alternative option for the saturated problems.} 
\end{remark}

\subsection{Notation}\label{S15}
For $0\leq\omega<\omega^*\ll1$, we let $\mathcal{Q}_{\omega}$ be the unique nonnegative radial solution with exponential decay to the following ODE:
\begin{equation}\label{CME0}
\mathcal{Q}_{\omega}''-\mathcal{Q}_{\omega}+\mathcal{Q}_{\omega}^5-\omega \mathcal{Q}_{\omega}|\mathcal{Q}_{\omega}|^{q-1}=0.
\end{equation}
For simplicity, we denote by $Q=\mathcal{Q}_0$. Recall that we have:
$$Q(x)=\bigg(\frac{3}{\cosh^2(2x)}\bigg)^{\frac{1}{4}}.$$

We also introduce the linearized operator at $\mathcal{Q}_{\omega}$:
$$L_{\omega}f=-f''+f-5\mathcal{Q}_{\omega}^4f+q\omega|\mathcal{Q}_{\omega}|^{q-1}f.$$
Similarly, we denote by $L=L_0$.

Next, we introduce the scaling operator:
$$\Lambda f=\frac{1}{2}f+yf'.$$

Then, for a given small constant $\alpha^*$, we denote by $\delta(\alpha^*)$ a generic small constant with
$$\lim_{\alpha^*\rightarrow0}\delta(\alpha^*)=0.$$

Finally, we denote the $L^2$ scalar product by
$$(f,g)=\int f(x)g(x)\,dx.$$

\subsection{Outline of the proof}
\subsubsection{Decomposition of the flow}We are searching for solutions of the following form:
$$u(t,x)\sim\frac{1}{\lambda(t)^{\frac{1}{2}}}Q_{b(t),\omega(t)}\bigg(\frac{x-x(t)}{\lambda(t)}\bigg),$$
$$\omega=\frac{\gamma}{\lambda^m},\;\frac{ds}{dt}=\frac{1}{\lambda^3},\;\frac{\lambda_s}{\lambda}=-b,\;\frac{x_s}{\lambda}=1,$$
which lead to the modified self-similar equation:
\begin{equation}\label{CME}
b\Lambda Q_{b,\omega}+\big(Q_{b,\omega}''-Q_{b,\omega}+Q_{b,\omega}^5-\omega Q_{b,\omega}|Q_{b,\omega}|^{q-1}\big)'=0.
\end{equation}
Formal computations show that $b$ and $\omega$ must satisfy the following condition:
$$b_s+2b^2+c_0\omega_s=0,$$
where $c_0=c_0(q)>0$ is a universal constant.

Combining all the above, we get the following formal finite dimensional system:
\begin{equation}\label{CFDS}
\begin{cases}
\frac{ds}{dt}=\frac{1}{\lambda^3},\;\frac{\lambda_s}{\lambda}=-b,\;\frac{x_s}{\lambda}=1,\\
b_s+2b^2+c_0\omega_s=0,\;\omega=\frac{\gamma}{\lambda^m}.
\end{cases}
\end{equation}
By standard computations, it is easy to see that \eqref{CFDS} has the following behavior. Let
$$L_0=\frac{b(0)}{\lambda^2(0)}+\frac{mc_0\gamma}{(m+2)\lambda^{m+2}(0)}.$$
We have:
\begin{enumerate}
	\item If $L_0>0$, then
	$$b(t)\rightarrow0,\quad\lambda(t)\rightarrow \bigg(\frac{m\gamma c_0}{(m+2)L_0}\bigg)^{\frac{1}{m+2}},\quad x(t)\sim \bigg(\frac{(m+2)L_0}{m\gamma c_0}\bigg)^{\frac{2}{m+2}}t,$$
	as $t\rightarrow +\infty$, which corresponds to the (Soliton) regime.
	\item If $L_0=0$, then
	$$b(t)\rightarrow0,\quad\lambda(t)\rightarrow +\infty,\quad x(t)\rightarrow +\infty,$$
	as $t\rightarrow +\infty$, which corresponds to the (Blow down) regime.
	\item If $L_0<0$, then
	$$b(t)\rightarrow-\infty,\quad\lambda(t)\rightarrow +\infty,$$
	as $t\rightarrow +\infty$, which corresponds to the (Exit) regime.
\end{enumerate}

\subsubsection{Modulation theory}
Our first step is to find a solution to \eqref{CME}. But for our analysis, it is enough to consider a suitable approximation%
\footnote{See Section 2.1 for more details.}%
:
$$Q_{b,\omega}(y)=\mathcal{Q}_{\omega}(y)+b\chi(|b|^{\beta}y)P_{\omega}(y).$$

As long as the solution remains in $\mathcal{T}_{\alpha^*,\gamma}$, we can introduce the following geometrical decomposition:
$$u(t)=\frac{1}{\lambda(t)^{\frac{1}{2}}}\Big[Q_{b(t),\omega(t)}+\varepsilon(t)\Big]\bigg(\frac{x-x(t)}{\lambda(t)}\bigg)$$
with $\omega(t)=\gamma/\lambda(t)^m$, and the error term satisfies some orthogonality conditions. Then the equation of the parameters are roughly speaking of the following form:
$$\frac{\lambda_s}{\lambda}+b=\frac{dJ_1}{ds}+O(\|\varepsilon\|^2_{H^1_{\rm loc}}),\quad b_s+2b^2+c_0\omega_s=\frac{dJ_2}{ds}+O(\|\varepsilon\|^2_{H^1_{\rm loc}}),$$
with 
$$|J_{i}|\lesssim \|\varepsilon\|_{H^1_{\rm loc}}+\int_{y>0}|\varepsilon|.$$

Therefore, a $L^1$ control on the right is needed, otherwise $J_i$ will perturb the formal system \eqref{CFDS}. 

\subsubsection{Monotonicity Formula}
Our next step is to derive a control for $\|\varepsilon\|_{H^1_{\rm loc}}$. Similar to {\cite[Proposition 3.1]{MMR1}}, we introduce the following nonlinear functional:
\begin{align*}
\mathcal{F}\sim\int\big(\psi\varepsilon_{y}^2+\varphi\varepsilon^2-5\varepsilon^2Q^4_{b,\omega}\psi+q\omega\varepsilon^2|Q_{b,\omega}|^{q-1}\psi\big),
\end{align*}
for some well-chosen weight functions $(\psi,\varphi)$, which decay exponentially to the left, and grow polynomially on the right. We will see from the choice of the orthogonality condition that the leading quadratic term of $\mathcal{F}$ is coercive:
$$\mathcal{F}\gtrsim \|\varepsilon\|^2_{H^1_{\rm loc}}.$$
Most importantly, we have the following monotonicity formula:
$$\frac{d}{ds}\bigg(\frac{\mathcal{F}}{\lambda^{2j}}\bigg)+\frac{\|\varepsilon\|^2_{H^1_{\rm loc}}}{\lambda^{2j}}\lesssim \frac{\omega^2b^2+b^4}{\lambda^{2j}},$$
for $j=0,1$. This formula is crucial in all three cases.

\subsubsection{Rigidity}
The selection of the dynamics depends on:
\begin{enumerate}
	\item For all $t$, 
	$$\bigg|b(t)+\frac{mc_0}{m+2}\omega(t)\bigg|\lesssim \|\varepsilon(t)\|^2_{H^1_{\rm loc}}+b^2(t)+\omega^2(t).$$
	\item For some $t_1^*<T=+\infty$, 
	$$b(t_1^*)+\frac{mc_0}{m+2}\omega(t_1^*)\gg \|\varepsilon(t)\|^2_{H^1_{\rm loc}}+b^2(t)+\omega^2(t).$$
	\item For some $t_1^*<T=+\infty$, 
	$$-b(t_1^*)-\frac{mc_0}{m+2}\omega(t_1^*)\gg \|\varepsilon(t)\|^2_{H^1_{\rm loc}}+b^2(t)+\omega^2(t).$$
\end{enumerate}
We will see that in the first case we have for all $t$,
$$|b(t)|\sim\omega(t)\gg \|\varepsilon(t)\|^2_{H^1_{\rm loc}},$$
And in the second case we have $$\omega(t)\gg|b(t)|\gg\|\varepsilon(t)\|^2_{H^1_{\rm loc}},$$
for $t>t_2^*\geq t_1^*$ as long as $u(t)$ remains in $\mathcal{T}_{\alpha^*,\gamma}$. 
While in the third case, we have
$$-b(t)\gg\omega(t)\gg\|\varepsilon(t)\|^2_{H^1_{\rm loc}},$$
for $t>t^*_1$ as long as $u(t)$ remains in $\mathcal{T}_{\alpha^*,\gamma}$. Then reintegrating the modulation equations, we will see that these three cases correspond to the (Blow down), (Soliton) and (Exit) regimes respectively.

Moreover, the condition on $b(t_1^*)$ and $\omega(t_1^*)$ which determines the (Soliton) and (Exit) regimes is an open condition to the initial data due to the continuity of the flow. On the other hand, it is easy to construct solutions, which belongs to the (Soliton) and (Exit) regime respectively. Since, the initial data set $\mathcal{A}_{\alpha_0}$ is connected, we can see that there exist solutions corresponding to the (Blow down) regime.

\subsubsection{Proof of Theorem \ref{CST}}
The proof of Theorem \ref{CST} is based on the fact that the separation condition for \eqref{CCPG} is close to the separation condition for \eqref{CCP}, when $\gamma\rightarrow0$. Then Theorem \ref{CST} follows immediately from a modified $H^1$ perturbation theory%
\footnote{See \cite[Theorem 3.1]{KKSV} for the standard $L^2$ perturbation theory.}%
. 

\subsection*{Acknowledgement}
The author would like to thank his supervisors F. Merle and T. Duyckaerts for having suggested
this problem to him and giving a lot of guidance.

\section{Nonlinear profile and decomposition of the flow}
In this section we will introduce the nonlinear profile and the geometrical decomposition similar to the one in \cite{MMR1}, which turns out to lead to the desired rigidity dynamics.
\subsection{Structure of the linearized operator $L_{\omega}$} 
Denote by $\mathcal{Y}$ the set of smooth function $f$ such that for all $k\in\mathbb{N}$, there exist $r_k>0$, $C_k>0$, with
{\begin{equation}
|\partial_{y}^kf(y)|\leq C_k(1+|y|)^{r_k}e^{-|y|}.
\end{equation}}

Let us first recall some results about the linearized operator $L$.

\begin{lemma}[Properties of $L$, \cite{MM2}, \cite{MMR1}, \cite{W2}]\label{CL0}
The self-adjoint operator $L$ ({recall that we use the notation $L=L_0$, which was introduced in Section \ref{S15}}) in $L^2$ has the following properties:
\begin{enumerate}
	\item Eigenfunction: $LQ^3=-8Q^3$, $LQ'=0$, $\ker{L}=\{aQ'|a\in\mathbb{R}\}$.
	\item Scaling: $L(\Lambda Q)=-2Q$.
	\item For any function $f\in L^2$ orthogonal to $Q'$, there exist a unique $g\in H^2$ such that $Lg=f$ with $(g,Q')=0$. Moreover, if $f$ is even (or respectively odd), then $g$ is even (or respectively odd).
	\item If $f\in L^2$ such that $Lf\in\mathcal{Y}$, then $f\in\mathcal{Y}$.
	\item Coercivity: For all $f\in H^1$, if $(f,Q^3)=(f,Q')=0$, then $(Lf,f)\geq (f,f)$.  Moreover, there exists a $\kappa_0>0$ such that for all $f\in H^1$,
	$$(Lf,f)\geq\kappa_0\|f\|_{H^1}^2-\frac{1}{\kappa_0}\Big[(f,Q)^2+(f,\Lambda Q)^2+(f,y\Lambda Q)^2\Big].$$ 
\end{enumerate}
\end{lemma}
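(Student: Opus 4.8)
The plan is to treat the five properties in turn, exploiting the explicit formula $Q(x) = (3/\cosh^2(2x))^{1/4}$ and classical Sturm--Liouville theory for the one-dimensional Schr\"odinger operator $L = -\partial_y^2 + 1 - 5Q^4$. First I would verify the two algebraic identities in (1) by direct differentiation: differentiating the ground state equation $Q'' - Q + Q^5 = 0$ gives $LQ' = 0$, and plugging the ansatz $Q^3$ into $L$ and using $Q'' = Q - Q^5$ together with the pointwise identity $(Q^3)'' = 3Q^2 Q'' + 6Q(Q')^2$ and the first-order relation $(Q')^2 = Q^2 - \tfrac13 Q^6$ (the conserved ``energy'' of the ODE) reduces $LQ^3$ to a multiple of $Q^3$; bookkeeping the constants yields $-8Q^3$. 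For $\ker L = \{aQ'\}$, since $Q'$ has exactly one zero, it is the second eigenfunction's companion — more precisely $Q'$ vanishes once so it cannot be the ground state of $L$; one argues that $L$ has exactly one negative eigenvalue (witnessed by $Q^3 > 0$? no — $Q^3>0$ is not the bottom eigenfunction, but $(LQ^3,Q^3) = -8\|Q^3\|^2 < 0$ shows the bottom eigenvalue is negative), $0$ is an eigenvalue with eigenfunction $Q'$ having a single sign change, hence $0$ is the second eigenvalue and is simple, giving $\ker L = \mathrm{span}\{Q'\}$. Identity (2) follows by applying $\Lambda$ to the ground state equation and commuting: $\Lambda$ applied to $Q''-Q+Q^5 = 0$ and the relation $[L,\Lambda]$-type computation (equivalently, differentiating the scaling family $\lambda^{-1/2}Q(\cdot/\lambda)$ of solutions of the stationary equation at $\lambda = 1$) produces $L(\Lambda Q) = -2Q$.

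For (3), given $f \perp Q'$ in $L^2$, I would solve $Lg = f$ by the standard variation-of-parameters / Fredholm alternative for the ODE: since $\ker L$ is spanned by the bounded function $Q'$ and $L$ is self-adjoint with essential spectrum $[1,\infty)$, $L$ is an isomorphism from $(\ker L)^\perp \cap H^2$ onto $(\ker L)^\perp \cap L^2$, which gives existence and uniqueness of $g$ with $(g,Q') = 0$; the parity statement follows because $L$ commutes with the reflection $y \mapsto -y$ (as $Q$ is even), so it preserves the even and odd subspaces. Property (4) is an elliptic-regularity plus decay bootstrap: if $Lf \in \mathcal Y$ then from $f'' = f - 5Q^4 f - Lf$ one gets $f \in C^\infty$ by iteration, and for the decay one writes $f'' - f = -5Q^4 f - Lf =: h$ where $h$ decays exponentially once one knows $f$ is bounded (which follows since $f \in L^2$ and $f'' = f + (\text{exponentially localized}) - Lf$ forces $f \to 0$); then the variation-of-parameters formula for $(-\partial_y^2+1)^{-1}$ with kernel $\tfrac12 e^{-|y-z|}$ applied to $h$ propagates the exponential decay (with a possible polynomial loss), placing $f \in \mathcal Y$.

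The main work, and the main obstacle, is the coercivity statement (5). The first assertion — $(Lf,f) \geq (f,f)$ for $f \perp Q^3, Q'$ — is the classical spectral gap: below the essential spectrum $[1,\infty)$, $L$ has only the simple negative eigenvalue $-8$ (eigenfunction $Q^3$) and the zero eigenvalue (eigenfunction $Q'$), so on the orthogonal complement of both, $L \geq 1$; this can be made rigorous either by a direct minimization argument (a minimizer of $(Lf,f)/(f,f)$ on the codimension-two subspace would be an eigenfunction with eigenvalue $< 1$, and the only such are $Q^3$ and $Q'$, contradiction) or by citing the explicit diagonalization. The second, quantitative assertion is the one requiring care: one must upgrade the $L^2$-coercivity to an $H^1$-coercivity modulo the three directions $Q, \Lambda Q, y\Lambda Q$. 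I would argue by contradiction and compactness: if no such $\kappa_0$ exists, take a sequence $f_n$ with $\|f_n\|_{H^1} = 1$, $(Lf_n,f_n) \to 0$ or negative, and all three inner products $\to 0$; extract a weak $H^1$ limit $f_\infty$, use local compactness to pass to the limit in the localized quadratic form, and show $f_\infty$ must be a combination of $Q^3$ and $Q'$ (from the $L^2$ gap) that is also orthogonal to $Q, \Lambda Q, y\Lambda Q$; then the non-degeneracy of the matrix of pairings $(Q^3,Q), (Q^3,\Lambda Q), (Q',\cdot)$, etc. — checked via explicit integrals using the $\cosh$ formula and the identity $(\Lambda Q, Q) = 0$ (mass criticality!) together with $(Q^3, \Lambda Q) = \tfrac12 \tfrac{d}{d\lambda}\big|_{1}\int \lambda^{-2}Q^4(\cdot/\lambda)\,Q = \ldots \neq 0$ — forces $f_\infty = 0$, and then one must recover a contradiction with $\|f_n\|_{H^1}=1$ by showing the $H^1$ norm cannot concentrate at spatial infinity, which uses that $-f'' + f$ controls the $H^1$ norm and $Q^4 f_n^2 \to 0$. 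The delicate points are the non-degeneracy of the finite-dimensional pairing matrix and ruling out loss of mass at infinity; both are standard in this literature (cf. \cite{MM2,MMR1}) and I would either reproduce the short computations or invoke those references directly.
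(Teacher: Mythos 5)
The paper does not prove Lemma~\ref{CL0} at all: it is stated with a citation to \cite{MM2}, \cite{MMR1}, \cite{W2} and used as a black box. So there is no ``paper's approach'' to compare against; your proposal is a reconstruction of the standard arguments from that literature, and it is broadly correct. The algebraic identities in (1)--(2) check out (the computation $LQ^3=-8Q^3$ via $(Q')^2=Q^2-\tfrac13 Q^6$ and $(Q^3)''=9Q^3-5Q^7$ is right, as is the scaling derivation of $L(\Lambda Q)=-2Q$), the Fredholm argument for (3) is standard, and the decay bootstrap for (4) is the right idea.

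Two points in (5) need tightening. First, the minimization argument you sketch for the $L^2$ spectral gap is circular as written: you invoke ``the only eigenfunctions with eigenvalue $<1$ are $Q^3$ and $Q'$'' to rule out a minimizer, but that is precisely what must be established. The clean route — which you mention as an alternative and which is what actually underlies the cited references — is the explicit P\"oschl--Teller structure: $5Q^4=15\,\mathrm{sech}^2(2y)$, so after rescaling $z=2y$ one has $L=4\big(-\partial_z^2-\tfrac{15}{4}\mathrm{sech}^2 z\big)+1$, with $\ell(\ell+1)=15/4$, i.e.\ $\ell=3/2$, whose bound-state energies are $-(\ell-n)^2$ for $n=0,1$; this gives exactly $\sigma(L)=\{-8,0\}\cup[1,\infty)$ and closes the gap argument. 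Second, for the quantitative $H^1$ coercivity the decisive non-degeneracy you must verify is $(Q',y\Lambda Q)\neq 0$, not $(Q^3,\Lambda Q)\neq 0$: by parity, $Q$ and $\Lambda Q$ are even and pair trivially with the odd eigenfunction $Q'$, so $y\Lambda Q$ is the only one of the three penalty directions that can control the $Q'$-component of a test function, and the whole statement would be false if $(Q',y\Lambda Q)=0$. This is an explicit integral in $\mathrm{sech}$ and $\tanh$ and is indeed nonzero, but it should be checked rather than subsumed under ``etc.''; the pairing $(Q^3,\Lambda Q)=\tfrac14\int Q^4$ you compute is what pins down the $Q^3$-component, not the $Q'$-component. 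With these two fixes the concentration--compactness scheme you outline does go through.
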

\begin{proposition}[{Nonlocalized profiles, \cite[Proposition 2.2]{MMR1}}]\label{CP1}
There exist a unique function $P$ with $P'\in\mathcal{Y}$, such that:
\begin{gather}
(LP)'=\Lambda Q,\quad \lim_{y\rightarrow-\infty}P(y)=\frac{1}{2}\int Q,\quad |P(y)|\lesssim\ e^{-\frac{y}{2}}\text{ for }y>0,\\
(P,Q)=\frac{1}{16}\bigg(\int Q\bigg)^2,\quad(P,Q')=0.
\end{gather}
\end{proposition}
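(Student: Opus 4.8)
The plan is to construct $P$ explicitly by first removing the non-$L^2$ tail, and then to read off the value $(P,Q)$ from the primitive structure of $\Lambda Q$ and Lemma~\ref{CL0}(2). The point that prevents a direct appeal to the inversion statement Lemma~\ref{CL0}(3) is that the prescribed boundary conditions force $P(y)\to\frac12\int Q$ as $y\to-\infty$, so $P\notin L^2$. I would fix a smooth cut-off $\chi_-$ with $\chi_-\equiv 1$ on $(-\infty,-1]$ and $\chi_-\equiv 0$ on $[0,+\infty)$, and look for $P$ in the form $P=\frac12\big(\int Q\big)\chi_-+R$. Since $L\big(\frac12(\int Q)\chi_-\big)=\frac12(\int Q)\chi_-+g_1$ with $g_1:=\frac12(\int Q)(-\chi_-''-5Q^4\chi_-)\in\mathcal{Y}$, the equation $(LP)'=\Lambda Q$ becomes $(LR)'=\rho$, where $\rho:=\Lambda Q-\frac12(\int Q)\chi_-'-g_1'\in\mathcal{Y}$. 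Using $\chi_-(-\infty)=1$, $\chi_-(+\infty)=0$ and $\int\Lambda Q=-\frac12\int Q$, one checks $\int_{\mathbb{R}}\rho=0$; hence its primitive $\tilde h(y):=\int_{-\infty}^y\rho$ decays exponentially at both ends, i.e.\ $\tilde h\in\mathcal{Y}\subset L^2$.

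It then remains to solve $LR=\tilde h$. By Lemma~\ref{CL0}(3) this is solvable, with a unique solution normalized by $(R,Q')=0$, provided $(\tilde h,Q')=0$; and then $R\in\mathcal{Y}$ by Lemma~\ref{CL0}(4). To verify the solvability condition, integrate by parts: $(\tilde h,Q')=-\int\rho\,Q=-\int(\Lambda Q)Q+\frac12(\int Q)\int\chi_-'Q+\int g_1'Q$. Here $\int(\Lambda Q)Q=0$, and writing $\int g_1'Q=-\int g_1 Q'$ and using $\int\chi_-''Q'=-\int\chi_-'Q''$, $\int 5Q^4\chi_- Q'=-\int\chi_-'Q^5$ together with $Q''+Q^5=Q$, one obtains $\int g_1'Q=-\frac12(\int Q)\int\chi_-'Q$, so $(\tilde h,Q')=0$. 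This produces a solution $P_0=\frac12(\int Q)\chi_-+R$ of $(LP_0)'=\Lambda Q$ with $P_0(y)\to\frac12\int Q$ as $y\to-\infty$, $P_0=R\in\mathcal{Y}$ on $[0,+\infty)$ (so $|P_0(y)|\lesssim e^{-y/2}$ there), and $P_0'\in\mathcal{Y}$. Since adding a multiple of $Q'$ changes none of these features ($LQ'=0$, $Q'\in\mathcal{Y}$), I set $P:=P_0+cQ'$ with $c=-(P_0,Q')/\|Q'\|_{L^2}^2$ to enforce $(P,Q')=0$.

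To compute $(P,Q)$, integrate $(LP)'=\Lambda Q$ from $-\infty$: this yields $LP=\frac12\int Q+w$ with $w(y):=\int_{-\infty}^y\Lambda Q$, so $w(-\infty)=0$ and $w(+\infty)=-\frac12\int Q$. Pairing with $\Lambda Q$ and moving $L$ onto $\Lambda Q$ (all boundary terms vanish because $\Lambda Q,(\Lambda Q)'\in\mathcal{Y}$ decay while $P$ stays bounded), Lemma~\ref{CL0}(2) gives $(LP,\Lambda Q)=(P,L\Lambda Q)=-2(P,Q)$. On the other hand, since $w'=\Lambda Q$,
\begin{align*}
(LP,\Lambda Q)&=\tfrac12\Big(\int Q\Big)\int\Lambda Q+\int w\,w'\\
&=-\tfrac14\Big(\int Q\Big)^2+\tfrac12\big[w^2\big]_{-\infty}^{+\infty}=-\tfrac18\Big(\int Q\Big)^2 .
\end{align*}
Comparing the two, $(P,Q)=\frac1{16}(\int Q)^2$; the $cQ'$ correction does not affect this since $(Q',Q)=0$.

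Finally, for uniqueness: if $P_1,P_2$ both satisfy the conclusion, then $(L(P_1-P_2))'=0$, so $L(P_1-P_2)$ is a constant; evaluating as $y\to-\infty$, where $P_1-P_2$ and all of its derivatives tend to $0$, this constant must be $0$, hence $P_1-P_2\in\ker L=\{aQ'\mid a\in\mathbb{R}\}$ by Lemma~\ref{CL0}(1), and $(P_1-P_2,Q')=0$ forces $P_1=P_2$. The main technical burden is the careful bookkeeping of the non-decaying tail at $-\infty$: showing that the primitive $\tilde h$ has zero total mass and is $L^2$-orthogonal to $Q'$ (this is where the identity $Q''+Q^5=Q$ enters), and justifying the integrations by parts in the computation of $(P,Q)$ for the bounded-but-not-decaying function $P$.
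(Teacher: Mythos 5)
Your proof is correct, and the overall strategy is the same as the paper's construction of the $\omega$-dependent profile $P_\omega$ (Proposition~\ref{CP2}), which the paper explicitly says is ``almost parallel'' to the argument for $P$: subtract a profile carrying the nontrivial tail at $-\infty$, solve the resulting $L^2$ corrector equation via Lemma~\ref{CL0}(3)--(4), and compute $(P,Q)$ by pairing the integrated equation $LP=\tfrac12\int Q+\int_{-\infty}^{y}\Lambda Q$ with $\Lambda Q$ and using $L\Lambda Q=-2Q$.

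The one point where you deviate is the choice of the subtracted profile. You take a generic smooth cutoff $\tfrac12(\int Q)\chi_-$, which forces you to verify the solvability orthogonality $(\tilde h,Q')=0$ by hand using the ground-state ODE $Q''+Q^5=Q$, and you then need an extra correction $cQ'$ to restore $(P,Q')=0$. The paper instead subtracts the canonical primitive $-\int_y^{+\infty}\Lambda Q$ (equivalently, it writes $P=\widetilde P-\int_y^{+\infty}\Lambda Q$ with $\widetilde P\in\mathcal{Y}$). With this choice the source of the corrector equation is $R=(\Lambda Q)'-5Q^4\int_y^{+\infty}\Lambda Q$, and the orthogonality $(R,Q')=-(R',Q)=0$ drops out immediately from $(\Lambda Q,Q)=0$ and $LQ'=0$, with no appeal to the ODE identity and no need for an a posteriori $cQ'$ correction. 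Both routes are valid; the paper's primitive is a cleaner bookkeeping device because it is dictated by the equation rather than chosen ad hoc, but your cutoff version is otherwise identical in substance, including the mass computation and the uniqueness argument.
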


Now for the ground state $\mathcal{Q}_{\omega}$ and the linearized operator $L_{\omega}$, we have the following properties:
\begin{lemma}\label{CL1}
For $0<\omega<\omega^*\ll1$, we have:
\begin{enumerate}
	\item Null space: $\ker{L_{\omega}}=\{a\mathcal{Q}_{\omega}'|a\in\mathbb{R}\}$.
	\item Pseudo-scaling rule: $L_{\omega}(\Lambda \mathcal{Q}_{\omega})=-2\mathcal{Q}_{\omega}+\frac{q-5}{2}\omega \mathcal{Q}_{\omega}^q$.
	\item For any function $f\in L^2$ orthogonal to $\mathcal{Q}_{\omega}'$, there exist a unique $g\in H^2$ such that $L_{\omega}g=f$ with $(g,\mathcal{Q}_{\omega}')=0$. Moreover, if $f$ is even (or respectively odd), then $g$ is even (or respectively odd).\
	\item If $f\in L^2$ such that $L_{\omega}f\in\mathcal{Y}$, then $f\in\mathcal{Y}$.
	\item Let $Z_{\omega}=\frac{\partial \mathcal{Q}_{\omega}}{\partial \omega}$, then $Z_{\omega}\in\mathcal{Y}$, and $L_{\omega}Z_{\omega}=-\mathcal{Q}_{\omega}|\mathcal{Q}_{\omega}|^{q-1}$.
	\item Coercivity: There exists a $\kappa_0>0$ such that for all $f\in H^1$,
	$$(L_{\omega}f,f)\geq\kappa_0\|f\|_{H^1}^2-\frac{1}{\kappa_0}\Big[(f,\mathcal{Q}_{\omega})^2+(f,\Lambda \mathcal{Q}_{\omega})^2+(f,y\Lambda \mathcal{Q}_{\omega})^2\Big].$$
\end{enumerate}
\end{lemma}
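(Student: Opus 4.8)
\textbf{Proof proposal for Lemma \ref{CL1}.}

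The plan is to treat $L_\omega$ as a small perturbation of $L$ in a suitable topology, using the fact (to be established, or borrowed from \cite{BL}) that $\omega\mapsto\mathcal{Q}_\omega$ is a smooth family with $\mathcal{Q}_\omega\to Q$ together with all derivatives, uniformly in the exponentially weighted norms defining $\mathcal{Y}$. Once this is in hand, items (1)--(6) follow from the corresponding statements in Lemma \ref{CL0} by standard perturbative arguments.

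First I would record the structural facts that come directly from the defining ODE \eqref{CME0}. Differentiating \eqref{CME0} in $y$ shows $L_\omega\mathcal{Q}_\omega'=0$, which gives the inclusion $\{a\mathcal{Q}_\omega'\}\subset\ker L_\omega$ in (1). Applying the scaling operator $\Lambda$ to \eqref{CME0} and using $\Lambda(f'')=(\Lambda f)''-2f''$ etc.\ yields the identity in (2) after collecting terms (this is the same computation that in the $\omega=0$ case gives $L(\Lambda Q)=-2Q$, plus the extra contribution of the saturated term, whose coefficient is exactly $m=(q-5)/2$). For (5), differentiate \eqref{CME0} in $\omega$: since $\partial_\omega\big[\mathcal{Q}_\omega''-\mathcal{Q}_\omega+\mathcal{Q}_\omega^5-\omega\mathcal{Q}_\omega|\mathcal{Q}_\omega|^{q-1}\big]=0$ and $\partial_\omega$ of the last term produces $-\mathcal{Q}_\omega|\mathcal{Q}_\omega|^{q-1}-\omega\partial_\omega(\mathcal{Q}_\omega|\mathcal{Q}_\omega|^{q-1})$, one reads off $L_\omega Z_\omega=-\mathcal{Q}_\omega|\mathcal{Q}_\omega|^{q-1}$ directly. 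The regularity $Z_\omega\in\mathcal{Y}$ then follows from (4) applied to $f=Z_\omega$, once (4) is proved, since $\mathcal{Q}_\omega|\mathcal{Q}_\omega|^{q-1}\in\mathcal{Y}$.

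The analytic heart is items (6), (3), (4) and the reverse inclusion in (1). For the coercivity (6): write $(L_\omega f,f)=(Lf,f)+\big((q\omega|\mathcal{Q}_\omega|^{q-1}-5(\mathcal{Q}_\omega^4-Q^4))f,f\big)$; the perturbation is bounded by $\delta(\omega)\|e^{-|y|/2}f\|_{L^2}^2\lesssim\delta(\omega)\|f\|_{H^1}^2$, so the quantitative coercivity of $L$ from Lemma \ref{CL0}(5) survives with $\kappa_0$ replaced by $\kappa_0/2$, at the cost of controlling $(f,Q)^2+(f,\Lambda Q)^2+(f,y\Lambda Q)^2$ by the corresponding quantities with $\mathcal{Q}_\omega$ plus a $\delta(\omega)\|f\|_{H^1}^2$ error. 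For (1), suppose $L_\omega f=0$ with $f\in H^1$; then $Lf=\big(5(\mathcal{Q}_\omega^4-Q^4)-q\omega|\mathcal{Q}_\omega|^{q-1}\big)f\in\mathcal{Y}\cdot f$, and combining the coercivity (6) with the three orthogonality directions one shows $f$ must be a multiple of $\mathcal{Q}_\omega'$; alternatively, since $L_\omega$ is a Schrödinger operator on the line with a potential well, $0$ is its lowest $L^2$ eigenvalue is false — rather $\mathcal{Q}_\omega'$ changes sign, so $0$ is the second eigenvalue and is simple by Sturm--Liouville theory, giving $\dim\ker L_\omega=1$. For (3), the range of $L_\omega$ on $L^2$ is the orthogonal complement of $\ker L_\omega=\mathbb{R}\mathcal{Q}_\omega'$ (self-adjointness plus the fact that $0$ is isolated in the spectrum, both inherited from the perturbative closeness to $L$ together with the spectral gap of $L$); invertibility on this complement with the normalization $(g,\mathcal{Q}_\omega')=0$ is then Fredholm theory, and the parity statement follows because $L_\omega$ commutes with $f(y)\mapsto f(-y)$ since $\mathcal{Q}_\omega$ is even. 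Finally (4) is the elliptic-regularity/bootstrap statement: from $L_\omega f=h\in\mathcal{Y}$ one has $-f''+f=h+5\mathcal{Q}_\omega^4 f-q\omega|\mathcal{Q}_\omega|^{q-1}f$, and iterating the explicit representation $f=\tfrac12 e^{-|y|}\ast(\cdots)$ upgrades polynomial-times-exponential decay and smoothness order by order exactly as in the proof of Lemma \ref{CL0}(4).

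The main obstacle I anticipate is not any single one of these steps but rather making the phrase ``$\mathcal{Q}_\omega$ is close to $Q$'' quantitative and uniform in the $\mathcal{Y}$-norms, i.e.\ establishing that the map $\omega\mapsto\mathcal{Q}_\omega$ is $C^1$ into $\mathcal{Y}$ with $\mathcal{Q}_\omega\to Q$, $Z_\omega\to Z_0$ in every weighted seminorm as $\omega\to0$. This is presumably where the ``alternative proof of existence'' promised in the footnote is needed: one constructs $\mathcal{Q}_\omega$ by an implicit-function-theorem argument around $Q$ in a space of exponentially decaying even functions, using precisely the non-degeneracy $\ker L=\mathbb{R}Q'$ modulo the (even) symmetry, which automatically delivers the smooth dependence and the decay estimates uniform for $0\le\omega<\omega^*$. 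Once that foundational step is secured, everything above is a routine perturbation of Lemma \ref{CL0}.
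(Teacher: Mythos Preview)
Your proposal is correct and follows essentially the same route as the paper: direct computation for (2), differentiation of \eqref{CME0} for the identity in (5), perturbation of Lemma~\ref{CL0} for (1), (3), (6), and an implicit function theorem around $Q$ in the even subspace to construct $\mathcal{Q}_\omega$ with $C^1$ dependence (giving $Z_\omega\in H^2$, then $Z_\omega\in\mathcal{Y}$ via (4)). The only minor variation is in (4), where the paper reduces to Lemma~\ref{CL0}(4) by observing that once $f$ is smooth and bounded (elliptic regularity), $Lf=L_\omega f+(5\mathcal{Q}_\omega^4-5Q^4-q\omega|\mathcal{Q}_\omega|^{q-1})f\in\mathcal{Y}$, rather than rerunning the kernel bootstrap from scratch.
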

\begin{proof}[Proof]
(1) follows from the same arguments {as the proof of \cite[Proposition 2.8]{W2} and \cite[Proposition 3.2]{W3}.} (2) follows from direct computation. (3) is a direct corollary of (1). While for (4), from standard elliptic theory, we know that $f$ is smooth and bounded. So we have $Lf\in\mathcal{Y}$, from Lemma \ref{CL0}, we have $f\in\mathcal{Y}$. 

Now we turn to the proof of (5). Differentiating the equation \eqref{CME0}, we obtain $L_{\omega}Z_{\omega}=-\mathcal{Q}_{\omega}|\mathcal{Q}_{\omega}|^{q-1}$. Since $\mathcal{Q}_{\omega}|\mathcal{Q}_{\omega}|^{q-1}\in\mathcal{Y}$, if we can show that $Z_{\omega}\in L^2$, then we have $Z_{\omega}\in\mathcal{Y}$. To do this, we introduce the following map:
$$F: H^2_e\times\mathbb{R}\mapsto L^2_e,\quad(u,\omega)\mapsto-u''+u-u^5+\omega u|u|^{q-1},$$
where $H^2_e$ (respectively $L^2_e$) is the Banach space consists of all $H^2$ (respectively $L^2$) functions which are even. Since $H^2(\mathbb{R})$ is continuously embedded into $L^{\infty}(\mathbb{R})$, the map $F$ is well-defined. 

We claim that there exists a small $\omega^*>0$, such that if $0\leq\omega<\omega^*$, then there exist a unique $u(\omega)\in H^2_e$, such that $F(u(\omega),\omega)=0$. Since we have $F(Q,0)=0$, from implicit function theory, we only remains to show that the Fr\'{e}chet derivative with respect to $u$, i.e. $\frac{\partial F}{\partial u}\big|_{(Q,0)}\in\mathcal{L}(H^2_e,L^2_e)$ is invertible and continuous. But it is easy to see that 
$$\frac{\partial F}{\partial u}\bigg|_{(Q,0)}=L,$$
which is invertible and continuous due to (3) of Lemma \ref{CL0}. Hence, we obtain the existence of such $u(\omega)$. Moreover, since $F$ is continuously differentiable with respect to both $u$ and $\omega$. So we have $u(\omega)$ is continuously differentiable with respect to $\omega$. In particular, we have $\frac{\partial u}{\partial \omega}\in H^2_e$. But from the uniqueness of $u(\omega)$, we must have $u(\omega)=\mathcal{Q}_{\omega}$. As a consequence, we have $Z_{\omega}=\frac{\partial \mathcal{Q}_{\omega}}{\partial \omega}=\frac{\partial u}{\partial \omega}\in H^2_e$, which concludes the proof of (5). 

Finally, (6) follows immediately from a perturbation argument for part (5) of Lemma \ref{CL0}. {More precisely, since $\mathcal{Q}_{\omega}$ is $C^1$ with respect to $\omega$, we have: for all $f\in H^1$,
$$(L_{\omega}f,f)=(Lf,f)+O(\omega)\|f\|^2_{H^1},$$
and 
$$(f,\mathcal{Q}_{\omega})^2+(f,\Lambda \mathcal{Q}_{\omega})^2+(f,y\Lambda \mathcal{Q}_{\omega})^2=(f,Q)^2+(f,\Lambda Q)^2+(f,y\Lambda Q)^2+O(\omega)\|f\|^2_{H^1}.$$
Together with part (5) of Lemma \ref{CL0}, we conclude the proof of part (6) of Lemma \ref{CL1}.}

We then finish the proof of Lemma \ref{CL1}.
\end{proof}

\begin{proposition}\label{CP2}
For $0<\omega<\omega^*\ll1$, there exist a smooth function $P_{\omega}$ with $P'_{\omega}\in\mathcal{Y}$, such that:
\begin{gather}
(L_{\omega}P_{\omega})'=\Lambda \mathcal{Q}_{\omega},\quad \lim_{y\rightarrow-\infty}P_{\omega}(y)=\frac{1}{2}\int \mathcal{Q}_{\omega},\label{C21}\\
(P_{\omega},\mathcal{Q}'_{\omega})=0,\quad  (P_{\omega},\mathcal{Q}_{\omega})=\frac{1}{16}\bigg(\int Q\bigg)^2+F(\omega),\label{C22}
\end{gather}
where $F$ is a $C^1$ function with $F(0)=0$. Moreover there exist constants $C_0,C_1,\ldots$, independent of $\omega$, such that
\begin{gather}
|P_{\omega}(y)|+\bigg|\frac{\partial P_{\omega}}{\partial\omega}(y)\bigg|\leq C_0e^{-\frac{y}{2}},\text{ \rm for all }y>0,\label{C2001}\\
|P_{\omega}(y)|+\bigg|\frac{\partial P_{\omega}}{\partial\omega}(y)\bigg|\leq C_0,\text{ \rm for all }y\in\mathbb{R},\label{C2002}\\
|\partial_{y}^kP_{\omega}(y)|\leq C_ke^{-\frac{|y|}{2}}\text{ \rm for all $k\in\mathbb{N}_+$, $y\in\mathbb{R}$}\label{C2003}.
\end{gather}
\end{proposition}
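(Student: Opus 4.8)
The plan is to construct $P_\omega$ as a perturbation of the profile $P$ from Proposition \ref{CP1}, treating $\omega$ as the small parameter. First I would recall that $P$ solves $(LP)' = \Lambda Q$ with the prescribed asymptotics, and set up the equation $(L_\omega P_\omega)' = \Lambda \mathcal{Q}_\omega$. Since $L_\omega = L + (q\omega|\mathcal{Q}_\omega|^{q-1} - 5(\mathcal{Q}_\omega^4 - Q^4))$ and $\mathcal{Q}_\omega = Q + \omega Z_0 + o(\omega)$ (using part (5) of Lemma \ref{CL1}), both the operator and the right-hand side depend smoothly on $\omega$. The natural way to integrate the equation $(L_\omega P_\omega)' = \Lambda\mathcal{Q}_\omega$ is to first produce $W_\omega$ with $L_\omega W_\omega = -\frac{1}{2}\int_y^{+\infty}\Lambda\mathcal{Q}_\omega$ wait — more precisely, one wants $L_\omega P_\omega = G_\omega$ where $G_\omega' = \Lambda\mathcal{Q}_\omega$ and $G_\omega(-\infty)$ is chosen so that the right-hand side is in the image of $L_\omega$; the solvability condition is $(G_\omega, \mathcal{Q}_\omega') = 0$, and one checks that $\int \Lambda\mathcal{Q}_\omega \cdot (\text{primitive of }\mathcal{Q}_\omega') $ type integration by parts fixes the additive constant exactly as in the $\omega=0$ case, giving $G_\omega(-\infty) = \frac12\int\mathcal{Q}_\omega$ after the shift, hence $P_\omega(-\infty) = \frac12\int\mathcal{Q}_\omega$.

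Concretely I would run an implicit function theorem argument in the same spirit as the proof of Lemma \ref{CL1}(5). Define $P_\omega = P + R_\omega$ where $R_\omega$ is the correction, decaying at $+\infty$ and tending to $\frac12\int(\mathcal{Q}_\omega - Q)$ at $-\infty$; subtract off a fixed smooth function with those boundary values so that the unknown lies in a fixed Banach space of exponentially-decaying-on-the-right functions (or in $\mathcal{Y}$ after one establishes smoothness). The equation for the correction becomes $(L R_\omega)' = \Lambda(\mathcal{Q}_\omega - Q) - (L_\omega - L)P_\omega$, which after one integration is of the form $L R_\omega = H(\omega, R_\omega)$ with $H(0,0) = 0$ and $H$ continuously differentiable; invertibility of $L$ on the orthogonal complement of $Q'$ (Lemma \ref{CL0}(3)) plus the solvability/orthogonality bookkeeping lets one solve for $R_\omega$ for $\omega < \omega^*$ small, with $R_\omega$ depending $C^1$ on $\omega$. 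The orthogonality $(P_\omega, \mathcal{Q}_\omega') = 0$ is imposed as part of the fixed-point setup (subtract the appropriate multiple of $\mathcal{Q}_\omega'$, which does not affect $(L_\omega P_\omega)'$ since $\mathcal{Q}_\omega' \in \ker L_\omega$ and differentiating kills the ambiguity up to a constant — one must be slightly careful here and use $(L_\omega\mathcal{Q}_\omega')' = 0$). Regularity and decay: once $R_\omega \in L^2$ is known with $L_\omega R_\omega \in \mathcal{Y}$-type right-hand side, part (4) of Lemma \ref{CL1} upgrades $P_\omega'$ to $\mathcal{Y}$, and differentiating the defining relation in $\omega$ and reapplying the same elliptic regularity gives the uniform bounds \eqref{C2001}–\eqref{C2003} on $P_\omega$ and $\partial_\omega P_\omega$; uniformity of the constants $C_k$ in $\omega$ comes from continuity of all data on the compact parameter interval $[0,\omega^*]$.

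The scalar product identities in \eqref{C22}: $(P_\omega,\mathcal{Q}_\omega')=0$ is built in; for $(P_\omega,\mathcal{Q}_\omega)$ I would compute $\frac{d}{d\omega}(P_\omega,\mathcal{Q}_\omega)$ or simply expand $(P_\omega,\mathcal{Q}_\omega) = (P,Q) + O(\omega) = \frac1{16}(\int Q)^2 + F(\omega)$ with $F(\omega) := (P_\omega,\mathcal{Q}_\omega) - (P,Q)$, which is $C^1$ with $F(0)=0$ directly from the $C^1$ dependence of $P_\omega$ and $\mathcal{Q}_\omega$ on $\omega$ established above.

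The main obstacle I anticipate is the bookkeeping around integrating $(L_\omega P_\omega)' = \Lambda\mathcal{Q}_\omega$ correctly: one must check that the constant of integration can be chosen so that the resulting right-hand side of $L_\omega P_\omega = (\cdots)$ is orthogonal to $\ker L_\omega = \mathbb{R}\mathcal{Q}_\omega'$ (this is the analogue of the computation $\int \Lambda Q\cdot(\int_{-\infty}^y Q') = 0$ type identity in \cite{MMR1}), and that this choice is consistent with the prescribed limit $\frac12\int\mathcal{Q}_\omega$ at $-\infty$ — i.e. that the solvability condition and the boundary condition don't conflict. For $\omega = 0$ this is exactly Proposition \ref{CP1}; the content here is that it persists under the perturbation, which again is an implicit-function/continuity statement but requires writing the solvability condition as a $C^1$ function of $\omega$ and verifying it vanishes identically (it should, because $(\Lambda\mathcal{Q}_\omega,\mathcal{Q}_\omega) $ and the relevant pairing are governed by the pseudo-scaling identity Lemma \ref{CL1}(2), namely $L_\omega\Lambda\mathcal{Q}_\omega = -2\mathcal{Q}_\omega + \frac{q-5}{2}\omega\mathcal{Q}_\omega^q$, which is what makes $\Lambda\mathcal{Q}_\omega$ land in the range of $L_\omega$ after the correct primitive is taken). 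Everything else is routine elliptic regularity and continuity of parameters.
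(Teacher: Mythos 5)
Your proposal is conceptually sound but takes a genuinely different route from the paper. You set up an implicit-function-theorem argument in $\omega$, perturbing around the $\omega=0$ profile $P$ from Proposition~\ref{CP1}. The paper instead makes a \emph{direct construction for each fixed $\omega$} via the substitution $P_\omega = \widetilde{P}_\omega - \int_y^{+\infty}\Lambda\mathcal{Q}_\omega$. After this substitution the unknown $\widetilde{P}_\omega$ satisfies $L_\omega\widetilde{P}_\omega = R_\omega$ with $R_\omega\in\mathcal{Y}$ explicitly written out, and the solvability condition $(R_\omega,\mathcal{Q}_\omega')=0$ is verified by a three-line algebraic computation using $(\Lambda\mathcal{Q}_\omega,\mathcal{Q}_\omega)=0$, self-adjointness of $L_\omega$, and $L_\omega\mathcal{Q}_\omega'=0$ --- no pseudo-scaling identity and no continuity argument required. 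Lemma~\ref{CL1}(3)--(4) (Fredholm inversion and elliptic regularity for $L_\omega$) then deliver $\widetilde{P}_\omega\in\mathcal{Y}$ directly. What you lose by going perturbatively is exactly the bookkeeping you anticipate: you must set up a Banach space that accommodates the $\omega$-dependent constant $\tfrac12\int(\mathcal{Q}_\omega - Q)$ at $-\infty$ (your ``fixed smooth function with those boundary values'' is not actually fixed, it depends on $\omega$), and your argument that the solvability condition ``should'' vanish because of the pseudo-scaling identity is a hand-wave where the paper has an exact identity. The substitution trick also makes the uniform-in-$\omega$ constants in \eqref{C2001}--\eqref{C2003} transparent: they are inherited from the uniform exponential bounds on $\partial_y^k\mathcal{Q}_\omega$ quoted from \cite{BL}, fed through the explicit formula for $R_\omega$ and the equation \eqref{C2004} for $\partial_\omega\widetilde{P}_\omega$; your appeal to ``continuity on the compact interval $[0,\omega^*]$'' only gives uniform constants if one has already established continuity in a topology encoding pointwise decay of all $y$-derivatives, which is precisely what the bootstrap delivers and what you would still need to set up. In short, your IFT route works but is heavier; the paper's direct route, which is the same trick used for Proposition~\ref{CP1} in \cite{MMR1}, is the intended shortcut.
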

\begin{proof}[Proof]
	The proof of Proposition \ref{CP2} is almost parallel to Proposition \ref{CP1}. We look for solution of the form $P_{\omega}=\widetilde{P}_{\omega}-\int_{y}^{+\infty}\Lambda \mathcal{Q}_{\omega}$. The function $y\rightarrow \int_{y}^{+\infty}\Lambda \mathcal{Q}_{\omega}$ is bounded and decays exponentially as $y\rightarrow+\infty$. Then, $P_{\omega}$ solves \eqref{C21} if and only if $\widetilde{P}_{\omega}$ solves
	$$(L_{\omega}\widetilde{P}_{\omega})'=\Lambda \mathcal{Q}_{\omega}+\bigg(L_{\omega}\int_{y}^{+\infty}\Lambda \mathcal{Q}_{\omega}\bigg)'=R_{\omega}',$$
	where
	$$R_{\omega}=(\Lambda \mathcal{Q}_{\omega})'-5\mathcal{Q}_{\omega}^4\int_{y}^{+\infty}\Lambda \mathcal{Q}_{\omega}+q\omega |\mathcal{Q}_{\omega}|^{q-1}\int_{y}^{+\infty}\Lambda \mathcal{Q}_{\omega}.$$
	Note that $R_{\omega}\in\mathcal{Y}$. Since $(\Lambda \mathcal{Q}_{\omega},\mathcal{Q}_{\omega})=0$ and $L_{\omega}\mathcal{Q}_{\omega}'=0$, we have $(R_{\omega},\mathcal{Q}_{\omega}')=-(R_{\omega}',\mathcal{Q}_{\omega})=0$. Then from Lemma \ref{CL1}, there exists a unique $\widetilde{P}_{\omega}\in\mathcal{Y}$, orthogonal to $\mathcal{Q}_{\omega}'$, such that $L_{\omega}\widetilde{P}_{\omega}=R_{\omega}$. Then $P_{\omega}=\widetilde{P}_{\omega}-\int_{y}^{+\infty}\Lambda \mathcal{Q}_{\omega}$ satisfies \eqref{C21} with $(P_{\omega},\mathcal{Q}_{\omega}')=0$ and $\lim_{y\rightarrow-\infty}P_{\omega}(y)=\frac{1}{2}\int \mathcal{Q}_{\omega}$. Moreover, we have
	\begin{align*}
	2\int P_{\omega}\mathcal{Q}_{\omega}&=-\int(L_{\omega}P_{\omega})\Lambda \mathcal{Q}_{\omega}+O(\omega)=\int\Lambda \mathcal{Q}_{\omega}\int_{y}^{+\infty}\Lambda \mathcal{Q}_{\omega}+O(\omega)\\
	&=\frac{1}{2}\bigg(\int \Lambda \mathcal{Q}_{\omega}\bigg)^2+O(\omega)=\frac{1}{8}\bigg(\int Q\bigg)^2+O(\omega).
	\end{align*}
	Let
	$$F(\omega)=(P_{\omega},\mathcal{Q}_{\omega})-\frac{1}{16}\bigg(\int Q\bigg)^2,$$
	then $F(0)=0$. 
	
	Next we claim that $\frac{\partial \widetilde{P}_{\omega}}{\partial \omega}\in \mathcal{Y}$.  Let us differentiate the equation $L_{\omega}\widetilde{P}_{\omega}=R_{\omega}$ to get
	\begin{equation}
	\label{C2004}
	L_{\omega}\bigg(\frac{\partial \widetilde{P}_{\omega}}{\partial \omega}\bigg)=\frac{\partial R_{\omega}}{\partial \omega}-20Z_{\omega}\mathcal{Q}_{\omega}^3\widetilde{P}_{\omega}+q(q-1)\omega Z_{\omega}\mathcal{Q}_{\omega}|\mathcal{Q}_{\omega}|^{q-3}\widetilde{P}_{\omega}+q|\mathcal{Q}_{\omega}|^{q-1}\widetilde{P}_{\omega}.
	\end{equation}
	Since $Z_{\omega}\in\mathcal{Y}$, it is easy to check that $\frac{\partial R_{\omega}}{\partial \omega}\in \mathcal{Y}$. So Lemma \ref{CL1} implies that $\frac{\partial \widetilde{P}_{\omega}}{\partial \omega}\in \mathcal{Y}$. 
	
	Now it only remains to prove \eqref{C2001}--\eqref{C2003}. But from \cite[Section 6]{BL}, there exist constants $M_0, M_1,\ldots$, independent of $\omega$, such that for all $k\in\mathbb{N}$, $y\in\mathbb{R}$,
	$$|\partial_{y}^k\mathcal{Q}_{\omega}(y)|\leq M_ke^{-\frac{2|y|}{3}}.$$
	Together with \eqref{C2004} and the construction of $P_{\omega}$, we obtain \eqref{C2001}--\eqref{C2003}. It is easy to see that \eqref{C2001}--\eqref{C2003} also implies that $F\in C^1$. Then we conclude the proof of Lemma \ref{CL1}.
\end{proof}

Now, we proceed to a simple localization of the profile to avoid the nontrivial tail on the left. Let $\chi$ be a smooth function with $0\leq\chi\leq1$, $\chi'\geq0$, $\chi(y)=1$ if $y>-1$, $\chi(y)=0$ if $y<-2$. We fix a
\begin{equation}
\beta=\frac{3}{4}.
\end{equation} 
And define the localized profile:
\begin{equation}
\chi_{b}(y)=\chi(|b|^{\beta}y),\quad Q_{b,\omega}(y)=\mathcal{Q}_{\omega}+b\chi_b(y)P_{\omega}(y).
\end{equation}

\begin{lemma}[Localized Profiles]\label{CL2}
For $|b|<b^*\ll1$, $0<\omega<\omega^*\ll1$, there holds:
\begin{enumerate}
	\item Estimates on $Q_b$: For all $y\in\mathbb{R}$, $k\in\mathbb{N}$,
	\begin{align}
	&|Q_{b,\omega}(y)|\lesssim e^{-|y|}+|b|\big(\mathbf{1}_{[-2,0]}(|b|^{\beta}y)+e^{-\frac{|y|}{2}}\big),\\
	&|\partial_{y}^kQ_{b,\omega}(y)|\lesssim e^{-|y|}+|b|e^{-\frac{|y|}{2}}+|b|^{1+k\beta}\mathbf{1}_{[-2,-1]}(|b|^{\beta}y),
	\end{align}
	where $\mathbf{1}_{I}$ denotes the characteristic function of the interval $I$.
	\item Equation of $Q_{b,w}$: Let
	\begin{equation}\label{CAP}
	-\Psi_{b,\omega}=b\Lambda Q_{b,\omega}+\big(Q_{b,\omega}''-Q_{b,\omega}+Q_{b,\omega}^5-\omega Q_{b,\omega}|Q_{b,\omega}|^{q-1}\big)'.
	\end{equation}
	Then, for all $y\in\mathbb{R}$,
	\begin{align}
	-\Psi_{b,\omega}=&b^2\big((10\mathcal{Q}_{\omega}^3P_{\omega}^2)_{y}+\Lambda P_{\omega}\big)-\frac{1}{2}b^2(1-\chi_{b})P_{\omega}\nonumber\\
	&+O\Big(|b|^{1+\beta}\mathbf{1}_{[-2,-1]}(|b|^{\beta}y)+b^2(\omega+|b|)e^{-\frac{|y|}{2}}\Big).\label{C23}
	\end{align}
	Moreover, we have
	\begin{equation}\label{C24}
	|\partial_{y}\Psi_{b,\omega}(y)|\lesssim |b|^{1+2\beta}\mathbf{1}_{[-2,-1]}(|b|^{\beta}y)+b^2e^{-\frac{|y|}{2}}.
	\end{equation}
	\item Mass and energy properties of $Q_{b,\omega}$:
	\begin{gather}
	\Bigg|\int Q_{b,\omega}^2-\bigg(\int \mathcal{Q}_{\omega}^2+2b\int P_{\omega}\mathcal{Q}_{\omega}\bigg)\Bigg|\lesssim |b|^{2-\beta},\label{C25}\\
	|E(Q_{b,\omega})|\lesssim |b|+\omega.\label{C205}
	\end{gather}
\end{enumerate}
\end{lemma}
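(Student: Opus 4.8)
\textbf{Proof plan for Lemma \ref{CL2}.}

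The strategy is a direct, if lengthy, computation exploiting the explicit definition $Q_{b,\omega}=\mathcal{Q}_\omega+b\chi_b P_\omega$ together with the decay estimates on $\mathcal{Q}_\omega$ and $P_\omega$ (Proposition \ref{CP2}, and the bound $|\partial_y^k\mathcal{Q}_\omega|\lesssim e^{-2|y|/3}$ quoted from \cite{BL}), and the algebraic identities defining $P_\omega$ and $\mathcal{Q}_\omega$. Part (1) is immediate: $\mathcal{Q}_\omega$ and its derivatives decay like $e^{-|y|}$ (up to the harmless $2/3$ in the exponent, which we may absorb since $|b|$ is small, or simply track the exponent $e^{-2|y|/3}$), while $b\chi_b P_\omega$ contributes the $|b|e^{-|y|/2}$ term for $y>0$ and the $|b|\mathbf 1_{[-2,0]}(|b|^\beta y)$ term for $y<0$ because $P_\omega$ is merely bounded on the left; the derivative estimate picks up the extra factor $|b|^{k\beta}$ each time $\partial_y$ falls on $\chi_b(y)=\chi(|b|^\beta y)$, which is supported in $|b|^\beta y\in[-2,-1]$.

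For part (2), I would substitute $Q_{b,\omega}=\mathcal{Q}_\omega+b\chi_b P_\omega$ into the right-hand side of \eqref{CAP} and organize terms by powers of $b$. The order-$b^0$ term vanishes because $\mathcal{Q}_\omega$ solves \eqref{CME0}. The order-$b^1$ terms are $b\Lambda\mathcal{Q}_\omega + b\big((-5\mathcal{Q}_\omega^4+q\omega|\mathcal{Q}_\omega|^{q-1})\chi_b P_\omega + (\chi_b P_\omega)''\big)' = b\big(\Lambda\mathcal{Q}_\omega + (L_\omega(\chi_b P_\omega))'\cdot(-1)\big)$ — more precisely, on the region $\chi_b\equiv 1$ one gets $b(\Lambda\mathcal{Q}_\omega - (L_\omega P_\omega)')=0$ by the defining relation \eqref{C21} for $P_\omega$; the discrepancy is supported where $\chi_b'\neq 0$ or $\chi_b\neq 1$, producing the $-\tfrac12 b^2(1-\chi_b)P_\omega$ term (here one uses $\lim_{y\to-\infty}P_\omega=\tfrac12\int\mathcal{Q}_\omega$, so $\Lambda$ acting on the constant tail and the commutator $[\Lambda,\chi_b]$ combine, and the cutoff derivatives contribute the $|b|^{1+\beta}\mathbf 1_{[-2,-1]}(|b|^\beta y)$ error after noting $|b|^\beta\cdot|b|=|b|^{1+\beta}$ from one derivative on $\chi_b$). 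The order-$b^2$ terms coming from the quintic nonlinearity expand $Q_{b,\omega}^5=\mathcal{Q}_\omega^5+5\mathcal{Q}_\omega^4(b\chi_b P_\omega)+10\mathcal{Q}_\omega^3(b\chi_b P_\omega)^2+\cdots$, whose leading genuinely quadratic piece is $(10\mathcal{Q}_\omega^3 P_\omega^2)_y b^2$ on the bulk region, while the saturated nonlinearity $\omega Q_{b,\omega}|Q_{b,\omega}|^{q-1}$ expands similarly and contributes only $O(b^2\omega e^{-|y|/2})$ since every such term carries both a factor $\omega$ and exponentially decaying profiles; the $b\Lambda(b\chi_b P_\omega)=b^2\Lambda P_\omega$ on $\chi_b\equiv1$ gives the remaining main term. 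Collecting, one reads off \eqref{C23}, and differentiating the same expansion once more (each $\partial_y$ on $\chi_b$ costing $|b|^\beta$, so $|b|^{1+2\beta}$ from the worst term $b\cdot|b|^{2\beta}$) gives \eqref{C24}.

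For part (3), \eqref{C25} follows by expanding $\int Q_{b,\omega}^2=\int\mathcal{Q}_\omega^2+2b\int\chi_b P_\omega\mathcal{Q}_\omega+b^2\int\chi_b^2P_\omega^2$: the $b^2$ integral is $O(b^2)$ (in fact $O(b^2)$ suffices but one can also just bound it by $|b|^{2-\beta}$), and replacing $\chi_b$ by $1$ in the cross term costs $2b\int(\chi_b-1)P_\omega\mathcal{Q}_\omega$, which is supported in $|b|^\beta y\le -1$ where $\mathcal{Q}_\omega$ is exponentially small of size $e^{-c|b|^{-\beta}}\ll|b|^{2-\beta}$ — actually the dominant contribution to the $|b|^{2-\beta}$ bound is the $b^2\int\chi_b^2 P_\omega^2$ term restricted to where $P_\omega$ is only bounded, i.e. $\int$ over a region of length $O(|b|^{-\beta})$ giving $b^2\cdot|b|^{-\beta}=|b|^{2-\beta}$. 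For \eqref{C205}, I would write $E(Q_{b,\omega})=\tfrac12\int (Q_{b,\omega})_y^2-\tfrac16\int Q_{b,\omega}^6+\tfrac{\gamma}{q+1}\int|Q_{b,\omega}|^{q+1}$ — wait, in the rescaled variable the relevant energy is $E(Q_{b,\omega})=\tfrac12\int(Q_{b,\omega}')^2-\tfrac16\int Q_{b,\omega}^6+\tfrac{\omega}{q+1}\int|Q_{b,\omega}|^{q+1}$; using $E(\mathcal{Q}_\omega)=\tfrac{\omega(q-5)}{2(q+1)}\int\mathcal{Q}_\omega^{q+1}=O(\omega)$ from the Pohozaev identities associated to \eqref{CME0}, and estimating the difference $E(Q_{b,\omega})-E(\mathcal{Q}_\omega)=O(|b|)$ by the bilinear structure of $E$ and the bounds in part (1), yields $|E(Q_{b,\omega})|\lesssim|b|+\omega$.

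The main obstacle is bookkeeping in part (2): one must carefully separate the region $\{\chi_b\equiv 1\}$, where the exact identities \eqref{CME0} and \eqref{C21} for $\mathcal{Q}_\omega$ and $P_\omega$ produce the clean main terms $b^2((10\mathcal{Q}_\omega^3P_\omega^2)_y+\Lambda P_\omega)$ and the cancellation of all order-$b$ terms, from the transition region $\{|b|^\beta y\in[-2,-1]\}$, where derivatives of $\chi_b$ generate the $|b|^{1+\beta}$ (resp. $|b|^{1+2\beta}$) localized errors, and from the far-left tail $\{|b|^\beta y<-1\}$, where $P_\omega$ fails to decay and one gets the $-\tfrac12 b^2(1-\chi_b)P_\omega$ contribution from $b\Lambda(b\chi_b P_\omega)$ acting effectively on the constant $\tfrac12\int\mathcal{Q}_\omega$. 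Keeping the powers of $\beta=3/4$ straight in each region, and checking that all cross terms involving $\omega$ carry the advertised extra smallness $b^2(\omega+|b|)e^{-|y|/2}$, is the delicate part; everything else is routine once the expansion is organized by powers of $b$ and by the support of $\chi_b$ and $\chi_b'$.
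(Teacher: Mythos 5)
Your plan follows essentially the same route as the paper: substitute $Q_{b,\omega}=\mathcal{Q}_\omega+b\chi_b P_\omega$ into \eqref{CAP}, kill the orders $b^0$ and $b^1$ on $\{\chi_b\equiv1\}$ via \eqref{CME0} and $(L_\omega P_\omega)'=\Lambda\mathcal{Q}_\omega$, read off the $b^2$ main terms and the cutoff/tail errors region by region, and for part (3) expand the mass using $\int\chi_b^2P_\omega^2\lesssim|b|^{-\beta}$ and bound the energy by comparison with $E(\mathcal{Q}_\omega)=O(\omega)$. The only slips are cosmetic and harmless: the $-\tfrac12 b^2(1-\chi_b)P_\omega$ term comes entirely from $b\Lambda(b\chi_bP_\omega)=b^2\big(\Lambda P_\omega-\tfrac12(1-\chi_b)P_\omega+y\chi_b'P_\omega+y(\chi_b-1)P_\omega'\big)$ rather than from the order-$b^1$ discrepancy as your first mention suggests (your summary paragraph has it right), there are sign typos in your intermediate order-$b^1$ expression, and the Pohozaev constant is $\tfrac{(5-q)}{4(q+1)}$ rather than $\tfrac{(q-5)}{2(q+1)}$, none of which affects the $O(\omega)$ bound you actually use.
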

\begin{proof}[Proof]
The proof of (1) follows immediately from the definition of $Q_{b,\omega}$ and Proposition \ref{CP2}. For (2), let us expand $Q_{b,\omega}=\mathcal{Q}_{\omega}+b\chi_b P_{\omega}$ in the expression of $\Psi_{b,\omega}$, using the fact that
$$\mathcal{Q}_{\omega}''-\mathcal{Q}_{\omega}+\mathcal{Q}_{\omega}^5-\omega \mathcal{Q}_{\omega}|\mathcal{Q}_{\omega}|^{q-1}=0,\quad (L_{\omega}P_{\omega})'=\Lambda \mathcal{Q}_{\omega},$$
we have:
\begin{align*}
-\Psi_{b,\omega}=&b(1-\chi_{b})\Lambda \mathcal{Q}_{\omega}\\
&+b\big(\chi_b'''P_{\omega}+3\chi_b''P_{\omega}'+2\chi_{b}'P_{\omega}''-\chi_b'P_{\omega}+5\chi_b'\mathcal{Q}_{\omega}P_{\omega}-q\omega\chi_b'|\mathcal{Q}_{\omega}|^{q-1}P_{\omega}\big)\\
&+b^2\Big((10\mathcal{Q}_{\omega}^3\chi_b^2P_{\omega}^2)_{y}+P_{\omega}\Lambda \chi_b+\chi_byP_{\omega}'\Big)\\
&+b^3(10\mathcal{Q}_{\omega}^2\chi_b^3P_{\omega}^3)_{y}+b^4(5\mathcal{Q}_{\omega}\chi_b^4P_{\omega}^4)_{y}+b^5(\chi_b^5P_{\omega}^5)_{y}\\
&-\omega\Big((\mathcal{Q}_{\omega}+b\chi_bP_{\omega})|\mathcal{Q}_{\omega}+b\chi_bP_{\omega}|^{q-1}-\mathcal{Q}_{\omega}|\mathcal{Q}_{\omega}|^{q-1}-qb\chi_bP_{\omega}|\mathcal{Q}_{\omega}|^{q-1}\Big)_{y}.
\end{align*}
We keep track of all terms up to $b^2$. Then \eqref{C23} and \eqref{C24} follow from the construction of the profile $Q_{b,\omega}$.

Finally, for (3), we have
$$\int\chi_b^2P_{\omega}^2\lesssim |b|^{-\beta}.$$
Then \eqref{C25} follows from
$$\int Q_{b,\omega}^2=\int \mathcal{Q}_{\omega}^2+2b\int\chi_bP_{\omega}\mathcal{Q}_{\omega}+b^2\int\chi_b^2P_{\omega}^2.$$ 
While for \eqref{C205}, since $E(\mathcal{Q}_{\omega})=O(\omega)$, we have:
$$|E(Q_{b,\omega})|\lesssim |b|+|E(\mathcal{Q}_{\omega})|\lesssim |b|+\omega,$$
which conludes the proof of Lemma \ref{CL2}.
\end{proof}

\subsection{Geometrical decomposition and modulation estimates}
In this paper we consider $H^1$ solution to \eqref{CCPG} a priori in the modulates tube $\mathcal{T}_{\alpha^*,\gamma}$ of functions near the soliton manifold. More precisely, we have

\begin{lemma}\label{CL3}
Assume that there exist $(\lambda_1(t),x_1(t))\in\big((\gamma/\omega^*)^{1/m},+\infty\big)\times\mathbb{R}$ and $\varepsilon_1(t)$ such that for all $t\in[0,t_0)$, the solution $u(t)$ to \eqref{CCPG} satisfies
\begin{equation}
u(t,x)=\frac{1}{\lambda_1^{\frac{1}{2}}(t)}\big[\mathcal{Q}_{\omega_1(t)}+\varepsilon_1(t)\big]\bigg(\frac{x-x_1(t)}{\lambda_1(t)}\bigg),
\end{equation}
with, $\forall t\in[0,t_0)$,
\begin{equation}\label{C203}
\omega_1(t)+\|\varepsilon_1(t)\|_{L^2}\leq \kappa\ll1,
\end{equation} 
where 
$$\omega_{1}(t)=\frac{\gamma}{\lambda^m_1(t)}.$$
Then we have
\begin{enumerate}
	\item There exist continuous functions $(\lambda(t),x(t),b(t))\in(0,+\infty)\times\mathbb{R}^2$, such that for all $t\in[0,t_0)$,
\begin{equation}\label{CGD}
\varepsilon(t,y)=\lambda^{\frac{1}{2}}(t)u(t,\lambda(t)y+x(t))-Q_{b(t),\omega(t)}
\end{equation}
satisfies the orthogonality conditions:
\begin{equation}
\label{COC}
(\varepsilon(t),\mathcal{Q}_{\omega(t)})=(\varepsilon(t),\Lambda \mathcal{Q}_{\omega(t)})=(\varepsilon(t),y\Lambda \mathcal{Q}_{\omega(t)})=0,
\end{equation}
where
$$\omega(t)=\frac{\gamma}{\lambda^m(t)}.$$
Moreover,
\begin{gather}
\omega(t)+\|\varepsilon(t)\|_{L^2}+|b(t)|+\bigg|1-\frac{\lambda_1(t)}{\lambda(t)}\bigg|\lesssim\delta(\kappa),\\
\|\varepsilon(0)\|_{H^1}\lesssim \delta(\|\varepsilon_1(0)\|_{H^1}).
\end{gather}
\item The parameters and error term depend continuously on the initial data. Considering a family of solutions $u_n(t)$, with $u_{0,n}\in\mathcal{A}_{\alpha_0}$, and $u_{0,n}\rightarrow u_{0}$ in $H^1$, as $n\rightarrow+\infty$. Let $(\lambda_n(t),b_n(t),x_n(t),\varepsilon_n(t))$ be the corresponding geometrical parameters and error terms of $u_n(t)$. Suppose the geometrical decomposition of $u_n(t)$ and $u(t)$ hold on $[0,T_0]$ for some $T_0>0$. Then for all $t\in[0,T_0]$, we have:
	\begin{equation}\label{C206}
	\big(\lambda_n(t),b_n(t),x_n(t),\varepsilon_n(t)\big)\xrightarrow{\mathbb{R}^3\times H^1}\big(\lambda(t),b(t),x(t),\varepsilon(t)\big),
	\end{equation}
	as $n\rightarrow+\infty$.
\end{enumerate}
\end{lemma}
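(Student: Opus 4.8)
The plan is to prove Lemma~\ref{CL3} by a standard modulation argument based on the implicit function theorem, exactly in the spirit of \cite[Lemma~2.5]{MMR1}, but carried out with the $\omega$-dependent ground state $\mathcal{Q}_\omega$ in place of $Q$. First I would set up the map whose zeros encode the orthogonality conditions \eqref{COC}. Given $u$ a priori close (in $L^2$, in the rescaled variables) to the soliton manifold, define for parameters $(\lambda,x,b)$ near $(\lambda_1,x_1,0)$ the rescaled error
\begin{equation*}
\varepsilon_{\lambda,x,b}(y)=\lambda^{1/2}u(\lambda y+x)-Q_{b,\omega}(y),\qquad \omega=\gamma/\lambda^m,
\end{equation*}
and consider
\begin{equation*}
\Phi(u;\lambda,x,b)=\Big((\varepsilon_{\lambda,x,b},\mathcal{Q}_{\omega}),\ (\varepsilon_{\lambda,x,b},\Lambda\mathcal{Q}_{\omega}),\ (\varepsilon_{\lambda,x,b},y\Lambda\mathcal{Q}_{\omega})\Big)\in\mathbb{R}^3.
\end{equation*}
At the base point where $u=\lambda_1^{-1/2}\mathcal{Q}_{\omega_1}((\cdot-x_1)/\lambda_1)$, $\lambda=\lambda_1$, $x=x_1$, $b=0$, one has $\varepsilon\equiv0$, so $\Phi=0$. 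The main computation is the Jacobian of $\Phi$ with respect to $(\lambda,x,b)$ at this point: differentiating, $\partial_\lambda\varepsilon|_0=-\Lambda\mathcal{Q}_\omega+O(\omega)$ (the $O(\omega)$ coming from the $\omega$-dependence of $\mathcal{Q}_\omega$ through $Z_\omega$, Lemma~\ref{CL1}(5)), $\partial_x\varepsilon|_0=-\mathcal{Q}_\omega'$ up to rescaling, and $\partial_b\varepsilon|_0=-P_\omega$ (using $\chi_b(0)$-type contributions are negligible since $\chi(|b|^\beta y)\to1$ pointwise and the cutoff only acts far out). Pairing against $(\mathcal{Q}_\omega,\Lambda\mathcal{Q}_\omega,y\Lambda\mathcal{Q}_\omega)$ and using $(\Lambda\mathcal{Q}_\omega,\mathcal{Q}_\omega)=0$, $(\mathcal{Q}_\omega',\mathcal{Q}_\omega)=0$, $(P_\omega,\mathcal{Q}_\omega)=\tfrac1{16}(\int Q)^2+F(\omega)\neq0$ from \eqref{C22}, together with the classical nondegeneracy facts for $Q$ in Lemma~\ref{CL0}, the $3\times3$ Jacobian is, for $\omega$ small, a small perturbation of an invertible matrix, hence invertible. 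The implicit function theorem then yields $C^1$ functions $(\lambda,x,b)(u)$ near the base point solving $\Phi=0$, and since the a priori tube assumption \eqref{C203} holds on all of $[0,t_0)$ with a uniform small constant $\kappa$, a continuity/connectedness argument extends the decomposition to the whole interval with the stated bounds $\omega+\|\varepsilon\|_{L^2}+|b|+|1-\lambda_1/\lambda|\lesssim\delta(\kappa)$.

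For the $H^1$ bound at $t=0$, I would note that $u_0\in\mathcal{A}_{\alpha_0}$ gives $u_0=Q+\varepsilon_0$ with $\|\varepsilon_0\|_{H^1}$ small, hence $\|\varepsilon_1(0)\|_{H^1}$ small; running the modulation at $t=0$ and controlling the difference between the true parameters and $(\lambda,x,b)=(1,0,0)$ by the implicit function theorem Lipschitz estimate gives $\|\varepsilon(0)\|_{H^1}\lesssim\delta(\|\varepsilon_1(0)\|_{H^1})$, since the modulation only shifts $\varepsilon_0$ by a smooth finite-dimensional correction of comparable size. Continuity of $(\lambda,x,b)$ in $t$ is immediate from the implicit function theorem and continuity of $t\mapsto u(t)$ in $H^1$ (local wellposedness).

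For part~(2), the continuous dependence on initial data, the argument is the same: the maps $u\mapsto(\lambda,x,b)(u)$ produced by the implicit function theorem are continuous (indeed locally Lipschitz) from a neighborhood of the soliton manifold in $H^1$ to $\mathbb{R}^3$, so $u_{0,n}\to u_0$ in $H^1$ gives convergence of the parameters at $t=0$; then since $u_n(t)\to u(t)$ in $H^1$ for each fixed $t$ by continuous dependence for \eqref{CCPG} (from \cite{Ka,KPV}), and since the decomposition is assumed valid on a common interval $[0,T_0]$ so that all parameters stay in the region where the implicit function is defined, one gets $(\lambda_n,b_n,x_n)(t)\to(\lambda,b,x)(t)$ for each $t\in[0,T_0]$, and consequently $\varepsilon_n(t)=\lambda_n^{1/2}u_n(\lambda_n\cdot+x_n)-Q_{b_n,\omega_n}\to\varepsilon(t)$ in $H^1$ by continuity of rescaling on $H^1$ and continuity of $(b,\omega)\mapsto Q_{b,\omega}$ (Lemma~\ref{CL2}).

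The step I expect to be the main obstacle is verifying the invertibility of the Jacobian \emph{uniformly in $\omega$} (equivalently, uniformly in $\lambda_1$ in the allowed range): one must make sure that the $O(\omega)$ perturbations coming from differentiating $\mathcal{Q}_\omega$, $P_\omega$, and the localization $\chi_b$ are genuinely small compared to the nondegenerate entries of the limiting ($\omega=0$, $b=0$) matrix, which requires the quantitative exponential decay estimates on $\mathcal{Q}_\omega$, $Z_\omega$, $P_\omega$ and $\partial_\omega P_\omega$ from Lemma~\ref{CL1} and Proposition~\ref{CP2} (in particular the constants $C_k,M_k$ being independent of $\omega$). A secondary technical point is that $Q_{b,\omega}$ is only $C^1$ in $b$ away from $b=0$ because of $|b|^\beta$ in the cutoff; but since $\chi\equiv1$ near the origin and $P_\omega$ decays, the contributions of $\partial_b\chi_b$ to the scalar products are $O(|b|^{1+\beta}\cdot(\text{exponentially small})$) and do not affect the leading Jacobian, so the implicit function theorem still applies with the base point $b=0$ where everything is smooth.
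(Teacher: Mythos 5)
Your proposal is correct in spirit and reaches the same conclusion by the same basic mechanism (modulation via the implicit function theorem), but it takes a slightly different route that is worth comparing with the paper's Appendix~\ref{APP1}. The paper decouples the problem into \emph{two} implicit-function-theorem steps: it first treats $\tilde\omega$ as a free auxiliary parameter, not tied to $\tilde\lambda$, and runs the IFT in $(\tilde\lambda,\tilde x,\tilde b)$ at the fixed reference point $(1,0,0,0,Q)$. There the Jacobian is exactly the classical $Q$-modulation matrix (diagonal up to the $P$-column, with determinant $(P,Q)\,\|\Lambda Q\|_{L^2}^2\,(Q',y\Lambda Q)\neq0$), with \emph{no} $\omega$-corrections and no $\lambda_1$-dependence to worry about. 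It then resolves the constraint $\omega=\gamma/\lambda^m$ by a second, one-dimensional IFT applied to $M(\underline\lambda,v)=\underline\lambda-\lambda_1\tilde\lambda_0(\gamma/\underline\lambda^m,v)$, whose $\underline\lambda$-derivative is $1+O(\omega_1)>0$ (using the normalization $\tilde\lambda_0(\tilde\omega,\mathcal{Q}_{\tilde\omega})\equiv1$). You instead bake the constraint $\omega=\gamma/\lambda^m$ directly into the definition of the modulation map and do one IFT, accepting that the Jacobian then picks up $O(\omega)$-corrections from $Z_\omega=\partial_\omega\mathcal{Q}_\omega$ and from $\partial_\omega P_\omega$. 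This works provided those corrections are uniformly small, which you correctly flag as the main point, and provided you first normalize by $\lambda_1(t)$ (work in the scaling-invariant variable $v(t)=\lambda_1^{1/2}u(t,\lambda_1\cdot+x_1)$ as the paper does) so that the base point of the IFT is fixed and the Jacobian does not carry an overall factor $1/\lambda_1$; as written, your base point $(\lambda_1(t),x_1(t),0)$ drifts with $t$ and the Jacobian entries scale like $1/\lambda_1$, which is a minor but real bookkeeping gap. Your handling of the $|b|^\beta$-cutoff non-differentiability at $b=0$ is correct: only pairings of $\partial_bQ_{b,\omega}$ against decaying test functions enter, and the $\chi_b'$-contribution lives in a region $|y|\sim|b|^{-\beta}$ where the test functions are exponentially small, so the effective $b$-derivative of the Jacobian entries is $-(P_\omega,\cdot)$ up to terms that vanish as $b\to0$. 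The proofs of the initial-data estimate and of part~(2) (continuous dependence) proceed exactly as you sketch, by the Lipschitz estimate from the IFT plus $H^1$ continuous dependence of the flow.
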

\begin{proof}
Lemma \ref{CL3} is a standard consequence of the implicit function theory. We leave the proof in Appendix \ref{APP1}.
\end{proof}
\begin{remark}
{Similar arguments have also been used in \cite[Lemma 1]{MM4}, \cite[Lemma 1]{MM3}, \cite[Lemma 2.5]{MMR1}, \cite[Lemma 2]{M1} $etc$.}
\end{remark}
\begin{remark}
The smallness of $\omega(t)$ ensures that $\mathcal{Q}_{\omega(t)}$ and $Q_{b(t),\omega(t)}$ are both well defined.
\end{remark}

\subsection{Modulation Equation}
In the frame work of Lemma \ref{CL3}, we introduce the rescaled variables $(s,y)$
\begin{equation}
y=\frac{x-x(t)}{\lambda(t)},\quad s=\int_0^t\frac{1}{\lambda^3(\tau)}\,d\tau.
\end{equation} 
Then, we have the following properties:
\begin{proposition}\label{CP3}
Assume for all $t\in[0,t_0)$,
\begin{equation}\label{C26}
\omega(t)+\|\varepsilon(t)\|_{L^2}+\int\varepsilon_{y}^2e^{-\frac{3|y|}{2(q-2)}}\,dy\leq\kappa
\end{equation}
for some small universal constant $\kappa>0$. Then the functions $(\lambda(s),x(s),b(s))$ are all $C^1$ and the following holds
\begin{enumerate}
	\item Equation of $\varepsilon$: For all $s\in[0,s_0)$,
	\begin{multline}\label{CMEQ}
	\varepsilon_s-(L_{\omega}\varepsilon)_{y}+b\Lambda\varepsilon=\bigg(\frac{\lambda_s}{\lambda}+b\bigg)(\Lambda Q_{b,\omega}+\Lambda\varepsilon)+\bigg(\frac{x_s}{\lambda}-1\bigg)(Q_{b,\omega}+\varepsilon)_{y}\\
	-b_s\frac{\partial Q_{b,\omega}}{\partial b}-\omega_s\frac{\partial Q_{b,\omega}}{\partial \omega}+\Psi_{b,\omega}-(R_b(\varepsilon))_{y}-(R_{\rm NL}(\varepsilon))_{y},
	\end{multline}
	where
	\begin{align}
	&\Psi_{b,\omega}=-b\Lambda Q_{b,\omega}-\big(Q_{b,\omega}''-Q_{b,\omega}+Q_{b,\omega}^5-\omega Q_{b,\omega}|Q_{b,\omega}|^{q-1}\big)',\\
	&R_{b}(\varepsilon)=5(Q_{b,\omega}^4-\mathcal{Q}_{\omega}^4)\varepsilon-q\omega(|Q_{b,\omega}|^{q-1}-|\mathcal{Q}_{\omega}|^{q-1})\varepsilon,\\
	&R_{\rm NL}(\varepsilon)=(\varepsilon+Q_{b,\omega})^5-5Q_{b,\omega}^{4}\varepsilon-Q_{b,\omega}^5-\omega\big[(\varepsilon+Q_{b,\omega})|\varepsilon+Q_{b,\omega}|^{q-1}\nonumber\\
	&\qquad\qquad\;\;-q\varepsilon |Q_{b,\omega}|^{q-1}-Q_{b,\omega}|Q_{b,\omega}|^{q-1}\big].
	\end{align}
	\item Estimate induced by the conservation laws: for $s\in[0,s_0)$, there holds:
	\begin{gather}
	\|\varepsilon\|_{L^2}\lesssim|b|^{\frac{1}{4}}+\omega^{\frac{1}{2}}+\bigg|\int u^2_0-\int Q^2\bigg|^{\frac{1}{2}},	\label{CMC}\\
	\frac{\|\varepsilon_{y}\|_{L^2}^2}{\lambda^2}\lesssim\frac{1}{\lambda^2}\bigg(\omega+|b|+\int\varepsilon^2e^{-\frac{|y|}{10}}\bigg)+\gamma\frac{\|\varepsilon_{y}\|_{L^2}^{m+2}}{\lambda^{m+2}}+|E_0|.\label{CEC}
	\end{gather}
	\item $H^1$ modulation equation: for all $s\in[0,s_0)$,
	\begin{align}
	&\bigg|\frac{\lambda_s}{\lambda}+b\bigg|+\bigg|\frac{x_s}{\lambda}-1\bigg|\lesssim \bigg(\int\varepsilon^2e^{-\frac{|y|}{10}}\bigg)^{\frac{1}{2}}+|b|(\omega+|b|),\label{CMS1} \\
	&|b_s|+|\omega_s|\lesssim(\omega+|b|)\Bigg[\bigg(\int\varepsilon^2e^{-\frac{|y|}{10}}\bigg)^{\frac{1}{2}}+|b|\Bigg]+\int\varepsilon^2e^{-\frac{|y|}{10}}.\label{CMS2}
	\end{align}
	\item $L^1$ control on the right: Assume the uniformly $L^1$ control on the right: $\forall t\in[0,t_0)$
	\begin{equation}\label{C202}
	\int_{y>0}|\varepsilon(t)|\lesssim \delta(\kappa).
	\end{equation} 
	then the quantities $J_1$ and $J_2$ below are well-defined. Moreover, we have
	\begin{enumerate}
		\item Law of $\lambda$: let
		\begin{equation}\label{C2005}
		\rho_1(y)=\frac{4}{(\int Q)^2}\int_{-\infty}^{y}\Lambda Q,\quad J_1(s)=(\varepsilon(s),\rho_1),
		\end{equation}
		where $Q$ is the ground state for \eqref{CCP}. Then we have:
		\begin{align}
		&\bigg|\frac{\lambda_{s}}{\lambda}+b-2\bigg((J_1)_s+\frac{1}{2}\frac{\lambda_s}{\lambda}J_1\bigg)\bigg|\nonumber\\
		&\lesssim (\omega+|b|)\Bigg[\bigg(\int\varepsilon^2e^{-\frac{|y|}{10}}\bigg)^{\frac{1}{2}}+|b|\Bigg]+\int\varepsilon^2e^{-\frac{|y|}{10}}.\label{CLOL}
		\end{align}
		\item Law of $b$: let
		\begin{equation}\begin{split}
		&\rho_2=\frac{16}{(\int Q)^2}\bigg(\frac{(\Lambda P,Q)}{\|\Lambda Q\|^2_{L^2}}\Lambda Q+P-\frac{1}{2}\int Q\bigg)-8\rho_1,\\
		&J_2(s)=(\varepsilon(s),\rho_2),
		\end{split}
		\end{equation}
		where $P$ was introduced in Proposition \ref{CP1}. Then we have
		\begin{align}
		&\bigg|b_s+2b^2+\omega_sG'(\omega)+b\bigg((J_2)_s+\frac{1}{2}\frac{\lambda_s}{\lambda}J_2\bigg)\bigg|\nonumber\\
		&\lesssim \int \varepsilon^2e^{-\frac{|y|}{10}}+(\omega+|b|)b^2,\label{CLOB}
		\end{align}
		where $G\in C^2$ with $G(0)=0$, $G'(0)=c_0>0$, for some universal constant $c_0$.
		\item Law of $\frac{b}{\lambda^2}$: let
		\begin{equation}\label{C204}
		\rho=4\rho_1+\rho_2\in\mathcal{Y}, \quad J(s)=(\varepsilon(s),\rho),
		\end{equation}
		then we have:
		\begin{align}
		&\bigg|\frac{d}{ds}\bigg(\frac{b}{\lambda^2}\bigg)+\frac{b}{\lambda^2}\bigg(J_s+\frac{1}{2}\frac{\lambda_s}{\lambda}J\bigg)+\frac{\omega_sG'(\omega)}{\lambda^2}\bigg|\nonumber\\
		&\lesssim \frac{1}{\lambda^2}\bigg(\int \varepsilon^2e^{-\frac{|y|}{10}}+(\omega+|b|)b^2\bigg).\label{CLOBOLS}
		\end{align}
	\end{enumerate}
\end{enumerate}
\end{proposition}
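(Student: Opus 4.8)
The plan is to derive each identity in Proposition~\ref{CP3} from the equation of $\varepsilon$, establishing first the general framework and then specializing.

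\textbf{Step 1: Regularity and the equation of $\varepsilon$.} First I would invoke Lemma~\ref{CL3} to get the decomposition $u(t,x)=\lambda^{-1/2}[Q_{b,\omega}+\varepsilon](t,(x-x(t))/\lambda(t))$ with the orthogonality conditions \eqref{COC}. Differentiation of the orthogonality conditions against the $C^1$ family $\mathcal{Q}_{\omega(s)}$, $\Lambda\mathcal{Q}_{\omega(s)}$, $y\Lambda\mathcal{Q}_{\omega(s)}$ combined with the implicit function theorem yields that $s\mapsto(\lambda,x,b)$ is $C^1$ (a bootstrap argument: one first gets local Lipschitz regularity from the equation, then $C^1$ from the modulation identities). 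Plugging the decomposition into \eqref{CCPG} and using the pseudo-scaling to pass to $(s,y)$ variables, one collects the linear part $-(L_\omega\varepsilon)_y+b\Lambda\varepsilon$, the modulation terms $(\lambda_s/\lambda+b)(\Lambda Q_{b,\omega}+\Lambda\varepsilon)+(x_s/\lambda-1)(Q_{b,\omega}+\varepsilon)_y-b_s\partial_b Q_{b,\omega}-\omega_s\partial_\omega Q_{b,\omega}$, the profile error $\Psi_{b,\omega}$ from \eqref{CAP}, and the two remainder terms $R_b(\varepsilon)$ (linear in $\varepsilon$, supported where $Q_{b,\omega}\ne\mathcal{Q}_\omega$) and $R_{\mathrm{NL}}(\varepsilon)$ (quadratic and higher). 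This is essentially bookkeeping, giving \eqref{CMEQ}.

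\textbf{Step 2: Conservation laws.} For \eqref{CMC}, I would expand $M(u)=\int u_0^2$ in the decomposition: $\int Q_{b,\omega}^2+2(\varepsilon,Q_{b,\omega})+\int\varepsilon^2=\int u_0^2$. Using \eqref{C25}, the fact that $(\varepsilon,\mathcal{Q}_\omega)=0$ kills the main part of the cross term leaving $(\varepsilon,b\chi_bP_\omega)=O(|b|\|\varepsilon\|_{L^2})$, and $Z_\omega\in\mathcal{Y}$ to control the $\omega$-dependence of $\int\mathcal{Q}_\omega^2-\int Q^2=O(\omega)$, one absorbs $\int\varepsilon^2$ appropriately and obtains the $|b|^{1/4}+\omega^{1/2}+|\int u_0^2-\int Q^2|^{1/2}$ bound (the $1/4$ power coming from the $|b|^{2-\beta}=|b|^{5/4}$ error competing with a quadratic term). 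For \eqref{CEC}, expand $E(u)=E_0$; the $\dot H^1$ norm of the rescaled profile contributes $\lambda^{-2}$ times $\int\varepsilon_y^2$ plus the coercivity-controlled quadratic form, and the saturated term is handled by the Gagliardo--Nirenberg inequality producing the $\gamma\|\varepsilon_y\|_{L^2}^{m+2}/\lambda^{m+2}$ term with $m+2=(q-1)/2$.

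\textbf{Step 3: Modulation estimates and the $L^1$ laws.} Taking the scalar product of \eqref{CMEQ} with each of $\mathcal{Q}_\omega$, $\Lambda\mathcal{Q}_\omega$, $y\Lambda\mathcal{Q}_\omega$ (and using $\partial_s$ of the orthogonality conditions, which produces extra $\omega_s Z_\omega$-type terms that are themselves $O((\omega+|b|)(\cdots))$) gives a near-diagonal linear system for $(\lambda_s/\lambda+b,\ x_s/\lambda-1,\ b_s)$ whose inversion yields \eqref{CMS1}--\eqref{CMS2}; the key points are that $\Psi_{b,\omega}$ pairs to size $b^2$ (its leading term $b^2((10\mathcal{Q}_\omega^3P_\omega^2)_y+\Lambda P_\omega)$ by \eqref{C23}) up to an $O(\omega b^2+|b|^3)$ error, that $R_b(\varepsilon)$ is $O(|b|^{1-\beta}\cdot\ (\int\varepsilon^2e^{-|y|/10})^{1/2})$-type (small after integration against exponentially decaying test functions), and $\omega_s=m b\omega+\dots$ so $\omega_s$ is slaved to $b$. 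For part (4), the functionals $J_i=(\varepsilon,\rho_i)$ are designed so that $(L\rho_i)_y$ reproduces exactly the resonant part of the source: differentiating $J_i$ in $s$, substituting \eqref{CMEQ}, and integrating by parts moves $L_\omega$ onto $\rho_i$; the specific choice of $\rho_1$ (a primitive of $\Lambda Q$) and $\rho_2$ (built from $P$ and $\Lambda Q$) cancels the $O(\|\varepsilon\|_{H^1_{\mathrm{loc}}})$-size contributions, upgrading them to quadratic; the $L^1$ hypothesis \eqref{C202} is exactly what is needed to make $(\varepsilon,\rho_i)$ and the terms involving the slowly-growing tails of $\rho_i$ finite and controlled. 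Formula \eqref{CLOBOLS} then follows by combining \eqref{CLOL} and \eqref{CLOB} via $\frac{d}{ds}(b/\lambda^2)=\lambda^{-2}(b_s-2b\cdot\lambda_s/\lambda \cdot b/\ \dots)$, i.e. $\frac{d}{ds}(b\lambda^{-2})=\lambda^{-2}(b_s+2b^2)+2b\lambda^{-2}(\lambda_s/\lambda+b)\cdot(-1)+\dots$, organizing terms so the $J$-derivatives assemble into $J_s+\frac12(\lambda_s/\lambda)J$ with $J=4\rho_1+\rho_2$.

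\textbf{Main obstacle.} The delicate point is the algebra in part (4): one must verify that the test functions $\rho_1,\rho_2$ are precisely tuned so that, after integration by parts against $(L_\omega\varepsilon)_y$ and accounting for the $\omega$-dependence (the difference between $L$ and $L_\omega$, and between $Q$ and $\mathcal{Q}_\omega$, which is $O(\omega)$ by Lemma~\ref{CL1} and Proposition~\ref{CP2}), every linear-in-$\varepsilon$ contribution either cancels identically or is absorbed into $(\omega+|b|)[(\int\varepsilon^2e^{-|y|/10})^{1/2}+|b|]$; the emergence of the universal constant $c_0=G'(0)$ and the function $G$ comes from pairing $\omega_s\partial_\omega Q_{b,\omega}$ with $\rho_2$ and tracking $(P,Q)=\frac1{16}(\int Q)^2$ through Proposition~\ref{CP1}. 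Controlling $R_{\mathrm{NL}}(\varepsilon)$ to quadratic order uniformly, given only the weighted-$H^1$ smallness \eqref{C26}, requires the pointwise bounds on $Q_{b,\omega}$ from Lemma~\ref{CL2}(1) together with Sobolev embedding in one dimension.
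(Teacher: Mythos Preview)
Your proposal is correct and follows essentially the same approach as the paper: derive \eqref{CMEQ} by direct substitution, expand the conservation laws for \eqref{CMC}--\eqref{CEC}, differentiate the orthogonality conditions for \eqref{CMS1}--\eqref{CMS2}, and for part~(4) compute $(J_i)_s$ via \eqref{CMEQ} and integration by parts.

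One structural point worth flagging in part~(4b): in the paper, the function $G$ and the constant $c_0$ do \emph{not} arise from pairing $\omega_s\partial_\omega Q_{b,\omega}$ with $\rho_2$. Rather, the paper first derives a ``sharp equation'' for $b_s$ by pairing \eqref{CMEQ} with $\mathcal{Q}_\omega$ itself (using that $(\varepsilon,\mathcal{Q}_\omega)=0$); the term $\omega_s(\partial_\omega Q_{b,\omega},\mathcal{Q}_\omega)=\omega_s(Z_\omega,\mathcal{Q}_\omega)+O(|b\omega_s|)$ produces $\widetilde G'(\omega)$, and after dividing out the factor $(P_\omega,\mathcal{Q}_\omega)=\tfrac1{16}(\int Q)^2+F(\omega)$ coming from $b_s\partial_b Q_{b,\omega}$ one obtains $G$. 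The $(J_2)_s$ computation is done separately (via the general identity for $\frac{d}{ds}(\varepsilon,\int_{-\infty}^y f)$ with $f=\rho_2'$), and \eqref{CLOB} is the combination of these two. Your outline would still work, but the bookkeeping is cleaner if you separate the $b_s$-equation from the $(J_2)_s$-equation as the paper does.
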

\begin{remark}
The proof of Proposition \ref{CP3} follows almost the same procedure as \cite[Lemma 2.7]{MMR1}. It is important that there is no a priori assumption on the upper bound of $\lambda(t)$. This fact ensures that Proposition \ref{CP3} can be used in all the 3 regimes%
\footnote{We will see in Section 4 that we can't expect any (finite) upper bound on the scaling parameter $\lambda(t)$ in both (Blow down) and (Exit) case.}%
.
\end{remark}
\begin{proof}[Proof]
{\it Proof of (1):} Equation \eqref{CMEQ} follows by direct computation from the equation of $u(t)$. 

{\it Proof of (2):} We write down the mass conservation law:
\begin{equation}
\label{C2006}
\int Q^2_{b,\omega}-\int Q^2+\int\varepsilon^2+2(\varepsilon,Q_{b,\omega})=\int u_0^2-\int Q^2.
\end{equation}
From \eqref{C25} and the orthogonality condition \eqref{COC}, we have
$$\int \varepsilon^2\lesssim |b|+\omega+|b|^{1-\beta}\|\varepsilon\|_{L^2}+\bigg|\int u_0^2-\int Q^2\bigg|.$$
then \eqref{CMC} follows from $\beta=\frac{3}{4}$.

Similarly, we use the energy conservation law and \eqref{C205} to obtain:
\begin{align*}
2\lambda^2 E_0=&2E(Q_{b,\omega})-2\int\varepsilon (Q_{b,\omega})_{yy}+\int\varepsilon^2_{y}-\frac{1}{3}\int\big[(Q_{b,\omega}+\varepsilon)^6-Q_{b,\omega}^6\big]\\
&+\frac{2\omega}{q+1}\int\big[|Q_{b,\omega}+\varepsilon|^{q+1}-|Q_{b,\omega}|^{q+1}\big]\\
=&O(|b|+\omega)+\int \varepsilon_{y}^2-2\int\varepsilon\big[(Q_{b,\omega}-\mathcal{Q}_{\omega})_{yy}+(Q_{b,\omega}^5-\mathcal{Q}_{\omega}^5)\\
&+\omega(Q_{b,\omega}|Q_{b,\omega}|^{q-1}-\mathcal{Q}_{\omega}|\mathcal{Q}_{\omega}|^{q-1})\big]-\frac{1}{3}\int\big[(Q_{b,\omega}+\varepsilon)^6-Q_{b,\omega}^6-6\varepsilon Q_{b,\omega}^5\big]\\
&+\frac{2\omega}{q+1}\int\big[|Q_{b,\omega}+\varepsilon|^{q+1}-|Q_{b,\omega}|^{q+1}-(q+1)\varepsilon Q_{b,\omega}|Q_{b,\omega}|^{q-1}\big]
\end{align*}
We estimate all terms in the above identity. By the definition of $Q_{b,\omega}$, we have:
\begin{align*}
&\bigg|\int\varepsilon\big[(Q_{b,\omega}-\mathcal{Q}_{\omega})_{yy}+(Q_{b,\omega}^5-\mathcal{Q}_{\omega}^5)+\omega(Q_{b,\omega}|Q_{b,\omega}|^{q-1}-\mathcal{Q}_{\omega}|\mathcal{Q}_{\omega}|^{q-1})\big]\bigg|\\
&\lesssim |b|\bigg(\int\varepsilon^2e^{-\frac{|y|}{10}}\bigg)^{\frac{1}{2}}+|b|^{1+2\beta}\int_{-2|b|^{-\beta}\leq y\leq0}|\varepsilon|\\
&\lesssim |b|+\int\varepsilon^2e^{-\frac{|y|}{10}}.
\end{align*}
For the nonlinear term, we use Gagliardo-Nirenberg's inequality to estimate:
\begin{align*}
&\bigg|\int\big[(Q_{b,\omega}+\varepsilon)^6-Q_{b,\omega}^6-6\varepsilon Q_{b,\omega}^5\big]\bigg|\lesssim \int \varepsilon^2\mathcal{Q}_{\omega}^4+\int\varepsilon^6+|b|\int\varepsilon^2\\
&\lesssim \int\varepsilon^2e^{-\frac{|y|}{10}}+|b|+\|\varepsilon\|_{L^2}^4\|\varepsilon_{y}\|_{L^2}^2,
\end{align*}
and
\begin{align*}
&\bigg|\omega\int\big[|Q_{b,\omega}+\varepsilon|^{q+1}-|Q_{b,\omega}|^{q+1}-(q+1)\varepsilon Q_{b,\omega}|Q_{b,\omega}|^{q-1}\big]\bigg|\\
&\lesssim \omega\bigg(|b|+\int\varepsilon^2e^{-\frac{|y|}{10}}+\int|\varepsilon|^{q+1}\bigg)\\
&\lesssim|b|+\int\varepsilon^2e^{-\frac{|y|}{10}}+\frac{\gamma}{\lambda^m}\|\varepsilon\|_{L^2}^{\frac{q+3}{2}}\|\varepsilon_{y}\|_{L^2}^{m+2}\\
&\lesssim |b|+\int\varepsilon^2e^{-\frac{|y|}{10}}+\gamma\frac{\|\varepsilon_{y}\|_{L^2}^{m+2}}{\lambda^m}.
\end{align*}
Collecting all the estimates above, we obtain \eqref{CEC}.

{\it Proof of (3):} Let us differentiate the orthogonality conditions
$$(\varepsilon(t),\Lambda \mathcal{Q}_{\omega(t)})=(\varepsilon(t),y\Lambda \mathcal{Q}_{\omega(t)})=0.$$
Note that
$$\frac{d}{ds}(\varepsilon,\Lambda \mathcal{Q}_{\omega})=(\varepsilon_s,\Lambda \mathcal{Q}_{\omega})+\omega_s(\varepsilon,\Lambda Z_{\omega}),$$
where $Z_{\omega}=\partial \mathcal{Q}_{\omega}/\partial \omega\in\mathcal{Y}$.
So we have:
\begin{align*}
&\bigg|\bigg(\frac{\lambda_s}{\lambda}+b\bigg)-\frac{(\varepsilon,L_{\omega}(\Lambda \mathcal{Q}_{\omega})')}{\|\Lambda \mathcal{Q}_{\omega}\|^2_{L^2}}\bigg|+\bigg|\bigg(\frac{x_s}{\lambda}-1\bigg)-\frac{(\varepsilon,L_{\omega}(y\Lambda \mathcal{Q}_{\omega})')}{\|\Lambda \mathcal{Q}_{\omega}\|^2_{L^2}}\bigg|\\
&\lesssim\bigg(\bigg|\frac{\lambda_s}{\lambda}+b\bigg|+\bigg|\frac{x_s}{\lambda}-1\bigg|+|b|\bigg)\times\Bigg(\omega+|b|+\bigg(\int\varepsilon^2e^{-\frac{|y|}{10}}\bigg)^{\frac{1}{2}}\Bigg)\\
&\quad+|b_s|+|\omega_s|+\int\varepsilon^2e^{-\frac{|y|}{10}}+\int\varepsilon^5e^{-\frac{9|y|}{10}}+\int|\varepsilon|^qe^{-\frac{9|y|}{10}}.
\end{align*}
For the nonlinear term, we use Sobolev embedding and the a priori smallness \eqref{C26}:
$$\|\varepsilon e^{-\frac{|y|}{4}}\|^2_{L^{\infty}}\leq\|\varepsilon e^{-\frac{3|y|}{4(q-2)}}\|^2_{L^{\infty}}\lesssim \int (\partial_{y}\varepsilon^2+\varepsilon^2)e^{-\frac{3|y|}{4(q-2)}}\ll1,$$
to estimate
\begin{equation}
\label{C27}
\int\varepsilon^5e^{-\frac{9|y|}{10}}+\int|\varepsilon|^qe^{-\frac{9|y|}{10}}\lesssim \Big(\|\varepsilon e^{-\frac{|y|}{4}}\|^3_{L^{\infty}}+\|\varepsilon e^{-\frac{3|y|}{4(q-2)}}\|^{q-2}_{L^{\infty}}\Big)\int\varepsilon^2e^{-\frac{|y|}{10}}.
\end{equation}
Here we use the basic fact that $q>5$.

For $\omega_s$, we have
\begin{equation}\label{C28}
\omega_s=-m\omega\frac{\lambda_s}{\lambda}=m\omega b-m\omega\bigg(\frac{\lambda_s}{\lambda}+b\bigg)
\end{equation}
The above estimates imply that
\begin{equation}
\label{C29}
\bigg|\frac{\lambda_s}{\lambda}+b\bigg|+\bigg|\frac{x_s}{\lambda}-1\bigg|\lesssim(\omega+|b|)|b|+|b_s|+\bigg(\int\varepsilon^2e^{-\frac{|y|}{10}}\bigg)^{\frac{1}{2}},
\end{equation}
and
\begin{align}
&\bigg|\bigg(\frac{\lambda_s}{\lambda}+b\bigg)-\frac{(\varepsilon,L_{\omega}(\Lambda \mathcal{Q}_{\omega})')}{\|\Lambda \mathcal{Q}_{\omega}\|^2_{L^2}}\bigg|+\bigg|\bigg(\frac{x_s}{\lambda}-1\bigg)-\frac{(\varepsilon,L_{\omega}(y\Lambda \mathcal{Q}_{\omega})')}{\|\Lambda \mathcal{Q}_{\omega}\|^2_{L^2}}\bigg|\nonumber\\
&\lesssim (\omega+|b|)\Bigg[\bigg(\int\varepsilon^2e^{-\frac{|y|}{10}}\bigg)^{\frac{1}{2}}+|b|\Bigg]+\int\varepsilon^2e^{-\frac{|y|}{10}}.\label{C210}
\end{align}
Next, let us differentiate the relation $(\varepsilon,\mathcal{Q}_{\omega})=0$ to obtain:
{
\begin{align}\label{ADD}
0=&(\varepsilon,\mathcal{Q}_{\omega})_s=(\varepsilon_s,\mathcal{Q}_{\omega})+\bigg(\varepsilon,\omega_s\frac{\partial \mathcal{Q}_{\omega}}{\partial\omega}\bigg)\nonumber\\
=&\omega_s\bigg(\varepsilon,\frac{\partial \mathcal{Q}_{\omega}}{\partial\omega}\bigg)-\big(\varepsilon,L_{\omega}(\mathcal{Q}_{\omega}')\big)-b(\Lambda\varepsilon,\mathcal{Q}_{\omega})\nonumber\\
&+\bigg(\frac{\lambda_s}{\lambda}+b\bigg)\Big[(\Lambda Q_{b,\omega},\mathcal{Q}_{\omega})+(\Lambda\varepsilon,\mathcal{Q}_{\omega})\Big]+\bigg(\frac{x_s}{\lambda}-1\bigg)\Big[(Q_{b,\omega}',\mathcal{Q}_{\omega})+(\varepsilon',\mathcal{Q}_{\omega})\Big]\nonumber\\
&-b_s\Big[(P_{\omega}\chi_b,\mathcal{Q}_{\omega})+(\beta y\chi_b',\mathcal{Q}_{\omega})\Big]-\omega_s\bigg(\mathcal{Q}_{\omega},\frac{\partial Q_{b,\omega}}{\partial \omega}\bigg)\nonumber\\
&+(\Psi_{b,\omega},\mathcal{Q}_{\omega})+\big(R_b(\varepsilon)+R_{\rm NL}(\varepsilon),\mathcal{Q}_{\omega}'\big).
\end{align}

Injecting the following facts:
\begin{align*}
&(P_{\omega}\chi_b,\mathcal{Q}_{\omega})+(\beta y\chi_b',\mathcal{Q}_{\omega})=(P_{\omega},\mathcal{Q}_{\omega})+O(b^{10})\sim 1,\\
&L_{\omega}\mathcal{Q}_{\omega}'=0,\quad(\mathcal{Q}_{\omega},\Lambda \mathcal{Q}_{\omega})=(\mathcal{Q}_{\omega},\mathcal{Q}_{\omega}')=(\varepsilon, \Lambda \mathcal{Q}_{\omega})=0,\\
&\big|\big(R_b(\varepsilon)+R_{\rm NL}(\varepsilon),\mathcal{Q}_{\omega}'\big)\big|\lesssim (\omega+|b|)\bigg(\int\varepsilon^2e^{-\frac{|y|}{10}}\bigg)^{1/2}+\int\varepsilon^2e^{-\frac{|y|}{10}}
\end{align*}
and \eqref{C23}, \eqref{C24}, \eqref{C27}, \eqref{C28} into \eqref{ADD}, we obtain:}
\begin{align}
|b_s|\lesssim&\bigg(\omega+|b|+\bigg(\int\varepsilon^2e^{-\frac{|y|}{10}}\bigg)^{\frac{1}{2}}\bigg)\times\bigg(\bigg|\frac{\lambda_s}{\lambda}+b\bigg|+\bigg|\frac{x_s}{\lambda}-1\bigg|\bigg)\nonumber\\
&+(\omega+|b|)\Bigg[\bigg(\int\varepsilon^2e^{-\frac{|y|}{10}}\bigg)^{\frac{1}{2}}+|b|\Bigg]+\int\varepsilon^2e^{-\frac{|y|}{10}}.\label{C211}
\end{align}
Combining \eqref{C28}, \eqref{C29} and \eqref{C211}, we get \eqref{CMS1} and \eqref{CMS2}.

{\it Proof of (4):} Firstly, we claim the following sharp equation:
\begin{align}
&b_s+2b^2+\omega_sG'(\omega)-\frac{16b}{(\int Q)^2}\bigg[\frac{(\Lambda P,Q)}{\|\Lambda Q\|^2_{L^2}}(\varepsilon,L(\Lambda Q)')+20(\varepsilon,PQ^3Q')\bigg]\nonumber\\
&=O\bigg(b^2(\omega+|b|)+\int\varepsilon^2e^{-\frac{|y|}{10}}\bigg).\label{C212}
\end{align} 
To prove this, we take the scalar product of \eqref{CMEQ} with $\mathcal{Q}_{\omega}$. We keep track of all terms up to $b^2$.

First, from \eqref{C23}, we have
\begin{align}
(\Psi_{b,\omega},\mathcal{Q}_{\omega})&=-b^2\big((10P_{\omega}^2\mathcal{Q}_{\omega}^3)_{y}+\Lambda P_{\omega},\mathcal{Q}_{\omega}\big)+O\big(b^2(|b|+\omega)\big)\nonumber\\
&=-b^2\big((10P^2Q^3)_{y}+\Lambda P,Q\big)+O\big(b^2(|b|+\omega)\big)\nonumber\\
&=-\frac{b^2}{8}\|Q\|_{L^1}^2+O\big(b^2(|b|+\omega)\big),\label{C213}
\end{align}
where for the last step use the following computation:
\begin{align*}
(\Lambda P,Q)&=-(P,\Lambda Q)=-(P,(LP)')=(P,(P''-P+5Q^4P)')\\
&=(P,P'''-P')+10\int Q^3Q'P^2,
\end{align*}
and from Proposition \ref{CP1}, we obtain:
$$\big((10P^2Q^3)_{y}+\Lambda P,Q\big)=\frac{1}{2}\lim_{y\rightarrow-\infty}P^2=\frac{1}{8}\|Q\|_{L^1}^2.$$
Next, from Proposition \ref{CP2}, we have:
\begin{align}
(b_s\frac{\partial Q_{b,\omega}}{\partial b},\mathcal{Q}_{\omega})&=b_s((\chi_b+\beta y\chi_b')P_{\omega},\mathcal{Q}_{\omega})=b_s(P_{\omega},\mathcal{Q}_{\omega})+O(b^{10})\nonumber\\
&=\frac{b_s}{16}\|Q\|_{L^1}^2+F(\omega)b_s+O(b^{10}),\label{C214}
\end{align}
where $F$ is  the $C^1$ function introduced in Proposition \ref{CP2}.
From Lemma \ref{CL1}, we have
\begin{align*}
(Z_{\omega},\mathcal{Q}_{\omega})&=-\frac{1}{2}(L_{\omega}Z_{\omega},\Lambda \mathcal{Q}_{\omega})+O(\omega)=\frac{1}{2}\int (\Lambda \mathcal{Q}_{\omega})\mathcal{Q}_{\omega}|\mathcal{Q}_{\omega}|^{q-1}+O(\omega)\\
&=\frac{q-1}{4(q+1)}\int|\mathcal{Q}_{\omega}|^{q+1}+O(\omega)>0.
\end{align*}
Then from \eqref{CMS2}, we have:
\begin{align}
(\omega_s\frac{\partial Q_{b,\omega}}{\partial \omega},\mathcal{Q}_{\omega})&=\omega_s\frac{1}{2}\frac{\partial\|\mathcal{Q}_{\omega}\|^2_{L^2}}{\partial\omega}+O(|b\omega_s|)\nonumber\\
&=\omega_s\widetilde{G}'(\omega)+O\bigg(b^2(\omega+|b|)+\int\varepsilon^2e^{-\frac{|y|}{10}}\bigg),\label{C215}
\end{align}
with $\widetilde{G}(\omega)=\frac{1}{2}(\|\mathcal{Q}_{\omega}\|^2_{L^2}-\|Q\|_{L^2}^2)$. It is easy to check $\widetilde{G}(0)=0$, $\widetilde{G}\in C^1$, and 
$$\widetilde{G}'(0)=(Z_{\omega},\mathcal{Q}_{\omega})\Big|_{\omega=0}=\frac{q-1}{4(q+1)}\int|Q|^{q+1}>0.$$
Next, from Proposition \ref{CP2} we have:
$$|(Q_{b,\omega}'+\varepsilon_{y},\mathcal{Q}_{\omega})|\lesssim\bigg(\int\varepsilon^2e^{-\frac{|y|}{10}}\bigg)^{\frac{1}{2}}+|(\mathcal{Q}_{\omega}',\mathcal{Q}_{\omega})|+|(P'_{\omega},\mathcal{Q}_{\omega})|+b^{10},$$
which together with \eqref{CMS1} implies that
\begin{equation}
\label{C216}
\bigg|\bigg(\frac{x_s}{\lambda}-1\bigg)(Q'_{b,\omega}+\varepsilon_{y},\mathcal{Q}_{\omega})\bigg|\lesssim b^2(\omega+|b|)+\int\varepsilon^2e^{-\frac{|y|}{10}}.
\end{equation}
For the small linear term, we have:
\begin{align}
\int R_{b}(\varepsilon)\mathcal{Q}_{\omega}'&=20b\int P_{\omega}\mathcal{Q}_{\omega}^3\mathcal{Q}_{\omega}'\varepsilon+|b|(\omega+|b|)O\bigg(\int\varepsilon^2e^{-\frac{|y|}{10}}\bigg)^{\frac{1}{2}}\nonumber\\
&=20b\int PQ^3Q'\varepsilon+|b|(\omega+|b|)O\bigg(\int\varepsilon^2e^{-\frac{|y|}{10}}\bigg)^{\frac{1}{2}}.\label{C217}
\end{align}
Since the nonlinear term can be estimated with the help of \eqref{C27}, we then have:
\begin{align*}
&b_s+\frac{2b^2+\omega_s\widetilde{G}'(\omega)}{1+H(\omega)}-\frac{16}{(1+H(\omega))(\int Q)^2}\bigg[(\Lambda  Q_{b,\omega},\mathcal{Q}_{\omega})\bigg(\frac{\lambda_s}{\lambda}+b\bigg)+20b(\varepsilon,PQ^3Q')\bigg]\nonumber\\
&=O\bigg(b^2(\omega+|b|)+\int\varepsilon^2e^{-\frac{|y|}{10}}\bigg),
\end{align*}
where
$$H(\omega)=\frac{16}{(\int Q)^2}F(\omega).$$
From \eqref{C210} we have
\begin{equation*}
\bigg|\bigg(\frac{\lambda_s}{\lambda}+b\bigg)-\frac{(\varepsilon,L(\Lambda Q)')}{\|\Lambda Q\|^2_{L^2}}\bigg|\lesssim (\omega+|b|)\Bigg[\bigg(\int\varepsilon^2e^{-\frac{|y|}{10}}\bigg)^{\frac{1}{2}}+|b|\Bigg]+\int\varepsilon^2e^{-\frac{|y|}{10}}.
\end{equation*}
Moreover, we have
\begin{equation*}
\big|(\Lambda Q_{b,\omega},\mathcal{Q}_{\omega})-b(\Lambda P,Q)\big|\lesssim b^{10}+|b(\Lambda P,Q)-b(\Lambda P_{\omega},\mathcal{Q}_{\omega})|\lesssim |b|(\omega+|b|).
\end{equation*}
We then conclude that
\begin{align}
&b_s+\frac{2b^2+\omega_s\widetilde{G}'(\omega)}{1+H(\omega)}-\frac{16b}{(1+H(\omega))(\int Q)^2}\bigg[\frac{(\Lambda P,Q)}{\|\Lambda Q\|^2_{L^2}}(\varepsilon,L(\Lambda Q)')+20(\varepsilon,PQ^3Q')\bigg]\nonumber\\
&=O\bigg(b^2(\omega+|b|)+\int\varepsilon^2e^{-\frac{|y|}{10}}\bigg)\label{C218}.
\end{align}

Finally, since $H\in C^1$, $H(0)=0$, it is to check that the following function
$$G(\omega)=\int_0^{\omega}\frac{\widetilde{G}'(x)}{1+H(x)}\,dx$$
satisfies $G\in C^2$, $G(0)=0$, $G'(0)=c_0>0$. Then, \eqref{C218} implies \eqref{C212} immediately.

Now, we turn to the proof of \eqref{CLOL}, \eqref{CLOB} and \eqref{CLOBOLS}. For all $f\in\mathcal{Y}$, independent of $s$, $(\varepsilon,\int_{-\infty}^{y}f)$ is well defined due to \eqref{C202}. Moreover, we have:
\begin{align*}
\frac{d}{ds}\bigg(\varepsilon,\int_{-\infty}^yf\bigg)=&-(\varepsilon,L_{\omega}f)+\bigg(\frac{\lambda_s}{\lambda}+b\bigg)\bigg(\Lambda Q_{b,\omega},\int_{-\infty}^{y}f\bigg)+\frac{\lambda_s}{\lambda}\bigg(\Lambda\varepsilon,\int_{-\infty}^{y}f\bigg)\\
&-\bigg(\frac{x_s}{\lambda}-1\bigg)(Q_{b,\omega}+\varepsilon,f)-\bigg(b_s\frac{\partial Q_{b,\omega}}{\partial b}+\omega_s\frac{\partial Q_{b,\omega}}{\partial \omega},\int_{-\infty}^{y}f\bigg)\\
&+\bigg(\Psi_{b,\omega},\int_{-\infty}^{y}\bigg)+\big(R_{b}(\varepsilon)+R_{\rm NL}(\varepsilon),f\big).
\end{align*}
Using \eqref{CMS1}, \eqref{CMS2}, \eqref{C27} and Proposition \ref{CP2}, we have:
\begin{align}
\frac{d}{ds}\bigg(\varepsilon,\int_{-\infty}^yf\bigg)=&-(\varepsilon,Lf)+\bigg(\frac{\lambda_s}{\lambda}+b\bigg)\bigg(\Lambda Q,\int_{-\infty}^{y}f\bigg)+\bigg(\frac{x_s}{\lambda}-1\bigg)(f,Q)\nonumber\\
&-\frac{1}{2}\frac{\lambda_s}{\lambda}\bigg(\varepsilon,\int_{-\infty}^{y}f\bigg)+O\Bigg((|b|+\omega)\bigg[\bigg(\int\varepsilon^2e^{-\frac{|y|}{10}}\bigg)^{\frac{1}{2}}\bigg]\Bigg)\nonumber\\
&+O\big((|b|+\omega)|b|\big)+O\bigg(\int\varepsilon^2e^{-\frac{|y|}{10}}\bigg).\label{C221}
\end{align}

-Proof of \eqref{CLOL}: We apply \eqref{C221} to $f=\Lambda Q$, using the following fact
$$L\Lambda Q=-2Q,\;\bigg(\Lambda Q,\int_{-\infty}^{y}\Lambda Q\bigg)=\frac{1}{8}\bigg(\int Q\bigg)^{2},\;\bigg(Q',\int_{-\infty}^{y}\Lambda Q\bigg)=0,$$
to obtain:
\begin{align*}
2(J_1)_s=&\frac{16(\varepsilon,Q)}{(\int Q)^2}+\bigg(\frac{\lambda_s}{\lambda}+b\bigg)-\frac{\lambda_s}{\lambda}J_1\\
&+O\Bigg((|b|+\omega)\bigg[\bigg(\int\varepsilon^2e^{-\frac{|y|}{10}}\bigg)^{\frac{1}{2}}+|b|\bigg]+\int\varepsilon^2e^{-\frac{|y|}{10}}\Bigg).
\end{align*}
Then \eqref{CLOL} follows immediately from the orthogonality condition \eqref{COC}.

-Proof of \eqref{CLOB}: We apply \eqref{C221} to $f=\rho_2'$.  Then from Lemma \ref{CL0} and Proposition \ref{CP1}, we have:
\begin{align*}
(\Lambda Q,\rho_2)&=\frac{16}{(\int Q)^2}\bigg(\frac{(\Lambda P,Q)}{\|\Lambda Q\|_{L^2}}\Lambda Q+P-\frac{1}{2}\int Q,\Lambda Q\bigg)-\frac{32}{(\int Q)^2}\bigg(\Lambda Q,\int_{-\infty}^{y}\Lambda Q\bigg)\\
&=\frac{16}{(\int Q)^2}\big[(\Lambda P,Q)+(\Lambda Q,P)\big]+\frac{4\|Q\|_{L^1}^2}{(\int Q)^2}-\frac{16}{(\int Q)^2}\bigg(\int \Lambda Q\bigg)^2=0,\\
(\rho',Q)&=\frac{16}{(\int Q)^2}\bigg(\frac{(\Lambda P,Q)}{\|\Lambda Q\|_{L^2}}(\Lambda Q)'+P',Q\bigg)-8(\rho'_1,Q).\\
\end{align*}
Next, from
$$L(P')=(LP)'+20Q'Q^3P=\Lambda Q+20Q'Q^3P,$$
and the orthogonality condition $(\varepsilon,\Lambda \mathcal{Q}_{\omega})=0$, we have
\begin{align*}
(\varepsilon,L\rho_2')&=\frac{16}{(\int Q)^2}\bigg(\varepsilon,L\bigg[\frac{(\Lambda P,Q)}{\|\Lambda Q\|_{L^2}}(\Lambda Q)'+P'\bigg]\bigg)-8(\varepsilon,L\rho_1')\\
&=\frac{16}{(\int Q)^2}\bigg[\frac{(\Lambda P,Q)}{\|\Lambda Q\|^2_{L^2}}(\varepsilon,L(\Lambda Q)')+20(\varepsilon,PQ^3Q')\bigg]+O(\omega)\bigg(\int\varepsilon^2e^{-\frac{|y|}{10}}\bigg)^{\frac{1}{2}},
\end{align*}
Injecting all the above estimates into \eqref{C221} with $f=\rho_2'$, we obtain:
\begin{align}\label{C222}
(J_2)_s=&-\frac{16}{(\int Q)^2}\bigg[\frac{(\Lambda P,Q)}{\|\Lambda Q\|^2_{L^2}}(\varepsilon,L(\Lambda Q)')+20(\varepsilon,PQ^3Q')\bigg]-\frac{1}{2}\frac{\lambda_s}{\lambda}J_2\nonumber\\
&+O\Bigg((|b|+\omega)\bigg[\bigg(\int\varepsilon^2e^{-\frac{|y|}{10}}\bigg)^{\frac{1}{2}}+|b|\bigg]+\int\varepsilon^2e^{-\frac{|y|}{10}}\Bigg).
\end{align}
Then \eqref{CLOB} follows from \eqref{C212} and \eqref{C222}.

-Proof of \eqref{CLOBOLS}: From \eqref{CLOL} and \eqref{CLOBOLS},
\begin{align*}
\frac{d}{ds}\bigg(\frac{b}{\lambda^2}\bigg)=&\frac{b_s+2b^2}{\lambda^2}-\frac{2b}{\lambda^2}\bigg(\frac{\lambda_s}{\lambda}+b\bigg)\\
=&-\frac{b}{\lambda^2}\bigg[(J_2)_s+\frac{1}{2}\frac{\lambda_s}{\lambda}J_2\bigg]-\frac{2b}{\lambda^2}\bigg[2(J_1)_s+\frac{\lambda_s}{\lambda}J_1\bigg]-\frac{\omega_sG'(\omega)}{\lambda^2}\\
&+O\Bigg(\frac{1}{\lambda^2}\bigg(\int \varepsilon^2e^{-\frac{|y|}{10}}+(\omega+|b|)b^2\bigg)\Bigg)\\
=&-\frac{b}{\lambda^2}\bigg[J_s+\frac{1}{2}\frac{\lambda_s}{\lambda}J\bigg]-\frac{\omega_sG'(\omega)}{\lambda^2}+O\Bigg(\frac{1}{\lambda^2}\bigg(\int \varepsilon^2e^{-\frac{|y|}{10}}+(\omega+|b|)b^2\bigg)\Bigg)
\end{align*}
which is exactly \eqref{CLOBOLS}.

Finally, it is easy to check that $\lim_{|y|\rightarrow+\infty}\rho(y)=0$, which implies that $\rho\in\mathcal{Y}$.
\end{proof}

\section{Monotonicity formula}
In this section, we will introduce the monotonicity tools developed in \cite{MM3} and \cite{MMR1}. This is the key technical argument of the analysis for solution near the soliton. 
\subsection{Pointwise monotonicity} Let $(\varphi_i)_{i=1,2},\psi\in C^{\infty}(\mathbb{R})$ be such that:
\begin{gather}
\varphi_i(y)=
\begin{cases}
e^y,\text{ for }y<-1,\\
1+y,\text{ for }-\frac{1}{2}<y<\frac{1}{2},\\
y^i,\text{ for }y>2,
\end{cases}
\quad \varphi'(y)>0,\text{ for all }y\in\mathbb{R},\\
\psi(y)=
\begin{cases}
e^{2y},\text{ for }y<-1,\\
1,\text{ for }y>-\frac{1}{2},
\end{cases}
\quad \psi'(y)\geq0,\text{ for all }y\in\mathbb{R}.
\end{gather}  
Let $B>100$ be a large universal constant to be chosen later. We then define the following weight function:
\begin{equation}
\psi_B(y)=\psi\bigg(\frac{y}{B}\bigg),\quad \varphi_{i,B}(y)=\varphi\bigg(\frac{y}{B}\bigg),
\end{equation} 
and the following weighted Sobolev norm of $\varepsilon$:
\begin{gather}
\mathcal{N}_i(s)=\int\bigg(\varepsilon_{y}^2(s,y)\psi_B(y)+\varepsilon^2(s,y)\varphi_{i,B}(y)\bigg)\,dy,\quad i=1,2,\\
\mathcal{N}_{i,\rm loc}(s)=\int \varepsilon^2(s,y)\varphi'_{i,B}(y)\,dy,\quad i=1,2.
\end{gather}

Then we have the following monotonicity:

\begin{proposition}[Monotonicity formula]\label{CP4}
There exist universal constants $\mu>0$, $B=B(q)>100$ and $0<\kappa\ll1$, such that the following holds. Let $u(t)$ be a solution of \eqref{CCPG} satisfying \eqref{C203} on $[0,t_0]$, and hence the geometrical decomposition \eqref{CGD} holds on $[0,t_0]$. Let $s_0=s(t_0)$, and assume the following a priori bounds hold for all $s\in[0,s_0]$:\\
(H1) Scaling invariant bounds:
\begin{equation}
\label{C31}
\omega(s)+|b(s)|+\mathcal{N}_2(s)+\|\varepsilon(s)\|_{L^2}+\omega(s)\|\varepsilon_{y}(s)\|_{L^2}^m\leq\kappa;
\end{equation}
(H2) Bounds related to $H^1$ scaling:
\begin{equation}
\label{C32}
\frac{\omega(s)+|b(s)|+\mathcal{N}_2(s)}{\lambda^2(s)}\leq\kappa;
\end{equation}
(H3) $L^2$ weighted bound on the right:
\begin{equation}
\label{C33}
\int_{y>0}y^{10}\varepsilon^2(s,y)\,dy\leq 50\bigg(1+\frac{1}{\lambda^{10}(s)}\bigg).
\end{equation}
We define the Lyapounov functionals for $(i,j)\in\{1,2\}^2$ as following:
\begin{align}
\label{C34}
\mathcal{F}_{i,j}=\int&\bigg(\varepsilon_{y}^2\psi_B+(1+\mathcal{J}_{i,j})\varepsilon^2\varphi_{i,B}-\frac{1}{3}\psi_B\big[(Q_{b,\omega}+\varepsilon)^6-Q_{b,\omega}^6-6\varepsilon Q_{b,\omega}^5\big]\nonumber\\
&+\frac{2\omega}{q+1}\big[|Q_{b,\omega}+\varepsilon|^{q+1}-|Q_{b,\omega}|^{q+1}-(q+1)\varepsilon Q_{b,\omega}|Q_{b,\omega}|^{q-1}\big]\psi_B\bigg),
\end{align}
with%
\footnote{Recall that $J_1$ was defined in \eqref{C2005}.}%
\begin{equation}
\label{C35}
\mathcal{J}_{i,j}=(1-J_1)^{-4(j-1)-2i}-1.
\end{equation}
Then the following estimates hold on $[0,s_0]$:
\begin{enumerate}
	\item Scaling invariant Lyapounov control: for $i=1,2$,
	\begin{equation}
	\label{CMF1}
	\frac{d\mathcal{F}_{i,1}}{ds}+\mu\int(\varepsilon_{y}^2+\varepsilon^2)\varphi'_{i,B}\lesssim_B b^2(\omega^2+b^2).
	\end{equation}
	\item $H^1$ scaling Lyapounov control: for $i=1,2$,
	\begin{equation}
	\label{CMF2}
	\frac{d}{ds}\bigg(\frac{\mathcal{F}_{i,2}}{\lambda^2}\bigg)+\frac{\mu}{\lambda^2}\int(\varepsilon_{y}^2+\varepsilon^2)\varphi'_{i,B}\lesssim_B \frac{b^2(\omega^2+b^2)}{\lambda^2}.
	\end{equation}
	\item Coercivity and pointwise bounds: there holds for all $(i,j)\in\{1,2\}^2$,
	\begin{gather}
	\mathcal{N}_i\lesssim\mathcal{F}_{i,j}\lesssim \mathcal{N}_i,\label{CCOER}\\
	|J_i|+|\mathcal{J}_{i,j}|\lesssim\mathcal{N}_2^{\frac{1}{2}}\label{C36}.
	\end{gather}
\end{enumerate}
\end{proposition}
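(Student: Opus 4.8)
The plan is to follow the structure of the monotonicity argument in \cite[Proposition 3.1]{MMR1}, treating the contributions generated by the saturated nonlinearity and by the $\omega$-dependence of the ground state $\mathcal{Q}_\omega$ as controlled perturbations. I would split the work into three parts: first the coercivity and pointwise bounds \eqref{CCOER}--\eqref{C36}; then the computation of $\frac{d}{ds}\mathcal{F}_{i,1}$ and of $\frac{d}{ds}(\mathcal{F}_{i,2}/\lambda^2)$ using the equation \eqref{CMEQ} for $\varepsilon$; and finally the extraction of the dissipative term $\int(\varepsilon_y^2+\varepsilon^2)\varphi'_{i,B}$ through a sharp localized coercivity estimate, together with the absorption of every error term into the right-hand sides of \eqref{CMF1}--\eqref{CMF2}.

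For the pointwise bounds, since $\rho_1,\rho_2,\rho\in\mathcal{Y}$, Cauchy--Schwarz and the definition of $\mathcal{N}_2$ give $|J_i|=|(\varepsilon,\rho_i)|\lesssim\mathcal{N}_2^{1/2}$, and then $|\mathcal{J}_{i,j}|\lesssim\mathcal{N}_2^{1/2}$ because $\mathcal{J}_{i,j}$ is a smooth function of $J_1$ vanishing at the origin while $\mathcal{N}_2\le\kappa$ is small. For \eqref{CCOER} I would expand the quadratic part of $\mathcal{F}_{i,j}$: the cubic and higher powers of $\varepsilon$ are $\delta(\kappa)\mathcal{N}_i$ by Sobolev embedding and \eqref{C33}, the saturated term contributes $q\omega\int|Q_{b,\omega}|^{q-1}\varepsilon^2\psi_B=O(\omega)\mathcal{N}_i$, and $\mathcal{J}_{i,j}\int\varepsilon^2\varphi_{i,B}=O(\mathcal{N}_2^{1/2})\mathcal{N}_i$; what remains is $\int(\varepsilon_y^2\psi_B+\varepsilon^2\varphi_{i,B}-5Q^4\varepsilon^2\psi_B)$, which is comparable to $\mathcal{N}_i$ by the sharp coercivity of $L$ (Lemma \ref{CL0}(5)), using that the orthogonality conditions \eqref{COC} differ from orthogonality against $Q,\Lambda Q,y\Lambda Q$ by $O(\omega)$ (Lemma \ref{CL1}) and choosing $B=B(q)$ large enough that the weight localization is harmless. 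This yields both directions of \eqref{CCOER}.

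Next I would differentiate $\mathcal{F}_{i,1}$ along the flow and substitute \eqref{CMEQ}. The leading term is the quadratic form coming from $\varepsilon_s\approx(L_\omega\varepsilon)_y-b\Lambda\varepsilon$ tested against $2\psi_B\varepsilon_y$, against $2(1+\mathcal{J}_{i,j})\varphi_{i,B}\varepsilon$, and against the linearization of the nonlinearities; after integration by parts and the usual virial algebra this produces $-\int(\varepsilon_y^2+\varepsilon^2)\varphi'_{i,B}$ up to a quadratic form essentially supported where $\varphi'_{i,B}\ne0$, which for $B$ large is dominated by $\int(\varepsilon_y^2+\varepsilon^2)\varphi'_{i,B}$ via a localized version of Lemma \ref{CL0}(5). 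The remaining contributions are: (a) the modulation terms $(\frac{\lambda_s}{\lambda}+b)$, $(\frac{x_s}{\lambda}-1)$, $b_s$, $\omega_s$ multiplied by weighted scalar products of $\varepsilon$, estimated by \eqref{CMS1}--\eqref{CMS2} to be $\lesssim(\omega+|b|)^2\mathcal{N}_i+\delta(\kappa)\int(\varepsilon_y^2+\varepsilon^2)\varphi'_{i,B}$, with the crucial point that the dangerous term proportional to $b$ (equivalently $\frac{\lambda_s}{\lambda}\approx-b$) times a weighted norm of $\varepsilon$, produced by the scaling term $b\Lambda\varepsilon$ in \eqref{CMEQ} acting on the polynomially growing weight $\varphi_{i,B}$, is cancelled to leading order by $\frac{d}{ds}\mathcal{J}_{i,j}$ via \eqref{CLOL} -- which is precisely why the weight $(1-J_1)^{-4(j-1)-2i}$ is inserted; (b) the $\Psi_{b,\omega}$ terms, bounded via \eqref{C23}--\eqref{C24} by $b^2(\omega^2+b^2)+\delta(\kappa)\int(\varepsilon_y^2+\varepsilon^2)\varphi'_{i,B}$; (c) the $R_b(\varepsilon)$, $R_{\rm NL}(\varepsilon)$ and saturated-nonlinearity terms, which are cubic or higher in $\varepsilon$ and, using $\psi_B\le1$, Gagliardo--Nirenberg, and the a priori bound $\omega\|\varepsilon_y\|_{L^2}^m\le\kappa$ to absorb $\omega\int|\varepsilon|^{q+1}\psi_B$, are all $\lesssim\delta(\kappa)\int(\varepsilon_y^2+\varepsilon^2)\varphi'_{i,B}$. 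Taking $\kappa$ and $\mu$ small then gives \eqref{CMF1}; for \eqref{CMF2} one runs the same computation on $\mathcal{F}_{i,2}/\lambda^2$, where the extra term $-2\frac{\lambda_s}{\lambda}\mathcal{F}_{i,2}/\lambda^2=\frac{2b}{\lambda^2}\mathcal{F}_{i,2}+O(|\frac{\lambda_s}{\lambda}+b|)\frac{\mathcal{F}_{i,2}}{\lambda^2}$ is again compensated by the $j$-dependent exponent in $\mathcal{J}_{i,2}$, which is the reason $\mathcal{J}_{i,j}$ depends on $j$.

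The main obstacle I anticipate is the sharp localized coercivity estimate underlying the dissipation in step three: one must show that the quadratic form emerging from the virial computation, restricted to the region where the weights are non-constant and with the kernel directions removed via \eqref{COC}, is bounded below by $\int(\varepsilon_y^2+\varepsilon^2)\varphi'_{i,B}$ up to an $O(\omega)$ error. This is delicate because it ties together the spectral structure of $L$, the size of $B$, and the orthogonality conditions, and it is the only place where the exact numerology of the weights $\psi,\varphi_i$ and of the constant $B=B(q)$ is used; the rest of the argument, though lengthy, is bookkeeping in which the new $\omega$-terms are systematically dominated by the a priori bounds (H1)--(H3).
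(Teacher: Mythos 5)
Your proposal follows essentially the same approach as the paper: the virial-type computation of $\tfrac{d}{ds}\mathcal{F}_{i,j}$ via \eqref{CMEQ}, the localized coercivity of the resulting quadratic form through the sharp spectral lemma, and the absorption of all $\omega$-weighted and nonlinear error terms using (H1)--(H3), Gagliardo--Nirenberg, and the bound $\omega\|\varepsilon_y\|_{L^2}^m\le\kappa$. You also correctly identify the precise reason the factor $(1-J_1)^{-4(j-1)-2i}$ is inserted in $\mathcal{J}_{i,j}$, namely to cancel via \eqref{CLOL} the otherwise uncontrolled term $(2(j-1)+i)\tfrac{\lambda_s}{\lambda}\int y\varphi'_{i,B}\varepsilon^2$ produced by scaling against the polynomially growing weight, with the $j$-dependence compensating the extra $-2(j-1)\tfrac{\lambda_s}{\lambda}\mathcal{F}_{i,j}/\lambda^{2(j-1)}$ term.
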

\begin{remark}
The proof of Proposition \ref{CP4} is almost the same to \cite[Proposition 3.1]{MMR1}. The only difference here is the additional terms involving $\omega$.
\end{remark}
\begin{remark}
Similar as Proposition \ref{CP3}, we do not assume any a priori control on the upper bound of $\lambda(s)$ so that the monotonicity formula can be used in all the 3 cases.
\end{remark}
\begin{remark}
As mentioned in \cite[Proposition 3.1]{MMR1}, the weight function $\psi$ decays faster than $\varphi_i$ on the left. As a result, $\mathcal{N}_2$ and $\mathcal{F}_{i,j}$ do not control $\int\varepsilon^2_{y}\varphi_{i,B}'$ (See \cite[Remark 3.5]{MMR1} for more details).
\end{remark}

\begin{proof}[Proof of Proposition \ref{CP4}]The proof of \eqref{CCOER} and \eqref{C36} is exactly the same as \cite[Proposition 3.1]{MMR1}. We only need to prove \eqref{CMF1} and \eqref{CMF2}. To do this, we compute directly to obtain that for all $(i,j)\in\{1,2\}^2$,
\begin{equation}
\label{C310}
\lambda^{2(j-1)}\bigg(\frac{\mathcal{F}_{i,j}}{\lambda^{2(j-1)}}\bigg)_s=f_1+f_2+f_3+f_4+f_5,
\end{equation}
where
\begin{align*}
&f_1=2\int\bigg(\varepsilon_s-\frac{\lambda_s}{\lambda}\Lambda\varepsilon\bigg)\big(-(\psi_B\varepsilon)_{y}+\varepsilon\varphi_{i,B}-\psi_B\Delta_{b,\omega}(\varepsilon)\big),\\
&f_2=2\int\bigg(\varepsilon_s-\frac{\lambda_s}{\lambda}\Lambda\varepsilon\bigg)\varepsilon\mathcal{J}_{i,j}\varphi_{i,B},\\
&f_3=2\frac{\lambda_s}{\lambda}\int\Lambda\varepsilon\big(-(\psi_B\varepsilon_{y})_{y}+(1+\mathcal{J}_{i,j})\varepsilon\varphi_{i,B}-\psi_B\Delta_{b,\omega}(\varepsilon)\big)\\
&\qquad\quad+(\mathcal{J}_{i,j})_s\int\varphi_{i,B}\varepsilon^2-2(j-1)\frac{\lambda_s}{\lambda}\mathcal{F}_{i,j},\\
&f_4=-2\int\psi_B(Q_{b,\omega})_s\big[\Delta_{b,\omega}-5\varepsilon Q_{b,\omega}^4+q\omega\varepsilon|Q_{b,\omega}|^{q-1}\big],\\
&f_5=\frac{2\omega_s}{q+1}\int\big[|Q_{b,\omega}+\varepsilon|^{q+1}-|Q_{b,\omega}|^{q+1}-(q+1)\varepsilon Q_{b,\omega}|Q_{b,\omega}|^{q-1}\big]\psi_B,\\
&\Delta_{b,\omega}(\varepsilon)=(Q_{b,\omega}+\varepsilon)^5-Q_{b,\omega}^5-\omega(Q_{b,\omega}+\varepsilon)|Q_{b,\omega}+\varepsilon|^{q-1}+\omega Q_{b,\omega}|Q_{b,\omega}|^{q-1}.
\end{align*}

Our goal is to show that for some $\mu_0>0$,
\begin{gather}
\frac{d}{ds}f_1\leq-\mu_0\int\bigg((\varepsilon^2+\varepsilon^2)\varphi_{i,B}'+\varepsilon_{yy}^2\psi_B'\bigg)+Cb^2(\omega^2+b^2),\label{C311}\\
\bigg|\frac{d}{ds}f_k\bigg|\leq \frac{\mu_0}{10}\int\bigg((\varepsilon^2+\varepsilon^2)\varphi_{i,B}'+\varepsilon_{yy}^2\psi_B'\bigg)+Cb^2(\omega^2+b^2),\text{ for }k=2,3,4,5.\label{C312}
\end{gather}

The following properties will be used several times in this paper%
\footnote{See \cite[Section 3]{MMR1} for more details.}%
:\begin{gather}
|\varphi'''_i(y)|+|\varphi''_i(y)|+|\psi'''(y)|+|y\psi'(y)|+|\psi(y)|\lesssim \varphi'_i\lesssim \varphi_i,\text{ for all }y\in\mathbb{R},\\
e^{|y|}(\psi(y)+|\psi'(y)|)\lesssim\varphi'_i\sim\varphi_i,\text{ for all }y\in(-\infty,\frac{1}{2}],\\
\mathcal{N}_{1,\rm loc}\lesssim\mathcal{N}_{2,\rm loc}\lesssim\mathcal{N}_1\lesssim\mathcal{N}_2,\quad \int\varepsilon^2\varphi_{1,B}\,dy\lesssim\mathcal{N}_{2,\rm loc},\label{C38}\\
\int_{y>0}y^2\varepsilon^2(s)\lesssim\bigg(1+\frac{1}{\lambda^{\frac{10}{9}}(s)}\bigg)\mathcal{N}_{2,\rm loc}^{\frac{8}{9}}(s).\label{C39}
\end{gather}

{\bf Control of $f_1$.} First, we rewrite $f_1$ using the equation of $\varepsilon$ in the following form:
\begin{align}
\varepsilon_s-\frac{\lambda_s}{\lambda}\Lambda\varepsilon=&\big(-\varepsilon_{yy}+\varepsilon-\Delta_{b,\omega}(\varepsilon)\big)_{y}+\bigg(\frac{\lambda_s}{\lambda}+b\bigg)\Lambda Q_{b,\omega}\nonumber\\
&+\bigg(\frac{x_s}{\lambda}-1\bigg)(Q_{b,\omega}+\varepsilon)_{y}-b_s\frac{\partial Q_{b,\omega}}{\partial b}-\omega_s\frac{\partial Q_{b,\omega}}{\partial \omega}+\Psi_{b,\omega},\label{C320}
\end{align}
where $-\Psi_{b,\omega}=b\Lambda Q_{b,\omega}+(Q_{b,\omega}''-Q_{b,\omega}+Q_{b,\omega}^5-\omega Q_{b,\omega}|Q_{b,\omega}|^{q-1})_{y}$. This yields:
$$f_1=f_{1,1}+f_{1,2}+f_{1,3}+f_{1,4}+f_{1,5},$$
with
\begin{align*}
f_{1,1}=&2\int\big(-\varepsilon_{yy}+\varepsilon-\Delta_{b,\omega}(\varepsilon)\big)_{y}\big(-(\psi_B\varepsilon_{y})_{y}+\varepsilon\varphi_{i,B}-\psi_B\Delta_{b,\omega}(\varepsilon)\big),\\
f_{1,2}=&2\bigg(\frac{\lambda_s}{\lambda}+b\bigg)\int\Lambda Q_{b,\omega}\big(-(\psi_B\varepsilon_{y})_{y}+\varepsilon\varphi_{i,B}-\psi_B\Delta_{b,\omega}(\varepsilon)\big),\\
f_{1,3}=&2\bigg(\frac{x_s}{\lambda}-1\bigg)\int (Q_{b,\omega}+\varepsilon)_{y}\big(-(\psi_B\varepsilon_{y})_{y}+\varepsilon\varphi_{i,B}-\psi_B\Delta_{b,\omega}(\varepsilon)\big),\\
f_{1,4}=&-2\int\bigg(b_s\frac{\partial Q_{b,\omega}}{\partial b}+\omega_s\frac{\partial Q_{b,\omega}}{\partial \omega}\bigg)\big(-(\psi_B\varepsilon_{y})_{y}+\varepsilon\varphi_{i,B}-\psi_B\Delta_{b,\omega}(\varepsilon)\big),\\
f_{1,5}=&2\int\Psi_{b,\omega}\big(-(\psi_B\varepsilon_{y})_{y}+\varepsilon\varphi_{i,B}-\psi_B\Delta_{b,\omega}(\varepsilon)\big).
\end{align*}

For the term $f_{1,1}$, we integrate by parts to obtain a more manageable formula:
\begin{align*}
f_{1,1}=&2\int(-\varepsilon_{yy}+\varepsilon-\Delta_{b,\omega}(\varepsilon))_{y}(-\varepsilon_{yy}+\varepsilon-\Delta_{b,\omega}(\varepsilon))\psi_B\\
&\quad+2\int(-\varepsilon_{yy}+\varepsilon-\Delta_{b,\omega}(\varepsilon))_{y}(-\psi_B'\varepsilon_{y}+\varepsilon(\varphi_B-\psi_B)).
\end{align*}
We compute these terms separately. First, we have
\begin{align*}
&2\int(-\varepsilon_{yy}+\varepsilon-\Delta_{b,\omega}(\varepsilon))_{y}(-\varepsilon_{yy}+\varepsilon-\Delta_{b,\omega}(\varepsilon))\psi_B\\
&=-\int\psi_B'\big(-\varepsilon_{yy}+\varepsilon-\Delta_{b,\omega}(\varepsilon)\big)^2\\
&=-\int\psi_B'\big([-\varepsilon_{yy}+\varepsilon-\Delta_{b,\omega}(\varepsilon)]^2-(-\varepsilon_{yy}+\varepsilon)^2\big)-\int\psi_B'(-\varepsilon_{yy}+\varepsilon)^2\\
&=-\bigg[\int\psi_B'(\varepsilon_{yy}^2+2\varepsilon_{y}^2)+\varepsilon^2(\psi_B'-\psi_B''')\bigg]\\
&\quad\,-\int\psi_B'\big([-\varepsilon_{yy}+\varepsilon-\Delta_{b,\omega}(\varepsilon)]^2-(-\varepsilon_{yy}+\varepsilon)^2\big).
\end{align*}
Next, we integrate by parts to obtain
\begin{align*}
&-2\int(\Delta_{b,\omega}(\varepsilon))_{y}(\varphi_{i,B}-\psi_B)\varepsilon\\
&=-\frac{1}{3}\int(\varphi_{i,B}-\psi_B)'\big([(Q_{b,\omega}+\varepsilon)^6-Q_{b,\omega}^6-6\varepsilon Q_{b,\omega}^5]-6\varepsilon[(Q_{b,\omega}+\varepsilon)^5-Q_{b,\omega}^5]\big)\\
&\quad-2\int(\varphi_{i,B}-\psi_B)(Q_{b,\omega})_{y}\big[(Q_{b,\omega}+\varepsilon)^5-Q_{b,\omega}^5-5\varepsilon Q_{b,\omega}^4\big]\\
&\quad+\frac{2\omega}{q+1}\int(\varphi_{i,B}-\psi_B)'\Big(\big[|Q_{b,\omega}+\varepsilon|^{q+1}-|Q_{b,\omega}|^{q+1}-(q+1)\varepsilon Q_{b,\omega}|Q_{b,\omega}|^{q-1}\big]\\
&\qquad\qquad\qquad-(q+1)\varepsilon\big[(Q_{b,\omega}+\varepsilon)|Q_{b,\omega}+\varepsilon|^{q-1}-Q_{b,\omega}|Q_{b,\omega}|^{q-1}\big]\Big)\\
&\quad+2\omega\int(\varphi_{i,B}-\psi_B)(Q_{b,\omega})_{y}\Big[(Q_{b,\omega}+\varepsilon)|Q_{b,\omega}+\varepsilon|^{q-1}\\
&\qquad\qquad\qquad-Q_{b,\omega}|Q_{b,\omega}|^{q-1}-q\varepsilon |Q_{b,\omega}|^{q-1}\Big],
\end{align*}
and
\begin{align*}
&2\int(-\varepsilon_{yy}+\varepsilon)_{y}(-\psi_B'\varepsilon_{y}+\varepsilon(\varphi_{i,B}-\psi_B))\\
&=-2\Bigg[\int\psi_B'\varepsilon^2_{yy}+\int\varepsilon^2_{y}\bigg(\frac{3}{2}\varphi'_{i,B}-\frac{1}{2}\psi_B'-\frac{1}{2}\psi_B'''\bigg)\\
&\quad\,+\int\varepsilon^2\bigg(\frac{1}{2}(\varphi_B-\psi_B)'-\frac{1}{2}(\varphi_B-\psi_B)'''\bigg)\Bigg].
\end{align*}

Finally, by direct expansion, we have
\begin{align*}
&\int(\Delta_{b,\omega}(\varepsilon))_{y}\psi_B'\varepsilon_{y}\\
&=5\int\psi_B'\varepsilon_{y}\Big((Q_{b,\omega})_{y}\big[(Q_{b,\omega}+\varepsilon)^4-Q_{b,\omega}^4\big]+\varepsilon_{y}(Q_{b,\omega}+\varepsilon)^4\Big)\\
&\quad-q\omega\int\psi_B'\varepsilon_{y}\Big((Q_{b,\omega})_{y}\big[|Q_{b,\omega}+\varepsilon|^{q-1}-|Q_{b,\omega}|^{q-1}\big]+\varepsilon_{y}|Q_{b,\omega}+\varepsilon|^{q-1}\Big).
\end{align*}
Collecting all the estimates above, we have
$$f_{1,1}=I+II,$$
where
\begin{align*}
I=&-\int\big[3\psi_B'\varepsilon_{yy}^2+(3\varphi'_{i,B}+\psi_B'-\psi_{B}''')\varepsilon^2_{y}+(\varphi'_{i,B}-\varphi_{i,B}''')\varepsilon^2\big]\\
&-2\int\bigg[\frac{(Q_{b,\omega}+\varepsilon)^6-Q_{b,\omega}^6-6\varepsilon Q_{b,\omega}^5}{6}-\varepsilon\big[(Q_{b,\omega}+\varepsilon)^5-Q_{b,\omega}^5\big]\bigg](\varphi'_{i,B}-\psi_B')\\
&+2\int[(Q_{b,\omega}+\varepsilon)^5-Q_{b,\omega}^5-5\varepsilon Q_{b,\omega}^4](Q_{b,\omega})_{y}(\psi_B-\varphi_{i,B})\\
&+10\int\psi_B'\varepsilon_{y}\Big((Q_{b,\omega})_{y}\big[(Q_{b,\omega}+\varepsilon)^4-Q_{b,\omega}^4\big]+\varepsilon_{y}(Q_{b,\omega}+\varepsilon)^4\Big)\\
&-\int\psi_B'\big([-\varepsilon_{yy}+\varepsilon-\Delta_{b,\omega}(\varepsilon)]^2-(-\varepsilon_{yy}+\varepsilon)^2\big)\\
=&I_1+I_2+I_3+I_4+I_5,
\end{align*}
and
\begin{align*}
II=&2\omega\int\bigg[\frac{|Q_{b,\omega}+\varepsilon|^{q+1}-|Q_{b,\omega}|^{q+1}-(q+1)\varepsilon Q_{b,\omega}|Q_{b,\omega}|^{q-1}}{q+1}\\
&\qquad\quad-\varepsilon\big[(Q_{b,\omega}+\varepsilon)|Q_{b,\omega}+\varepsilon|^{q-1}-Q_{b,\omega}|Q_{b,\omega}|^{q-1}\big]\bigg](\varphi'_{i,B}-\psi_B')\\
&-2\omega\int\big[(Q_{b,\omega}+\varepsilon)|Q_{b,\omega}+\varepsilon|^{q-1}-Q_{b,\omega}|Q_{b,\omega}|^{q-1}\\
&\qquad\quad-q\varepsilon |Q_{b,\omega}|^{q-1}\big](Q_{b,\omega})_{y}(\psi_B-\varphi_{i,B})\\
&-2q\omega\int\psi_B'\varepsilon_{y}\Big((Q_{b,\omega})_{y}\big[|Q_{b,\omega}+\varepsilon|^{q-1}-|Q_{b,\omega}|^{q-1}\big]+\varepsilon_{y}|Q_{b,\omega}+\varepsilon|^{q-1}\Big).
\end{align*}

For $I_k$, $k=1,2,3,4$, we can use the same strategy as in \cite[Proposition 3.1]{MMR1} to obtain:
\begin{equation}
\label{C313}
\sum_{k=1}^4I_k\leq-\mu_1\int\Big(\varepsilon_{yy}^2\psi_B'+\varepsilon^2_{y}\varphi_{i,B}'+\varepsilon^2\varphi_{i,B}'\Big)+Cb^4,
\end{equation}
for some universal constant $\mu_1>0$. 

{The idea is to split the integral into three parts. We denote by $I_k^{<,\sim,>}$ the integration on $y<-B/2$, $|y|\leq B/2$, $y>B/2$ respectively, for $k=1,2,3,4$.

On the region $y<-B/2$, using the following weighted Sobolev bound introduced in \cite[Lemma 6]{M1} and \cite[Proposition 3.1]{MMR1}:
\begin{align}
\label{C314}
\Big\|\varepsilon^2\sqrt{\varphi_{i,B}'}\Big\|^2_{L^{\infty}}&\lesssim\|\varepsilon\|^2_{L^2}\bigg(\int\varepsilon^2_{y}\varphi_{i,B}'+\int\varepsilon^2\frac{(\varphi_{i,B}'')^2}{\varphi_B'}\bigg)\nonumber\\
&\lesssim\delta(\kappa)\int(\varepsilon^2_{y}+\varepsilon^2)\varphi_{i,B}',
\end{align}
we have
\begin{align*}
I_2^<+I_3^<+I_4^<&\lesssim_B \int (\varepsilon^6+\varepsilon^5Q_{b,\omega}'+\varepsilon_y^2\varepsilon^4)\varphi_{i,B}'+\|Q_{b,\omega}\|_{L^{\infty}(y<-B/2)}\int(\varepsilon_y^2+\varepsilon^2)\varphi_{i,B}'\\
&\lesssim \delta(\kappa)\int(\varepsilon^2_{y}+\varepsilon^2)\varphi_{i,B}'.
\end{align*}
Hence we have
\begin{equation}
\label{31}
\sum_{k=1}^4I_k^<\leq-\mu_2\int_{y<-B/2}\Big(\varepsilon_{yy}^2\psi_B'+\varepsilon^2_{y}\varphi_{i,B}'+\varepsilon^2\varphi_{i,B}'\Big)
\end{equation}
for some $\mu_2>0$.

For the region $|y|\leq B/2$, we have
$$\sum_{k=1}^4I_k^{\sim}=-\frac{1}{B}\int_{|y|<B/2} \Big(3\varepsilon_y^2+\varepsilon^2-5Q^4\varepsilon^2+20yQ'Q^3\varepsilon^2\Big)+O\Bigg(\int_{|y|<B/2}(|b|+\omega)\varepsilon^2+\varepsilon^6\Bigg).$$

We then introduce the following coercivity lemma:
\begin{lemma}[{\cite[Lemma 3.4]{MMR1}}]
There exist $B_0>100$, $\mu_3>0$, such that for all $\varepsilon\in H^1$ and $B>B_0$, we have 
$$\int_{|y|<B/2} \Big(3\varepsilon_y^2+\varepsilon^2-5Q^4\varepsilon^2+20yQ'Q^3\varepsilon^2\Big)\geq \mu_3\int_{|y|<B/2}\varepsilon_y^2+\varepsilon^2-\frac{1}{B}\int\varepsilon^2e^{-\frac{|y|}{2}}.$$
\end{lemma}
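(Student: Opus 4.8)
The inequality is a $B$--uniform quantitative spectral statement about the quadratic form
$$\mathcal{B}(\varepsilon)=\int\big(3\varepsilon_y^2+\varepsilon^2-5Q^4\varepsilon^2+20yQ'Q^3\varepsilon^2\big),$$
which is exactly the contribution of $\{|y|<B/2\}$ to the monotonicity formula computed above. Note that $yQ'\le 0$ on all of $\mathbb{R}$, so $-20yQ'Q^3=20|y|\,|Q'|\,Q^3\ge 0$ and $\mathcal{B}(\varepsilon)=\int(3\varepsilon_y^2+\varepsilon^2)-\int V\varepsilon^2$ with $V:=5Q^4-20yQ'Q^3\in\mathcal{Y}$. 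The plan is: \emph{(i)} by a cut-off, reduce the claim for an arbitrary $\varepsilon\in H^1$ to a full-line coercivity estimate for $\mathcal{B}$, since on the annulus $B/4<|y|<B/2$ the potential $V$ is exponentially small and the gradient/mass corrections produced by the cut-off carry a factor $B^{-1}$; \emph{(ii)} prove that $\mathcal{B}(\varepsilon)=(A\varepsilon,\varepsilon)$, where $A=-3\partial_y^2+1-V$, is coercive on $H^1(\mathbb{R})$ modulo a finite-dimensional spectral subspace $\Sigma$ of $A$; \emph{(iii)} absorb the $\Sigma$-projection — and the cut-off errors — into $\tfrac1B\int\varepsilon^2e^{-|y|/2}$, using $\Sigma\subset\mathcal{Y}$ and Cauchy--Schwarz, after taking $B_0$ large. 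No orthogonality assumption on $\varepsilon$ is used: the role of the orthogonality conditions is played here by the weighted error term.

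\textbf{The spectral step (the heart of the matter).} Since $V$ decays exponentially, $A$ is a self-adjoint Schrödinger-type operator on $L^2(\mathbb{R})$ with $\sigma_{\mathrm{ess}}(A)=[1,+\infty)$ and at most finitely many eigenvalues in $(-\infty,1)$. Fix $\mu_0\in(0,1)$ which is not an eigenvalue and let $\Sigma$ be the span of the (finitely many) eigenfunctions with eigenvalue $\le\mu_0$; by Agmon's estimates these eigenfunctions lie in $\mathcal{Y}$. Spectral calculus gives $(A\varepsilon,\varepsilon)\ge\mu_0\|\varepsilon\|_{L^2}^2$ for $\varepsilon\perp_{L^2}\Sigma$, and combining this with the crude bound $(A\varepsilon,\varepsilon)\ge 3\|\varepsilon_y\|_{L^2}^2+\|\varepsilon\|_{L^2}^2-\|V\|_{L^\infty}\|\varepsilon\|_{L^2}^2$ upgrades it to $(A\varepsilon,\varepsilon)\ge c_0\|\varepsilon\|_{H^1}^2$ on $\Sigma^\perp$ for some $c_0>0$. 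For general $\varepsilon\in H^1$ write $\varepsilon=\varepsilon_\Sigma+\varepsilon^\perp$; since $A$ leaves $\Sigma$ and $\Sigma^\perp$ invariant, $(A\varepsilon,\varepsilon)=(A\varepsilon_\Sigma,\varepsilon_\Sigma)+(A\varepsilon^\perp,\varepsilon^\perp)\ge -C\|\varepsilon_\Sigma\|_{H^1}^2+c_0\|\varepsilon^\perp\|_{H^1}^2$, and with $\|\varepsilon^\perp\|_{H^1}^2\ge\tfrac12\|\varepsilon\|_{H^1}^2-\|\varepsilon_\Sigma\|_{H^1}^2$ this yields $\mathcal{B}(\varepsilon)\ge\tfrac{c_0}{2}\|\varepsilon\|_{H^1}^2-C'\|\varepsilon_\Sigma\|_{H^1}^2$.

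\textbf{Absorption and conclusion.} Each basis element $e$ of $\Sigma$ lies in $\mathcal{Y}$, hence $|(\varepsilon,e)|^2\le\big(\int\varepsilon^2e^{-|y|/2}\big)\big(\int e^2e^{|y|/2}\big)\lesssim\int\varepsilon^2e^{-|y|/2}$, so $\|\varepsilon_\Sigma\|_{H^1}^2\lesssim\sum_e|(\varepsilon,e)|^2\lesssim\int\varepsilon^2e^{-|y|/2}$ and therefore $\mathcal{B}(\varepsilon)\ge\tfrac{c_0}{2}\|\varepsilon\|_{H^1}^2-C''\int\varepsilon^2e^{-|y|/2}$ on $H^1(\mathbb{R})$. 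Feeding this back through Step 1 and choosing $\mu_3$ small and $B_0$ large (so that the annulus errors and the $C''$-loss are dominated by the available gradient/mass surplus on $\{|y|<B/2\}$ together with the $\tfrac1B$-weighted term, which is legitimate because $V$, $\zeta_B'$ and $\zeta_B''$ all gain exponential or $B^{-1}$ smallness far from $0$), one arrives at the stated inequality. \textbf{The main obstacle} is Step (ii): locating $\Sigma$ and establishing the threshold coercivity of $A$ on $\Sigma^\perp$, i.e.\ pinning down the finitely many non-positive directions of $\mathcal{B}$. As in Martel--Merle and \cite[Lemma 3.4]{MMR1}, this is handled through the explicit algebraic structure of $Q$ together with $\ker L=\mathrm{span}\,Q'$ and the Pohozaev identities for $Q$; once this is in place the cut-off and Cauchy--Schwarz bookkeeping, though lengthy, is routine.
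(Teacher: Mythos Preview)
The paper does not prove this lemma; it is quoted from \cite{MMR1} and used as a black box inside the proof of Proposition~\ref{CP4}. More to the point, the statement as transcribed here is actually \emph{false} for arbitrary $\varepsilon\in H^1$: take $\varepsilon=Q$. Using the Pohozaev identities $3\int(Q')^2=\int Q^6$, $\int Q^2=\tfrac23\int Q^6$ and $\int 20yQ'Q^5=-\tfrac{10}{3}\int Q^6$, one finds that the left-hand side converges (as $B\to\infty$) to $-\tfrac{20}{3}\int Q^6<0$, while the right-hand side tends to $\mu_3\|Q\|_{H^1}^2>0$. So either an orthogonality hypothesis on $\varepsilon$ has been dropped in the transcription (in the paper's application $\varepsilon$ does satisfy \eqref{COC}), or the coefficient $\tfrac1B$ should be a fixed constant independent of $B$.

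Your abstract spectral argument is correct and does yield $\mathcal{B}(\varepsilon)\ge\tfrac{c_0}{2}\|\varepsilon\|_{H^1}^2-C''\int\varepsilon^2e^{-|y|/2}$ with a \emph{fixed} $C''$; the cutoff localisation in your Step~(i) is also sound. The gap is the last absorption step, where you claim that ``choosing $\mu_3$ small and $B_0$ large'' converts $C''$ into $\tfrac1B$. This cannot work: testing your own full-line inequality against a negative-eigenvalue eigenfunction of $A$ (which exists, by the computation above) forces $C''>\tfrac{c_0}{2}$, so the surplus $(\tfrac{c_0}{2}-\mu_3)\|\tilde\varepsilon\|_{H^1}^2$ is never enough to absorb $C''\int\tilde\varepsilon^2e^{-|y|/2}$, however small $\mu_3$ is. Your assertion that ``no orthogonality assumption is used; the role of the orthogonality conditions is played by the weighted error term'' is precisely where the argument breaks down. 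To get the sharp $\tfrac1B$ error one must impose the orthogonality conditions \eqref{COC} and identify the non-positive directions of $A$ explicitly in terms of $Q$, $\Lambda Q$, $y\Lambda Q$ --- the ``explicit algebraic structure'' your closing paragraph alludes to but does not carry out.
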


The above lemma implies immediately that
\begin{equation}
\label{32}
\sum_{k=1}^4I_k^{\sim}\leq-\mu_2\int_{|y|<B/2}\Big(\varepsilon_{yy}^2\psi_B'+\varepsilon^2_{y}\varphi_{i,B}'+\varepsilon^2\varphi_{i,B}'\Big)
\end{equation}

While for the region $y>B/2$, we have $\psi_B'=\psi_B'''\equiv0$. We also have:
$$\|\varepsilon\|^2_{L^{\infty}(y>B/2)}\lesssim \|\varepsilon\|^2_{H^1(y>B/2)}\lesssim \mathcal{N}_2\leq \delta(\kappa).$$
Hence, we have:
$$\sum_{k=2}^4I_k^{>}\lesssim\big[\|Q_{b,\omega}\|_{L^{\infty}(y>B/2)}+\|\varepsilon\|_{L^{\infty}(y>B/2)}\big]\int(\varepsilon^2_y+\varepsilon^2)\varphi_{i,B}',$$
which implies that
\begin{equation}
\label{33}
\sum_{k=1}^4I_k^{>}\leq-\mu_2\int_{y>B/2}(+\varepsilon^2_{y}+\varepsilon^2)\varphi_{i,B}'.
\end{equation}

Combining \eqref{31}--\eqref{33}, we obtain \eqref{C313}.
}

Now we turn to the estimate of $I_5$, we have:
\begin{align}
|I_5|&\lesssim\int\psi_B'(|\varepsilon_{yy}|+|\varepsilon|+|\varepsilon|^5+\omega|\varepsilon|^q)(|\varepsilon|^5+\omega|\varepsilon|^q+|Q_{b,\omega}\varepsilon|)\nonumber\\
&\leq\frac{\mu_1}{100}\int(\varepsilon^2_{yy}+\varepsilon^2)\psi_B'+C(\mu_1)\bigg(\int Q_{b,\omega}^2\varepsilon^2\psi_B'+\int\varepsilon^{10}\psi_B'+\omega^2\int|\varepsilon|^{2q}\psi_B'\bigg).\label{C3001}
\end{align}

Combining \eqref{C314} and the hypothesis (H1), we have:
\begin{align}
&\int Q_{b,\omega}^2\varepsilon^2\psi_B'\lesssim\|Q_{b,\omega}\|^2_{L^{\infty}(y<-\frac{B}{2})}\int\varepsilon^2\varphi_{i,B}'\leq \frac{\mu_1}{500}\int(\varepsilon^2_{yy}+\varepsilon^2)\psi_B',\label{C3002}\\
&\int\varepsilon^{10}\psi_B'\lesssim\|\varepsilon^2(\psi_{B}')^{\frac{1}{4}}\|_{L^{\infty}}^4\int\varepsilon^2\lesssim\delta(\kappa)\int(\varepsilon^2_{y}+\varepsilon^2)\varphi_{i,B}'+\bigg(\int\varepsilon_{y}^2(\psi_B')^{\frac{1}{2}}\bigg)^2,\label{C3003}
\end{align}
and
\begin{align}
\omega^2\int|\varepsilon|^{2q}\psi_B'&\lesssim\omega^2\|\varepsilon^2(\psi_{B}')^{\frac{1}{4}}\|_{L^{\infty}}^4\int|\varepsilon|^{2q-8}\lesssim\omega^2\|\varepsilon^2(\psi_{B}')^{\frac{1}{4}}\|_{L^{\infty}}^4\|\varepsilon\|_{L^2}^{q-3}\|\varepsilon_{y}\|_{L^2}^{q-5}\nonumber\\
&\lesssim \delta(\kappa)\int(\varepsilon^2_{y}+\varepsilon^2)\varphi_{i,B}'+\bigg(\int\varepsilon_{y}^2(\psi_B')^{\frac{1}{2}}\bigg)^2,\label{C3004}
\end{align}
where we use the fact that $\omega\|\varepsilon_y\|_{L^2}^m\leq \kappa$ for the last inequality. 

From $\big((\psi')^{\frac{1}{2}}\big)''\lesssim\varphi_{i}'$ and (H1), we have
\begin{align}
\bigg(\int\varepsilon_{y}^2(\psi_{B}')^{\frac{1}{2}}\bigg)^2&=\bigg(-\int\varepsilon\varepsilon_{yy}(\psi_{B}')^{\frac{1}{2}}+\frac{1}{2}\int\varepsilon^2\big((\psi_{B}')^{\frac{1}{2}}\big)''\bigg)^2\nonumber\\
&\lesssim\int\varepsilon^2\int\varepsilon^2_{yy}\psi_B'+\bigg(\int\varepsilon^2\varphi_{i,B}'\bigg)^2\nonumber\\
&\lesssim\delta(\kappa)\int\big(\varepsilon_{yy}^2\psi_B'+\varepsilon^2\varphi_{i,B}'\big).\label{C316}
\end{align}
Injecting \eqref{C3002}--\eqref{C316} into \eqref{C3001}, we have:
\begin{equation}
\label{C315}
|I_5|\lesssim \frac{\mu_1}{50}\bigg(\int\varepsilon^2_{yy}\psi_B'+\int(\varepsilon^2+\varepsilon_{y}^2)\varphi_{i,B}'\bigg).
\end{equation}

Now, we turn to the estimate of $II$. We write $II$ in the following form:
$$II=II^{<}+II^>,$$
where $II^{<,>}$ correspond to the integration on $y<-\frac{B}{2}$ and $y>-\frac{B}{2}$ respectively.

For $II^<$, using the fact that $\psi_B'\sim(\varphi_{i,B}')^2$ for $y<-\frac{B}{2}$, we have:
\begin{align*}
|II^<|&\lesssim \omega\bigg(\int_{y<-\frac{B}{2}}(|\varepsilon|^{q+1}+|Q_{b,\omega}|^{q-1}\varepsilon^2)\varphi_{i,B}'+\int_{y<-\frac{B}{2}}|Q_{b,\omega}'|(|\varepsilon|^q+\varepsilon^2)\varphi_{i,B}\bigg)\\
&\quad+\omega\int_{y<-\frac{B}{2}}\psi_B'|\varepsilon_{y}|\Big(|\varepsilon|^{q-1}+|Q_{b,\omega}|^{q-2}|\varepsilon|+|\varepsilon_{y}||\varepsilon|^{q-1}+|\varepsilon_{y}||Q_{b,\omega}|^{q-1}\Big)\\
&\leq C(\mu_1)\omega\bigg(\int\varphi_{i,B}'\big(|\varepsilon|^{q+1}+|\varepsilon|^q\big)+\int\psi_B'\big(\varepsilon_{y}^2|\varepsilon|^{q-1}+|\varepsilon_{y}||\varepsilon|^{q-1}\big)\bigg)\\
&\quad+\frac{\mu_1}{500}\int_{y<-\frac{B}{2}}(\varepsilon^2+\varepsilon_{y}^2)\varphi_{i,B}'.
\end{align*}
We use (H1)--(H3) and Gagliardo-Nirenberg's inequality to estimate these terms separately. First, we have
\begin{align*}
\omega\int|\varepsilon|^{q+1}\varphi_{i,B}'&\lesssim\omega \|\varepsilon^2(\varphi_{i,B}')^{1/2}\|_{L^{\infty}}^2\int|\varepsilon|^{q-3}\\
&\lesssim\omega\bigg(\int\varepsilon^2\int(\varepsilon^2+\varepsilon_{y}^2)\varphi_{i,B}'\bigg)\Big(\|\varepsilon_{y}\|_{L^2}^{\frac{q-5}{2}}\|\varepsilon\|_{L^2}^{\frac{q-1}{2}}\Big)\\
&\lesssim(\omega\|\varepsilon_{y}\|_{L^2}^m)\bigg(\int\varepsilon^2\bigg)^{\frac{q+3}{4}}\int(\varepsilon^2+\varepsilon_{y}^2)\varphi_{i,B}'\\
&\lesssim\delta(\kappa)\int(\varepsilon^2+\varepsilon_{y}^2)\varphi_{i,B}',
\end{align*}
and 
\begin{align*}
\omega\int|\varepsilon|^{q}\varphi_{i,B}'&\lesssim\omega\big\|\varepsilon^2(\varphi_{i,B}')^{1/2}\big\|_{L^{\infty}}^{\frac{3}{2}}\big\||\varepsilon|^{\frac{1}{2}}(\varphi_{i,B}')^{\frac{1}{4}}\big\|_{L^4}\big\||\varepsilon|^{\frac{2q-7}{2}}\big\|_{L^{4/3}}\\
&\lesssim \omega\bigg(\int\varepsilon^2\bigg)^{\frac{3}{4}}\bigg(\int(\varepsilon^2+\varepsilon_{y}^2)\varphi_{i,B}'\bigg)\|\varepsilon\|_{L^2}^{\frac{q-2}{2}}\|\varepsilon_{y}\|_{L^2}^{\frac{q-5}{2}}\\
&\lesssim\delta(\kappa)\int(\varepsilon^2+\varepsilon_{y}^2)\varphi_{i,B}'.
\end{align*}
From $\psi'\lesssim (\varphi_i')^2$ and \eqref{C316}, we also have:
\begin{align*}
\omega\int\psi_B'\varepsilon^2_{y}|\varepsilon|^{q-1}&\lesssim \omega\big\|\varepsilon^2(\psi_{B}')^{1/4}\big\|_{L^{\infty}}^2\|\varepsilon\|_{L^{\infty}}^{q-5}\int\varepsilon_{y}^2(\psi_{B}')^{1/2}\\
&\lesssim (\omega\|\varepsilon_{y}\|_{L^2}^m)\|\varepsilon\|_{L^2}^{m+2}\bigg(\int(\varepsilon^2+\varepsilon_{y}^2)(\psi_{B}')^{1/2}\bigg)\int\varepsilon_{y}^2(\psi_{B}')^{1/2}\\
&\lesssim \delta(\kappa)\int(\varepsilon^2+\varepsilon_{y}^2)\varphi_{i,B}'+\bigg(\int\varepsilon_{y}^2(\psi_{B}')^{1/2}\bigg)^2\\
&\lesssim \delta(\kappa)\bigg(\int\varepsilon^2_{yy}\psi_B'+\int(\varepsilon^2+\varepsilon_{y}^2)\varphi_{i,B}'\bigg),
\end{align*}
and 
\begin{align*}
\omega\int\psi_B'|\varepsilon_{y}||\varepsilon|^{q-1}&\lesssim\omega\big\|\varepsilon^2(\psi_{B}')^{1/4}\big\|_{L^{\infty}}^{\frac{3}{2}}\big\|\varepsilon_{y}(\psi_B')^{\frac{1}{4}}\big\|_{L^2}\big\||\varepsilon|^{q-4}\big\|_{L^2}\\
&\lesssim (\omega\|\varepsilon_{y}\|_{L^2}^m)\|\varepsilon\|_{L^2}^{m+2}\bigg(\int(\varepsilon^2+\varepsilon_{y}^2)(\psi_{B}')^{1/2}\bigg)\bigg(\int\varepsilon_{y}^2(\psi_{B}')^{1/2}\bigg)^{\frac{1}{2}}\\
&\lesssim\delta(\kappa)\int(\varepsilon^2+\varepsilon_{y}^2)\varphi_{i,B}'+\bigg(\int\varepsilon_{y}^2(\psi_{B}')^{1/2}\bigg)^{\frac{3}{2}}\\
&\leq\frac{\mu_1}{1000}\int(\varepsilon^2+\varepsilon_{y}^2)\varphi_{i,B}'+C(\mu_1)\bigg(\int\varepsilon_{y}^2(\psi_{B}')^{1/2}\bigg)^{2}\\
&\leq\frac{\mu_1}{500}\bigg(\int\varepsilon^2_{yy}\psi_B'+\int(\varepsilon^2+\varepsilon_{y}^2)\varphi_{i,B}'\bigg).
\end{align*}
In conclusion, we have
\begin{equation}
\label{C317}
|II^<|\leq\frac{\mu_1}{50}\bigg(\int\varepsilon^2_{yy}\psi_B'+\int(\varepsilon^2+\varepsilon_{y}^2)\varphi_{i,B}'\bigg).
\end{equation}

For $II^{>}$, we know that $\psi_{B}'\equiv0$ for $y>-\frac{B}{2}$. Using Sobolev Embedding, we have
$$\|\varepsilon\|^2_{L^{\infty}(y>-\frac{B}{2})}\lesssim\|\varepsilon\|_{L^{2}(y>-\frac{B}{2})}\|\varepsilon_{y}\|_{L^{2}(y>-\frac{B}{2})}\leq\mathcal{N}_2\leq 1.$$
Thus, we have
\begin{equation}\label{C318}
|II^{>}|\lesssim\omega\bigg(\int\varepsilon^2\varphi_{i,B}'+\int(Q_{b,\omega})_{y}\varepsilon^2\varphi_{i,B}\bigg)\lesssim_B \delta(\kappa)\int(\varepsilon^2+\varepsilon_{y}^2)\varphi_{i,B}'.
\end{equation}
Combining \eqref{C313}, \eqref{C315}, \eqref{C317} and \eqref{C318}, we have
\begin{equation}\label{C319}
f_{1,1}\leq -\mu_0\int(\varepsilon^2+\varepsilon_{y}^2)\varphi_{i,B}'+Cb^4,
\end{equation}
for some universal constant $\mu_0>0$.

Now, let us deal with $f_{1,2}$, it is easy to see that
$$f_{1,2}=\widetilde{I}+\widetilde{II},$$
where 
\begin{align*}
&\widetilde{I}=2\bigg(\frac{\lambda_s}{\lambda}+b\bigg)\int\Lambda Q_{b,\omega}\big(-(\psi_B\varepsilon_{y})_{y}+\varepsilon\varphi_{i,B}-\psi_B[(Q_{b,\omega}+\varepsilon)^5-Q_{b,\omega}^5]\big),\\
&\widetilde{II}=2\omega\bigg(\frac{\lambda_s}{\lambda}+b\bigg)\int\Lambda Q_{b,\omega}\psi_B\big((Q_{b,\omega}+\varepsilon)|Q_{b,\omega}+\varepsilon|^{q-1}-Q_{b,\omega}|Q_{b,\omega}^{q-1}|\big).
\end{align*}

The term $\widetilde{I}$ can be estimated by the same argument as in \cite[Propostion 3.1]{MMR1}. Thus, we have
$$|\widetilde{I}|\leq \frac{\mu_0}{500}\int(\varepsilon^2+\varepsilon_{y}^2)\varphi_{i,B}'+Cb^2(\omega^2+b^2).$$
{We mention here that the modulation estimate \eqref{CMS1} in this paper is slightly different from \cite[(2.29)]{MMR1}, $i.e.$ there is an additional term ``$\omega|b|$" on the right hand side of \eqref{CMS1}. This additional term results in the appearance of the term ``$\omega^2b^2$" on the right hand side of the above inequality.}

While for $\widetilde{II}$, we have
$$|\widetilde{II}|\lesssim\omega\bigg|\frac{\lambda_s}{\lambda}+b\bigg|\big(B^{\frac{1}{2}}\mathcal{N}_{i,\rm loc}^{\frac{1}{2}}+\int|\varepsilon|^q\psi_B\big).$$
Using \eqref{CMS1} and the strategy for $f_{1,1}$, we have
$$|\widetilde{II}|\leq \frac{\mu_0}{500}\int(\varepsilon^2+\varepsilon_{y}^2)\varphi_{i,B}'+Cb^2(\omega^2+b^2).$$
Similar argument can be applied to $f_{1,k}$, $k=3,4,5$. Together with \eqref{C319}, we conclude the proof of \eqref{C311}.

{\bf Control of $f_2$.} For $f_2$, we integrate by parts, using \eqref{C320} to get
\begin{align*}
f_2=2\mathcal{J}_{i,j}\int&\varepsilon\varphi_{i,B}\bigg[\big(-\varepsilon_{yy}+\varepsilon-\Delta_{b,\omega}(\varepsilon)\big)_{y}+\bigg(\frac{\lambda_s}{\lambda}+b\bigg)\Lambda Q_{b,\omega}\\
&+\bigg(\frac{x_s}{\lambda}-1\bigg)(Q_{b,\omega}+\varepsilon)_{y}-b_s\frac{\partial Q_{b,\omega}}{\partial b}-\omega_s\frac{\partial Q_{b,\omega}}{\partial \omega}+\Psi_{b,\omega}\bigg].
\end{align*}
We integrate by parts, estimating all terms like we did for $f_{1}$. Together with
$$|\mathcal{J}_{i,j}|\lesssim|J_1|\lesssim \mathcal{N}_2^{\frac{1}{2}}\lesssim \delta(\kappa),$$
we have
\begin{equation}
\label{C321}
|f_2|\lesssim \delta(\kappa)\int(\varepsilon^2+\varepsilon_{y}^2)\varphi_{i,B}'+b^2(\omega^2+b^2).
\end{equation}

{\bf Control of $f_{3}$}. Recall that 
\begin{align*}
f_3=&2\frac{\lambda_s}{\lambda}\int\Lambda\varepsilon\big(-(\psi_B\varepsilon_{y})_{y}+(1+\mathcal{J}_{i,j})\varepsilon\varphi_{i,B}-\psi_B\Delta_{b,\omega}(\varepsilon)\big)\\
&+(\mathcal{J}_{i,j})_s\int\varphi_{i,B}\varepsilon^2-2(j-1)\frac{\lambda_s}{\lambda}\mathcal{F}_{i,j}.
\end{align*}
Integrating by parts%
\footnote{See \cite[Proposition 3.1, Step 5]{MMR1} and \cite[(5.22)]{L1} for more details.}%
, we have
$$f_3=\hat{I}+\hat{II},$$
where
\begin{align*}
\hat{I}=&\frac{\lambda_s}{\lambda}\int[(2-2(j-1))\psi_B-y\psi_B']\varepsilon^2_{y}\\
&-\frac{1}{3}\frac{\lambda_s}{\lambda}\int[(2-2(j-1))\psi_B-y\psi_B']\big[(Q_{b,\omega}+\varepsilon)^6-Q_{b,\omega}^6-6\varepsilon Q_{b,\omega}^5\big]\\
&+2\frac{\lambda_s}{\lambda}\int\psi_B\Lambda Q_b\big[(Q_{b,\omega}+\varepsilon)^5-Q_{b,\omega}^5-5\varepsilon Q_{b,\omega}^4\big]\\
&+(\mathcal{J}_{i,j})_s\int\varepsilon^2\varphi_{i,B}-\frac{\lambda_s}{\lambda}(1+\mathcal{J}_{i,j})\int y\varphi_{i,B}'\varepsilon^2-2(j-1)\frac{\lambda_s}{\lambda}(1+\mathcal{J}_{i,j})\int\varepsilon^2 \varphi_{i,B},
\end{align*}
and
\begin{align*}
\hat{II}=&\frac{2\omega}{q+1}\frac{\lambda_s}{\lambda}\int\bigg[\bigg(\frac{q+3}{q-1}-2(j-1)\bigg)\psi_B-y\psi_B'\bigg]\times\Big[|Q_{b,\omega}+\varepsilon|^{q+1}\\
&\qquad\quad-|Q_{b,\omega}|^{q+1}-(q+1)\varepsilon Q_{b,\omega}|Q_{b,\omega}|^{q-1}\Big]\\
&-2\omega\frac{\lambda_s}{\lambda}\int\psi_B\Lambda Q_b\big[(Q_{b,\omega}+\varepsilon)|Q_{b,\omega}+\varepsilon|^{q-1}-Q_{b,\omega}|Q_{b,\omega}|^{q-1}-q\varepsilon |Q_{b,\omega}|^{q-1}\big].
\end{align*}

Similarly, we can use the same strategy as in \cite[Proposition 3.1]{MMR1} to estimate $I$, which leads to:
\begin{equation}
\label{310}
|\hat{I}|\lesssim \delta(\kappa)\int(\varepsilon^2+\varepsilon_{y}^2)\varphi_{i,B}'+b^2(\omega^2+b^2).
\end{equation}

{More precisely, we can rewrite $\hat{I}$ as following
\begin{align*}
\hat{I}=&\frac{\lambda_s}{\lambda}\int[2(2-j)\psi_B-y\psi_B']\varepsilon_y^2\\
&-\frac{1}{3}\frac{\lambda_s}{\lambda}\int[2(2-j)\psi_B-y\psi_B']\times\big[(Q_{b,\omega}+\varepsilon)^6-Q_{b,\omega}^6-6\varepsilon Q_{b,\omega}^5\big]\\
&+2\frac{\lambda_s}{\lambda}\int\psi_B\Lambda Q_b\big[(Q_{b,\omega}+\varepsilon)^5-Q_{b,\omega}^5-5\varepsilon Q_{b,\omega}^4\big]\\
&+\frac{1}{i}\bigg[(\mathcal{J}_{i,j})_s-2(j-1)(1+\mathcal{J}_{i,j})\frac{\lambda_s}{\lambda}\bigg]\int(i\varphi_{i,B}-y\varphi_{i,B}')\varepsilon^2\\
&+\frac{1}{i}\bigg[(\mathcal{J}_{i,j})_s-(2(j-1)+i)(1+\mathcal{J}_{i,j})\frac{\lambda_s}{\lambda}\bigg]\int y\varphi_{i,B}'\varepsilon^2\\
=&\hat{I}_1+\hat{I}_2,
\end{align*}
where
$$\hat{I}_2=\frac{1}{i}\bigg[(\mathcal{J}_{i,j})_s-(2(j-1)+i)(1+\mathcal{J}_{i,j})\frac{\lambda_s}{\lambda}\bigg]\int y\varphi_{i,B}'\varepsilon^2.$$

We also denote by $\hat{I}_k^{<,\sim,>}$, $k=1,2$, the integration over $y<-B/2$, $|y|<B/2$, $y>B/2$ respectively.

For integration over $|y|<B/2$, the estimate is straightforward, we have
\begin{equation*}
|\hat{I}_1^{\sim}|+|\hat{I}_2^{\sim}|\lesssim\delta(\kappa)\int_{|y|<B/2}(\varepsilon_y^2+\varepsilon^2)\varphi_{i,B}'.
\end{equation*}
 While for $y<-B/2$, using \eqref{CMS1}, we have
\begin{align*}
&|\hat{I}_1^<|+|\hat{I}_2^<|\\
&\lesssim (|b|+\mathcal{N}_{i,\rm loc}^{\frac{1}{2}})\int _{y<-B/2}(\psi_B+|y|\phi'_{i,B}+\varphi_{i,B})(\varepsilon_y^2+\varepsilon^2)+|y|\varphi_{i,B}'\varepsilon^2\\
&\lesssim (|b|+\mathcal{N}_{i,\rm loc}^{\frac{1}{2}})\Bigg[\int_{y<-B/2}\varepsilon_y^2\varphi_{i,B}'+\int_{y<-B/2}|y|\varphi_{i,B}'\varepsilon^2\Bigg]\\
&\lesssim (|b|+\mathcal{N}_{i,\rm loc}^{\frac{1}{2}})\Bigg[\int\varepsilon_y^2\varphi_{i,B}'+\bigg(\int_{y<-B/2}y^{100}e^{\frac{y}{B}}\varepsilon^2\bigg)^{\frac{1}{100}}\bigg(\int_{y<-B/2}e^{\frac{y}{B}}\varepsilon^2\bigg)^{\frac{99}{100}}\Bigg]\\
&\lesssim (|b|+\mathcal{N}_{i,\rm loc}^{\frac{1}{2}})\times\bigg(\int\varepsilon^2_y\varphi'_{i,B}+\mathcal{N}_{i,\rm loc}^{\frac{99}{100}}\bigg)\leq \delta(\kappa)\int(\varepsilon_y^2+\varepsilon^2)\varphi_{i,B}'+Cb^4.
\end{align*}

Now, for $y>B/2$, we first have 
$$i\varphi_{i,B}-y\varphi_{i,B}'=0,$$
for all $y>B$. Hence
$$|\hat{I}_1^>|\lesssim  (|b|+\mathcal{N}_{i,\rm loc}^{\frac{1}{2}})\int(\varepsilon^2_y+\varepsilon^2)\varphi_{i,B}'\lesssim \delta(\kappa)\int(\varepsilon_y^2+\varepsilon^2)\varphi_{i,B}'.$$

Next, for $\hat{I}_2^>$, we know from \eqref{CLOL} that
\begin{align*}
\bigg|(\mathcal{J}_{i,j})_s-(2(j-1)+i)(1+\mathcal{J}_{i,j})\frac{\lambda_s}{\lambda}\bigg|&=\frac{4(j-1)+2i}{(1-J_1)^{4(j-1)+2i+1}}\bigg|(J_1)_s-\frac{\lambda_s}{2\lambda}(1-J_1)\bigg|\\
&\lesssim |b|+\mathcal{N}_{i,\rm loc}.
\end{align*}
Together with \eqref{C32}, \eqref{C33} and \eqref{C39}, we have:
\begin{align*}
|\hat{I}_2^>|&\lesssim (|b|+\mathcal{N}_{i, \rm loc}^{\frac{1}{2}})\bigg(1+\frac{1}{\lambda^{\frac{10}{9}}}\bigg)\mathcal{N}_{i,\rm loc}^{\frac{8}{9}}\\
&\lesssim|b|\big(1+\delta(\kappa)|b|^{-\frac{5}{9}}\big)\mathcal{N}_{i,\rm loc}^{\frac{8}{9}}+\mathcal{N}_{i,\rm loc}\big(1+\delta(\kappa)\mathcal{N}_{i,\rm loc}^{-\frac{5}{9}}\big)\mathcal{N}_{i,\rm loc}^{\frac{8}{9}}\\
&\lesssim\delta(\kappa)\int(\varepsilon_y^2+\varepsilon^2)\varphi_{i,B}'+b^2(\omega^2+b^2).
\end{align*}

Combining the above estimates, we obtain \eqref{310}.
}

Finally, for $\hat{II}$, from the following fact
$$\psi_B+\bigg|\bigg(\frac{q+3}{q-1}-2(j-1)\bigg)\psi_B-y\psi_B'\bigg|\lesssim_B \varphi_{i,B}',$$
we have
\begin{align*}
|\hat{II}|\lesssim\omega\bigg|\frac{\lambda_s}{\lambda}\bigg|\int(|\varepsilon|^{q+1}+|\varepsilon|^q+\varepsilon^2)\varphi_{i,B}'.
\end{align*}
Using $|\lambda_s/\lambda|\lesssim\delta(\kappa)$ and the strategy for $f_{1,1}$, we have:
$$|\hat{II}|\lesssim\delta(\kappa)\int(\varepsilon^2+\varepsilon_{y}^2)\varphi_{i,B}'.$$

In conclusion, we have
\begin{equation}
\label{C322}
|f_3|\lesssim \delta(\kappa)\int(\varepsilon^2+\varepsilon_{y}^2)\varphi_{i,B}'+b^2(\omega^2+b^2).
\end{equation}

{\bf Control of $f_4$.} From \eqref{C22} and \eqref{CMF2}, we have
$$|(Q_{b,\omega})_s|\lesssim |b_s|\bigg|\frac{\partial Q_{b,\omega}}{\partial b}\bigg|+|\omega_s|\bigg|\frac{\partial Q_{b,\omega}}{\partial \omega}\bigg|\leq (\omega+|b|)(|b|+\mathcal{N}_{i,\rm loc}^{\frac{1}{2}})\lesssim\delta(\kappa).$$
Using the Sobolev bounds \eqref{C314} and the strategy for $f_{1,1}$, we have
\begin{align}\label{C323}
|f_4|&\lesssim\delta(\kappa)\bigg(\int(\omega|\varepsilon|^q+|\varepsilon|^5+\varepsilon^2)\varphi_{i,B}'\bigg)\nonumber\\
&\lesssim\delta(\kappa)\bigg(\int\varepsilon^2_{yy}\psi_B'+\int(\varepsilon^2+\varepsilon_{y}^2)\varphi_{i,B}'\bigg).
\end{align}

{\bf Control of $f_5$.} From \eqref{CMS1} we know that 
$$|\omega_s|=m\omega\bigg|\frac{\lambda_s}{\lambda}\bigg|\lesssim \delta(\kappa).$$
Thus, the Sobolev bounds \eqref{C314} and the strategy for $f_{1,1}$, we have 
\begin{align}\label{C324}
|f_5|&\lesssim\delta(\kappa)\int(\omega|\varepsilon|^{q+1}+\varepsilon^2)\varphi_{i,B}'\nonumber\\
&\lesssim\delta(\kappa)\bigg(\int\varepsilon^2_{yy}\psi_B'+\int(\varepsilon^2+\varepsilon_{y}^2)\varphi_{i,B}'\bigg).
\end{align}

Combining \eqref{C321}, \eqref{C322}, \eqref{C323} and \eqref{C324}, we conclude the proof of \eqref{C312}, hence the proof of Proposition \ref{CP4}.
\end{proof}

\subsection{Dynamical control of the tail on the right} In order to close the bootstrap bound (H3), we need the dynamical control of the $L^2$ tail on the right introduced in \cite{MMR1}. More precisely, we choose a smooth function
$$\varphi_{10}(y)=
\begin{cases}
0,\text{ for }y<0,\\
y^{10},\text{ for }y>1,
\end{cases}
\quad\varphi_{10}'\geq0.$$
Then we have
\begin{lemma}[Dynamical control of the tail on the right, \cite{MMR1}]\label{CL4}Under the assumption of Proposition \ref{CP4}, there holds:
\begin{equation}\label{C37}
\frac{1}{\lambda^{10}}\frac{d}{ds}\bigg(\lambda^{10}\int\varphi_{10}\varepsilon^2\bigg)\lesssim_B\mathcal{N}_{1,\rm loc}+b^2.
\end{equation}
\end{lemma}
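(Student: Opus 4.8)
The plan is to compute the $s$-derivative of the localized quantity $\lambda^{10}\int\varphi_{10}\varepsilon^2$ directly from the equation \eqref{CMEQ} of $\varepsilon$, exactly along the lines of the proof of \cite[Lemma 3.2]{MMR1}, keeping track of the extra terms generated by the saturated perturbation. Writing $\frac{d}{ds}\big(\lambda^{10}\int\varphi_{10}\varepsilon^2\big) = 10\lambda^{10}\frac{\lambda_s}{\lambda}\int\varphi_{10}\varepsilon^2 + \lambda^{10}\frac{d}{ds}\int\varphi_{10}\varepsilon^2$, I would substitute $\varepsilon_s = (L_\omega\varepsilon)_y - b\Lambda\varepsilon + (\text{RHS of }\eqref{CMEQ})$ into $\frac{d}{ds}\int\varphi_{10}\varepsilon^2 = 2\int\varphi_{10}\varepsilon\varepsilon_s$ and integrate by parts.

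First I would treat the leading term $2\int\varphi_{10}\varepsilon(L_\omega\varepsilon)_y$. The key structural point, as in \cite{MMR1}, is that $\varphi_{10}$ is supported on $y>0$ and $\varphi_{10}'\geq 0$, so the dangerous cubic-in-$\varepsilon$ terms from the linear part produce, after integration by parts, a leading term of the form $-\int(3\varphi_{10}'\varepsilon_y^2 + \dots)$ with a favorable sign, plus terms localized where $\varphi_{10}''$, $\varphi_{10}'''$ are supported (i.e.\ $0<y<1$), which are controlled by $\mathcal{N}_{1,\rm loc}$; the potential terms $\mathcal{Q}_\omega^4\varphi_{10}'\varepsilon^2$ and the new term $q\omega|\mathcal{Q}_\omega|^{q-1}\varphi_{10}'\varepsilon^2$ are exponentially localized near the origin and hence also bounded by $\mathcal{N}_{1,\rm loc}$ (for $\omega$ the extra factor only helps). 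The scaling term $-2b\int\varphi_{10}\varepsilon\Lambda\varepsilon$ combines with $10\frac{\lambda_s}{\lambda}\int\varphi_{10}\varepsilon^2$ and, using $\frac{\lambda_s}{\lambda} = -b + O(\cdots)$ from \eqref{CMS1} together with $y\varphi_{10}' \sim \varphi_{10}$ on $y>1$, one checks the $b$-coefficient in front of $\int\varphi_{10}'\varepsilon^2$ has the right (small) size, so this is absorbed. The modulation terms $(\frac{\lambda_s}{\lambda}+b)\int\varphi_{10}\varepsilon\Lambda Q_{b,\omega}$, $(\frac{x_s}{\lambda}-1)\int\varphi_{10}\varepsilon(Q_{b,\omega}+\varepsilon)_y$, $b_s\int\varphi_{10}\varepsilon\partial_b Q_{b,\omega}$, $\omega_s\int\varphi_{10}\varepsilon\partial_\omega Q_{b,\omega}$ are all handled using \eqref{CMS1}, \eqref{CMS2}: the profile factors are exponentially decaying (note $P_\omega$ decays like $e^{-y/2}$ for $y>0$ by \eqref{C2001}, and the cutoff piece $b\chi_b P_\omega$ contributes harmlessly since $\chi_b\equiv 1$ for $y>-1$), so these pair against $\mathcal{N}_{1,\rm loc}^{1/2}$ and yield $O(\mathcal{N}_{1,\rm loc}+b^2)$ after Cauchy–Schwarz. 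The source term $\Psi_{b,\omega}$ contributes $2\int\varphi_{10}\varepsilon\Psi_{b,\omega}$; using \eqref{C23} and the exponential decay of $\Psi_{b,\omega}$ on the right (the $(1-\chi_b)$ and $\mathbf{1}_{[-2,-1]}$ pieces live on $y<0$ where $\varphi_{10}=0$), this is $O(b^2)$. Finally the nonlinear terms $-(R_b(\varepsilon))_y$ and $-(R_{\rm NL}(\varepsilon))_y$, after integration by parts, give contributions controlled by the weighted $L^\infty$ bound $\|\varepsilon^2\sqrt{\varphi_{10}'}\|_{L^\infty}\lesssim\delta(\kappa)(\cdots)$ analogous to \eqref{C314}, where the genuinely new piece is the saturated nonlinearity $\omega(\dots)$; here one uses $\omega\|\varepsilon_y\|_{L^2}^m\leq\kappa$ from (H1) exactly as in the control of $f_5$ and $II^<$ in Proposition \ref{CP4} to absorb $\omega\int\varphi_{10}'|\varepsilon|^{q+1}$ and similar terms into $\delta(\kappa)\int\varphi_{10}'(\varepsilon_y^2+\varepsilon^2)\lesssim\delta(\kappa)(\mathcal{N}_{1,\rm loc}+\dots)$.

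The main obstacle I anticipate is bookkeeping rather than any genuinely new difficulty: one must make sure that every term produced by the defocusing perturbation — both the linear correction $q\omega|\mathcal{Q}_\omega|^{q-1}$ inside $L_\omega$ and the nonlinear $\omega|\varepsilon+Q_{b,\omega}|^{q-1}$ terms — is either exponentially localized (hence dominated by $\mathcal{N}_{1,\rm loc}$) or controllable via the supercritical Gagliardo–Nirenberg estimates using the scaling-invariant smallness $\omega\|\varepsilon_y\|_{L^2}^m\leq\kappa$, since without that bound the high power $|\varepsilon|^{q+1}$ on the unbounded-weight region $y>1$ could not be closed. Once all contributions are collected, summing them gives $\frac{1}{\lambda^{10}}\frac{d}{ds}\big(\lambda^{10}\int\varphi_{10}\varepsilon^2\big)\lesssim_B \mathcal{N}_{1,\rm loc}+b^2$, which is \eqref{C37}.
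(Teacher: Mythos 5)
Your proposal is correct and follows the same route as the paper: differentiate $\lambda^{10}\int\varphi_{10}\varepsilon^2$, substitute the equation of $\varepsilon$, integrate by parts using $y\varphi_{10}'=10\varphi_{10}$ for $y\geq 1$ so that the transport term cancels the $10\frac{\lambda_s}{\lambda}\int\varphi_{10}\varepsilon^2$ coming from the $\lambda^{10}$ normalization (note this is an exact cancellation of the two $\Lambda\varepsilon$ contributions $-b\Lambda\varepsilon$ and $(\frac{\lambda_s}{\lambda}+b)\Lambda\varepsilon$ against the normalization, not merely a small-coefficient absorption), and then bound the remaining modulation, source, and nonlinear pieces by $\mathcal{N}_{1,\rm loc}+b^2$. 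The only simplification the paper makes which you did not need to invoke is that for the saturated nonlinearity on $y>0$ one already has $\|\varepsilon\|_{L^\infty(y>0)}\lesssim\mathcal{N}_1^{1/2}\ll1$, so the scaling-invariant bound $\omega\|\varepsilon_y\|_{L^2}^m\leq\kappa$ is not actually required here.
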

\begin{proof}[Proof]
The proof of Lemma \ref{CL4} is exactly the same as \cite[Lemma 3.7]{MMR1}. 

{More precisely, from \eqref{C320}, we have:
\begin{multline*}
\frac{1}{2}\frac{d}{ds}\int\varphi_{10}\varepsilon^2=\int\varphi_{10}\varepsilon\bigg[
\frac{\lambda_s}{\lambda}\Lambda\varepsilon+\big(-\varepsilon_{yy}+\varepsilon-\Delta_{b,\omega}(\varepsilon)\big)_{y}+\bigg(\frac{\lambda_s}{\lambda}+b\bigg)\Lambda Q_{b,\omega}\\
+\bigg(\frac{x_s}{\lambda}-1\bigg)(Q_{b,\omega}+\varepsilon)_{y}-b_s\frac{\partial Q_{b,\omega}}{\partial b}-\omega_s\frac{\partial Q_{b,\omega}}{\partial \omega}+\Psi_{b,\omega}\bigg],
\end{multline*}
where 
$$\Delta_{b,\omega}(\varepsilon)=(Q_{b,\omega}+\varepsilon)^5-Q_{b,\omega}^5-\omega(Q_{b,\omega}+\varepsilon)|Q_{b,\omega}+\varepsilon|^{q-1}+\omega Q_{b,\omega}|Q_{b,\omega}|^{q-1}.$$

We integrate the linear term by parts using the fact that $y\varphi_{10}'=10\varphi_{10}$ for $y\geq1$, and $\varphi_{10}'''\ll\varphi'_{10}$ for $y$ large, to obtain
\begin{align*}
&\int\varphi_{10}\varepsilon\bigg[
\frac{\lambda_s}{\lambda}\Lambda\varepsilon+\big(-\varepsilon_{yy}+\varepsilon)_y\bigg]\\
&=-\frac{1}{2}\frac{\lambda_s}{\lambda}\int y\varphi'_{10}\varepsilon^2-\frac{3}{2}\int\varepsilon^2_y\varphi'_{10}-\frac{1}{2}\int\varepsilon^2\varphi'_{10}+\frac{1}{2}\int\varepsilon^2\varphi'''_{10}\\
&\leq -5\frac{\lambda_s}{\lambda}\int \varphi_{10}\varepsilon^2-\frac{3}{2}\int\varepsilon^2_y\varphi'_{10}-\frac{1}{2}\int\varepsilon^2\varphi'_{10}+C\mathcal{N}_{1,\rm loc}.
\end{align*}

Next, from \eqref{C23}, \eqref{CMS1}, and \eqref{CMS2}, it is easy to obtain:
\begin{align*}
&\Bigg|\int\varphi_{10}\varepsilon\bigg[\bigg(\frac{\lambda_s}{\lambda}+b\bigg)\Lambda Q_{b,\omega}
+\bigg(\frac{x_s}{\lambda}-1\bigg)(Q_{b,\omega}+\varepsilon)_{y}-b_s\frac{\partial Q_{b,\omega}}{\partial b}-\omega_s\frac{\partial Q_{b,\omega}}{\partial \omega}+\Psi_{b,\omega}\bigg]\Bigg|\\
&\lesssim b^2+\mathcal{N}_{1,\rm loc}.
\end{align*}

While for the nonlinear term, we integrate by parts to remove all derivatives on $\varepsilon$ to obtain:
\begin{align*}
\Bigg|\int\varphi_{10}\varepsilon\big[\Delta_{b,\omega}(\varepsilon)\big]_y\Bigg|&\lesssim \int \varphi_{10}\varepsilon^2 e^{-\frac{|y|}{2}}\big(\|\varepsilon\|_{L^{\infty}(y>0)}\big)+\int\varepsilon^6\varphi_{10}'+\omega\int|\varepsilon|^{q+1}\varphi_{10}'\\
&\lesssim \delta(\kappa)\int  \varepsilon^2\varphi_{10}',
\end{align*}
where we use the fact that $|Q_{b,\omega}|+|Q_{b,\omega}'|\lesssim e^{-|y|/2}$ for $y>0$ and
$$\|\varepsilon\|_{L^{\infty}(y>0)}\lesssim \mathcal{N}_{1}\ll1.$$

Hence, we have
$$\frac{d}{ds}\int\varphi_{10}\varepsilon^2+10\frac{\lambda_s}{\lambda}\int\varphi_{10}\varepsilon^2\lesssim b^2+\mathcal{N}_{1,\rm loc},$$
which together with Gronwell's inequality, implies \eqref{C37} immediately.
}
\end{proof}

\section{Rigidity of the dynamics in $\mathcal{A}_{\alpha_0}$ and proof of Theorem \ref{CMT}}
In this section we will classify the behavior of any solution with initial data in $\mathcal{A}_{\alpha_0}$, which directly implies Theorem \ref{CMT}. To begin, we define
\begin{equation}\label{C41}
t^*=\sup\{0<t<+\infty|\text{for all $t'\in[0,t]$, $u(t')\in\mathcal{T}_{\alpha^*,\gamma}$}\}.
\end{equation} 
Assume $0<\gamma\ll\alpha_0\ll\alpha^*\ll1$, then the condition on the initial data, i.e. $u_0\in\mathcal{A}_{\alpha_0}$ implies $t^*>0$.

Next, by Lemma \ref{CL3}, $u(t)$ admits the following geometrical decomposition on $[0,t^*]$:
$$u(t,x)=\frac{1}{\lambda^{\frac{1}{2}}(t)}\big[Q_{b(t),\omega(t)}+\varepsilon(t)\big]\bigg(\frac{x-x(t)}{\lambda(t)}\bigg).$$

The condition $u_0\in\mathcal{A}_{\alpha_0}$ implies:
\begin{gather}
\omega(0)+\|\varepsilon(0)\|_{H^1}+\omega(0)\|\varepsilon_{y}(0)\|^m_{L^2}+|b(0)|+|1-\lambda(0)|\lesssim\delta(\alpha_0),\label{C42}\\
\int_{y>0}y^{10}\varepsilon^2(0)\,dy\leq2.\label{C4013}
\end{gather}
Using H\"{o}lder's inequality, we have:
\begin{equation}\label{C43}
\mathcal{N}_2(0)\lesssim\delta(\alpha_0).
\end{equation}
Then let us fix a $0<\kappa\ll1$ as in Proposition \ref{CP3} and \ref{CP4}, and define
\begin{equation}
t^{**}=\sup\{0<t<t^*|\text{(H1), (H2) and (H3) hold for all }t'\in[0,t]\}.
\end{equation}
Note that from \eqref{C42}--\eqref{C43}, we have $t^{**}>0$. Let $s^*=s(t^*)$, $s^{**}=s(t^{**})$. 

\subsection{Consequence of the monotonicity formula} We derive some crucial estimates from the monotonicity formula introduced in Section 3.
\begin{lemma}\label{CL5}
We have the following:
\begin{enumerate}
	\item Almost monotonicity of the localized Sobolev norm: there exists a universal constant $K_0>1$, such that for $i=1,2$ and $0\leq s_1<s_2\leq s^{**}$, there holds:
	\begin{align}
	&\mathcal{N}_{i}(s_2)+\int_{s_1}^{s_2}\int\big(\varepsilon_{y}^2(s,y)+\varepsilon^2(s,y)\big)\varphi'_{i,B}(y)\,dyds\nonumber\\
	&\leq K_0\bigg[\mathcal{N}_i(s_1)+\sup_{s\in[s_1,s_2]}|b(s)|^3+\sup_{s\in[s_1,s_2]}\omega^3(s)\bigg],\label{C44}\\
	&\frac{\mathcal{N}_i(s_2)}{\lambda^2(s_2)}+\int_{s_1}^{s_2}\frac{1}{\lambda^2(s)}\bigg[\bigg(\int\big(\varepsilon_{y}^2+\varepsilon^2\big)(s)\varphi'_{i,B}\bigg)+b^2(s)\big(|b(s)|+\omega(s)\big)\bigg]\,ds\nonumber\\
	&\leq K_0\Bigg(\frac{\mathcal{N}_i(s_1)}{\lambda^2(s_1)}+\bigg[\frac{b^2(s_1)+\omega^2(s_1)}{\lambda^2(s_1)}+\frac{b^2(s_2)+\omega^2(s_2)}{\lambda^2(s_2)}\bigg]\Bigg).\label{C45}
	\end{align}
	\item Control of $b$ and $\omega$: for all $0\leq s_1<s_2\leq s^{**}$, there holds:
	\begin{equation}
	\label{C46}
	\omega(s_2)+\int_{s_1}^{s_2}b^2(s)\,ds\lesssim\mathcal{N}_1(s_1)+\omega(s_1)+\sup_{s\in[s_1,s_2]}|b(s)|,
	\end{equation}
	\item Control of $\frac{b}{\lambda^2}$: let $c_1=\frac{m}{m+2}G'(0)>0$, where $G$ is the $C^2$ function introduced in \eqref{CLOB}. Then	there exists a universal constant $K_1>1$ such that for all $0\leq s_1<s_2\leq s^{**}$, there holds:
	\begin{align}
	&\bigg|\frac{b(s_2)+c_1\omega(s_2)}{\lambda^2(s_2)}-\frac{b(s_1)+c_1\omega(s_1)}{\lambda^2(s_1)}\bigg|\nonumber\\
	&\leq K_1\bigg(\frac{\mathcal{N}_1(s_1)}{\lambda^2(s_1)}+\frac{b^2(s_1)+\omega^2(s_1)}{\lambda^2(s_1)}+\frac{b^2(s_2)+\omega^2(s_2)}{\lambda^2(s_2)}\bigg).\label{C47}
	\end{align}
	\item Refined control of $\lambda$: let $\lambda_0(s)=\lambda(s)(1-J_1(s))^2$. Then there exists a universal constant $K_2>1$ such that for all $s\in[0,s^{**}]$,
	\begin{equation}\label{C48}
	\bigg|\frac{(\lambda_0)_s}{\lambda_0}+b\bigg|\leq K_2\Big[\mathcal{N}_1+(|b|+\omega)(\mathcal{N}_2^{\frac{1}{2}}+|b|)\Big].
	\end{equation}
\end{enumerate}
\end{lemma}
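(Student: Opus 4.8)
The plan is to prove the four assertions of Lemma~\ref{CL5} by integrating the differential estimates already established, chiefly the monotonicity formulas \eqref{CMF1}--\eqref{CMF2}, the coercivity \eqref{CCOER}, the modulation equations \eqref{CMS1}--\eqref{CMS2}, the laws \eqref{CLOL}--\eqref{CLOBOLS} and the bootstrap hypotheses (H1)--(H3). Throughout, the standing feature is that there is never an \emph{a priori} upper bound on $\lambda(s)$, so all manipulations must be scaling-careful: one integrates either the scaling-invariant functional $\mathcal{F}_{i,1}$ or the $H^1$-scaled functional $\mathcal{F}_{i,2}/\lambda^2$, never dividing by $\lambda$ in an uncontrolled way.

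\textbf{Proof of (1).} For \eqref{C44} I would integrate \eqref{CMF1} in $s$ over $[s_1,s_2]$, use the coercivity \eqref{CCOER} to replace $\mathcal{F}_{i,1}$ by $\mathcal{N}_i$ on both endpoints, and bound the right-hand side error $\int_{s_1}^{s_2} b^2(\omega^2+b^2)\,ds$ by $\big(\sup|b|+\sup\omega\big)\int b^2\,ds$; then absorb $\int_{s_1}^{s_2} b^2\,ds$ via \eqref{C46} (proved below, but logically one proves \eqref{C46} first, or runs the two estimates together, since \eqref{C46} itself only uses \eqref{CMS2} and \eqref{C44} at the crude level $\mathcal{F}_{i,1}\gtrsim 0$). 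The cube $\sup|b|^3$ and $\sup\omega^3$ then come out. For \eqref{C45} the same scheme is applied to \eqref{CMF2}: integrate $\frac{d}{ds}(\mathcal{F}_{i,2}/\lambda^2)$, use \eqref{CCOER} again, and control the right side $\int \lambda^{-2} b^2(\omega^2+b^2)\,ds$; the new feature is that one wants to keep the good term $\int \lambda^{-2} b^2(|b|+\omega)\,ds$ on the left, which is possible because $b^2(\omega^2+b^2)=b^2(|b|+\omega)\cdot O(|b|+\omega)$ and $|b|+\omega\leq\delta(\kappa)$ by (H1), so a fraction of the good term absorbs the error; the endpoint contributions $(b^2+\omega^2)/\lambda^2$ at $s_1$ and $s_2$ account for the boundary terms that are generated when one trades $b^2(|b|+\omega)$ against the $b_s$-type pieces coming from \eqref{CLOB}.

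\textbf{Proof of (2) and (3).} For \eqref{C46}: from \eqref{C28}, $\omega_s=-m\omega\lambda_s/\lambda$, and \eqref{CMS1} gives $\lambda_s/\lambda=-b+O(\cdots)$, so $\omega(s_2)\lesssim\omega(s_1)\exp(\int|b|+\cdots)$ is not quite enough; instead I would use \eqref{CLOB} rewritten as $b_s+2b^2+\omega_s G'(\omega)=\frac{d}{ds}(\text{small})+O(\int\varepsilon^2 e^{-|y|/10}+(\omega+|b|)b^2)$, integrate, and use $\int\varepsilon^2 e^{-|y|/10}\lesssim\mathcal{N}_{1,\rm loc}$ together with \eqref{C44} to bound $\int_{s_1}^{s_2}\mathcal{N}_{1,\rm loc}\,ds\lesssim\mathcal{N}_1(s_1)+\sup|b|^3+\sup\omega^3$; the term $2\int b^2$ is thereby isolated on the left, and $\omega(s_2)$ is recovered from $\omega_s G'(\omega)$ integrating to $G(\omega(s_2))-G(\omega(s_1))\sim c_0(\omega(s_2)-\omega(s_1))$ plus the $b_s$ boundary term $|b(s_2)|+|b(s_1)|\lesssim\sup|b|$. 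For \eqref{C47}: this is exactly the integrated form of \eqref{CLOBOLS}. Note $\omega_s G'(\omega)=\frac{d}{ds}G(\omega)$ and $\frac{d}{ds}\big(\frac{b}{\lambda^2}\big)+\frac{d}{ds}\big(\frac{G(\omega)}{\lambda^2}\big)+2\frac{\lambda_s}{\lambda^3}G(\omega)=\ldots$; since $G(\omega)=c_1\omega+O(\omega^2)$ with $c_1=\frac{m}{m+2}G'(0)$ after combining $G'(0)=c_0$ and the factor $m/(m+2)$ arising from $\omega_s=m\omega b+\cdots$ (this is the algebraic point where the constant $c_1$ is fixed), one gets $\frac{d}{ds}\big(\frac{b+c_1\omega}{\lambda^2}\big)=$ a total derivative of a term bounded by $\mathcal{N}_2^{1/2}(|b|+\omega)/\lambda^2$ plus $\frac{1}{\lambda^2}O(\mathcal{N}_{1,\rm loc}+(|b|+\omega)b^2)$. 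Integrating and using \eqref{C45} to control $\int \lambda^{-2}\mathcal{N}_{1,\rm loc}\,ds$ and $\int\lambda^{-2}b^2(|b|+\omega)\,ds$ by the right-hand side of \eqref{C47}, together with the endpoint bounds $\mathcal{N}_2^{1/2}(|b|+\omega)/\lambda^2\lesssim (b^2+\omega^2+\mathcal{N}_1)/\lambda^2$ at $s_1,s_2$, yields \eqref{C47}.

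\textbf{Proof of (4).} Here $\lambda_0=\lambda(1-J_1)^2$, so $\frac{(\lambda_0)_s}{\lambda_0}=\frac{\lambda_s}{\lambda}+2\frac{(1-J_1)_s}{1-J_1}=\frac{\lambda_s}{\lambda}-2(J_1)_s+O(|J_1||(J_1)_s|)$. By \eqref{CLOL}, $\frac{\lambda_s}{\lambda}+b-2\big((J_1)_s+\frac12\frac{\lambda_s}{\lambda}J_1\big)$ is $\lesssim(\omega+|b|)(\mathcal{N}_2^{1/2}+|b|)+\mathcal{N}_{1,\rm loc}$; substituting and using $|J_1|\lesssim\mathcal{N}_2^{1/2}$ from \eqref{C36}, $|\lambda_s/\lambda|\lesssim\delta(\kappa)$ from \eqref{CMS1}, and $\int\varepsilon^2 e^{-|y|/10}\lesssim\mathcal{N}_1$, the error terms all collapse into $\mathcal{N}_1+(|b|+\omega)(\mathcal{N}_2^{1/2}+|b|)$, which is \eqref{C48}. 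The purpose of this refinement, to be exploited later, is that $\lambda_0$ obeys $(\lambda_0)_s/\lambda_0=-b$ up to a \emph{quadratic} error, with no linear-in-$\varepsilon$ term.

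\textbf{Main obstacle.} The delicate point is the bookkeeping of the quadratic right-hand sides under the absence of any upper bound on $\lambda$: one must consistently choose, for each of the three regimes envisaged, whether to run the scaling-invariant estimate \eqref{C44} or the $H^1$-scaled estimate \eqref{C45}, and to check that the "good" term $\int\lambda^{-2}b^2(|b|+\omega)\,ds$ in \eqref{C45} genuinely dominates the error $\int\lambda^{-2}b^2(\omega^2+b^2)\,ds$ — this is where the smallness $|b|+\omega\le\delta(\kappa)$ from (H1) is essential, and where one must be careful that the boundary terms produced by integrating the total-derivative structure of \eqref{CLOB}--\eqref{CLOBOLS} are exactly of the form $(b^2+\omega^2)/\lambda^2$ at the two endpoints and not something worse. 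The identification of the constant $c_1=\frac{m}{m+2}G'(0)$, while elementary, is the other place where one cannot be cavalier: it comes from combining the $G'(\omega)\omega_s$ term in \eqref{CLOB} with the relation $\omega_s = m\omega b + \cdots$ and the $2b/\lambda^2\cdot(\lambda_s/\lambda)$ term, and getting the algebra right here is what makes the later reintegration of the modulation equations in the three regimes produce the sharp asymptotics claimed in Theorem~\ref{CMT}.
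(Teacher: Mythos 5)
Your outline of \eqref{C44}, \eqref{C46}, and \eqref{C48} is essentially the paper's argument: integrate \eqref{CMF1} and use \eqref{CCOER} for \eqref{C44}, rewrite \eqref{C212} as $\frac{d}{ds}G(\omega)+b^2\leq -b_s+C\mathcal{N}_{1,\rm loc}$ for \eqref{C46}, and combine \eqref{CLOL} with $|J_1|\lesssim\mathcal{N}_2^{1/2}$ for \eqref{C48}. However, your treatments of \eqref{C45} and \eqref{C47} both contain genuine errors.

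For \eqref{C47}, your claim that ``$G(\omega)=c_1\omega+O(\omega^2)$ with $c_1=\frac{m}{m+2}G'(0)$'' is simply false: by Taylor's formula $G(\omega)=G'(0)\omega+O(\omega^2)=c_0\omega+O(\omega^2)$, and $c_1\neq c_0$. The factor $\frac{m}{m+2}$ does not appear in a Taylor coefficient of $G$; it is produced by the algebraic constraint $\omega=\gamma/\lambda^m$, i.e.\ $\lambda^2=(\gamma/\omega)^{2/m}$, which turns the integral $\int_{s_1}^{s_2}\frac{\omega_s G'(\omega)}{\lambda^2}\,ds$ into the exact boundary term $\frac{\Sigma(\omega)}{\lambda^2}\big|_{s_1}^{s_2}$ with $\Sigma(\omega)=\omega^{-2/m}\int_0^\omega x^{2/m}G'(x)\,dx$; a short computation then gives $\Sigma'(0)=G'(0)\big(1-\frac{2}{m+2}\big)=\frac{m}{m+2}G'(0)=c_1$. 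Your proposed route via ``$\omega_s G'(\omega)=\frac{d}{ds}G(\omega)$'' plus ``$2b/\lambda^2\cdot(\lambda_s/\lambda)$'' does not close without this change of variable, and in any case gives the wrong constant.

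For \eqref{C45}, the assertion that ``a fraction of the good term absorbs the error'' misrepresents the structure. The good term $\int_{s_1}^{s_2}\lambda^{-2}b^2(|b|+\omega)\,ds$ does not come from the left side of \eqref{CMF2} at all; the proof first establishes the separate inequality
\begin{equation*}
\int_{s_1}^{s_2}\frac{b^2(\omega+|b|)}{\lambda^2}\,ds\lesssim\int_{s_1}^{s_2}\frac{\mathcal{N}_{1,\rm loc}}{\lambda^2}\,ds+\frac{b^2(s_1)+\omega^2(s_1)}{\lambda^2(s_1)}+\frac{b^2(s_2)+\omega^2(s_2)}{\lambda^2(s_2)},
\end{equation*}
by integrating \eqref{C212} against $|b|/\lambda^2$ and $\omega/\lambda^2$ and running a genuinely nontrivial bootstrap in which the constraint $\omega\lambda^m=\gamma$ is again used to rewrite $\int\omega_s\,\omega G'(\omega)\lambda^{-2}\,ds$ as a boundary term, and in which the structural smallness $\frac{2+m/10}{m+2}<1$ for $m>0$ is what allows the self-referential term $\int\lambda^{-2}\omega b^2$ to be absorbed. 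Your heuristic ``the endpoint contributions account for the boundary terms generated when one trades $b^2(|b|+\omega)$ against the $b_s$-type pieces'' skips precisely this absorption, which is the only hard step of the lemma. Only after this intermediate bound is established does one integrate \eqref{CMF2} and observe $b^2(\omega^2+b^2)\leq\delta(\kappa)b^2(|b|+\omega)$ to conclude.
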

\begin{proof}[Proof]
{\it Proof of \eqref{C44} and \eqref{C46}}. From \eqref{C212}, we have:
$$\frac{d}{ds}G(\omega)+b^2\leq-b_s+C\mathcal{N}_{1,\rm loc}.$$
Integrating from $s_1$ to $s_2$, we have
\begin{align*}
G(\omega(s_2))+\int_{s_1}^{s_2}b^2&\lesssim \int_{s_1}^{s_2} \mathcal{N}_{1,\rm loc}+G(\omega(s_1))+|b(s_2)-b(s_1)|\\
&\lesssim \int_{s_1}^{s_2} \mathcal{N}_{1,\rm loc}+G(\omega(s_1))+\sup_{s\in[s_1,s_2]}|b(s)|
\end{align*}
Since $G(\omega)\sim \omega$, we then obtain \eqref{C46}.

Next, from the monotonicity formula \eqref{CMF1} and \eqref{CCOER} we obtain:
\begin{align}
&\mathcal{N}_{i}(s_2)+\int_{s_1}^{s_2}\int\big(\varepsilon_{y}^2(s,y)+\varepsilon^2(s,y)\big)\varphi'_{i,B}(y)\,dyds\nonumber\\
&\lesssim\mathcal{N}_i(s_1)+\Big[\sup_{s\in[s_1,s_2]}b^2(s)+\sup_{s\in[s_1,s_2]}\omega^2(s)\Big]\int_{s_1}^{s_2}b^2,\label{C410}
\end{align}
Combining \eqref{C46} and \eqref{C410}, we obtain \eqref{C44}. 

{\it Proof of \eqref{C45}}. First, from \eqref{C212} and \eqref{CMS2}, we have
\begin{align}
2\int_{s_1}^{s_2}\frac{|b|^3}{\lambda^2}&\leq\int_{s_1}^{s_2}\bigg[-\frac{|b|b_s-\omega_sG'(\omega)|b|+C\mathcal{N}_{1,\rm loc}+\delta(\kappa)|b|^3}{\lambda^2}\bigg]\nonumber\\
&\leq-\frac{1}{2}\frac{b|b|}{\lambda^2}\bigg|_{s_1}^{s_2}+O\bigg(\int_{s_1}^{s_2}\frac{\mathcal{N}_{1,\rm loc}+\omega b^2}{\lambda^2}\bigg)+\delta(\kappa)\int_{s_1}^{s_2}\frac{|b|^3}{\lambda^2}\label{C411}.
\end{align}
Recall that $\omega=\frac{\gamma}{\lambda^m}$. Then from \eqref{CMS1} we have:
\begin{align}
\int_{s_1}^{s_2}\frac{\omega b^2}{\lambda^2}&=-\int_{s_1}^{s_2}\frac{\lambda_s}{\lambda}\frac{\omega b}{\lambda^2}+\int_{s_1}^{s_2}\frac{\omega b}{\lambda^2}\bigg(\frac{\lambda_s}{\lambda}+b\bigg)\nonumber\\
&\leq\frac{1}{m+2}\int_{s_1}^{s_2}\bigg(\frac{\omega}{\lambda^2}\bigg)_sb+\delta(\kappa)\int_{s_1}^{s_2}\frac{\omega b^2}{\lambda^2}+O\bigg(\int_{s_1}^{s_2}\frac{\mathcal{N}_{1,\rm loc}}{\lambda^2}\bigg)\nonumber\\
&=\frac{1}{m+2}\int_{s_1}^{s_2}\frac{\omega}{\lambda^2}(-b_s)+\delta(\kappa)\int_{s_1}^{s_2}\frac{\omega b^2}{\lambda^2}\nonumber\\
&\quad+O\bigg(\int_{s_1}^{s_2}\frac{\mathcal{N}_{1,\rm loc}}{\lambda^2}+\frac{b^2(s_1)+\omega^2(s_1)}{\lambda^2(s_1)}+\frac{b^2(s_2)+\omega^2(s_2)}{\lambda^2(s_2)}\bigg).\label{C412}
\end{align}
From \eqref{C212} and \eqref{CMS2}, we have:
\begin{align}
\int_{s_1}^{s_2}\frac{\omega}{\lambda^2}(-b_s)&\leq\int_{s_1}^{s_2}\frac{\omega}{\lambda^2}\bigg[\bigg(2+\frac{m}{10}\bigg)b^2+\omega_sG'(\omega)+C(m)\mathcal{N}_{1,\rm loc}\bigg]\nonumber\\
&\leq \bigg(2+\frac{m}{10}\bigg)\int_{s_1}^{s_2}\frac{\omega b^2}{\lambda^2}+\int_{s_1}^{s_2}\frac{\omega_s\omega G'(\omega)}{(\gamma/\omega)^{2/m}}\nonumber\\
&\quad+O\bigg(\int_{s_1}^{s_2}\frac{\mathcal{N}_{1,\rm loc}}{\lambda^2}+\frac{b^2(s_1)+\omega^2(s_1)}{\lambda^2(s_1)}+\frac{b^2(s_2)+\omega^2(s_2)}{\lambda^2(s_2)}\bigg).\label{C413}
\end{align}
From \eqref{CMS1} and \eqref{CMS2} again, we have:
\begin{align}
\bigg|\int_{s_1}^{s_2}\frac{\omega_s\omega G'(\omega)}{(\gamma/\omega)^{2/m}}\bigg|=\frac{|M(\omega(s_2))-M(\omega(s_1))|}{\gamma^{2/m}}\lesssim\frac{\omega^2(s_1)}{\lambda^2(s_1)}+\frac{\omega^2(s_2)}{\lambda^2(s_2)},\label{C415}
\end{align}
where
$$M(\omega)=\int_{0}^\omega x^{1+2/m}G'(x)\,dx\sim\omega^{2+2/m}.$$
Therefore, combining \eqref{C412}--\eqref{C415}, we have
\begin{align}\label{C416}
\int_{s_1}^{s_2}\frac{\omega b^2}{\lambda^2}\leq&\bigg(\frac{2+m/10}{m+2}+\delta(\kappa)\bigg)\int_{s_1}^{s_2}\frac{\omega b^2}{\lambda^2}\nonumber\\
&+O\Bigg(\int_{s_1}^{s_2}\frac{\mathcal{N}_{1,\rm loc}}{\lambda^2}+\bigg[\frac{b^2(s_1)+\omega^2(s_1)}{\lambda^2(s_1)}+\frac{b^2(s_2)+\omega^2(s_2)}{\lambda^2(s_2)}\bigg]\Bigg).
\end{align}
Taking $\kappa>0$ small enough, from \eqref{C411} and \eqref{C416} we have
\begin{align}\label{C417}
\int_{s_1}^{s_2}\frac{b^2(\omega+|b|)}{\lambda^2}\lesssim\int_{s_1}^{s_2}\frac{\mathcal{N}_{1,\rm loc}}{\lambda^2}+\bigg[\frac{b^2(s_1)+\omega^2(s_1)}{\lambda^2(s_1)}+\frac{b^2(s_2)+\omega^2(s_2)}{\lambda^2(s_2)}\bigg],
\end{align}
Now, integrating the monotonicity formula \eqref{CMF2}, we have:
\begin{align*}
&\frac{\mathcal{N}_i(s_2)}{\lambda^2(s_2)}+\int_{s_1}^{s_2}\frac{1}{\lambda^2(s)}\bigg[\bigg(\int\big(\varepsilon_{y}^2+\varepsilon^2\big)(s)\varphi'_{i,B}\bigg)\bigg]\,ds\nonumber\\
&\lesssim \frac{\mathcal{N}_i(s_1)}{\lambda^2(s_1)}+\delta(\kappa)\int_{s_1}^{s_2}\frac{b^2(s)\big(\omega(s)+|b(s)|\big)}{\lambda^2(s)}\,ds,
\end{align*}
which implies \eqref{C45} immediately.

{\it Proof of \eqref{C47}}. The proof of \eqref{C47} based on integrating the equation of $\frac{b}{\lambda^2}$, i.e. \eqref{CLOBOLS}. More precisely, from \eqref{CMS1}, \eqref{CLOBOLS} and the fact that $|J|\lesssim \mathcal{N}_{1,\rm loc}^{\frac{1}{2}}$ (recall that $J$ given by \eqref{C204} is a well localized $L^2$ scalar product), we have:
\begin{align*}
&\bigg|\bigg(\frac{b}{\lambda^2}e^{J}\bigg)_s+\frac{\omega_sG'(\omega)}{\lambda^2}e^J\bigg|=\bigg|\bigg(\frac{b}{\lambda^2}\bigg)_s+\frac{b}{\lambda^2}J_s+\frac{\omega_sG'(\omega)}{\lambda^2}\bigg|e^J\nonumber\\
&\lesssim \bigg|\frac{\lambda_s}{\lambda}\frac{b}{\lambda^2}J\bigg|+O\Bigg(\frac{1}{\lambda^2}\bigg(\int \varepsilon^2e^{-\frac{|y|}{10}}+(\omega+|b|)b^2\bigg)\Bigg)\nonumber\\
&\lesssim\frac{b^2}{\lambda^2}|J|+O\Bigg(\frac{1}{\lambda^2}\bigg(\int \varepsilon^2e^{-\frac{|y|}{10}}+(\omega+|b|)b^2\bigg)\Bigg)\nonumber\\
&\lesssim O\bigg(\frac{1}{\lambda^2}\big(\mathcal{N}_{1,\rm loc}+(\omega+|b|)b^2\big)\bigg).
\end{align*}
We integrate this estimate in time using \eqref{C45} to get
\begin{align}
\Bigg|\bigg[\frac{b}{\lambda^2}e^J\bigg]_{s_1}^{s_2}+\int_{s_1}^{s_2}\frac{\omega_sG'(\omega)}{\lambda^2}e^J\Bigg|\lesssim\frac{\mathcal{N}_1(s_1)}{\lambda^2(s_1)}+\bigg[\frac{b^2(s_1)+\omega^2(s_1)}{\lambda^2(s_1)}+\frac{b^2(s_2)+\omega^2(s_2)}{\lambda^2(s_2)}\bigg].\label{C418}
\end{align}
Note that $|e^J-1|\leq2|J|\lesssim \mathcal{N}_{1,\rm loc}^{\frac{1}{2}}$. Together with \eqref{C45}, we have
\begin{align}
\bigg[\frac{b}{\lambda^2}e^J\bigg]_{s_1}^{s_2}&=\frac{b}{\lambda^2}\bigg|_{s_1}^{s_2}+\bigg|\bigg[\frac{b}{\lambda^2}\mathcal{N}^{\frac{1}{2}}_{1,\rm loc}\bigg]_{s_1}^{s_2}\bigg|\nonumber\\
&=\frac{b}{\lambda^2}\bigg|_{s_1}^{s_2}+O\bigg(\frac{\mathcal{N}_1(s_1)}{\lambda^2(s_1)}+\frac{b^2(s_1)+\omega^2(s_1)}{\lambda^2(s_1)}+\frac{b^2(s_2)+\omega^2(s_2)}{\lambda^2(s_2)}\bigg).\label{C419}
\end{align}
Next, from \eqref{CMS2}, \eqref{C45} and $|J|\lesssim \mathcal{N}_{1,\rm loc}^{\frac{1}{2}}$, we have
\begin{align}
&\bigg|\int_{s_1}^{s_2}\frac{\omega_sG'(\omega)}{\lambda^2}(e^J-1)\bigg|\lesssim\int_{s_1}^{s^2}\frac{(b^2+\omega^2)b^2+\mathcal{N}_{1,\rm loc}}{\lambda^2}\nonumber\\
&\lesssim\frac{\mathcal{N}_1(s_1)}{\lambda^2(s_1)}+\frac{b^2(s_1)+\omega^2(s_1)}{\lambda^2(s_1)}+\frac{b^2(s_2)+\omega^2(s_2)}{\lambda^2(s_2)}.\label{C420}
\end{align}
Finally, recall $\omega=\gamma/\lambda^m$, so we have:
$$\int_{s_1}^{s_2}\frac{\omega_sG'(\omega)}{\lambda^2}=\int_{s_1}^{s_2}\frac{\omega_sG'(\omega)}{(\gamma/\omega)^{2/m}}=\frac{\Sigma(\omega)}{\lambda^2}\bigg|_{s_1}^{s_2},$$
where
\begin{gather*}
\Sigma(\omega):=\frac{1}{\omega^{2/m}}\int_{0}^{\omega}x^{2/m}G'(x)\,dx.
\end{gather*}
Recall that $G$ is the $C^2$ function introduced in \eqref{CLOB}. We then have $\Sigma\in C^2$ and $c_1=\Sigma'(0)=\frac{m}{m+2}G'(0)>0$. Hence, we have
\begin{equation}
\label{C421}
\int_{s_1}^{s_2}\frac{\omega_sG'(\omega)}{\lambda^2}=\frac{c_1\omega}{\lambda^2}\bigg|_{s_1}^{s_2}+O\bigg(\frac{\omega^2(s_1)}{\lambda^2(s_1)}+\frac{\omega^2(s_2)}{\lambda^2(s_2)}\bigg),
\end{equation}

Combining \eqref{C418}--\eqref{C421}, we conclude the proof of \eqref{C47}.

{\it Proof of \eqref{C48}}. From \eqref{C36}, we have
$$\bigg|\frac{\lambda}{\lambda_0}-1\bigg|\lesssim|J_1|\lesssim\mathcal{N}_{2}^{\frac{1}{2}}\lesssim \delta(\kappa),$$
thus we obtain from \eqref{CLOL}:
\begin{align*}
\bigg|\frac{(\lambda_0)_s}{\lambda_0}+b\bigg|&=\bigg|\frac{1}{1-J_1}\bigg[(1-J_1)\frac{\lambda_s}{\lambda}+b-2(J_1)_s\bigg]-\frac{J_1}{1-J_1}b\bigg|\\
&\lesssim\int\varepsilon^2e^{-\frac{|y|}{10}}+(|b|+\omega)(\mathcal{N}^{\frac{1}{2}}_2+|b|).
\end{align*}
This concludes the proof of \eqref{C48}, hence the proof of Lemma \ref{CL5}.
\end{proof}

\subsection{Rigidity dynamics in $\mathcal{A}_{\alpha_0}$} In this part, we will give a specific classification for the asymptotic behavior of solution with initial data in $\mathcal{A}_{\alpha_0}$.

We first introduce the separation time $t_1^*$: 
\begin{align}
&t_1^*=0, \text{ if }|b(0)+c_1\omega(0)|\geq C^*\big(\mathcal{N}_1(0)+b^2(0)+\omega^2(0)\big),\nonumber\\
&t_1^*=\sup\Big\{0<t<t^*\big|\text{for all }t'\in[0,t],\nonumber\\
&\qquad\qquad\quad|b(t')+c_1\omega(t')|\leq C^*\big(\mathcal{N}_1(t')+b^2(t')+\omega^2(t')\big)\Big\},\;\text{otherwise},
\end{align}
where%
\footnote{Recall that $K_0$, $K_1$, $K_2$ and $c_1$ were introduced in Lemma \ref{CL5}.} 
\begin{equation}\label{C4010}
C^*=100(K_1+K_0K_2)>0.
\end{equation}
Then we have:
\begin{proposition}[Rigidity Dynamics]\label{CP5}
There exist universal constants $0<\gamma\ll\alpha_0\ll\alpha^*\ll1$ and $C^*>1$ such that the following holds. Let $u_0\in\mathcal{A}_{\alpha_0}$, and $u(t)$ be the corresponding solution to \eqref{CCPG}. Then we have:

\noindent (1) The following trichotomy holds:
\begin{itemize}
\item{\bf(Blow down):} If $t_1^*=t^*$, then $t_1^*=t^*=T=+\infty$ with, 
\begin{align}
&|b(t)|+\mathcal{N}_2(t)\rightarrow0,\quad\text{as }t\rightarrow+\infty,\label{C403}\\
&\lambda(t)\sim t^{\frac{2}{q+1}},\; x(t)\sim t^{\frac{q-3}{q+1}},\quad\text{as }t\rightarrow+\infty\label{C404}.
\end{align}

\item{\bf (Exit):} If $t_1^*<t^*$ with 
$$b(t^*_1)+c_1\omega(t^*_1)\leq -C^*\big(\mathcal{N}_1(t_1^*)+b^2(t^*_1)+\omega^2(t^*_1)\big),$$
then $t^*<T=+\infty$. In particular,
\begin{equation}\label{C405}
\inf_{\lambda_0>0,\lambda_0^{-m}\gamma<\omega^*,x_0\in\mathbb{R}}\bigg\|u(t^*)-\frac{1}{\lambda_0^{\frac{1}{2}}}\mathcal{Q}_{\lambda_0^{-m}\gamma}\bigg(\frac{x-x_0}{\lambda_0}\bigg)\bigg\|_{L^2}=\alpha^*.
\end{equation}
Moreover, we have
\begin{equation}\label{C406}
b(t^*)\leq-C(\alpha^*)<0,\quad\lambda(t^*)\geq\frac{C(\alpha^*)}{\delta(\alpha_0)}\gg1.
\end{equation}

\item{\bf (Soliton):} If $t_1^*<t^*$ with 
$$b(t^*_1)+c_1\omega(t^*_1)\geq C^*\big(\mathcal{N}_1(t_1^*)+b^2(t^*_1)+\omega^2(t^*_1)\big),$$
then $t^*=T=+\infty$. Moreover, we have:
\begin{align}
&\mathcal{N}_2(t)+|b(t)|\rightarrow0,\quad\text{as }t\rightarrow+\infty,\label{C407}\\
&\lambda(t)=\lambda_{\infty}\big(1+o(1)\big),\quad x(t)=\frac{t}{\lambda_{\infty}^2}\big(1+o(1)\big),\quad\text{as }t\rightarrow+\infty,\label{C408}
\end{align}
for some $\lambda_{\infty}\in(0,+\infty)$.
\end{itemize}

\noindent (2) All of the three scenarios introduced in (1) are known to occur. Moreover, the initial data sets which lead to the (Soliton) and (Exit) case are open in $\mathcal{A}_{\alpha_0}$ (under the topology of $H^1\cap L^2(y_+^{10}dy)$).
\end{proposition}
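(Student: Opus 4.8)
The plan is to run the rigidity scheme of Martel--Merle--Rapha\"el, the new ingredient being that the role of the formal number $L_0$ of \eqref{CFDS} is played by the almost-conserved quantity $D(s):=\bigl(b(s)+c_1\omega(s)\bigr)/\lambda^2(s)$, whose sign selects the three regimes. \emph{Step 1 (bootstrap).} I would first upgrade the a priori bounds (H1)--(H3) of Proposition \ref{CP4} to strict inequalities on $[0,s^{**}]$, which forces $t^{**}=t^*$, so that the geometric decomposition, the modulation equations of Proposition \ref{CP3}, and all of Lemma \ref{CL5} hold on $[0,t^*)$. The inputs are \eqref{C42}--\eqref{C43}: mass conservation \eqref{CMC} controls $\|\varepsilon\|_{L^2}$ by $|b|^{1/4}+\omega^{1/2}+\delta(\alpha_0)$; the control \eqref{C46} of $\omega$ and $\int b^2$, the monotonicity \eqref{C44} for $\mathcal{N}_i$, and Lemma \ref{CL4} for the weighted tail close (H1) and (H3), while (H2) uses the $H^1$-scaled estimate \eqref{C45}/\eqref{CMF2} — essential since nothing prevents $\lambda$ from being large. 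One keeps $\gamma\ll\alpha_0\ll\alpha^*\ll\kappa$ so that every $\delta(\cdot)$ loss is absorbed.

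\emph{Step 2 (sign rigidity and the trichotomy).} The heart of the argument is that on any interval where $|b+c_1\omega|\ge C^*(\mathcal{N}_1+b^2+\omega^2)$ with $C^*=100(K_1+K_0K_2)$, estimate \eqref{C47} shows the total variation of $D$ is $\ll|D|$; hence $D$ keeps a fixed sign there, and a continuity/continuation argument propagates $|b+c_1\omega|\gtrsim\mathcal{N}_1+b^2+\omega^2$, \emph{with the sign frozen at $t_1^*$}, up to $t^*$. This yields the trichotomy: $t_1^*=t^*$ means $|b+c_1\omega|\lesssim\mathcal{N}_1+b^2+\omega^2$ throughout (Blow down); $t_1^*<t^*$ with $b(t_1^*)+c_1\omega(t_1^*)>0$ gives (Soliton), and $<0$ gives (Exit). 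In the latter two cases $|b|>c_1\omega$ forces a definite sign of $b$, and $D$ staying away from $0$ forces, in the (Exit) case, $b\le-c\lambda^2$ with $\lambda$ increasing, and in the (Soliton) case $\lambda$ trapped in a compact set $[\lambda_-,\lambda_+]\subset(0,\infty)$.

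\emph{Step 3 (asymptotics).} In the (Blow down) case \eqref{C46} gives $\int_0^{s^*}b^2<\infty$; with $|b|\lesssim\omega$ and $\lambda_s/\lambda=-b\approx c_1\omega>0$ one gets $\lambda\uparrow+\infty$, hence $\omega=\gamma\lambda^{-m}\to0$, $b\to0$, and $\mathcal{N}_2\to0$ (integrate \eqref{CMF1}, use \eqref{C44}); then $\|\varepsilon\|_{L^2}\to0$ by \eqref{CMC}, so the solution never exits, $t^*=+\infty$, and reintegrating $\lambda_s/\lambda\sim c_1\gamma\lambda^{-m}$, $x_s/\lambda\sim1$, $ds/dt=\lambda^{-3}$ with $m=\tfrac{q-5}{2}$ yields \eqref{C403}--\eqref{C404}. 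In the (Exit) case, $t^*=+\infty$ together with $b\le-c\lambda^2$ and $\lambda$ increasing would drive $|b|$ out of $[0,\kappa]$ in finite $s$, contradicting (H1); hence $t^*<+\infty$, $u(t^*)\in\partial\mathcal{T}_{\alpha^*,\gamma}$, giving \eqref{C405}, and combining the tube-boundary condition with \eqref{CMC} forces $|b(t^*)|$ bounded below by some $C(\alpha^*)$, while the almost-conservation of $D$ and $|D(0)|\lesssim\delta(\alpha_0)$ then force $\lambda(t^*)$ to be large, giving \eqref{C406}. In the (Soliton) case, $\int b^2<\infty$ and $|b_s|$ bounded give $b\to0$, $\int\mathcal{N}_{1,\mathrm{loc}}<\infty$ gives $\mathcal{N}_2\to0$, and then $D\to D_\infty>0$ with $b\to0$ forces $\lambda\to\lambda_\infty=(c_1\gamma/D_\infty)^{1/(m+2)}$, whence \eqref{C407}--\eqref{C408} and, since $\varepsilon\to0$ in $H^1_{\mathrm{loc}}$ and $Q_{b,\omega}\to\mathcal{Q}_{\lambda_\infty^{-m}\gamma}$, the convergence \eqref{C11}.

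\emph{Step 4 (part (2), and the main obstacle).} Openness of the (Soliton) and (Exit) sets in $\mathcal{A}_{\alpha_0}$ follows because the defining condition is the \emph{strict} inequality $b(t_1^*)+c_1\omega(t_1^*)\gtrless\pm C^*(\mathcal{N}_1+b^2+\omega^2)$ attained at $t_1^*<t^*$, which persists under the continuous dependence of $(\lambda,b,x,\varepsilon)$ on the data (Lemma \ref{CL3}(2)). For existence, the exact solitary wave $\mathcal{Q}_\gamma$ lies in (Soliton), and any $u_0\in\mathcal{A}_{\alpha_0}$ with $t_1^*=0$ and $b(0)+c_1\omega(0)$ large and negative (a small negative $b$-perturbation of $\mathcal{Q}_\gamma$) lies in (Exit); since $\mathcal{A}_{\alpha_0}$ is connected and both sets are open and nonempty, their complement is nonempty and consists of (Blow down) solutions. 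The main difficulty is that Steps 1 and 2 must be run \emph{simultaneously}: improving (H1)--(H3) relies on $\int b^2$ and on the decay of $\mathcal{N}_i$, which themselves become available only once the sign of $b+c_1\omega$ is pinned down; and the absence of any upper bound on $\lambda$ forces the systematic use of the $\lambda^{-2}$-weighted monotonicity in the (Blow down) and (Exit) regimes. Extracting the sharp rates in \eqref{C404} then requires a careful reintegration of the modulation equations keeping the $\Sigma(\omega)\sim c_1\omega$ correction from the proof of \eqref{C47} visible.
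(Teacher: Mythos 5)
Your overall strategy is correct and coincides with the paper's: the almost-conserved quantity $D(s)=(b(s)+c_1\omega(s))/\lambda^2(s)$ selects the regime via its sign, the trichotomy is organized by the separation time $t_1^*$, the monotonicity formulas give the decay of $\mathcal{N}_i$, and openness/nonemptiness follow by continuity of the flow and the mass constraint. However, there is a genuine gap in Step 1 as applied to the (Soliton) case, and a secondary gap in the (Blow down) asymptotics.

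\textbf{The Soliton bootstrap gap.} You assert that one can directly ``upgrade the a priori bounds (H1)--(H3) to strict inequalities on $[0,s^{**}]$, which forces $t^{**}=t^*$'' in all three cases. This is exactly what fails in the (Soliton) regime, and the paper spends the bulk of its effort here. The obstruction: (H2)--(H3) are inhomogeneous in the scaling ($\lambda^{-2}$ and $\lambda^{-10}$ weights), and by the time $t_1^*$ where separation occurs, there is no a priori \emph{upper} bound on $\lambda(t_1^*)$ (it could be arbitrarily large after a long pre-separation near-blow-down phase). The dynamical control of the tail \eqref{C37} integrated from $s_1^*$ produces a factor $\lambda^{10}(s_1^*)/\lambda^{10}(s)$ which cannot be absorbed without first knowing $\lambda$ is trapped near $\lambda(t_1^*)$ --- but trapping is what one is trying to prove. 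The paper resolves this circularity by exploiting the pseudo-scaling symmetry \eqref{C1}: it renormalizes at $t_1^*$ so that $\bar\lambda(0)=1$, replaces (H1)--(H3) with scale-adapted versions $\rm(H1)'$--$\rm(H3)'$, and only then can the bootstrap close. It then needs a second separation time $\bar t_2^*$ (where $|\bar b|$ drops below $\tfrac{1}{100}c_1\bar\omega$) and the explicit ODE comparison Lemma \ref{CL6} applied to the differential inequality
$$\Big(\tfrac{1}{\bar\lambda_0^2}\Big)_{\bar s}+C\bar\gamma\Big(\tfrac{1}{\bar\lambda_0^2}\Big)^{1+m/2}\sim\bar\ell^*$$
in order to pin $\bar\lambda$ in a compact interval. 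None of this machinery appears in your sketch: you simply assert that ``$D$ staying away from $0$ forces $\lambda$ trapped in a compact set,'' but obtaining that trap is the heart of the matter and requires an argument your outline does not contain.

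\textbf{The Blow down rate.} To extract $\lambda(t)\sim t^{2/(q+1)}$ you integrate $\lambda_s/\lambda\sim c_1\gamma\lambda^{-m}$, i.e.\ $(\lambda^m)_s\sim mc_1\gamma+O(\lambda^m\mathcal{N}_1)$. The $O$-term is not negligible unless one knows $\int_0^\infty\lambda^m(s)\mathcal{N}_1(s)\,ds<\infty$. The paper proves this in \eqref{C4011} by a refined $\lambda^m$-weighted version of the scale-invariant monotonicity \eqref{CMF1}, which is a nontrivial additional estimate. Your phrase ``careful reintegration keeping the $\Sigma(\omega)$ correction visible'' gestures at the issue but omits the actual mechanism that makes the error integrable. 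Your existence argument for (Exit) (perturbing $\mathcal{Q}_\gamma$ so that $b(0)+c_1\omega(0)$ is negative) is plausible but different from, and less clean than, the paper's, which simply notes that any $u_0\in\mathcal{A}_{\alpha_0}$ with $\|u_0\|_{L^2}<\|Q\|_{L^2}$ must lie in (Exit) because mass conservation \eqref{C2006} rules out (Soliton) and (Blow down).
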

\begin{remark}
It is easy to see Proposition \ref{CP5} implies Theorem \ref{CMT} immediately.
\end{remark}
\begin{remark}
The constant $C^*$ chosen here is not sharp. We can replace it by some slightly different ones.
\end{remark}

\begin{proof}[Proof of Proposition \ref{CP5}]

{The basic idea of the proof is to show that the assumptions (H1)--(H3) introduced in Proposition \ref{CP4} hold for all $t\in[0,t^*)$ ($i.e.$ as long as the solution is close to the soliton manifold). And then together with the estimates obtained in Lemma \ref{CL5}, we can show that the error term $\varepsilon$ does not perturb the ODE system, and hence the parameters $(b,\lambda,x)$ has the same asymptotic behavior as the formal system \eqref{CFDS}, which concludes the proof of Theorem \ref{CMT}. 

In the (Blow down) and (Exit) cases, this is done by improving the estimates in (H1)--(H3) on $[0,t^{**}]$ (recall that $t^{**}$ is the largest time $t$ that (H1)--(H3) hold on $[0,t]$), and then a standard bootstrap argument shows that $t^{**}=t^{*}$, $i.e.$ (H1)--(H3) hold on $[0,t^*)$. While in the (Soliton) case, it seems hard to improve all the estimates on $[0,t^{**}]$. But, fortunately, we can use a similar bootstrap argument to show that some modified assumptions $\rm(H1)'$, $\rm (H2)'$, $\rm(H3)'$ hold on $[0,t^*]$, which is also enough to obtain the asymptotic behavior of the parameters.
} 

\subsubsection*{1. The blow down case.} Assume that $t^*_1=t^*$, i.e. for all $t\in[0,t^*]$,
\begin{equation}
\label{C422}
|b(t)+c_1\omega(t)|\leq C^*\big(\mathcal{N}_1(t)+b^2(t)+\omega^2(t)\big).
\end{equation}

{\noindent {\bf Step 1: Closing the bootstrap.}} 

We claim that $t^{**}=t^*$, i.e. the bootstrap assumptions (H1), (H2) and (H3) hold on $[0,t^*]$. 

Indeed, we claim that for all $s\in[0,s^{**})$, 
\begin{gather}
\omega(s)+|b(s)|+\|\varepsilon(s)\|_{L^2}+\mathcal{N}_2(s)\lesssim\delta(\alpha_0),\label{C423}\\
\lambda(s)\geq \frac{4}{5},\label{C424}\\
\int_{y>0}y^{10}\varepsilon^2(s)\,dy\leq 5.\label{C425}
\end{gather}
Then choosing $\alpha^*$, $\alpha_0$, $\gamma$ such that $0<\gamma\ll\alpha_0\ll\alpha^*\ll\kappa$, we can see that \eqref{C423}--\eqref{C425} imply $t^{**}=t^*$ immediately.

First, from \eqref{C422} we have: for all $s\in[0,s^{**})$,
\begin{gather}
b(s)\leq 4C^*\mathcal{N}_1(s)-|b(s)|,\label{C426}\\
|b(s)|\lesssim \mathcal{N}_1(s)+\omega(s),\label{C427}\\
\omega(s)\lesssim\mathcal{N}_1(s)+|b(s)|.\label{C428}
\end{gather}
Then we apply \eqref{C426} and \eqref{C428} to \eqref{C48} to obtain:
\begin{align*}
\frac{(\lambda_0)_s}{\lambda_0}&\geq-b-\mathcal{N}_1-C(\omega+|b|)(\mathcal{N}_2^{\frac{1}{2}}+|b|)\\
&\geq -5C^*\mathcal{N}_1+|b|-\delta(\kappa)|b|\\
&\gtrsim -\mathcal{N}_{1}.
\end{align*}
Integrating this from $s_1$ to $s_2$ for some $0\leq s_1<s_2\leq s^{**}$, and using \eqref{C44}, we have:
\begin{equation}
\label{C429}
\lambda(s_2)\geq\frac{9}{10}\lambda(s_1).
\end{equation}
In particular, we know from \eqref{C42} that for all $s\in[0,s^{**})$
\begin{equation}\label{C402}
\lambda(s)\geq\frac{9}{10}\lambda(0)\geq\frac{4}{5}.
\end{equation}
By our choice of $\gamma$, we have
\begin{equation}
\label{C430}
\omega(s)=\frac{\gamma}{\lambda^m(s)}\leq 2^m \gamma\lesssim\delta(\alpha_0).
\end{equation}
Next, from \eqref{C43}, \eqref{C44} and \eqref{C427}, we have for all $s\in[0,s^{**})$
$$\mathcal{N}_2(s)\lesssim \mathcal{N}_2(0)+\sup_{s'\in[0,s]}\mathcal{N}^3_2(s')+\sup_{s'\in[0,s]}\omega^3(s'),$$
which together with \eqref{C427} implies that 
$$|b(s)|+\mathcal{N}_2(s)\lesssim\delta(\alpha_0),$$
for all $s\in[0,s^{**})$.
Then from \eqref{CMC} and the condition on the initial data, we obtain
\begin{equation}
\|\varepsilon(s)\|_{L^2}\lesssim\delta(\alpha_0).
\end{equation}
From \eqref{CEC} and the condition on the initial data, we have
$$\frac{\|\varepsilon_{y}(s)\|^2_{L^2}}{\lambda^2(s)}\lesssim \delta(\alpha_0)+\frac{\|\varepsilon_{y}(s)\|^{m+2}_{L^2}}{\lambda^{m+2}(s)}.$$
Since $\|\varepsilon_{y}(0)\|_{L^2}\lesssim \delta(\alpha_0)$, $\lambda(0)\sim 1$, from a standard bootstrap argument, we have:
$$\frac{\|\varepsilon_{y}(s)\|^2_{L^2}}{\lambda^2(s)}\lesssim\delta(\alpha_0).$$
Thus, we have
\begin{equation}\label{C4014}
\omega(s)\|\varepsilon_{y}(s)\|^m_{L^2}\lesssim\gamma\frac{\|\varepsilon_{y}(s)\|^m_{L^2}}{\lambda^m(s)}\lesssim \delta(\alpha_0).
\end{equation}

Finally, let us integrate \eqref{C37} from $0$ to $s$, using \eqref{C4013}, \eqref{C44}, \eqref{C46}, \eqref{C429} and \eqref{C402} to obtain
\begin{align*}
\int\varphi_{10}\varepsilon^2(s)\,dy&\leq\frac{\lambda^{10}(0)}{\lambda^{10}(s)}\int\varphi_{10}\varepsilon^2(0)\,dy+C\int_0^s\frac{\lambda^{10}(s')}{\lambda^{10}(s)}\big(\mathcal{N}_{1,\rm loc}(s')+b^2(s')\big)\,ds'\\
&\leq 3+C\int_{0}^s\big(\mathcal{N}_{1,\rm loc}(s')+b^2(s')\big)\,ds'\leq3+\delta(\kappa)<5.
\end{align*}

We therefore conclude the proof of \eqref{C423}--\eqref{C425}, and obtain $t^{**}=t^*$. Since $0<\alpha_0\ll\alpha^*$, the estimate \eqref{C423} implies that $t^{**}=t^*=T=+\infty$.\\

{\noindent{\bf Step 2: Proof of \eqref{C403} and \eqref{C404}.}} 

We first claim that $\lambda(t)\rightarrow+\infty$ as $t\rightarrow+\infty$. Let
$$S=\int_0^{+\infty}\frac{1}{\lambda^3(\tau)}\,d\tau\in(0,+\infty].$$
From \eqref{CMS2}, \eqref{C44}, \eqref{C46} and \eqref{C428} we have:
\begin{gather*}
\int_0^{+\infty}|\omega_t|\,dt=\int_0^{S}|\omega_s|\,ds\lesssim\int_0^S(\mathcal{N}_{2,\rm loc}(s)+b^2(s))\,ds<+\infty,\\
\int_0^{+\infty}\frac{\gamma^2}{\lambda^{3+2m}(t)}\,dt=\int_0^S\omega^2(s)\,ds\lesssim\int_0^S(\mathcal{N}_{2,\rm loc}(s)+b^2(s))\,ds<+\infty.
\end{gather*}
This leads to $\lambda(t)\rightarrow+\infty$ as $t\rightarrow+\infty$, or equivalently $\lim_{t\rightarrow+\infty}\omega(t)=0$.

Next, we claim that $S=+\infty$. Otherwise, $b(s),\omega(s)\in L^1([0,S))$. Applying this to \eqref{C48}, we obtain:
$$\frac{(\lambda_0)_s}{\lambda_0}\in L^1([0,S)).$$
But since $\lambda_0(s)\rightarrow+\infty$ as $s\rightarrow S$, we have:
$$\bigg|\int_0^{S-\delta_0}\frac{(\lambda_0)_s}{\lambda_0}(s')\,ds'\bigg|=\bigg|\log\bigg(\frac{\lambda_0(S-\delta_0)}{\lambda_0(0)}\bigg)\bigg|\rightarrow+\infty,$$
as $\delta_0\rightarrow0$, which leads to a contradiction. 

Now we can prove \eqref{C403} and \eqref{C404}. To do this, we claim that for all $s\in[0,+\infty)$,
\begin{equation}\label{C4011}
\lambda^{m}(s)\mathcal{N}_2(s)+\int_{0}^s\lambda^{m}(s')(\varepsilon^2(s')+\varepsilon_{y}^2(s'))\varphi_{2,B}'\,ds'\lesssim 1.
\end{equation}
From \eqref{CMF1} we have:
\begin{align}\label{C4012}
\frac{1}{\lambda^{m}}\frac{d}{ds}\bigg(\lambda^{m}\mathcal{F}_{2,1}\bigg)\leq-\mu\int(\varepsilon^2+\varepsilon^2_{y})\varphi_{2,B}'+O(b^4+\omega^2b^2)-m\frac{\lambda_s}{\lambda}\mathcal{F}_{2,1}.
\end{align}
From \eqref{CMS1}, \eqref{CCOER}, \eqref{C39} and \eqref{C402}, we have
\begin{align*}
\bigg|\frac{\lambda_s}{\lambda}\mathcal{F}_{2,1}\bigg|&\lesssim(|b|+\mathcal{N}_{1,\rm loc}^{\frac{1}{2}})\Bigg[\bigg(1+\frac{1}{\lambda^{\frac{10}{9}}(s)}\bigg)\mathcal{N}_{2,\rm loc}^{\frac{8}{9}}+\int\varepsilon^2_{y}\psi_B\Bigg]\\
&\lesssim b^2+\delta(\kappa)\int(\varepsilon^2+\varepsilon^2_{y})\varphi_{2,B}'.
\end{align*}
Injecting this into \eqref{C4012} and integrating from $0$ to $s$, using \eqref{C427} and \eqref{C428}, we have,
\begin{align*}
&\lambda^{m}(s)\mathcal{N}_2(s)+\int_{0}^s\lambda^{m}(s')(\varepsilon^2(s')+\varepsilon_{y}^2(s'))\varphi_{2,B}'\,ds'\\
&\lesssim \int_0^s \lambda^{m}(s')\omega^4(s')\,ds'+\delta(\kappa)\int_0^s\lambda^{m}(s')\mathcal{N}_1(s')\,ds'\\
&\lesssim\gamma\int_0^s \omega^3(s')\,ds'+\delta(\kappa)\int_0^s\lambda^{m}(s')\mathcal{N}_1(s')\,ds'\\
&\lesssim\gamma\int_0^s b^2(s')\,ds'+\delta(\kappa)\int_0^s\lambda^{m}(s')\mathcal{N}_1(s')\,ds'.
\end{align*}
Together with \eqref{C46}, we obtain \eqref{C4011}.

Since $\lambda(s)\rightarrow+\infty$ as $s\rightarrow+\infty$, we have
\begin{equation}\label{C4001}
\mathcal{N}_2(s)\lesssim \lambda^{-m}(s)\rightarrow0\quad\text{as }s\rightarrow+\infty.
\end{equation}

Now, using \eqref{C48}, \eqref{C422} and \eqref{C427}, we have
\begin{align*}
\bigg|-\frac{(\lambda_0)_s}{\lambda_0}+c_1\omega\bigg|&\lesssim\bigg|\frac{(\lambda_0)_s}{\lambda_0}+b\bigg|+|b+c_1\omega|\\ 
&\lesssim\mathcal{N}_{1}+b^2+\omega^2+(|b|+\omega)(\mathcal{N}_2^{\frac{1}{2}}+|b|)\\
&\lesssim \mathcal{N}_1+\delta(\kappa)\omega.
\end{align*}
Multiplying the above inequality by $\lambda_0^m$ and integrating from $0$ to $s$, we obtain
$$-C\int_0^s \lambda_0^m\mathcal{N}_{1}+\frac{1}{2}c_1\gamma s\leq \int_0^s (\lambda_0)_s\lambda_0^{m-1}\leq C\int_0^s \lambda_0^m\mathcal{N}_{1}+2c_1\gamma s.$$
From \eqref{C4011} and $|1-\lambda/\lambda_0|\lesssim \delta(\kappa)$, we obtain
$$\lambda^m(s)\sim s,\quad\text{as }s\rightarrow+\infty.$$
We then have,
$$t(s)=\int_0^s\lambda^3(s')\,ds'\sim s^{\frac{m+3}{m}}=s^{\frac{q+1}{q-5}},\quad\text{as }s\rightarrow+\infty,$$
which implies 
$$\lambda(t)\sim t^{\frac{2}{q+1}},\quad \text{as }t\rightarrow+\infty.$$
Next, from \eqref{C422} and \eqref{C427}, we have
$$b(t)\rightarrow0,\quad\text{as }t\rightarrow+\infty.$$ 
Finally, integrating \eqref{CMS1}, we obtain:
$$x(t)\sim t^{\frac{q-3}{q+1}},\quad\text{as }t\rightarrow+\infty,$$
which concludes the proof of \eqref{C403} and \eqref{C404}.

\subsubsection*{2. The Exit case.} Assume $t_1^*<t^*$ with
\begin{equation}
\label{C432}
b(t^*_1)+c_1\omega(t^*_1)\leq -C^*\big(\mathcal{N}_1(t_1^*)+b^2(t^*_1)+\omega^2(t^*_1)\big).
\end{equation}

{\noindent {\bf Step 1: Closing the bootstrap.}} 

First of all, following the same procedure as in the (Blow down) case, we have for all $s\in[0,s_1^*]$,
\begin{gather}
\omega(s)+|b(s)|+\|\varepsilon(s)\|_{L^2}+\omega(s)\|\varepsilon_{y}(s)\|_{L^2}^m+\mathcal{N}_2(s)\lesssim\delta(\alpha_0),\\
\lambda(s)\geq \frac{4}{5},\\
\int_{y>0}y^{10}\varepsilon^2(s)\,dy\leq 5.
\end{gather}
In particular, we have $t_1^*<t^{**}\leq t^*$. Now, we claim $t^{**}=t^*<T=+\infty$.

To prove this, we use a standard bootstrap argument by improving (H1), (H2) and (H3) on $[t^*_1,t^{**}]$.
Let 
$$\ell^*=\frac{b(t^*_1)+c_1\omega(t^*_1)}{\lambda^2(t^*_1)}<0.$$
It is easy to see that $|\ell^*|\lesssim \delta(\alpha_0)$. Now we observe from \eqref{C47} that for all $s\in[s_1^*,s^{**})$, 
\begin{equation*}
2\ell^*-C^*\frac{b^2(s)+\omega^2(s)}{\lambda^2(s)}\leq\frac{b(s)+c_1\omega(s)}{\lambda^2(s)}\leq\frac{\ell^*}{2}+C^*\frac{b^2(s)+\omega^2(s)}{\lambda^2(s)}.
\end{equation*}
which implies 
\begin{gather}
-b(s)\gtrsim\omega(s)>0,\label{C434}\\
3\ell^*-C\frac{\omega(s)}{\lambda^2(s)}\leq\frac{b(s)}{\lambda^2(s)}\leq \frac{\ell^*}{3}<0.\label{C435}
\end{gather}
We then observe from \eqref{C48} and \eqref{C434} that, 
$$\frac{(\lambda_0)_s}{\lambda_0}\gtrsim-\mathcal{N}_{1,\rm loc},$$
which after integration, yields the almost monotonicity:
\begin{equation}
\label{C436}
\forall s_1^*\leq s_1<s_2\leq s^{**},\quad \lambda(s_2)\geq \frac{9}{10}\lambda(s_1)\geq\frac{1}{2}.
\end{equation}
So we obtain for all $s\in[s^*_1,s^{**})$,
$$\omega(s)+\frac{\omega(s)}{\lambda^2(s)}\lesssim\gamma\lesssim\delta(\alpha_0).$$
Together with \eqref{C45} and \eqref{C435}, we have for all $s\in[s_1^*,s^{**})$,
$$\frac{|b(s)|+\mathcal{N}_2(s)}{\lambda^2(s)}\lesssim \delta(\alpha_0),$$
which improves (H2) if we choose $\alpha_0\ll\kappa$. Next, 
Using the same strategy as in the (Blow down) case, we have for all $s\in[s_1^*,s^{**})$,
$$\int\varphi_{10}\varepsilon^2(s)\,dy\leq 7.$$
Then, (H3) is improved. We now only remains to improve (H1). Since for all $t\in[t^*_1,t^{*})$, $u(t)\in\mathcal{T}_{\alpha^*,\gamma}$. Following the argument in Lemma \ref{CL3}, we have for all $t\in[0,t^*)$, $|b(t)|\lesssim\delta(\alpha^*)$. By \eqref{CMC}, \eqref{C44}, and \eqref{C434}, we have for all $s\in[s^*_1,s^{**})$, 
$$\omega(s)+\|\varepsilon(s)\|_{L^2}+\mathcal{N}_2(s)\lesssim\delta(\alpha^*).$$
Now, following from the same argument as we did for \eqref{C4014}, we have:
$$\omega(s)\|\varepsilon_{y}(s)\|_{L^2}^m\lesssim\delta(\alpha_0).$$ 
Then (H1) is improved, due to our choice of the universal constant, i.e. $\alpha^*\ll\kappa$.

In conclusion, we have proved $t^{**}=t^*.$\\

{\noindent {\bf Step 2: Proof of \eqref{C405} and \eqref{C406}.}} 

We first claim that (Exit) occurs in finite time $t^*<+\infty$. Dividing \eqref{C48} by $\lambda^2_0$, and use \eqref{C434} to estimate on $[t^*_1,t^*)$
$$-\frac{\ell^*}{3}-C\frac{\mathcal{N}_{1,\rm loc}}{\lambda^2}\leq(\lambda_0)_t\leq -3\ell^*+C\frac{\mathcal{N}_{1,\rm loc}}{\lambda^2}.$$
Integrating from $t^*_1$ to $t$, we get
$$\frac{|\ell^*|(t-t^*_1)}{3}-C_1\int_{t^*_1}^t\frac{\mathcal{N}_{1,\rm loc}}{\lambda^2}\leq\lambda_0(t)-\lambda_0(t_1^*)\leq3|\ell^*|(t-t^*_1)+C_2\int_{t_1^*}^t\frac{\mathcal{N}_{1,\rm loc}}{\lambda^2}.$$
From \eqref{C436}, we have
$$\int_{t_1^*}^t\frac{\mathcal{N}_{1,\rm loc}}{\lambda^2}=\int_{s_1^*}^s\lambda\mathcal{N}_{1,\rm loc}\lesssim\lambda(s)\int_{s_1^*}^s\mathcal{N}_{1,\rm loc}\lesssim\delta(\kappa)\lambda(t).$$
Thus, for all $t\in[t^*_1,t^*)$,
$$\frac{1}{4}(|\ell^*|(t-t^*_1)+\lambda_0(t_1^*))\leq\lambda(t)\leq4(|\ell^*|(t-t^*_1)+\lambda_0(t_1^*)).$$
Next, from \eqref{C434}, we have for all $t\in[t^*_1,t^*)$,
$$-100|\ell^*|(|\ell^*|(t-t^*_1)+\lambda_0(t_1^*))^2\leq b(t)\leq-\frac{|\ell^*|}{100}(|\ell^*|(t-t^*_1)+\lambda_0(t_1^*))^2.$$
If $t^*=T=+\infty$, then the above estimate leads to $b(t)\rightarrow-\infty$ as $t\rightarrow+\infty$, which contradicts with the fact that $|b(t)|\lesssim \delta(\alpha^*)$ for all $t\in[t^*_1,t^*)$. Thus, we have $t^*<T=+\infty.$

Finally, since $0<t^*<+\infty$, by the definition of $t^*$, we must have $-b(t^*)\geq C(\alpha^*)>0$. While from \eqref{C434}, we have
$$\lambda^2(t^*)\geq\frac{1}{2}\frac{|b(t^*)|}{|\ell^*|}\gtrsim\frac{C(\alpha^*)}{\delta(\alpha_0)}\gg1,$$
which concludes the proof of \eqref{C405} and \eqref{C406}.

\subsubsection*{3. The Soliton case.} Assume $t_1^*<t^*$ with
\begin{equation}
\label{C437}
b(t^*_1)+c_1\omega(t^*_1)\geq C^*\big(\mathcal{N}_1(t_1^*)+b^2(t^*_1)+\omega^2(t^*_1)\big)>0.
\end{equation}

{\noindent {\bf Step 1: Estimates on the rescaled solution.}} 

Similar to the (Exit) case, we have for all $s\in[0,s_1^*]$,
\begin{gather}
\omega(s)+|b(s)|+\|\varepsilon(s)\|_{L^2}+\omega(s)\|\varepsilon_{y}(s)\|_{L^2}^m+\mathcal{N}_2(s)\lesssim\delta(\alpha_0),\label{C438}\\
\lambda(s)\geq \frac{4}{5},\label{C439}\\
\int_{y>0}y^{10}\varepsilon^2(s)\,dy\leq 5.\label{C440}
\end{gather} 

But here, we can't directly prove that $t^{**}=t^*$ as we did in the (Exit) case. The main difficulty is that we lack some control on the upper bound of $\lambda(t^*_1)$, which makes it hard to improve the bootstrap assumption (H2) and (H3). However, we will see that the bootstrap assumption (H2) and (H3) is related to the scaling symmetry of the problem. If we use the pseudo-scaling rule \eqref{C1} on $[t^*_1,t^*)$ to rescale $\lambda(t_1^*)$ to $1$, then we can get the desired result. Roughly speaking, on $[t^*_1,t^*]$, the bootstrap assumption (H2) and (H3) should be replaced by some other suitable assumptions $\rm(H2)'$ and $\rm(H3)'$.

More precisely, we introduce the following change of coordinates. For all $t\in[t^*_1,t^*)$, let
\begin{gather}
\bar{t}=\frac{t-t^*_1}{\lambda^3(t^*_1)},\;\bar{x}=\frac{x-x(t^*_1)}{\lambda(t^*_1)},\;\bar{\gamma}=\frac{\gamma}{\lambda^m(t^*_1)},\;\bar{t}^*=\frac{t^*-t^*_1}{\lambda^3(t^*_1)}\\
\bar{u}(\bar{t},\bar{x})=\lambda^{\frac{1}{2}}u\big(\lambda^3(t^*_1)\bar{t}+t^*_1,\lambda(t^*_1)\bar{x}+x(t^*_1)\big).
\end{gather} 
Then, from the pseudo-scaling rule \eqref{C1}, $\bar{u}(\bar{t},\bar{x})$ is a solution to the following Cauchy problem:
\begin{equation}\label{C441}\begin{cases}
\partial_{\bar{t}} \bar{u} +(\bar{u}_{\bar{x}\bar{x}}+\bar{u}^5-\bar{\gamma} \bar{u}|\bar{u}|^{q-1})_{\bar{x}}=0, \quad (\bar{t},\bar{x})\in[0,\bar{t}^*)\times\mathbb{R},\\
\bar{u}(0,\bar{x})=Q_{b(t^*_1),\omega(t^*_1)}(\bar{x})+\varepsilon(t^*_1,\bar{x})\in H^1(\mathbb{R}).
\end{cases}
\end{equation}
Moreover, for all $\bar{t}\in[0,\bar{t}^*)$ we define:
\begin{gather}
\bar{\varepsilon}(\bar{t},y)=\varepsilon(\lambda^3(t^*_1)\bar{t}+t^*_1,y),\;\bar{\lambda}(\bar{t})=\frac{\lambda(\lambda^3(t^*_1)\bar{t}+t^*_1)}{\lambda(t^*_1)},\;\bar{\omega}(\bar{t})=\frac{\bar{\gamma}}{\bar{\lambda}^m(\bar{t})},\\
\bar{b}(\bar{t})=b(\lambda^3(t^*_1)\bar{t}+t^*_1),\;\bar{x}(\bar{t})=\frac{x(\lambda^3(t^*_1)\bar{t}+t^*_1)-x(t^*_1)}{\lambda(t^*_1)}.
\end{gather}
Then, from \eqref{CGD}, it is easy to check that 
$$\bar{u}(\bar{t},\bar{x})=\frac{1}{\bar{\lambda}^{\frac{1}{2}}(\bar{t})}\Big[Q_{\bar{b}(\bar{t}),\bar{\omega}(\bar{t})}+\bar{\varepsilon}(\bar{t})\Big]\bigg(\frac{\bar{x}-\bar{x}(\bar{t})}{\bar{\lambda}(\bar{t})}\bigg),$$
with:
$$(\bar{\varepsilon}(\bar{s}),Q_{\bar{\omega}(\bar{s})})=(\bar{\varepsilon}(\bar{s}),\Lambda Q_{\bar{\omega}(\bar{s})})=(\bar{\varepsilon}(\bar{s}),\bar{y}\Lambda Q_{\bar{\omega}(\bar{s})})=0.$$
where $(\bar{s},\bar{y})$ are the scaling invariant variables:
$$\bar{s}=\int_0^{\bar{t}}\frac{1}{\bar{\lambda}^3(\tau)}\,d\tau,\quad \bar{y}=\frac{\bar{x}-\bar{x}(\bar{t})}{\bar{\lambda}(\bar{t})},$$
We then introduce the weighted Sobolev norms:
\begin{gather}
\overline{\mathcal{N}}_i(\bar{s})=\int\bigg(\bar{\varepsilon}_{\bar{y}}^2(\bar{s},\bar{y})\psi_B(\bar{y})+\bar{\varepsilon}^2(\bar{s},\bar{y})\varphi_{i,B}(\bar{y})\bigg)\,d\bar{y},\\
\overline{\mathcal{N}}_{i,\rm loc}(\bar{s})=\int\bar{\varepsilon}^2(\bar{s},\bar{y})\varphi_{i,B}'(\bar{y})\,d\bar{y},
\end{gather}
where $\varphi_{i,B}$ and $\psi_B$ are the weight functions introduced in Section 3. 

From now on, for all $\bar{t}\in[0,\bar{t}^*)$, we let $t=\lambda^3(t^*_1)\bar{t}+t^*_1$. In this setting, we have $\bar{s}(\bar{t})=s(t)-s^*_1$.
Since the pseudo-scaling rule \eqref{C1} is $L^2$ invariant, we have
$$\bar{u}(\bar{t})\in\mathcal{T}_{\alpha^*,\bar{\gamma}}\iff u(t)\in\mathcal{T}_{\alpha^*,\gamma},$$
which yields
$$\bar{t}^*=\sup\{0<\bar{t}<+\infty|\text{for all $t'\in[0,\bar{t}]$, $\bar{u}(t')\in\mathcal{T}_{\alpha^*,\bar{\gamma}}$}\}.$$

Next, let $\kappa>0$ be the universal constant introduced in Proposition \ref{CP3}, Proposition \ref{CP4} and Lemma \ref{CL5}. We then define the following bootstrap assumptions for the rescaled solution $\bar{u}(\bar{t},\bar{x})$. For all $\bar{s}\in[0,\bar{s}(\bar{t}))$:\\
$\rm(H1)'$ Scaling invariant bounds:
\begin{equation}
\bar{\omega}(\bar{s})+|\bar{b}(\bar{s})|+\overline{\mathcal{N}}_2(\bar{s})+\|\bar{\varepsilon}(\bar{s})\|_{L^2}+\bar{\omega}(\bar{s})\|\bar{\varepsilon}_{\bar{y}}(\bar{s})\|_{L^2}^m\leq\kappa;
\end{equation}
$\rm(H2)'$ Bounds related to $H^1$ scaling:
\begin{equation}
\frac{\bar{\omega}(\bar{s})+|\bar{b}(\bar{s})|+\overline{\mathcal{N}}_2(\bar{s})}{\bar{\lambda}^2(\bar{s})}\leq\kappa;
\end{equation}
$\rm(H3)'$ $L^2$ weighted bound on the right:
\begin{equation}
\int_{\bar{y}>0}\bar{y}^{10}\bar{\varepsilon}^2(\bar{s},\bar{y})\,d\bar{y}\leq 50\bigg(1+\frac{1}{\bar{\lambda}^{10}(\bar{s})}\bigg).
\end{equation}

We define $\bar{t}^{**}$ as following:
\begin{equation}
\bar{t}^{**}=\sup\{0<\bar{t}<\bar{t}^*|\text{$\rm(H1)'$, $\rm(H2)'$ and $\rm(H3)'$ hold for all }t'\in[0,\bar{t}]\}.
\end{equation}
Our goal here is to prove that $\bar{t}^{**}=\bar{t}^{*}=+\infty$, which gives us the desired asymptotic behaviors%
\footnote{Since $\lambda(t^*_1)\gtrsim 1$, we know that (H1) is equivalent to $\rm(H1)'$, (H2) is weaker than $\rm (H2)'$, while (H3) is stronger than $\rm (H3)'$. It is hard to determine whether $t^{**}=\lambda^3(t^*_1)\bar{t}^{**}+t^*_1$ holds.}%
. Let $\bar{s}^*=\bar{s}(\bar{t}^*)$, $\bar{s}^{**}=\bar{s}(\bar{t}^{**})$. Since
\begin{equation}\label{C4016}
\bar{\lambda}(0)=1,\;\bar{x}(0)=0,\;\bar{b}(0)=b(t^*_1),\;\bar{\omega}(0)=\omega(t^*_1),\;\bar{\varepsilon}(0,\bar{y})=\varepsilon(t^*_1,\bar{y}),\; \bar{\gamma}\lesssim \gamma, 
\end{equation}
we know from \eqref{C438}--\eqref{C440}, that $\bar{s}^{**}>0$.

On the other hand, on $[0,\bar{s}^{**})$, all conditions of Proposition \ref{CP3}, Proposition \ref{CP4}, Lemma \ref{CL4} and Lemma \ref{CL5} are satisfied for $\bar{u}(\bar{t},\bar{x})$. Repeating the same procedure, we have
\begin{lemma}[Estimates for the rescaled solution] For all $\bar{s}\in[0,\bar{s}^{**})$ or equivalently $s\in[s^*_1,s^*_1+\bar{s}^{**})$, all estimates of Proposition \ref{CP3}, Proposition \ref{CP4}, Lemma \ref{CL4} and Lemma \ref{CL5} hold with $$(t,x,u,\gamma,\lambda(t),b(t),x(t),\omega(t),\varepsilon(t),s,y)$$
replaced by $$(\bar{t},\bar{x},\bar{u},\bar{\gamma},\bar{\lambda}(\bar{t}),\bar{b}(\bar{t}),\bar{x}(\bar{t}),\bar{\omega}(\bar{t}),\bar{\varepsilon}(\bar{t}),\bar{s},\bar{y}).$$
\end{lemma}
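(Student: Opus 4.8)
The plan is to exploit the fact that the entire apparatus of Sections~2--4 was set up to be ``universal'' in the defocusing parameter and \emph{covariant} under the pseudo-scaling \eqref{C1}. Consequently nothing genuinely new has to be proved: the lemma follows by transporting each of the cited statements through the change of variables. I would first record the structural facts. By the pseudo-scaling rule \eqref{C1}, $\bar u$ solves \eqref{C441}, i.e. it is a solution of $(\mathrm{gKdV}_{\bar\gamma})$ with $\bar\gamma=\gamma/\lambda^m(t^*_1)$. Since we are in Step~1 of the (Soliton) regime we already have $\lambda(t^*_1)\geq\frac45$ from \eqref{C439}, hence $0<\bar\gamma\lesssim\gamma\ll\alpha_0\ll\alpha^*\ll\kappa$; in particular every smallness requirement on the defocusing parameter used in Lemma~\ref{CL1}, Proposition~\ref{CP2}, Lemma~\ref{CL2}, Proposition~\ref{CP3}, Proposition~\ref{CP4}, Lemma~\ref{CL4} and Lemma~\ref{CL5} is again met with $\bar\gamma$ in place of $\gamma$, and $\mathcal Q_{\bar\omega}$, $Q_{\bar b,\bar\omega}$, $P_{\bar\omega}$ are well defined as long as $\bar\omega(\bar s)\le\kappa$, which holds by $\rm(H1)'$. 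All constants appearing in those statements ($B$, $\mu$, $\kappa$, $\kappa_0$, $c_0$, $c_1$, $K_0$, $K_1$, $K_2$, \dots) are universal or depend only on $q$, hence are unchanged.

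Next I would check that the geometric decomposition and the dynamical variables transform correctly. From the definitions of $\bar\lambda,\bar b,\bar x,\bar\varepsilon$ and the identity \eqref{CGD} for $u$ one verifies directly that $\bar\varepsilon(\bar t,\bar y)=\bar\lambda^{1/2}(\bar t)\,\bar u(\bar t,\bar\lambda(\bar t)\bar y+\bar x(\bar t))-Q_{\bar b(\bar t),\bar\omega(\bar t)}(\bar y)$, that the orthogonality conditions \eqref{COC} hold for $\bar\varepsilon$ against $\mathcal Q_{\bar\omega}$, $\Lambda\mathcal Q_{\bar\omega}$, $\bar y\Lambda\mathcal Q_{\bar\omega}$, and that the rescaled scaling-invariant variables satisfy $\bar s=s-s^*_1$, $\bar y=y$. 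Hence $\partial_{\bar s}$ corresponds to $\partial_s$ on profiles and on $\bar\varepsilon$, while $\bar\lambda_{\bar s}/\bar\lambda=-\bar b$, $\bar x_{\bar s}/\bar\lambda=1$, $\bar\omega=\bar\gamma/\bar\lambda^m$, so that the modified self-similar structure \eqref{CFDS} is preserved. Since the tube $\mathcal T_{\alpha^*,\cdot}$ is $L^2$-invariant, $\bar t^*$ is the exit time for $\bar u$ exactly as $t^*$ is for $u$, and $\rm(H1)'$--$\rm(H3)'$ are, term by term, precisely the a priori bounds (H1)--(H3) of Proposition~\ref{CP4} written for $\bar u$ on $[0,\bar s^{**})$.

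With these two observations in hand, the actual content of the lemma is essentially automatic. The proofs of Proposition~\ref{CP3}, Proposition~\ref{CP4}, Lemma~\ref{CL4} and Lemma~\ref{CL5} use only the equation satisfied by the solution, the geometric decomposition together with its orthogonality conditions, the smallness of the defocusing parameter, and the a priori bounds (H1)--(H3); crucially, and as stressed in the remarks following Propositions~\ref{CP3} and~\ref{CP4}, no a priori upper bound on the scaling parameter is ever invoked. All of these ingredients are available for $\bar u$ on $[0,\bar s^{**})$ by the preceding two paragraphs, so each of those statements applies verbatim to $\bar u$ with the stated substitution $(t,x,u,\gamma,\lambda,b,x,\omega,\varepsilon,s,y)\mapsto(\bar t,\bar x,\bar u,\bar\gamma,\bar\lambda,\bar b,\bar x,\bar\omega,\bar\varepsilon,\bar s,\bar y)$, which is exactly the assertion of the lemma.

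The main difficulty is therefore not analytic but a matter of careful bookkeeping: one must verify that every estimate in Propositions~\ref{CP3}--\ref{CP4} and Lemmas~\ref{CL4}--\ref{CL5} is either genuinely scale-invariant or scales homogeneously in the scaling parameter (for instance $\mathcal N_i/\lambda^2$, $b/\lambda^2$, the tail bound $\int_{y>0}y^{10}\varepsilon^2$ against $1+\lambda^{-10}$), so that normalizing $\lambda(t^*_1)$ to $\bar\lambda(0)=1$ alters no constant, and that the only place where the absolute size of the scaling parameter enters — through $\omega=\gamma/\lambda^m$ — is faithfully reproduced by $\bar\omega=\bar\gamma/\bar\lambda^m$. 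Once this homogeneity check is done, together with the compatibility $\bar s=s-s^*_1$ of the time variables, the estimates for the rescaled solution follow immediately from the already-proved results.
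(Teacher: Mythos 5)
Your proof is correct and is essentially the same argument the paper uses (the paper compresses it to ``Repeating the same procedure'' together with a remark that the restated estimates are omitted). You make explicit the covariance under the pseudo-scaling \eqref{C1}, the identities $\bar s=s-s_1^*$, $\bar y=y$, $\bar\varepsilon=\varepsilon$, the smallness $\bar\gamma\lesssim\gamma$, and the fact that $\rm(H1)'$--$\rm(H3)'$ are literally (H1)--(H3) for $\bar u$ — precisely the bookkeeping the paper leaves implicit.
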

\begin{remark}
For simplicity, we skip the statement of these similar estimates for $\bar{u}$. We also refer to the equation number of the corresponding inequality for $u(t)$, when we need to use these estimates for $\bar{u}(\bar{t})$.
\end{remark}

{\noindent {\bf Step 2: Closing the bootstrap.}

In this part, we will close the bootstrap argument to show that $\bar{t}^{**}=\bar{t}^*=+\infty.$ This is done through the following steps:
\begin{enumerate}
\item We prove that for $\bar{t}$ large enough, we have $\bar{\omega}(\bar{t})\gg|\bar{b}(\bar{t})|$, which coincides with the formal ODE system \eqref{CFDS} in the (Soliton) region, where we have $\omega(t)$ converges to a positive constant while $b(t)$ converges to $0$, as $t\rightarrow+\infty$. Indeed, if $|\bar{b}(\bar{t})|\gtrsim\bar{\omega}(\bar{t})$ holds for all $\bar{t}\in[0,\bar{t}^{**}]$, we will obtain finite time blow-up if $\bar{b}(0)>0$ or (Exit) behavior if $\bar{b}(0)<0$. Both of them lead to a contradiction.
\item The hardest part of the analysis is to prove that the scaling parameter $\bar{\lambda}$ is bounded from both above and below for all $\bar{t}\in[0,\bar{t}^{**}]$. This is done by proving that%
\footnote{See \eqref{C461} and \eqref{C462} for details.}
$$\bigg(\frac{1}{\bar{\lambda}^2}\bigg)_s+C\bar{\gamma}\bigg(\frac{1}{\bar{\lambda}^2}\bigg)^{1+m/2}\sim \bar{\ell}^*>0.$$
\item The estimates of the rest terms can be done by similar arguments as in the (Blow down) and (Exit) regions.
\end{enumerate}
}

Now we turn to the proof of $\bar{t}^{**}=\bar{t}^*=+\infty$. We first define 
\begin{equation*}
\bar{t}_2^*=
\begin{cases}
0,\;\text{if }|\bar{b}(0)|\leq\frac{1}{100}c_1\bar{\omega}(0),\\
\sup\{0<\bar{t}<\bar{t}^{*}\mid\text{for all $t'\in[0,\bar{t}]$, }|\bar{b}(t')|\geq\frac{1}{100}c_1\bar{\omega}(t')\},\;\text{otherwise.}
\end{cases}
\end{equation*}

Our first observation is that $\bar{t}^*_2<\bar{t}^{*}$. Otherwise, since $\bar{t}^*_2=\bar{t}^*\geq\bar{t}^{**}>0$, we have for all $\bar{t}\in[0,\bar{t}^{**})$, $\bar{b}(\bar{t})\not=0$.

If $\bar{b}(0)>0$, we claim that $\bar{t}^{**}=\bar{t}^*_2=\bar{t}^*=+\infty$. To prove this, we need to improve $\rm(H1)'$, $\rm (H2)'$ and $\rm (H3)'$ on $[0,\bar{t}^{**}]$. Indeed, from the definition of $\bar{t}^*_2$, we have
\begin{equation}
\label{C442}
0<\bar{\omega}(\bar{t})\lesssim \bar{b}(\bar{t}).
\end{equation}
for all $\bar{t}\in[0,\bar{t}^{**})$. Applying this to \eqref{C48}, we have:
$$\frac{(\bar{\lambda}_0)_{\bar{s}}}{\bar{\lambda}_0}\leq-\bar{b}+O(\overline{\mathcal{N}}_{2,\rm loc})+\delta(\kappa)|\bar{b}|.$$
Integrating this from $0$ to $\bar{t}$ using \eqref{C44} and the fact that $\bar{\lambda}(0)=1$, we obtain the almost monotonicity:
\begin{equation}
\label{C443}
\forall0\leq\bar{s}_1<\bar{s}_2\leq \bar{s}^{**},\quad \bar{\lambda}(\bar{s}_2)\leq\frac{10}{9}\bar{\lambda}(\bar{s}_1)\leq \frac{5}{4}.
\end{equation}
On the other hand, we learn from \eqref{C47}, \eqref{C437} and \eqref{C4016}, that for all $\bar{s}\in[0,\bar{s}^{**})$,
\begin{equation}\label{C409}
\frac{99}{100}\bar{\ell}^*-K_1\frac{\bar{b}^2(\bar{s})+\bar{\omega}^2(\bar{s})}{\bar{\lambda}^2(\bar{s})}\leq\frac{\bar{b}(\bar{s})+c_1\bar{\omega}(\bar{s})}{\bar{\lambda}^2(\bar{s})}\leq\frac{101}{100}\bar{\ell}^*+K_1\frac{\bar{b}^2(\bar{s})+\bar{\omega}^2(\bar{s})}{\bar{\lambda}^2(\bar{s})},
\end{equation}
where
$$0<\bar{\ell}^*=\frac{\bar{b}(0)+c_1\bar{\omega}(0)}{\bar{\lambda}^2(0)}=b(t^*_1)+c_1\omega(t^*_1)\lesssim \delta(\alpha_0).$$
Together with \eqref{C442}, we have for all $\bar{s}\in[0,\bar{s}^{**})$,
\begin{equation}
\label{C444}
\frac{\bar{b}(\bar{s})}{\bar{\lambda}^2(\bar{s})}\sim \bar{\ell}^*\lesssim\delta(\alpha_0),\quad \frac{\bar{\omega}(\bar{s})}{\bar{\lambda}^2(\bar{s})}\lesssim \bar{\ell}^*\lesssim\delta(\alpha_0).
\end{equation}
Then from \eqref{C443}, \eqref{C44} and \eqref{C45}, we have for all $\bar{s}\in[0,\bar{s}^{**})$,
\begin{equation}
\label{C445}
\frac{\overline{\mathcal{N}}_2(\bar{s})}{\bar{\lambda}^2(\bar{s})}\lesssim\delta(\alpha_0),\quad\overline{\mathcal{N}}_2(\bar{s})+\bar{\omega}(\bar{s})+|\bar{b}(\bar{s})|\lesssim\bar{\lambda}^2(\bar{s})\bar{\ell}^*+\delta(\alpha_0)\leq\delta(\alpha_0),
\end{equation}
Then, from \eqref{CMC}, \eqref{C438} and following fact $$\bar{u}(0,\bar{x})=Q_{b(t^*_1),\omega(t^*_1)}(\bar{x})+\varepsilon(t^*_1,\bar{x}),$$
we know that
\begin{align}
\label{C446}
\|\bar{\varepsilon}(\bar{s})\|_{L^2}&\lesssim\delta(\alpha_0)+\bigg|\int \bar{u}^2(0)-\int Q^2\bigg|^{\frac{1}{2}}\nonumber\\
&\lesssim \delta(\alpha_0)+\|\varepsilon(t^*_1)\|_{L^2}+|b(t^*_1)|^{\frac{1}{2}}+\omega^{\frac{1}{2}}(t^*_1)\nonumber\\
&\lesssim \delta(\alpha_0).
\end{align} 
Now, from \eqref{CEC} and \eqref{C445}, we have:
$$\bar{\omega}(\bar{s})\|\bar{\varepsilon}_{\bar{y}}(\bar{s})\|_{L^2}^m=\bar{\gamma}\frac{\|\bar{\varepsilon}_{\bar{y}}(\bar{s})\|_{L^2}^m}{\bar{\lambda}^m(\bar{s})}\lesssim \delta(\alpha_0)+\bigg(\bar{\gamma}\frac{\|\bar{\varepsilon}_{\bar{y}}(\bar{s})\|_{L^2}^{m}}{\bar{\lambda}^{m}(\bar{s})}\bigg)^{\frac{m+2}{2}}+|\bar{\gamma}^{\frac{2}{m}}\bar{E}(\bar{u}(0))|^{\frac{m}{2}},$$
where $\bar{E}(\bar{u}(0))$ is the energy of the Cauchy problem \eqref{C441}, i.e.
$$\bar{E}(\bar{u}(0))=\frac{1}{2}\int \bar{u}_{\bar{x}}^2(0)-\frac{1}{6}\int\bar{u}^6(0)+\frac{\bar{\gamma}}{q+1}\int|\bar{u}(0)|^{q+1},$$
Since $$\bar{u}(0,\bar{x})=\lambda^{\frac{1}{2}}(t^*_1)u(t^*_1,\lambda(t^*_1)\bar{x}+x(t^*_1)),$$
from the energy conservation law of \eqref{CCPG} and the condition on the initial data, we have 
$$|\bar{\gamma}^{\frac{2}{m}}\bar{E}(\bar{u}(0))|=\bigg|\gamma^{\frac{2}{m}}\frac{\bar{E}(\bar{u}(0))}{\lambda^2(t^*_1)}\bigg|=|\gamma^{\frac{2}{m}}E(u(t^*_1))|=|\gamma^{\frac{2}{m}}E_0|\lesssim \delta(\alpha_0).$$
Thus, for all $\bar{s}\in[0,\bar{s}^{**})$, we have
$$\bar{\omega}(\bar{s})\|\bar{\varepsilon}_{\bar{y}}(\bar{s})\|_{L^2}^m\lesssim \delta(\alpha_0)+\big(\bar{\omega}(\bar{s})\|\bar{\varepsilon}_{\bar{y}}(\bar{s})\|_{L^2}^m\big)^{1+\frac{m}{2}}.$$
From \eqref{C438} and \eqref{C4016}, we have 
$$\bar{\omega}(0)\|\bar{\varepsilon}_{\bar{y}}(0)\|_{L^2}^m=\omega(s^*_1)\|\varepsilon_{y}(s^*_1)\|_{L^2}^m\lesssim\delta(\alpha_0),$$
then a standard bootstrap argument leads to:
\begin{equation}\label{C4015}
\bar{\omega}(\bar{s})\|\bar{\varepsilon}_{\bar{y}}(\bar{s})\|_{L^2}^m\lesssim \delta(\alpha_0),
\end{equation}
for all $\bar{s}\in[0,\bar{s}^{**})$.

Finally, integrating \eqref{C37}, using \eqref{C44} and \eqref{C443} we obtain:
\begin{align}
\int\varphi_{10}(\bar{y})\bar{\varepsilon}^2(\bar{s},\bar{y})\,d\bar{y}&\leq\frac{\bar{\lambda}^{10}(0)}{\bar{\lambda}^{10}(\bar{s})}\int\varphi_{10}(\bar{y})\bar{\varepsilon}^2(0,\bar{y})\,d\bar{y}+\frac{C}{\bar{\lambda}^{10}(\bar{s})}\int_0^{\bar{s}}\bar{\lambda}^{10}(\overline{\mathcal{N}}_{1,\rm loc}+\bar{b}^2)\nonumber\\
&\leq\frac{1}{\bar{\lambda}^{10}(\bar{s})}\bigg[5+C\bar{\lambda}^{10}(0)\int_0^{\bar{s}}(\overline{\mathcal{N}}_{1,\rm loc}+\bar{b}^2)\bigg]\leq \frac{5+\delta(\kappa)}{\bar{\lambda}^{10}(\bar{s})}.\label{C447}
\end{align}
Combining \eqref{C444}--\eqref{C447}, we conclude that $\bar{t}^{**}=\bar{t}^*$. Since all $H^1$ solution of \eqref{C441} is global in time, we must have $\bar{t}^{**}=\bar{t}^*=+\infty$, provided that $\alpha_0\ll\alpha^*$.
Now we inject \eqref{C444} into \eqref{C48} to obtain:
$$\frac{\bar{\ell}^*}{3}-C\frac{\overline{\mathcal{N}}_{1,\rm loc}}{\bar{\lambda}^2}\leq-(\bar{\lambda}_0)_{\bar{t}}\leq3\bar{\ell}^*+C\frac{\overline{\mathcal{N}}_{1,\rm loc}}{\bar{\lambda}^2}$$
Integrating in time, we have for all $\bar{t}\in[0,+\infty)$
$$0<\bar{\lambda}_0(\bar{t})\leq\bar{\lambda}(0)-\frac{\bar{\ell}^*\bar{t}}{3}+C\int_{0}^{\bar{t}}\frac{\overline{\mathcal{N}}_{1,\rm loc}}{\bar{\lambda}^2}.$$
From \eqref{C443} and \eqref{C44} we have
$$\int_{0}^{\bar{t}}\frac{\overline{\mathcal{N}}_{1,\rm loc}}{\bar{\lambda}^2}=\int_{0}^{\bar{s}}\bar{\lambda}(\tau)\overline{\mathcal{N}}_{1,\rm loc}(\tau)\,d\tau\lesssim \int_{0}^{\bar{s}}\overline{\mathcal{N}}_{1,\rm loc}(\tau)\,d\tau\lesssim \delta(\kappa),$$
which implies that the solution blows up in finite time. This is a contradiction. 

Now we consider the other case $\bar{b}(0)<0$. We claim again that $\bar{t}^*_2=\bar{t}^{**}=\bar{t}^*=+\infty$. It is also done by improving the three  bootstrap assumptions. First, we know from \eqref{C47}, \eqref{C437} and \eqref{C4016} that \eqref{C409} still holds in this case. And the definition of $\bar{t}^*_2$ implies that
\begin{equation}
\label{C448}
0<\bar{\ell}^*\lesssim-\frac{\bar{b}(\bar{s})}{\bar{\lambda}^2(\bar{s})}\sim\frac{\bar{\omega}(\bar{s})}{\bar{\lambda}^2(\bar{s})}.
\end{equation}
Then we apply the fact that $0<\bar{\omega}\lesssim-\bar{b}$ to \eqref{C48} to obtain:
$$\frac{(\bar{\lambda}_0)_{\bar{s}}}{\bar{\lambda}_0}\geq-\frac{1}{2}\bar{b}-O(\overline{\mathcal{N}}_{2,\rm loc}).$$
Integrating in time we have:
\begin{equation}
\label{C449}
\forall0\leq\bar{s}_1<\bar{s}_2\leq \bar{s}^{**},\quad \bar{\lambda}(\bar{s}_2)\geq\frac{9}{10}\bar{\lambda}(\bar{s}_1)\geq \frac{4}{5},
\end{equation}
which yields for all $\bar{s}\in[0,\bar{s}^{**})$,
\begin{equation}
\label{C450}
\bar{\omega}(\bar{s})+\frac{\bar{\omega}(\bar{s})}{\bar{\lambda}^2(\bar{s})}\lesssim \bar{\gamma}\lesssim\delta(\alpha_0).
\end{equation}
From \eqref{C448}, \eqref{C44} and \eqref{C45}, we get 
\begin{equation}
\label{C451}
\overline{\mathcal{N}}_2(\bar{s})+|\bar{b}(\bar{s})|+\frac{\overline{\mathcal{N}}_2(\bar{s})+|\bar{b}(\bar{s})|}{\bar{\lambda}^2(\bar{s})}\lesssim\delta(\alpha_0)
\end{equation}
Using the same argument as we did for \eqref{C446}--\eqref{C447}, we have
\begin{equation}
\label{C452}
\|\bar{\varepsilon}(\bar{s})\|_{L^2}\lesssim \delta(\alpha_0),\quad\bar{\omega}(\bar{s})\|\bar{\varepsilon}_{\bar{y}}(\bar{s})\|_{L^2}^m\lesssim \delta(\alpha_0),\quad\int\varphi_{10}\bar{\varepsilon}^2(\bar{s})\,d\bar{y}\leq 7.
\end{equation}
Combining \eqref{C450}--\eqref{C452}, we conclude that $\bar{t}^{**}=\bar{t}^{*}=+\infty$. But from \eqref{C448}, we have
\begin{equation}\label{C453}
-\bar{b}\sim\bar{\omega}(\bar{s})\gtrsim\bar{\gamma}^{\frac{2}{m+2}}(\bar{\ell}^*)^{\frac{m}{m+2}}>0.
\end{equation}
On the other hand, from \eqref{C46}, we have
$$\int_{0}^{\bar{s}^{**}}\bar{b}^2(s')\,ds'\lesssim 1.$$
The above 2 estimates imply that
$$\bar{s}^{**}=\int_0^{+\infty}\frac{1}{\bar{\lambda}^3(\tau)}\,d\tau<+\infty.$$
which leads to $\bar{\lambda}(\bar{t}_n)\rightarrow+\infty$ as $n\rightarrow+\infty$, for some sequence $\bar{t}_n\rightarrow+\infty$, or equivalently $\lim_{n\rightarrow+\infty}\bar{\omega}(\bar{t}_n)=0.$ This contradicts with \eqref{C453}.

In conclusion, we have proved that $\bar{t}^*_2<\bar{t}^{*}$ with
$$|\bar{b}(\bar{t}^*_2)|\leq \frac{1}{100}c_1\bar{\omega}(\bar{t}^*_2).$$

Let $\bar{s}_2^*=\bar{s}(\bar{t}^*_2)$. Repeating the same procedure as before, we have for all $\bar{s}\in[0,\bar{s}^*_2]$,
\begin{gather}
\bar{\omega}(\bar{s})+|\bar{b}(\bar{s})|+\|\bar{\varepsilon}(\bar{s})\|_{L^2}+\bar{\omega}(\bar{s})\|\bar{\varepsilon}_{\bar{y}}(\bar{s})\|_{L^2}^m+\overline{\mathcal{N}}_2(\bar{s})\lesssim\delta(\alpha_0),\label{C454}\\
\frac{\bar{\omega}(\bar{s})+|\bar{b}(\bar{s})|+\overline{\mathcal{N}}_2(\bar{s})}{\bar{\lambda}^2(\bar{s})}\lesssim \delta(\alpha_0),\\
\int_{\bar{y}>0}\bar{y}^{10}\bar{\varepsilon}^2(\bar{s})\,d\bar{y}\leq 7\bigg(1+\frac{1}{\bar{\lambda}^{10}(\bar{s})}\bigg).\label{C456}
\end{gather}

In particular, we have $\bar{t}^*_2<\bar{t}^{**}\leq \bar{t}^*$. Similarly, we need to improve the three bootstrap assumptions on $[\bar{t}^*_2,\bar{t}^{**})$ to obtain $\bar{t}^{**}=\bar{t}^{*}=+\infty$.

First, it is easy to see that \eqref{C409} holds on $[\bar{s}_2^*,\bar{s}^{**})$. So the definition of $\bar{s}^*_2$ yields%
\footnote{Recall that $c_1=G'(0)>0$, where $G$ is the $C^2$ function introduced in \eqref{CLOB}.}%
\begin{equation}
\label{C457}
\frac{19}{20}\bar{\ell}^*\leq\frac{c_1\bar{\omega}(\bar{s}^*_2)}{\bar{\lambda}^2(\bar{s}^*_2)}\leq\frac{21}{20}\bar{\ell}^*,
\end{equation}
which implies
\begin{equation}
\label{C458}
\frac{9}{10}\bigg(\frac{\bar{\ell}^*}{c_1\bar{\gamma}}\bigg)^{\frac{2}{m+2}}\leq\frac{1}{\bar{\lambda}^2(\bar{s}^*_2)}\leq\frac{11}{10}\bigg(\frac{\bar{\ell}^*}{c_1\bar{\gamma}}\bigg)^{\frac{2}{m+2}}.
\end{equation}

Next, we let 
$$C_1=\frac{99}{100}c_1<c_1,\quad C_2=\frac{101}{100}c_1>c_1,$$
Then, we learn from \eqref{C409} that for all $\bar{s}\in[\bar{s}^*_2,\bar{s}^{**})$,
\begin{align*}
\frac{99}{100}\bar{\ell}^*&\leq \frac{\bar{b}(\bar{s})+C_2\bar{\omega}(\bar{s})}{\bar{\lambda}^2(\bar{s})}-\frac{c_1}{100}\frac{\bar{\omega}(\bar{s})}{\bar{\lambda}^2(\bar{s})}+O\bigg(\frac{\bar{b}^2(\bar{s})+\bar{\omega}^2(\bar{s})}{\bar{\lambda}^2(\bar{s})}\bigg)\\
&\leq\frac{\bar{b}(\bar{s})+C_2\bar{\omega}(\bar{s})}{\bar{\lambda}^2(\bar{s})}-\frac{c_1}{100}\frac{\bar{\omega}(\bar{s})}{\bar{\lambda}^2(\bar{s})}+\delta(\kappa)\Bigg(\bigg|\frac{\bar{b}(\bar{s})+C_2\bar{\omega}(\bar{s})}{\bar{\lambda}^2(\bar{s})}\bigg|+\bigg|\frac{\bar{\omega}(\bar{s})}{\bar{\lambda}^2(\bar{s})}\bigg|\Bigg),
\end{align*}
which implies%
\footnote{Here we use the fact that $|1-(\bar{\lambda}/\bar{\lambda}_0)|\lesssim |\bar{J}_1|\lesssim\delta(\kappa)$.}
\begin{equation}
\label{C459}
\frac{49}{50}\bar{\ell}^*\leq \frac{\bar{b}(\bar{s})+C_2\bar{\omega}_0(\bar{s})}{\bar{\lambda}_0^2(\bar{s})}-\frac{c_1}{200}\frac{\bar{\omega}_0(\bar{s})}{\bar{\lambda}_0^2(\bar{s})},
\end{equation}
where $$\bar{\omega}_0(\bar{s})=\frac{\bar{\gamma}}{\bar{\lambda}^m_0(\bar{s})}.$$
Injecting \eqref{C48} into \eqref{C459}, using \eqref{C45} and the fact that%
\footnote{This is a direct consequence of \eqref{C459}.}
$$\frac{\bar{b}(\bar{s})+C_2\bar{\omega}_0(\bar{s})}{\bar{\lambda}_0^2(\bar{s})}>0,$$
we have
\begin{align}
\frac{49}{50}\bar{\ell}^*\leq&\frac{101}{100}\bigg(-\frac{(\bar{\lambda}_0)_{\bar{s}}}{\bar{\lambda}^3_0}+ \frac{C_2\bar{\omega}_0(\bar{s})}{\bar{\lambda}_0^2(\bar{s})}\bigg)-\frac{1}{100}\bigg( \frac{\bar{b}(\bar{s})+C_2\bar{\omega}_0(\bar{s})}{\bar{\lambda}_0^2(\bar{s})}\bigg)-\frac{c_1}{200}\frac{\bar{\omega}_0(\bar{s})}{\bar{\lambda}_0^2(\bar{s})}\nonumber\\
&+\frac{101K_2}{100}\frac{\overline{\mathcal{N}}_1(\bar{s})}{\bar{\lambda}_0^2(\bar{s})}+\delta(\kappa)\Bigg(\bigg|\frac{\bar{b}(\bar{s})+C_2\bar{\omega}_0(\bar{s})}{\bar{\lambda}_0^2(\bar{s})}\bigg|+\bigg|\frac{\bar{\omega}_0(\bar{s})}{\bar{\lambda}_0^2(\bar{s})}\bigg|\Bigg)\nonumber,\\
\leq&\frac{101}{100}\bigg(-\frac{(\bar{\lambda}_0)_{\bar{s}}}{\bar{\lambda}^3_0}+ \frac{C_2\bar{\omega}_0(\bar{s})}{\bar{\lambda}_0^2(\bar{s})}\bigg)-\frac{1}{100}\bigg( \frac{\bar{b}(\bar{s})+C_2\bar{\omega}_0(\bar{s})}{\bar{\lambda}_0^2(\bar{s})}\bigg)-\frac{c_1}{300}\frac{\bar{\omega}_0(\bar{s})}{\bar{\lambda}_0^2(\bar{s})}\nonumber\\
&+\frac{101K_0K_2}{100}\frac{\big(\overline{\mathcal{N}}_1(0)+\bar{b}^2(0)+\bar{\omega}^2(0)\big)}{\bar{\lambda}_0^2(0)}+\delta(\kappa)\Bigg(\bigg|\frac{\bar{b}(\bar{s})+C_2\bar{\omega}_0(\bar{s})}{\bar{\lambda}_0^2(\bar{s})}\bigg|+\bigg|\frac{\bar{\omega}_0(\bar{s})}{\bar{\lambda}_0^2(\bar{s})}\bigg|\Bigg)\nonumber,\\
\leq&\frac{101}{100}\bigg(-\frac{(\bar{\lambda}_0)_{\bar{s}}}{\bar{\lambda}^3_0}+ \frac{C_2\bar{\omega}_0(\bar{s})}{\bar{\lambda}_0^2(\bar{s})}\bigg)+\frac{51K_0K_2}{50}\frac{\big(\overline{\mathcal{N}}_1(0)+\bar{b}^2(0)+\bar{\omega}^2(0)\big)}{\bar{\lambda}^2(0)}.\label{C460}
\end{align} 
From \eqref{C437} and \eqref{C4016}, we have
$$\bar{\ell}^*=\frac{\bar{b}(0)+c_1\bar{\omega}(0)}{\bar{\lambda}^2(0)}\geq100(K_1+K_0K_2)\frac{\big(\overline{\mathcal{N}}_1(0)+\bar{b}^2(0)+\bar{\omega}^2(0)\big)}{\bar{\lambda}^2(0)}.$$
So \eqref{C460} implies that for all $\bar{s}\in[\bar{s}^*_2,\bar{s}^{**})$,
\begin{equation}
\label{C461}
\frac{1}{2}\bigg(\frac{1}{\bar{\lambda}_0^2}\bigg)_{\bar{s}}+C_2\bar{\gamma}\bigg(\frac{1}{\bar{\lambda}_0^2}\bigg)^{1+\frac{m}{2}}\geq\frac{9}{10}\bar{\ell}^*.
\end{equation}

Similar to \eqref{C459}, we have
\begin{equation}
\label{C4017}
\frac{51}{50}\bar{\ell}^*\geq \frac{\bar{b}(\bar{s})+C_1\bar{\omega}_0(\bar{s})}{\bar{\lambda}_0^2(\bar{s})}
+\frac{c_1}{200}\frac{\bar{\omega}_0(\bar{s})}{\bar{\lambda}_0^2(\bar{s})}-\delta(\kappa)\bigg|\frac{\bar{b}(\bar{s})+C_2\bar{\omega}_0(\bar{s})}{\bar{\lambda}_0^2(\bar{s})}\bigg|,
\end{equation}
which leads to 
\begin{align*}
\frac{51}{50}\bar{\ell}^*\geq&\frac{99}{100}\bigg(-\frac{(\bar{\lambda}_0)_{\bar{s}}}{\bar{\lambda}^3_0}+ \frac{C_1\bar{\omega}_0(\bar{s})}{\bar{\lambda}_0^2(\bar{s})}\bigg)+\frac{1}{100}\bigg( \frac{\bar{b}(\bar{s})+C_1\bar{\omega}_0(\bar{s})}{\bar{\lambda}_0^2(\bar{s})}\bigg)+\frac{c_1}{200}\frac{\bar{\omega}_0(\bar{s})}{\bar{\lambda}_0^2(\bar{s})}\nonumber\\
&+\frac{99K_2}{100}\frac{\overline{\mathcal{N}}_1(\bar{s})}{\bar{\lambda}_0^2(\bar{s})}-\delta(\kappa)\Bigg(\bigg|\frac{\bar{b}(\bar{s})+C_2\bar{\omega}_0(\bar{s})}{\bar{\lambda}_0^2(\bar{s})}\bigg|+\bigg|\frac{\bar{\omega}_0(\bar{s})}{\bar{\lambda}_0^2(\bar{s})}\bigg|\Bigg)\nonumber,\\
\end{align*}
and
\begin{align*}
\frac{51}{50}\bar{\ell}^*\geq&\frac{101}{100}\bigg(-\frac{(\bar{\lambda}_0)_{\bar{s}}}{\bar{\lambda}^3_0}+ \frac{C_1\bar{\omega}_0(\bar{s})}{\bar{\lambda}_0^2(\bar{s})}\bigg)-\frac{1}{100}\bigg( \frac{\bar{b}(\bar{s})+C_1\bar{\omega}_0(\bar{s})}{\bar{\lambda}_0^2(\bar{s})}\bigg)+\frac{c_1}{200}\frac{\bar{\omega}_0(\bar{s})}{\bar{\lambda}_0^2(\bar{s})}\nonumber\\
&+\frac{101K_2}{100}\frac{\overline{\mathcal{N}}_1(\bar{s})}{\bar{\lambda}_0^2(\bar{s})}-\delta(\kappa)\Bigg(\bigg|\frac{\bar{b}(\bar{s})+C_2\bar{\omega}_0(\bar{s})}{\bar{\lambda}_0^2(\bar{s})}\bigg|+\bigg|\frac{\bar{\omega}_0(\bar{s})}{\bar{\lambda}_0^2(\bar{s})}\bigg|\Bigg)\nonumber.\\
\end{align*}
Using the same strategy as \eqref{C460}, and discussing the sign of $\frac{\bar{b}(\bar{s})+C_1\bar{\omega}_0(\bar{s})}{\bar{\lambda}_0^2(\bar{s})}$, we have:
\begin{equation}
\label{C462}
\frac{1}{2}\bigg(\frac{1}{\bar{\lambda}_0^2}\bigg)_{\bar{s}}+C_1\bar{\gamma}\bigg(\frac{1}{\bar{\lambda}_0^2}\bigg)^{1+\frac{m}{2}}\leq\frac{11}{10}\bar{\ell}^*.
\end{equation}
Then we need following basic lemma:
\begin{lemma}\label{CL6}
Let $F$: $[0,x_0)\rightarrow(0,+\infty)$ be a $C^1$ function. Let $\nu>0$, $L>0$ be 2 positive constants. Then we have:
\begin{enumerate}
	\item If for all $x\in[0,x_0)$
	$$F_x+F^{1+\nu}\geq L,$$
	then for all $x\in[0,x_0)$,
	$$F(x)\geq\min(F(0),L^{\frac{1}{1+\nu}}).$$
	\item If for all $x\in[0,x_0)$
	$$F_x+F^{1+\nu}\leq L,$$
	then for all $x\in[0,x_0)$,
	$$F(x)\leq\max(F(0),L^{\frac{1}{1+\nu}}).$$
\end{enumerate}
\end{lemma}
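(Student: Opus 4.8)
The plan is to prove Lemma \ref{CL6} by a direct comparison/ODE argument, treating the two parts symmetrically. For part (1), set $m_0=\min(F(0),L^{1/(1+\nu)})$ and suppose for contradiction that the (nonempty, open in $[0,x_0)$) set where $F(x)<m_0$ is nonempty; let $x_1$ be its infimum. Since $F$ is continuous and $F(0)\geq m_0$, we have $x_1\geq 0$ and $F(x_1)=m_0$ (if $x_1=0$ this is the boundary value, noting $F(0)\geq m_0$ forces equality only in the degenerate case, which we handle separately), and there is a sequence $x\downarrow x_1$ with $F(x)<m_0$, hence $F_x(x_1)\leq 0$. But at $x_1$ we also have $F(x_1)^{1+\nu}=m_0^{1+\nu}\leq L$, so the hypothesis $F_x+F^{1+\nu}\geq L$ gives $F_x(x_1)\geq L-m_0^{1+\nu}\geq 0$. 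This does not yet close; the clean way is instead to note that whenever $F(x)\leq L^{1/(1+\nu)}$ we get $F_x(x)\geq L-F(x)^{1+\nu}\geq 0$, so $F$ is nondecreasing on any interval where it stays $\leq L^{1/(1+\nu)}$; combined with the initial value this yields $F(x)\geq\min(F(0),L^{1/(1+\nu)})$ for all $x$.

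Concretely, the cleanest argument I would write out:

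\begin{proof}
We prove (1); the proof of (2) is identical up to reversing inequalities. Let $\ell=L^{\frac{1}{1+\nu}}$ and $m_0=\min(F(0),\ell)$. Suppose, for contradiction, that there exists $x\in[0,x_0)$ with $F(x)<m_0$. Set
$$x_1=\inf\{x\in[0,x_0)\mid F(x)<m_0\}.$$
By continuity of $F$ and $F(0)\geq m_0$, we have $x_1\in[0,x_0)$, $F(x_1)\geq m_0$, and there is a sequence $y_n\downarrow x_1$ with $F(y_n)<m_0\leq F(x_1)$; in particular $F(x_1)=m_0$ and there exist points $x>x_1$ arbitrarily close to $x_1$ with $F(x)<F(x_1)$. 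Hence
$$\liminf_{x\downarrow x_1}\frac{F(x)-F(x_1)}{x-x_1}\leq 0,$$
so $F_x(x_1)\leq 0$. On the other hand $F(x_1)=m_0\leq\ell$, so $F(x_1)^{1+\nu}\leq\ell^{1+\nu}=L$, and the hypothesis at $x_1$ gives
$$F_x(x_1)\geq L-F(x_1)^{1+\nu}\geq 0.$$
Thus $F_x(x_1)=0$ and $F(x_1)^{1+\nu}=L$, i.e. $F(x_1)=\ell$, so $m_0=\ell$, meaning $F(0)\geq\ell=F(x_1)$. Now consider $g(x)=\ell-F(x)$ on a right-neighborhood of $x_1$: $g(x_1)=0$, and for $x$ near $x_1$ with $F(x)\leq\ell$ we have $F_x(x)\geq L-F(x)^{1+\nu}\geq 0$, so $g$ is nonincreasing there; since $g(x_1)=0$ this forces $g(x)\leq 0$, i.e. $F(x)\geq\ell=m_0$ for $x$ slightly larger than $x_1$, contradicting the definition of $x_1$. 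This proves $F(x)\geq m_0$ for all $x\in[0,x_0)$.
\end{proof}

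The main (and only real) obstacle is the boundary bookkeeping at the point $x_1$ where $F$ first threatens to drop below the barrier: one has to argue that at a first crossing the derivative is $\leq 0$ while the differential inequality forces it to be $\geq 0$ once $F$ has reached the level $\ell$, and then use that $F$ is actually nondecreasing in the sub-barrier regime to rule out the crossing altogether. This is a completely standard barrier/continuity argument, so I expect no genuine difficulty; the only care needed is to phrase it so it works whether $x_1=0$ or $x_1>0$ and whether $m_0=F(0)$ or $m_0=\ell$. Part (2) follows by applying the same reasoning to the reversed inequalities (or to $-F$ with the obvious sign changes), using that $F_x(x)\leq L-F(x)^{1+\nu}\leq 0$ whenever $F(x)\geq\ell$, so $F$ cannot exceed $\max(F(0),\ell)$.
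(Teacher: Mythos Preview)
Your proof is correct, and in fact the paper gives no proof at all beyond the sentence ``It is easy to prove Lemma \ref{CL6} by standard ODE theory,'' so your barrier/first-crossing argument is precisely the kind of standard argument being invoked.

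One small expositional point: in the closing step (the case $m_0=\ell$), the phrase ``for $x$ near $x_1$ with $F(x)\leq\ell$ \ldots\ so $g$ is nonincreasing there'' implicitly treats $\{F\leq\ell\}$ as a right-interval at $x_1$, which is not guaranteed a priori. The clean way to finish is the one you already sketched in your opening paragraph: take any $y>x_1$ with $F(y)<\ell$, set $z=\sup\{x\in[x_1,y]:F(x)\geq\ell\}$ (nonempty since $F(x_1)=\ell$), observe $F(z)=\ell$ and $F<\ell$ on $(z,y]$, then integrate $F_x\geq L-F^{1+\nu}\geq0$ over $[z,y]$ to get $F(y)\geq\ell$, a contradiction. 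In the other case $m_0=F(0)<\ell$ your contradiction $F_x(x_1)\leq0$ versus $F_x(x_1)\geq L-m_0^{1+\nu}>0$ is already immediate, so no extra work is needed there.
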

It is easy to prove Lemma \ref{CL6} by standard ODE theory. Now we apply Lemma \ref{CL6} to \eqref{C461} and \eqref{C462} on $[\bar{s}^*_2,\bar{s}^{**})$, using \eqref{C458} to obtain
\begin{equation}
\label{C463}
\frac{90}{101}\bigg(\frac{\bar{\ell}^*}{c_1\bar{\gamma}}\bigg)^{\frac{2}{m+2}}\leq\frac{1}{\bar{\lambda}^2(\bar{s})}\leq\frac{10}{9}\bigg(\frac{\bar{\ell}^*}{c_1\bar{\gamma}}\bigg)^{\frac{2}{m+2}},
\end{equation}
for all $\bar{s}\in[\bar{s}^*_2,\bar{s}^{**})$. This also implies that for all $\bar{s}\in[\bar{s}^*_2,\bar{s}^{**})$,
\begin{equation}
\label{C464}
\bar{\omega}(\bar{s})\sim \bar{\gamma}^{\frac{2}{m+2}}(\bar{\ell}^*)^{\frac{m}{m+2}}\lesssim\delta(\alpha_0),\quad\frac{\bar{\omega}(\bar{s})}{\bar{\lambda}^2(\bar{s})}\sim \bar{\ell}^*\lesssim\delta(\alpha_0).
\end{equation}
From \eqref{C459} and \eqref{C4017}, we have
$$\frac{\bar{b}(\bar{s})+C_2\bar{\omega}_0(\bar{s})}{\bar{\lambda}_0^2(\bar{s})}\geq\frac{49}{50}\bar{\ell}^*,\quad\frac{\bar{b}(\bar{s})+C_1\bar{\omega}_0(\bar{s})}{\bar{\lambda}_0^2(\bar{s})}\leq2\bar{\ell}^*,$$
together with \eqref{C464}, we have
\begin{equation}
\label{C465}
\bigg|\frac{\bar{b}(\bar{s})}{\bar{\lambda}^2(\bar{s})}\bigg|\lesssim \bar{\ell}^*\lesssim\delta(\alpha_0),\quad|\bar{b}(\bar{s})|\lesssim\bar{\gamma}^{\frac{2}{m+2}}(\bar{\ell}^*)^{\frac{m}{m+2}}\lesssim\delta(\alpha_0).
\end{equation}
Again, from the mass conservation law \eqref{CMC}, energy conservation law \eqref{CEC} and the almost monotonicity \eqref{C44}, \eqref{C45}, we have for all $\bar{s}\in[\bar{s}^*_2,\bar{s}^{**})$:
\begin{equation}
\label{C466}
\|\bar{\varepsilon}(\bar{s})\|_{L^2}+\bar{\omega}(\bar{s})\|\bar{\varepsilon}_{\bar{y}}(\bar{s})\|_{L^2}^m+\overline{\mathcal{N}}_2(\bar{s})+\frac{\overline{\mathcal{N}}_2(\bar{s})}{\bar{\lambda}^2(\bar{s})}\lesssim\delta(\alpha_0).
\end{equation}

Finally, we learn from \eqref{C463}, that for all $\bar{s}^*_2\leq\bar{s}_1<\bar{s}_2\leq \bar{s}^{**}$,
$$\frac{1}{4}<\bigg(\frac{81}{101}\bigg)^5\leq \bigg(\frac{\bar{\lambda}(\bar{s}_1)}{\bar{\lambda}(\bar{s}_2)}\bigg)^{10}\leq\bigg(\frac{101}{81}\bigg)^5<4.$$
Then for all $\bar{s}\in[\bar{s}^*_2,\bar{s}^{**})$, we integrate \eqref{C37} from $\bar{s}^*_2$ to $\bar{s}$ to obtain:
\begin{align}
\int\varphi_{10}(\bar{y})\bar{\varepsilon}^2(\bar{s},\bar{y})\,d\bar{y}&\leq\frac{\bar{\lambda}^{10}(\bar{s}^*_2)}{\bar{\lambda}^{10}(\bar{s})}\int\varphi_{10}(\bar{y})\bar{\varepsilon}^2(\bar{s}^*_2,\bar{y})\,d\bar{y}+\frac{C}{\bar{\lambda}^{10}(\bar{s})}\int_{\bar{s_2}}^{\bar{s}}\bar{\lambda}^{10}(\overline{\mathcal{N}}_{1,\rm loc}+\bar{b}^2)\nonumber\\
&\leq\frac{\bar{\lambda}^{10}(\bar{s}^*_2)}{\bar{\lambda}^{10}(\bar{s})}\times7\bigg(1+\frac{1}{\bar{\lambda}^{10}(\bar{s}^*_2)}\bigg)+4C\int_{\bar{s_2}}^{\bar{s}}(\overline{\mathcal{N}}_{1,\rm loc}+\bar{b}^2)\nonumber\\
&\leq 28\bigg(1+\frac{1}{\bar{\lambda}^{10}(\bar{s})}\bigg)+\delta(\kappa)<30\bigg(1+\frac{1}{\bar{\lambda}^{10}(\bar{s})}\bigg).\label{C467}
\end{align}

Combining \eqref{C464}--\eqref{C467}, we have improved $\rm (H1)'$, $\rm (H2)'$ and $\rm (H3)'$, hence $\bar{t}^{**}=\bar{t}^*=+\infty$. This also implies that $t^*=+\infty$. \\

{\noindent {\bf Step 3: Proof of \eqref{C407} and \eqref{C408}.}} 

Now it is sufficient to prove the following
$$|\bar{b}(\bar{t})|+\overline{\mathcal{N}}_2(\bar{t})\rightarrow0,\quad \bar{\lambda}(\bar{t})\rightarrow\bar{\lambda}_{\infty}\in(0,+\infty),$$
as $\bar{t}\rightarrow+\infty$.
First of all, from \eqref{C463}, we know that
$$\bar{s}^{**}=\bar{s}^*=\int_0^{+\infty}\frac{1}{\bar{\lambda}^3(\tau)}\,d\tau=+\infty.$$
Then we claim that $\bar{b}_{\bar{s}}\bar{b}\in L^1((0,+\infty))$. Indeed, from \eqref{C212}, we have
$$\big|\bar{b}\bar{b}_{\bar{s}}+\bar{\omega}_{\bar{s}}G'(\bar{\omega})\bar{b}\big|\lesssim
\bar{b}^2+\int\bar{\varepsilon}^2e^{-\frac{|\bar{y}|}{10}}\in L^1((0,+\infty)).$$
From \eqref{CMS1}, we have:
\begin{align*}
\bar{\omega}_{\bar{s}}G'(\bar{\omega})\bar{b}=m\bar{\omega}G'(\bar{\omega})\bar{b}^2+O\Bigg(\bar{\omega}\bigg|\bar{b}\bigg(\frac{\bar{\lambda}_{\bar{s}}}{\bar{\lambda}}+\bar{b}\bigg)\bigg|\Bigg)=O\bigg(\bar{b}^2+\int\bar{\varepsilon}^2e^{-\frac{|\bar{y}|}{10}}\bigg).
\end{align*}
The above 2 estimates imply that
$$\int_0^{+\infty}|\bar{b}_{\bar{s}}\bar{b}(s')|\,ds'=\int_0^{+\infty}\frac{1}{2}|(\bar{b}^2)_{\bar{s}}|<+\infty.$$
Together with 
$$\int_{0}^{+\infty}\bar{b}^2(\bar{s})\,d\bar{s}<+\infty,$$
we conclude that $\bar{b}(\bar{t})\rightarrow0$, as $\bar{t}\rightarrow+\infty$.
Next, We use \eqref{C212} again to obtain:
$$\big|\bar{b}_{\bar{s}}+\bar{\omega}_{\bar{s}}G'(\bar{\omega})\big|\lesssim
\bar{b}^2+\int\bar{\varepsilon}^2e^{-\frac{|\bar{y}|}{10}}\in L^1((0,+\infty)).$$
Thus, we have
$$\int_0^{+\infty}\big|(\bar{b}+G(\bar{\omega}))_{\bar{s}}(s')\big|\,ds'<+\infty.$$
We then know that $b(\bar{t})+G(\bar{\omega}(\bar{t}))$ has a limit as $\bar{t}\rightarrow+\infty$. Since $\lim_{\bar{t}\rightarrow+\infty}\bar{b}(\bar{t})=0$, we obtain that $G(\bar{\omega}(\bar{t}))$ has a limit as $\bar{t}\rightarrow+\infty$. On the other hand, we have $G'(0)>0$, $\bar{\omega}(\bar{t})\ll1$, so there exists a constant $\bar{\omega}_{\infty}>0$, such that
$$\lim_{\bar{t}\rightarrow+\infty}\bar{\omega}(\bar{t})=\bar{\omega}_{\infty}\sim\bar{\gamma}^{\frac{2}{m+2}}(\bar{\ell}^*)^{\frac{m}{m+2}},$$
or equivalently
$$\lim_{\bar{t}\rightarrow+\infty}\bar{\lambda}(\bar{t})=\bar{\lambda}_{\infty}\sim\bigg(\frac{c_1\bar{\gamma}}{\bar{\ell}^*}\bigg)^{\frac{1}{m+2}}.$$
Let
$$\ell^*=\frac{b(t^*_1)+c_1\omega(t^*_1)}{\lambda^2(t^*_1)}>0.$$
Recall that
$$\bar{\gamma}=\frac{\gamma}{\lambda^m(t^*_1)},\quad \bar{\ell}^*=b(t^*_1)+c_1\omega(t^*_1),\quad \bar{\lambda}(\bar{t})=\frac{\lambda(\lambda^3(t^*_1)\bar{t}+t^*_1)}{\lambda(t^*_1)}.$$
We obtain
\begin{equation}
\label{C468}
\lim_{t\rightarrow+\infty}\lambda(t)=\lambda_{\infty}\sim\bigg(\frac{c_1\gamma}{\ell^*}\bigg)^{\frac{1}{m+2}}.
\end{equation}

Next, the inequality \eqref{C44} implies the existence of a sequence $\bar{s}_n$ such that
$$\overline{\mathcal{N}}_1(\bar{s}_n)\lesssim \int\big(\bar{\varepsilon}^2(\bar{s}_n)+\bar{\varepsilon}^2_{\bar{y}}(\bar{s}_n)\big)\varphi'_{2,B}\rightarrow0,\text{ as }n\rightarrow+\infty, $$
where $\lim_{n\rightarrow+\infty}\bar{s}_n=+\infty$.
Using the monotonicity \eqref{C410}, we have
$$\overline{\mathcal{N}}_1(\bar{s})\rightarrow0,\;\text{as }\bar{s}\rightarrow +\infty.$$
Together with \eqref{C39} and \eqref{C463}, we obtain
$$\overline{\mathcal{N}}_2(\bar{t})\rightarrow0,\;\text{as }\bar{t}\rightarrow +\infty,$$
which implies that
$$\mathcal{N}_2(t)\rightarrow0,\;\text{as }t\rightarrow +\infty,$$

Finally, from \eqref{CMS1}, we have:
$$\lambda^2(t)x_t(t)\sim1,\;\text{as }t\rightarrow +\infty,$$
which after integration implies
$$x(t)\sim\frac{t}{\lambda^2_{\infty}},\;\text{as }t\rightarrow +\infty.$$
We then conclude the proof of \eqref{C407} and \eqref{C408}, hence the proof of the first part of Proposition \ref{CP5}.

\subsubsection*{4. Nonemptiness and stability.}
Now we give the proof of the second part of Proposition \ref{CP5}. 

First, we show that the (Soliton) and (Exit) regimes are stable under small perturbation in $\mathcal{A}_{\alpha_0}$.
From \eqref{C206}, we know that the parameters depend continuously on the initial data, which implies that the cases (Exit) and (Soliton) are both open in $\mathcal{A}_{\alpha_0}$, since the separation condition is an open condition of initial data in $\mathcal{A}_{\alpha_0}$.

Indeed, for all $u_0\in\mathcal{A}_{\alpha_0}$, if the corresponding solution $u(t)$ to \eqref{CCPG} belongs to the (Soliton) regime, we let $t^*_1$ be the separation time introduced in Proposition \ref{CP5}. For all $\tilde{u}_0\in\mathcal{A}_{\alpha_0}$, close enough to $u_0$, we let $\tilde{u}(t)$ be the corresponding solution to \eqref{CCPG}, and $\tilde{b}(t)$, $\tilde{x}(t)$, $\tilde{\lambda}(t)$, $\tilde{\varepsilon}(t)$ be the corresponding geometrical parameters and error term. Then from local theory, we have $\sup_{t\in[0,t_1^*]}\|u(t)-\tilde{u}(t)\|_{H^1}\ll1$, which together with \eqref{C206}, leads to
$$\tilde{b}(t^*_1)+c_1\tilde{\omega}(t^*_1)\geq \frac{999}{1000}C^*\big(\widetilde{\mathcal{N}}_1(t_1^*)+\tilde{b}^2(t^*_1)+\tilde{\omega}^2(t^*_1)\big).$$
So $\tilde{u}(t)$ must belong to the (Soliton) regime. This implies the openness of (Soliton) regime. The openness of the (Exit) regime follows from the same argument.

Next, we claim that there exists initial data in $\mathcal{A}_{\alpha_0}$ such that the corresponding solution to \eqref{CCPG} belongs to the (Soliton) and (Exit) regimes respectively. First, it is easy to check that the traveling wave solution
$$u(t,x)=\mathcal{Q}_{\gamma}(x-t)$$
belongs to the (Soliton) regime. On the other hand, from \eqref{C2006}, we can see, in both the (Soliton) and (Blow down) cases, we have
$$\|u_0\|_{L^2}\geq \|Q\|_{L^2}.$$
Hence, for initial data $u_0\in\mathcal{A}_{\alpha_0}$ with%
\footnote{Since we assume that $\gamma\ll\alpha_0$, such $u_0$ exists in $\mathcal{A}_{\alpha_0}$.}
 $\|u_0\|_{L^2}<\|Q\|_{L^2}$, the corresponding solution must belong to the (Exit) regime. 

Finally, since the sets of initial data which leads to the (Soliton) and (Exit) regime are both open and nonempty in $\mathcal{A}_{\alpha_0}$. Together with the fact that $\mathcal{A}_{\alpha_0}$ is connected, we conclude that there exists $u_0\in\mathcal{A}_{\alpha_0}$, such that the corresponding solution to \eqref{CCPG} belongs to the (Blow down) regime.
\end{proof}

\section{Proof of Theorem \ref{CST}}
In this part we will use the local Cauchy theory of generalized KdV equations developed in \
{KPV} to prove Theorem \ref{CST}.

\subsection{$H^1$ perturbation theory}
First of all, let us introduce the following linear estimates proved by Kenig, Ponce and Vega in \cite{KPV}.
\begin{lemma}[Linear estimates, \cite{KPV}]\label{CL7}The following linear estimates hold:
\begin{enumerate}
	\item For all $u_0\in H^1$,
	\begin{gather}
	\bigg\|\frac{\partial}{\partial_x}W(t)u_0\bigg\|_{L^{\infty}_xL^2_t(\mathbb{R})}+\|W(t)u_0\|_{L_x^5L_t^{10}(\mathbb{R})}\lesssim \|u_0\|_{L^2},\\
	\Big\|D^{\alpha_q}_xD^{\beta_q}_tW(t)u_0\Big\|_{L^p_xL^r_t(I)}\lesssim \|D_x^{s_q}u_0\|_{L^2},\label{C510}
	\end{gather}
	where $q>5$ is the power of the defocusing nonlinear term of \eqref{CCPG}, and 
	\begin{gather*}
	W(t)f=e^{-t\partial_x^3}f,\quad s_q=\frac{1}{2}-\frac{2}{(q-1)},\\
	\alpha_q=\frac{1}{10}-\frac{2}{5(q-1)},\quad\beta_q=\frac{3}{10}-\frac{6}{5(q-1)},\\
	\frac{1}{p}=\frac{2}{5(q-1)}+\frac{1}{10},\quad\frac{1}{r}=\frac{3}{10}-\frac{4}{5(q-1)}.
	\end{gather*}
	\item For all well localized $g$, we have:
	\begin{align}
	&\sup_{t\in I}\bigg\|\frac{\partial}{\partial x}\int_0^tW(t-t')g(\cdot,t')\,dt'\bigg\|_{L^2_x}\lesssim \|g\|_{L^1_xL^2_t(I)},\\
	&\bigg\|\frac{\partial^2}{\partial x^2}\int_0^tW(t-t')g(\cdot,t')\,dt'\bigg\|_{L^{\infty}_x L^2_t(I)}\lesssim \|g\|_{L^1_xL^2_t(I)},\\
	&\bigg\|\int_0^tW(t-t')g(\cdot,t')\,dt'\bigg\|_{L^5_x L^{10}_t(I)}\lesssim \|g\|_{L^{5/4}_xL^{10/9}_t(I)},\\
	&\bigg\|D^{\alpha_q}_xD^{\beta_q}_t\int_0^tW(t-t')g(\cdot,t')\,dt'\bigg\|_{L^p_x L^r_t(I)}\lesssim \|g\|_{L^{p'}_xL^{r'}_t(I)},\label{C500}\\
	&\|g\|_{L^{5(q-1)/4}_xL^{5(q-1)/2}_t}\lesssim \|D^{\alpha_q}_xD^{\beta_q}_tg\|_{L^p_xL^r_t},\label{C504}
	\end{align}
	where
	$$1=\frac{1}{p}+\frac{1}{p'}=\frac{1}{r}+\frac{1}{r'}.$$
\end{enumerate}
\end{lemma}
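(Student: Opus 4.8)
The statement to be proved is Lemma \ref{CL7}, the collection of Strichartz-type linear estimates for the Airy propagator $W(t)=e^{-t\partial_x^3}$ due to Kenig--Ponce--Vega. Since these are exactly the estimates established in \cite{KPV}, the proof is a matter of citing and organizing the relevant results there rather than re-deriving them from scratch; the plan is to recall the three structural ingredients from \cite{KPV} — the local smoothing (Kato) estimate, the $L^p_xL^q_t$ Strichartz estimates for the Airy group, and the maximal-function / Sobolev embedding in the mixed-norm spaces — and then to note that the inhomogeneous estimates follow from the homogeneous ones by a standard Christ--Kiselev / duality argument.

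\textbf{Step 1: Homogeneous estimates.} First I would recall from \cite[Theorem 2.1 and Theorem 3.5]{KPV} the sharp local smoothing estimate $\|\partial_x W(t)u_0\|_{L^\infty_x L^2_t}\lesssim \|u_0\|_{L^2}$, which is the gain of one derivative in the $L^\infty_x L^2_t$ norm, together with the Strichartz estimate $\|W(t)u_0\|_{L^5_x L^{10}_t}\lesssim\|u_0\|_{L^2}$; these give the first displayed inequality directly. For the second homogeneous estimate \eqref{C510}, I would invoke the family of estimates in \cite[Theorem 2.1, (2.8)--(2.12)]{KPV} which provide, for admissible triples $(\alpha,\beta)$ of space/time derivative orders and admissible Lebesgue exponents $(p,r)$, a bound $\|D_x^{\alpha}D_t^{\beta}W(t)u_0\|_{L^p_xL^r_t}\lesssim\|D_x^{s}u_0\|_{L^2}$ whenever the scaling relation among $\alpha,\beta,p,r,s$ is satisfied; one then checks that the explicit exponents $\alpha_q,\beta_q,p,r,s_q$ defined in the statement satisfy this scaling relation. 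This is purely arithmetic: verify $\alpha_q+3\beta_q$, $\frac1p+\frac1r$ and $s_q$ are consistent with the dimensional count of the Airy equation, using $q>5$ to ensure the exponents lie in the admissible range (in particular $p,r\in(1,\infty)$ and $s_q\in[0,\tfrac12)$).

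\textbf{Step 2: Inhomogeneous estimates and main obstacle.} The inhomogeneous bounds — the Duhamel estimates $\sup_t\|\partial_x\int_0^t W(t-t')g\|_{L^2_x}\lesssim\|g\|_{L^1_xL^2_t}$, $\|\partial_x^2\int_0^t W(t-t')g\|_{L^\infty_xL^2_t}\lesssim\|g\|_{L^1_xL^2_t}$, the $L^5_xL^{10}_t$ / $L^{5/4}_xL^{10/9}_t$ pair, and \eqref{C500} — follow from the homogeneous estimates of Step 1 by combining duality (the $L^1_xL^2_t \to L^\infty_xL^2_t$ smoothing is dual to the homogeneous smoothing estimate, by the usual $TT^*$ identity) with the Christ--Kiselev lemma to pass from the non-retarded operator $\int_{\mathbb R}W(t-t')g\,dt'$ to the retarded one $\int_0^t$; this is exactly the structure of the argument in \cite[Section 3]{KPV}, and I would cite it accordingly. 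Finally the Sobolev-type embedding \eqref{C504}, $\|g\|_{L^{5(q-1)/4}_xL^{5(q-1)/2}_t}\lesssim\|D_x^{\alpha_q}D_t^{\beta_q}g\|_{L^p_xL^r_t}$, is a mixed-norm Sobolev inequality in the fractional derivative scale, and follows from the fractional integration theorem in mixed Lebesgue spaces once one verifies the exponent identities $\frac{4}{5(q-1)}=\frac1p-\alpha_q$ and $\frac{2}{5(q-1)}=\frac1r-\beta_q$ (again a direct computation with the given values). The only genuine point requiring care — the ``hard part,'' such as it is — is bookkeeping: checking that every exponent appearing in the statement is admissible for the KPV machinery precisely because $q>5$, and that the fractional-derivative orders $\alpha_q,\beta_q$ are nonnegative and small (they are, since $\alpha_q<\tfrac1{10}$, $\beta_q<\tfrac3{10}$), so that no borderline endpoint case arises; once this is confirmed the lemma is an immediate consequence of \cite{KPV}.
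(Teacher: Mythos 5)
Your proposal is correct and takes essentially the same approach as the paper, which simply cites the relevant theorems of Kenig--Ponce--Vega (Theorem 3.5, Corollary 3.8, Lemmas 3.14--3.15, Corollary 3.16 in \cite{KPV}) without further elaboration. The only minor remark is that your appeal to the Christ--Kiselev lemma for the retarded estimates is a slight anachronism: \cite{KPV} predates that lemma and establishes the Duhamel estimates by direct arguments, but the substance and outcome are the same.
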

\begin{proof}[Proof]See Theorem 3.5, Corollary 3.8, Lemma 3.14, Lemma 3.15 and Corollary 3.16 in \cite{KPV} for the proof of (1) and (2).
\end{proof}

Now we define the following norms:
\begin{align*}
\eta^1_{I}(w)=&\|w\|_{L^5_x L^{10}_t(I)},\;\eta^2_I(w)=\|w_x\|_{L^{\infty}_xL^2_t(I)},\;\eta^3_I(w)=\|D^{\alpha_q}_xD^{\beta_q}_tw\|_{L^p_x L^r_t(I)},\\
\Omega_I(w)=&\max_{j=1,2}\big[\eta^j_I(w)+\eta^j_I(w_x)\big]+\eta^3_I(w),\\
\Delta_I(h)=&\|h\|_{L_x^1L^2_t(I)}+\|h_x\|_{L^{5/4}_xL^{10/9}_t(I)}+\|h_x\|_{L_x^1L^2_t(I)}+\|h_{xx}\|_{L^{5/4}_xL^{10/9}_t(I)}\\
&+\|h_{x}\|_{L^{p'}_xL^{r'}_t(I)},
\end{align*}
for all interval $I\subset \mathbb{R}$.

Then we have the following modified $H^1$ perturbation theory:
\begin{proposition}[Modified long time $H^1$ perturbation theory]\label{CP6}
Let $I$ be an interval containing $0$, and $\tilde{u}$ be an $H^1$ solution to 
\begin{equation}
\begin{cases}
\partial_t\tilde{u}+(\partial_{xx}\tilde{u}+\tilde{u}^5-\gamma\tilde{u}|\tilde{u}|^{q-1})_x=e_x,\;(t,x)\in I\times\mathbb{R},\\
\tilde{u}(0,x)=\tilde{u}_0\in H^1.
\end{cases}
\end{equation}
Suppose we have
\begin{align*}
\sup_{t\in I}\|\tilde{u}(t)\|_{H^1}+\Omega_{I}(\tilde{u})\leq M,
\end{align*}
for some $M>0$ independent of $\gamma$. Let $u_0\in H^1$ be such that
{\begin{multline*}
\|u_0-\tilde{u}_0\|_{H^1}+\|e\|_{L_x^1L^2_t(I)}+\|e_x\|_{L^{5/4}_xL^{10/9}_t(I)}+\\
\|e_x\|_{L_x^1L^2_t(I)}+\|e_{xx}\|_{L^{5/4}_xL^{10/9}_t(I)}
+\|e_{x}\|_{L^{p'}_xL^{r'}_t(I)}\leq \epsilon,
\end{multline*}}
for some small $0<\epsilon<\epsilon_0(M)$. Then the solution of \eqref{CCPG} with initial data $u_0$ satisfies:
\begin{equation}\label{C501}
\sup_{t\in I}\|u-\tilde{u}\|_{H^1}+\Omega_I(u-\tilde{u})\leq C(M)\epsilon.
\end{equation}
\end{proposition}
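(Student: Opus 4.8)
The plan is to set up a standard contraction/bootstrap argument in the space controlled by the norm $\Omega_I$, treating $u-\tilde u$ as the unknown and using the Duhamel formula together with the linear estimates of Lemma \ref{CL7}. Write $w=u-\tilde u$. Then $w$ solves
\begin{equation*}
\partial_t w+\partial_{xxx}w+\partial_x\big(F(\tilde u+w)-F(\tilde u)\big)=-e_x,\quad w(0)=u_0-\tilde u_0,
\end{equation*}
where $F(v)=v^5-\gamma v|v|^{q-1}$, so by Duhamel
\begin{equation*}
w(t)=W(t)(u_0-\tilde u_0)-\int_0^tW(t-t')\partial_x\big(F(\tilde u+w)-F(\tilde u)+e\big)(t')\,dt'.
\end{equation*}
The first step is to record the algebraic identity that expands $F(\tilde u+w)-F(\tilde u)$ into a sum of terms each of which is a product of $w$ (or $w_x$ after one sees where derivatives land) with powers of $\tilde u$ and $w$; the quintic part gives terms $\tilde u^{4}w,\dots,w^5$ and the saturated part gives $\gamma$ times terms of total degree $q$. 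The key point is that $\gamma$ multiplies every term coming from the saturated nonlinearity, so those contributions are harmless for $\gamma$ small once the other norms are under control.

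The second step is to estimate each term through the linear bounds: for the $L^1_xL^2_t$ and $L^{5/4}_xL^{10/9}_t$ type inputs appearing in $\Delta_I$, use H\"older in $x$ and $t$ to distribute the quintic product among the $\eta^j_I$ norms of $\tilde u$ and $w$, exactly as in the classical $L^2$ perturbation theory of \cite{KKSV} but keeping the extra derivative (this is why the norm $\Omega_I$ includes $\eta^j_I(w_x)$ and the $\eta^3_I$ piece built from the fractional derivatives $D^{\alpha_q}_xD^{\beta_q}_t$, which is the device from \cite{KPV} that lets one close the $H^1$-level estimate for the supercritical power $q$). Combining with \eqref{C510}, \eqref{C500}, \eqref{C504} one obtains a closed inequality of the schematic form
\begin{equation*}
\Omega_I(w)+\sup_{t\in I}\|w(t)\|_{H^1}\lesssim \epsilon+ \big(M^4+\Omega_I(w)^4\big)\Omega_I(w)+\gamma C(M)\big(1+\Omega_I(w)\big)^{q-1}\Omega_I(w).
\end{equation*}

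The third step is the bootstrap: the interval $I$ is first subdivided into finitely many subintervals $I_1,\dots,I_N$ (with $N=N(M)$) on each of which $\Omega_{I_k}(\tilde u)$ is smaller than a fixed threshold $\eta_0(M)$ — possible because $\Omega_I(\tilde u)\le M<\infty$ — so that the cubic-in-$\Omega$ feedback term has small coefficient; choosing $\gamma$ small (depending only on $M$) kills the saturated contribution as well, and a continuity argument gives $\Omega_{I_1}(w)\le C\epsilon$. One then iterates over $I_2,\dots,I_N$, at each stage using the bound at the right endpoint of the previous interval as the new initial datum and absorbing the accumulated constants into $C(M)$; after $N(M)$ steps one gets \eqref{C501}. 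The main obstacle is bookkeeping rather than conceptual: one must verify that the fractional-derivative norm $\eta^3_I$ genuinely closes — i.e. that every term of $D^{\alpha_q}_xD^{\beta_q}_t\big(F(\tilde u+w)-F(\tilde u)\big)$ can be placed in $L^{p'}_xL^{r'}_t$ via \eqref{C500} and \eqref{C504} using only the norms already in $\Omega_I$ and $\sup_t\|\cdot\|_{H^1}$ — since fractional Leibniz rules for the product of $q$ factors at a non-integer order are where the estimate is most delicate; this is exactly the step carried out in \cite{KPV} for the Cauchy problem, and here one simply needs the difference version, which follows by the same chain-rule/Leibniz estimates applied to $F(\tilde u+w)-F(\tilde u)=\int_0^1F'(\tilde u+\theta w)\,d\theta\, w$.
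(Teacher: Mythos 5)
Your proposal follows essentially the same route as the paper: write the Duhamel formula for the difference $w=u-\tilde u$, estimate the quintic and saturated nonlinear contributions via the Kenig--Ponce--Vega linear bounds and H\"older (using the fractional norm $\eta^3_I$ to close at the supercritical power $q$), subdivide $I$ into $N(M)$ subintervals on which $\Omega_{I_k}(\tilde u)$ is small, and iterate. The paper packages the single-interval step as a separate ``short time perturbation'' lemma (Lemma \ref{CL8}, proved in Appendix \ref{APP2}) and then patches; you do both at once, which is a presentational difference only.

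Two points in your schematic inequality, however, are inconsistent with your own closing argument and with the paper's estimate \eqref{C503}. First, you wrote $\big(M^4+\Omega_I(w)^4\big)\Omega_I(w)$. If the coefficient of the linear-in-$\Omega_I(w)$ piece really were $M^4$ rather than $\Omega_I(\tilde u)^4$, subdividing $I$ so that $\Omega_{I_k}(\tilde u)$ is small would not help at all, and the bootstrap would not close for $M$ of size $O(1)$. The correct bound, coming from the H\"older computations in Appendix \ref{APP2}, replaces $M^4$ by $\Omega_{I_k}(\tilde u)^4$ (up to an $M$-dependent multiplicative constant), and it is precisely this that the subdivision exploits. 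Second, you invoke ``choosing $\gamma$ small (depending only on $M$)'' to absorb the saturated contribution. This is both unnecessary and not permitted by the statement: the proposition allows any $0<\gamma\ll 1$ and gives constants $\epsilon_0(M)$, $C(M)$ depending only on $M$. In fact the saturated nonlinearity, after H\"older and the embedding \eqref{C504}, produces a term controlled by $\gamma\, S(t)\big(S(t)^{q-1}+\Omega_{I_k}(\tilde u)^{q-1}\big)$ with $\gamma\le 1$, so the same subdivision that makes $\Omega_{I_k}(\tilde u)^4$ small simultaneously makes $\Omega_{I_k}(\tilde u)^{q-1}$ small; no extra smallness in $\gamma$ is needed. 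With these two corrections, the argument is exactly that of the paper.
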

\begin{remark}
The perturbation theory still holds true if we replace $H^1$ by $H^{s}$, with $s\geq\frac{1}{2}-\frac{2}{q-1}>0$.
\end{remark}
\begin{proof}[Proof of Proposition \ref{CP6}]Without loss of generality, we assume that $I=[0,T_0]$ for some $T_0>0$.
	
We first claim the following short time perturbation theory.
\begin{lemma}[Short time perturbation theory]\label{CL8}
Under the same notation of Proposition \ref{CP6}, if we assume in addition that $\Omega_I(\tilde{u})\leq \epsilon_0$, for some small $0<\epsilon_0=\epsilon_1(M)\ll1$. Then there exists a constant $C_0(M)$ which depends only on $M$ such that if $0<\epsilon<\epsilon_0=\epsilon_1(M)$, then
\begin{equation}\label{C502}
\sup_{t\in I}\|u-\tilde{u}\|_{H^1}+\Omega_I(u-\tilde{u})\leq C_0(M)\epsilon.
\end{equation}
\end{lemma}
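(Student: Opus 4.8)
The plan is to run the standard fixed-point/bootstrap scheme for perturbation lemmas (cf.\ \cite{KKSV}), but carried out in the $H^1$-adapted spaces and using the smallness hypothesis $\Omega_I(\tilde u)\le\epsilon_0$. Set $w=u-\tilde u$, which solves
\begin{equation*}
\partial_t w+\big(\partial_{xx}w+(u^5-\tilde u^5)-\gamma(u|u|^{q-1}-\tilde u|\tilde u|^{q-1})\big)_x=-e_x,\qquad w(0)=u_0-\tilde u_0,
\end{equation*}
so that by Duhamel's formula
\begin{equation*}
w(t)=W(t)(u_0-\tilde u_0)-\int_0^tW(t-t')\partial_x\big[(u^5-\tilde u^5)-\gamma(u|u|^{q-1}-\tilde u|\tilde u|^{q-1})+e\big](t')\,dt'.
\end{equation*}
Applying to this identity the homogeneous and inhomogeneous linear estimates of Lemma \ref{CL7} (the $L^\infty_xL^2_t$, $L^5_xL^{10}_t$ and $D^{\alpha_q}_xD^{\beta_q}_tL^p_xL^r_t$ bounds together with their duals \eqref{C500}, \eqref{C504}) yields the master inequality
\begin{equation*}
\Omega_I(w)+\sup_{t\in I}\|w(t)\|_{H^1}\le C\Big(\|u_0-\tilde u_0\|_{H^1}+\Delta_I(e)+\Delta_I(u^5-\tilde u^5)+\gamma\,\Delta_I\big(u|u|^{q-1}-\tilde u|\tilde u|^{q-1}\big)\Big).
\end{equation*}

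The core of the argument is to estimate the two nonlinear differences in the $\Delta_I$ norm. Writing $u=\tilde u+w$, the quintic difference expands as $\sum_{a=1}^{5}\binom{5}{a}w^a\tilde u^{5-a}$; each monomial is controlled, after distributing derivatives and using Hölder in the mixed space--time norms with the $L^5_xL^{10}_t$-type exponents of Kenig--Ponce--Vega, by products of the $\eta^j_I$-norms and $H^1$-norms of $w$ and $\tilde u$, giving
\begin{equation*}
\Delta_I(u^5-\tilde u^5)\le C(M)\big(\Omega_I(w)+\Omega_I(\tilde u)\big)^{4}\,\Omega_I(w).
\end{equation*}
For the saturated difference I would use the pointwise bound $\big||a|^{q-1}a-|b|^{q-1}b\big|\lesssim(|a|^{q-1}+|b|^{q-1})|a-b|$ for the $L^1_xL^2_t$ piece, and a fractional Leibniz rule adapted to the $D^{\alpha_q}_xD^{\beta_q}_t$ norm for the derivative pieces, combined with Hölder using the exponents $\tfrac{5(q-1)}{4},\tfrac{5(q-1)}{2}$ from \eqref{C504} and the estimate \eqref{C510}; this gives
\begin{equation*}
\Delta_I\big(u|u|^{q-1}-\tilde u|\tilde u|^{q-1}\big)\le C(M)\big(\Omega_I(w)+\Omega_I(\tilde u)\big)^{q-1}\,\Omega_I(w),
\end{equation*}
where one uses crucially that $s_q=\tfrac12-\tfrac{2}{q-1}\le\tfrac12$ so the $H^1$ bound on $u,\tilde u$ dominates the $D^{s_q}_x$ norm appearing on the right of \eqref{C510}, and that $\gamma\le1$ keeps the constant $\gamma$-independent.

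Combining these estimates with the hypotheses $\Omega_I(\tilde u)\le\epsilon_0$, $\sup_I\|\tilde u\|_{H^1}\le M$, and $\|u_0-\tilde u_0\|_{H^1}+\Delta_I(e)\le\epsilon$, I arrive at
\begin{equation*}
\Omega_I(w)+\sup_{t\in I}\|w(t)\|_{H^1}\le C(M)\Big(\epsilon+\big(\Omega_I(w)+\epsilon_0\big)^{4}\,\Omega_I(w)\Big).
\end{equation*}
A standard continuity argument in the time variable — legitimate because $t\mapsto\Omega_{[0,t]}(w)+\sup_{[0,t]}\|w\|_{H^1}$ is continuous and vanishes at $t=0$ — then lets me choose $\epsilon_0=\epsilon_1(M)$ small enough (and $\epsilon<\epsilon_0$) so that the nonlinear term is absorbed into the left-hand side, yielding $\Omega_I(w)+\sup_I\|w\|_{H^1}\le C_0(M)\epsilon$, which is exactly \eqref{C502}. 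I expect the main obstacle to be the saturated difference: since $q>5$ is not in general an odd integer, $z\mapsto|z|^{q-1}z$ is only finitely differentiable, so bounding the $D^{\alpha_q}_xD^{\beta_q}_t$ part of its difference cannot be done by a naive expansion and requires the fractional Leibniz/chain rule machinery for which precisely the KPV exponents $(\alpha_q,\beta_q,p,r,s_q)$ and estimates \eqref{C510}, \eqref{C500}, \eqref{C504} are designed — and where one must track that no constant depends on $\gamma$.
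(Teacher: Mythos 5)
Your overall strategy---Duhamel for $w=u-\tilde u$, a master inequality in the adapted norms, and a continuity/bootstrap argument in time---is the same as the paper's (Appendix~B of the paper proves precisely the estimate $S(t)\lesssim_M\epsilon+S(t)\bigl(S(t)^4+S(t)^{q-1}+\Omega_I(\tilde u)^4+\Omega_I(\tilde u)^{q-1}\bigr)$ with $S(t)=\Omega_{[0,t]}(v)$ and closes by bootstrap). However, there is a conceptual misunderstanding in the part you flag as ``the main obstacle.''

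You say that, since $q>5$ need not be an odd integer and $z\mapsto|z|^{q-1}z$ is only finitely smooth, bounding the $D^{\alpha_q}_xD^{\beta_q}_t$ component of the saturated difference requires fractional Leibniz machinery applied directly to the nonlinearity. This is not the case, and seeing why is the whole point of the KPV scheme used here. The fractional norm $\eta^3_I$ appears only on the \emph{output} of the Duhamel operator; the inhomogeneous estimate \eqref{C500},
\begin{equation*}
\Bigl\|D^{\alpha_q}_xD^{\beta_q}_t\int_0^tW(t-t')\,g(\cdot,t')\,dt'\Bigr\|_{L^p_xL^r_t(I)}\lesssim\|g\|_{L^{p'}_xL^{r'}_t(I)},
\end{equation*}
already converts it into a plain Lebesgue mixed norm $L^{p'}_xL^{r'}_t$ of the source. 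Since the nonlinearity enters the equation as a spatial derivative, one needs only $\bigl\|\bigl((v+\tilde u)^5-\tilde u^5\bigr)_x\bigr\|_{L^{p'}_xL^{r'}_t}$ and $\bigl\|\bigl((v+\tilde u)|v+\tilde u|^{q-1}-\tilde u|\tilde u|^{q-1}\bigr)_x\bigr\|_{L^{p'}_xL^{r'}_t}$, which involve only one integer $x$-derivative of a Lipschitz-in-$z$ nonlinearity. These are estimated by elementary pointwise bounds, H\"older, and the dual estimate \eqref{C504} (which bounds $\|v\|_{L^{5(q-1)/4}_xL^{5(q-1)/2}_t}$ by $\eta^3_I(v)$), together with the Gagliardo--Nirenberg interpolation $\|v_x\|_{L^{p_0}_xL^{r_0}_t}\lesssim\Omega_I(v)$. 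Note also that your $\Delta_I$-norm, as defined in the paper, contains only integer $x$-derivatives of its argument, so the master inequality you write down is internally consistent; your worry about fractional Leibniz is a phantom. This is exactly why the exponents $(\alpha_q,\beta_q,p,r,s_q)$ and estimates \eqref{C510}, \eqref{C500}, \eqref{C504} are set up as they are: they route all fractional regularity through the linear flow, never through the non-smooth nonlinearity.

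Once that is understood, the rest of your argument goes through and matches the paper's: the quintic piece is handled by the binomial expansion and $L^5_xL^{10}_t$/$L^\infty_xL^2_t$ H\"older as you describe, and the continuity-in-time argument closing the bootstrap at size $O(\epsilon)$ is correct (the paper phrases it as $S(0)=0$ together with the self-improving inequality).
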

We leave the proof of Lemma \ref{CL8} in Appendix \ref{APP2}.

Now we turn to the proof of Proposition \ref{CP6}. Let $\epsilon_0=\epsilon_1(2M)>0$ as in Lemma \ref{CL8}. We then choose $0=t_0<t_1<\ldots<t_N=T_0$ (recall that we assume $I=[0,T_0]$), such that for all $j=1,\ldots,N$,
$$\Omega_{[t_{j-1},t_{j}]}(\tilde{u})\leq \epsilon_0.$$
From a standard argument, we know that $N=N(M,\epsilon_0)=N(M)>0$.
We use Lemma \ref{CL8} on each interval $[t_{j-1},t_j]$ to obtain:
$$\sup_{t\in[t_{j-1},t_j]}\|u(t)-\tilde{u}(t)\|_{H^1}+\Omega_{[t_{j-1},t_j]}(\tilde{u})\leq C_0(M)\max(\epsilon,\|u(t_{j-1})-\tilde{u}(t_{j-1})\|_{H^1}).$$
Arguing by induction, using $\|u(0)-\tilde{u}(0)\|_{H^1}\leq \epsilon$, we have for all $j=1,\ldots,N$,
$$\sup_{t\in[t_{j-1},t_j]}\|u(t)-\tilde{u}(t)\|_{H^1}+\Omega_{[t_{j-1},t_j]}(\tilde{u})\leq C(j,M)\epsilon.$$
Summarizing these estimates, we have:
\begin{align*}
\sup_{t\in I}\|u-\tilde{u}\|_{H^1}+\Omega_{I}(\tilde{u})&\leq\sum_{j=1}^N\sup_{t\in[t_{j-1},t_j]}\|u(t)-\tilde{u}(t)\|_{H^1}+\Omega_{[t_{j-1},t_j]}(\tilde{u})\\
&\leq\sum_{j=1}^N C(j,M)\epsilon= C(M)\epsilon,
\end{align*}
which concludes the proof of Proposition \ref{CP6}.
\end{proof}

\subsection{End of the proof of Theorem \ref{CST}}
Now for $0<\gamma\ll\alpha_0\ll\alpha^*\ll1$, we choose a $u_0\in\mathcal{A}_{\alpha_0/2}\subset\mathcal{A}_{\alpha_0}$, such that the corresponding solution $u(t)$ to \eqref{CCP} belongs to the (Blow up) regime with blow up time $T<+\infty$. Let $u_{\gamma}(t)$ be the corresponding solution to \eqref{CCPG}. From \cite[Section 4.4]{MMR1}, we know that there exists a $0<T^*_1<T<+\infty$, geometrical parameters $(\lambda(t),b(t),x(t))$ and an error term $\varepsilon(t)$ such that the following geometrical decomposition holds on $[0,T^*_1]$:
\begin{equation}\label{C51}
u(t,x)=\frac{1}{\lambda(t)^{\frac{1}{2}}}\big[Q_{b(t)}+\varepsilon(t)\big]\bigg(\frac{x-x(t)}{\lambda(t)}\bigg),
\end{equation}
with
\begin{equation}\label{C52}
(\varepsilon,Q)=(\varepsilon,\Lambda Q)=(\varepsilon,y\Lambda Q)=0.
\end{equation}
Moreover, we have for all $t\in[0,T^*_1]$,
\begin{gather}
\mathcal{N}_2(t)+\|\varepsilon(t)\|_{L^2}+|b(t)|+|1-\lambda(t)|\lesssim\delta(\alpha_0),\label{C53}\\
\int_{y>0}y^{10}\varepsilon^2(t,y)\,dy\leq 5.\label{C54}
\end{gather}
and 
\begin{equation}\label{C55}
b(T^*_1)\geq 2C^*\mathcal{N}_1(T^*_1),
\end{equation}
where $C^*$ is the universal constant%
\footnote{The constant $C^*$ chosen here might be different from the one in \cite[(4.23)]{MMR1}. But we can always replace $C^*$ (both constants in this paper and in \cite{MMR1}) by some larger universal constant.}
 introduced in Section 4.2. One may easily check that $C^*$ defined by \eqref{C4010} is independent of $\gamma$.

Next, we claim that there exists a constant $C(u_0,q)>1$ which depends only on $u_0$ and $q$, such that
\begin{equation}\label{C56}
\sup_{t\in[0,T^*_1]}\|u(t)\|_{H^1}+\Omega_{[0,T^*_1]}(u)+\Delta_{[0,T^*_1]}(u|u|^{q-1})\leq C(u_0,q)<+\infty.
\end{equation}

Indeed, from \cite[Corollary 2.11]{KPV} (taking $s=1$), we have
\begin{align*}
\eta^1_{[0,T^*_1]}(u)+\eta^1_{[0,T^*_1]}(u_x)+\eta^2_{[0,T^*_1]}(u)+\eta^2_{[0,T^*_1]}(u_x)\leq C(u_0,q)<+\infty.
\end{align*}
Then, from Duhamel's principle, we have:
$$u(t)=W(t)u_0+\int_0^t\bigg(W(t-t')\partial_x(u^5)\bigg)\,dt'.$$
Together with \eqref{C510}, \eqref{C500} and the Gagliardo-Nirenberg's inequality introduced in \cite[Theorem 2.44]{BCD}, we have
\begin{align*}
\eta^3_{[0,T^*_1]}(u)&\lesssim\|u_xu^4\|_{L^{p'}_xL^{r'}_t}+\|u_0\|_{H^1}\lesssim\|u\|^4_{L^5_xL_t^{10}}\|u_x\|_{L^{p_0}_xL^{r_0}_t}+\|u_0\|_{H^1}\\
&\lesssim \|u\|^4_{L^5_xL_t^{10}}\|D_x^{s_q}u\|_{L^5_xL^{10}_t}^{1-s_q}\|D_x^{s_q}u_x\|_{L^{\infty}_xL^2_t}^{s_q}+\|u_0\|_{H^1}\\
&\lesssim\|u\|^4_{L^5_xL_t^{10}} \Big(\|u\|^{1-s_q}_{L^5_xL^{10}_t}\|u_x\|^{s_q}_{L^5_xL^{10}_t}\Big)^{1-s_q}\Big(\|u_x\|^{1-s_q}_{L^{\infty}_xL^{2}_t}\|u_{xx}\|^{s_q}_{L^{\infty}_xL^{2}_t}\Big)^{s_q}\\
&\quad+\|u_0\|_{H^1}\\
&\lesssim \Big(\eta^1_{[0,T^*_1]}(u)+\eta^1_{[0,T^*_1]}(u_x)+\eta^2_{[0,T^*_1]}(u)+\eta^2_{[0,T^*_1]}(u_x)\Big)^5+\|u_0\|_{H^1},
\end{align*}
where
$$\frac{1}{p_0}=\frac{1}{10}-\frac{2}{5(q-1)},\quad\frac{1}{r_0}=\frac{3}{10}+\frac{4}{5(q-1)}.$$
This implies $\Omega_{[0,T^*_1]}(u)\leq C(u_0,q)<+\infty$.

Next, using the arguments in \cite[Section 6]{KPV}, we obtain
$$
\Delta_{[0,T^*_1]}(u|u|^{q-1})\lesssim\big(\Omega_{[0,T^*_1]}(u)\big)^q\leq C(u_0,q) $$
which yields \eqref{C56}.

Then we apply Proposition \ref{CP6} to $u(t)$ and $u_{\gamma}(t)$, with $e=\gamma u|u|^{q-1}$. Note that from \eqref{C56}, we have 
$$\Delta_{[0,T^*_1]}(e)<\gamma C(u_0,q)\leq \gamma^{\frac{1}{2}}\ll\epsilon_0(C(u_0,q)),$$
provided that $0<\gamma<\gamma(u_0,\alpha_0,\alpha^*,q)\ll1$. Then Proposition \ref{CP6} implies that for all $t\in[0,T^*_1]$, we have
\begin{equation}
\label{C57}
\|u(t)-u_{\gamma}(t)\|_{H^1}\lesssim \gamma^{\frac{1}{2}}.
\end{equation}
Combining with \eqref{C51}--\eqref{C54}, we know that for all $t\in[0,T^*_1]$, $u_{\gamma}(t)\in\mathcal{T}_{\alpha_0,\gamma}$. This allows us to apply Lemma \ref{CL3} to $u_{\gamma}(t)$ on $[0,T^*_1]$, i.e. there exist geometrical parameters $(b_{\gamma}(t),\lambda_{\gamma}(t),x_{\gamma}(t))$ and an error term $\varepsilon_{\gamma}(t)$, such that
$$u_{\gamma}(t,x)=\frac{1}{\lambda_{\gamma}(t)^{\frac{1}{2}}}\big[Q_{b_{\gamma}(t),\omega_{\gamma}(t)}+\varepsilon_{\gamma}(t)\big]\bigg(\frac{x-x_{\gamma}(t)}{\lambda_{\gamma}(t)}\bigg),$$
with
$$\omega_{\gamma}(t)=\frac{\gamma}{\lambda^m_{\gamma}(t)}.$$
Moreover, the orthogonality conditions \eqref{COC} hold. 

Now, from Lemma \ref{CL3} and \eqref{C57} we obtain that for all $t\in[0,T^*_1]$,
\begin{equation}
\label{C58}
\bigg|1-\frac{\lambda(t)}{\lambda_{\gamma}(t)}\bigg|+|b_{\gamma}(t)-b(t)|+|x_{\gamma}(t)-x(t)|+\|\varepsilon_{\gamma}(t)-\varepsilon(t)\|_{H^1}\lesssim \delta(\gamma).
\end{equation} 
Together with \eqref{C53}--\eqref{C55}, we have the following:
\begin{enumerate}
	\item For all $t\in[0,T^*_1]$, \eqref{C438}--\eqref{C440} hold for $u_{\gamma}(t)$.
    \item At the time $t=T^*_1$, there holds:
    $$b_{\gamma}(T^*_1)+c_1\omega_{\gamma}(T^*_{1})\geq C^*(\mathcal{N}_{1,\gamma}(T^*_1)+b^2_{\gamma}(T^*_1)+\omega_{\gamma}^2(T^*_1)),$$
    where
    $$\mathcal{N}_{i,\gamma}(t)=\int(\varepsilon_{\gamma})^2_{y}\psi_B+\varepsilon_{\gamma}^2\varphi_{i,B}.$$
\end{enumerate}

By the argument in Section 4, we know that $u_{\gamma}(t)$ belongs to the (Soliton) regime introduced in Theorem \ref{CMT}. Moreover, we also obtain \eqref{C12} from \eqref{C468}. This concludes the proof of the first part of Theorem \ref{CST}.

The second part of Theorem \ref{CST} follows from exactly the same procedure. Thus, we complete the proof of Theorem \ref{CST}.  

\appendix
{
\section{Proof of the geometrical decomposition}\label{APP1}
In this section, we will give the proof of Lemma \ref{CL3}. We first introduce the following notations: for all suitable $(\tilde{\lambda},\tilde{x},\tilde{b},\tilde{\omega},v)$
\begin{align}\label{A1}
&F_1(\tilde{\lambda},\tilde{x},\tilde{b},\tilde{\omega},v)=(\mathcal{Q}_{\tilde{\omega}},\varepsilon_{\tilde{\lambda},\tilde{x},\tilde{b},\tilde{\omega},v}),\\
&F_2(\tilde{\lambda},\tilde{x},\tilde{b},\tilde{\omega},v)=(\Lambda\mathcal{Q}_{\tilde{\omega}},\varepsilon_{\tilde{\lambda},\tilde{x},\tilde{b},\tilde{\omega},v}),\\
&F_3(\tilde{\lambda},\tilde{x},\tilde{b},\tilde{\omega},v)=(y\Lambda\mathcal{Q}_{\tilde{\omega}},\varepsilon_{\tilde{\lambda},\tilde{x},\tilde{b},\tilde{\omega},v}),
\end{align} 
where
$$\varepsilon_{\tilde{\lambda},\tilde{x},\tilde{b},\tilde{\omega},v}(y)=\tilde{\lambda}^{1/2}v(\tilde{\lambda} y+\tilde{x})-Q_{\tilde{b},\tilde{\omega}}(y).$$

We mention here that we don't assume 
$$\tilde{\omega}=\frac{\gamma}{\tilde{\lambda}^m}.$$

At $(\tilde{\lambda},\tilde{x},\tilde{b},\tilde{\omega},v)=(1,0,0,0,Q)$, we have
\begin{align*}
&\bigg(\frac{\partial F_1}{\partial \tilde{\lambda}},\frac{\partial F_1}{\partial \tilde{x}},\frac{\partial F_1}{\partial \tilde{b}}\bigg)=\Big((\Lambda Q,Q),(Q',Q),(P,Q)\Big),\\
&\bigg(\frac{\partial F_2}{\partial \tilde{\lambda}},\frac{\partial F_2}{\partial \tilde{x}},\frac{\partial F_2}{\partial \tilde{b}}\bigg)=\Big((\Lambda Q,\Lambda Q),(Q',\Lambda Q),(P,\Lambda Q)\Big),\\
&\bigg(\frac{\partial F_3}{\partial \tilde{\lambda}},\frac{\partial F_3}{\partial \tilde{x}},\frac{\partial F_3}{\partial \tilde{b}}\bigg)=\Big((\Lambda Q,y\Lambda Q),(Q',y\Lambda Q),(P,y\Lambda Q)\Big).
\end{align*}
Since $(\Lambda Q,Q)=(Q',Q)=(Q',\Lambda Q)=(\Lambda Q,y\Lambda Q)=0$, $(P,Q)\not=0$, $(\Lambda Q,\Lambda Q)\not=0$, $(Q',y\Lambda Q)\not=0$, it is easy to see that the above Jacobian is not degenerate. Hence, from implicit function theory, we have: there exist unique continues maps 
\begin{equation}
\label{A2}
(\tilde{\lambda}_0,\tilde{x}_0,\tilde{b}_0):\; (\tilde{\omega},v)\mapsto (1-\delta,1+\delta)\times(-\delta,\delta)\times(-\delta,\delta),\quad \delta>0,
\end{equation}
such that for all $\tilde{\omega}\ll1$, $\|v-Q\|_{H^1}\ll1$, there holds
\begin{equation}
\label{A3}
F_{j}(\tilde{\lambda}_0(\tilde{\omega},v),\tilde{x}_0(\tilde{\omega},v),\tilde{b}_0(\tilde{\omega},v),\tilde{\omega},v)=0,\quad j=1,2,3.
\end{equation}
The uniqueness also implies that for all $\tilde{\omega}\ll1$, we have
\begin{equation}
\label{A7}
\tilde{\lambda}_0(\tilde{\omega},\mathcal{Q}_{\tilde{\omega}})\equiv1.
\end{equation}

Next we fix a time $t\in[0,t_0)$ as in Lemma \ref{CL3}. For a solution $u(t)$ to \eqref{CCPG} with 
$$u(t,x)=\frac{1}{\lambda_1^{\frac{1}{2}}(t)}\big[\mathcal{Q}_{\omega_1(t)}+\varepsilon_1(t)\big]\bigg(\frac{x-x_1(t)}{\lambda_1(t)}\bigg),$$
and
$$\omega_1(t)=\frac{\gamma}{\lambda_1^m(t)}\ll1,$$
we let 
$$v(t,\cdot)=\lambda_1^{1/2}(t)u(t,\lambda_1(t)\cdot+x_1(t))=\mathcal{Q}_{\omega_1(t)}(\cdot)+\varepsilon_1(t,\cdot).$$
Then we have $\|v(t,\cdot)-Q(\cdot)\|_{H^1}\ll1$. 

We claim that there exists a $\underline{\lambda}(t)>0$, such that
\begin{equation}
\label{A4}
\lambda_1(t)\tilde{\lambda}_0\bigg(\frac{\gamma}{\underline{\lambda}^m(t)},v(t)\bigg)=\underline{\lambda}(t), \quad \frac{\gamma}{\underline{\lambda}^m(t)}\ll1.
\end{equation}

This is easy to be verified by implicit function theory. We let
$$M(\underline{\lambda},v)=\underline{\lambda}-\lambda_1(t)\tilde{\lambda}_0\bigg(\frac{\gamma}{\underline{\lambda}^m},v\bigg).$$
Then we have
\begin{align*}
&M(\lambda_1(t),\mathcal{Q}_{\omega_1(t)})=0,\\
&\frac{\partial M}{\partial \underline{\lambda}}\bigg|_{(\underline{\lambda},v)=(\lambda_1(t),\mathcal{Q}_{\omega_1(t)})}=1+m\omega_1(t)\frac{\partial \tilde{\lambda}_0}{\partial\tilde{\omega}}(\omega_1(t),\mathcal{Q}_{\omega_1(t)})>0,
\end{align*}
which implies \eqref{A4} immediately.

Applying \eqref{A2}--\eqref{A4} to $v(t)$, we have
\begin{align}
&F_{j}\Big(\tilde{\lambda}_0\big(\omega(t),v(t)\big),\tilde{x}_0\big(\omega(t),v(t)\big),\tilde{b}_0\big(\omega(t),v(t)\big),\omega(t),v(t)\Big)=0,\; j=1,2,3,\label{A5}\\
&\lambda_1(t)\tilde{\lambda}_0\big(\omega(t),v(t)\big)=\underline{\lambda}(t),\label{A6}
\end{align}
where
$$\omega(t)=\frac{\gamma}{\underline{\lambda}^m(t)}.$$

Now, we let 
\begin{gather}
\lambda(t)=\underline{\lambda}(t),\; b(t)=\tilde{b}_0\big(\omega(t),v(t)\big),\; x(t)=x_1(t)+\lambda_1(t)\tilde{x}_0(\omega(t),v(t)),\\
\omega(t)=\frac{\gamma}{\lambda^m(t)},\;\varepsilon(t,y)=\lambda^{1/2}(t)u\big(t,\lambda(t)\cdot+x(t)\big)-Q_{b(t),\omega(t)}.
\end{gather}

We claim that this $(\lambda(t),x(t),b(t))$ satisfies the orthogonality conditions \eqref{COC}. Indeed, from \eqref{A4}--\eqref{A6}, we have
\begin{align*}
0=&F_1\Big(\tilde{\lambda}_0\big(\omega(t),v(t)\big),\tilde{x}_0\big(\omega(t),v(t)\big),\tilde{b}_0\big(\omega(t),v(t)\big),\omega(t),v(t)\Big)\\
=&\bigg(\mathcal{Q}_{\omega(t)}(\cdot),\;\tilde{\lambda}_0^{1/2}(\omega(t),v(t))v\Big(t,\tilde{\lambda}_0(\omega(t),v(t))\cdot+\tilde{x}_0(\omega(t),v(t))\Big)-Q_{b(t),\omega(t)}(\cdot)\bigg)\\
=&\bigg(\mathcal{Q}_{\omega(t)}(\cdot),\;\big[\lambda_1(t)\tilde{\lambda}_0(\omega(t),v(t))\big]^{1/2}\times\\
&\quad u\Big(t,\lambda_1(t)\big[\tilde{\lambda}_0(\omega(t),v(t))\cdot+\tilde{x}_0(\omega(t),v(t))\big]+x_1(t)\Big)-Q_{b(t),\omega(t)}(\cdot)\bigg)\\
=&\big(\mathcal{Q}_{\omega(t)}(\cdot),\;\lambda^{1/2}(t)u(t,\lambda(t)\cdot+x(t))-Q_{b(t),\omega(t)}(\cdot)\big)=(\mathcal{Q}_{\omega(t)},\varepsilon(t)).
\end{align*}
The other two orthogonality conditions can be verified similarly.

Finally, since the maps 
$$(\tilde{\lambda}_0,\tilde{x}_0,\tilde{b}_0):\; (\tilde{\omega},v)\mapsto (1-\delta,1+\delta)\times(-\delta,\delta)\times(-\delta,\delta)$$
are continuous, the remaining part of Lemma \ref{CL3} follows immediately.
}

\section{Proof of Lemma \ref{CL8}}\label{APP2}
In this section, we give the proof of the modified short time perturbation theory, i.e. Lemma \ref{CL8}.

First, we let $v(t,x)=u(t,x)-\tilde{u}(t,x)$, $S(t)=\Omega_{[0,t]}(v)$. We claim the following estimate holds true for all $t\in I$:
\begin{equation}
\label{C503}
S(t)\lesssim_{M} \epsilon+S(t)\big(S(t)^4+S(t)^{q-1}+\Omega_I(\tilde{u})^4+\Omega_I(\tilde{u})^{q-1}\big).
\end{equation}
Since $S(0)=0$ and $\Omega_I(\tilde{u})\leq \epsilon_0$, we know that Lemma \ref{CL8} follows from a standard bootstrap argument. Now it only remains to prove \eqref{C503}.

First, by Duhamel's principle, we have
\begin{align*}
v(t)&=W(t)(\tilde{u}_0-u_0)+\int_{0}^t\bigg(W(t-t')\partial_x\big[\tilde{u}^5-\gamma\tilde{u}|\tilde{u}|^{q-1}\\
&\qquad-(\tilde{u}+v)^5+\gamma(\tilde{u}+v)|\tilde{u}+v|^{q-1}-e\big]\bigg)\,dt'\\
&=v_L(t)+v_N(t).
\end{align*}

For the linear part $v_L$, from Lemma 5.1, we have:
\begin{equation}\label{C505}
\Omega_{[0,t]}(v_L)+\sup_{t'\in[0,t]}\|v_L\|_{H^1}\lesssim \|\tilde{u}_0-u_0\|_{H^1}\lesssim \epsilon.
\end{equation}

Now, for the nonlinear part $v_N$, we use Lemma \ref{CL7} to estimate:
\begin{align*}
\eta^1_{[0,t]}(v_N)&\lesssim\|e_x\|_{L^{5/4}_xL^{10/9}_t([0,t])}+\|(v+\tilde{u})^4(v+\tilde{u})_x-\tilde{u}^4\tilde{u}_x\|_{L^{5/4}_xL^{10/9}_t([0,t])}\\
&\quad+\||v+\tilde{u}|^{q-1}(v+\tilde{u})_x-|\tilde{u}|^{q-1}\tilde{u}_x\|_{L^{5/4}_xL^{10/9}_t([0,t])}.
\end{align*}
By H\"{o}lder's inequality, we have:
\begin{align*}
&\|(v+\tilde{u})^4(v+\tilde{u})_x-\tilde{u}^4\tilde{u}_x\|_{L^{5/4}_xL^{10/9}_t([0,t])}\\
&\lesssim\big\|\big((v+\tilde{u})^4-\tilde{u}^4\big)\tilde{u}_x\big\|_{L^{5/4}_xL^{10/9}_t}+\|(v+\tilde{u})^4v_x\|_{L^{5/4}_xL^{10/9}_t}\\
&\lesssim\big(\|\tilde{u}\|^3_{L^5_xL^{10}_t}+\|v\|^3_{L^5_xL^{10}_t}\big)\|v\|_{L^5_xL^{10}_t}\|\tilde{u}_x\|_{L^{\infty}_xL^{2}_t}\\
&\quad+\|v\|^4_{L^5_xL^{10}_t}\big(\|v_x\|_{L^{\infty}_xL^{2}_t}+\|\tilde{u}_x\|_{L^{\infty}_xL^{2}_t}\big)\\
&\lesssim S(t)\big(S(t)^4+S(t)^{q-1}+\Omega_I(\tilde{u})^4+\Omega_I(\tilde{u})^{q-1}\big),
\end{align*}
and
\begin{align*}
&\||v+\tilde{u}|^{q-1}(v+\tilde{u})_x-|\tilde{u}|^{q-1}\tilde{u}_x\|_{L^{5/4}_xL^{10/9}_t([0,t])}\\
&\lesssim\big\|\big(|v+\tilde{u}|^{q-1}-|\tilde{u}|^{q-1}\big)\tilde{u}_x\big\|_{L^{5/4}_xL^{10/9}_t}+\||v+\tilde{u}|^{q-1}v_x\|_{L^{5/4}_xL^{10/9}_t}\\
&\lesssim\big(\|\tilde{u}\|^{q-2}_{L^{5(q-1)/4}_xL^{5(q-1)/2}_t}+\|v\|^{q-2}_{L^{5(q-1)/4}_xL^{5(q-1)/2}_t}\big)\|v\|_{L^{5(q-1)/4}_xL^{5(q-1)/2}_t}\|\tilde{u}_x\|_{L^{\infty}_xL^{2}_t}\\
&\quad+\|v\|^{q-1}_{L^{5(q-1)/4}_xL^{5(q-1)/2}_t}\big(\|v_x\|_{L^{\infty}_xL^{2}_t}+\|\tilde{u}_x\|_{L^{\infty}_xL^{2}_t}\big)\\
&\lesssim\big(\|D^{\alpha_q}_xD^{\beta_q}_t\tilde{u}\|^{q-2}_{L^p_x L^r_t}+\|D^{\alpha_q}_xD^{\beta_q}_tv\|^{q-2}_{L^p_x L^r_t}\big)\|D^{\alpha_q}_xD^{\beta_q}_tv\|_{L^p_x L^r_t}\|\tilde{u}_x\|_{L^{\infty}_xL^{2}_t}\\
&\quad+\|D^{\alpha_q}_xD^{\beta_q}_tv\|^{q-1}_{L^p_x L^r_t}\big(\|v_x\|_{L^{\infty}_xL^{2}_t}+\|\tilde{u}_x\|_{L^{\infty}_xL^{2}_t}\big)\\
&\lesssim S(t)\big(S(t)^4+S(t)^{q-1}+\Omega_I(\tilde{u})^4+\Omega_I(\tilde{u})^{q-1}\big),
\end{align*}
where we use \eqref{C504} for the last but two inequality. The above two estimates imply that
\begin{equation}
\label{C506}
\eta^1_{[0,t]}(v_N)\lesssim S(t)\big(S(t)^4+S(t)^{q-1}+\Omega_I(\tilde{u})^4+\Omega_I(\tilde{u})^{q-1}\big)+\epsilon.
\end{equation}
Similarly, we have
\begin{align*}
\eta^1_{[0,t]}(\partial_xv_N)&\lesssim\|e_{xx}\|_{L^{5/4}_xL^{10/9}_t([0,t])}+\big\|\big((v+\tilde{u})^5-\tilde{u}^5\big)_{xx}\big\|_{L^{5/4}_xL^{10/9}_t([0,t])}\\
&\quad+\big\|\big((v+\tilde{u})|v+\tilde{u}|^{q-1}-\tilde{u}|\tilde{u}|^{q-1}\big)_{xx}\big\|_{L^{5/4}_xL^{10/9}_t([0,t])}.
\end{align*}
By H\"{o}lder's inequality again, we have:
\begin{align*}
&\big\|\big((v+\tilde{u})^5-\tilde{u}^5\big)_{xx}\big\|_{L^{5/4}_xL^{10/9}_t([0,t])}\\
&\lesssim \|(v+\tilde{u})^4v_{xx}\|_{L^{5/4}_xL^{10/9}_t}+\big\|(v+\tilde{u})^3(v_{x}+2\tilde{u}_x)v_x\big\|_{L^{5/4}_xL^{10/9}_t}\\
&\lesssim\big(\|\tilde{u}\|^{4}_{L^{5}_xL^{10}_t}+\|v\|^{4}_{L^{5}_xL^{10}_t}\big)\|v_{xx}\|_{L^{\infty}_xL^{2}_t}\\
&\quad+\|v\|^{3}_{L^{5}_xL^{10}_t}\|v_x\|_{L^{\infty}_xL^{2}_t}\big(\|v_x\|_{L^{5}_xL^{10}_t}+\|\tilde{u}_x\|_{L^{5}_xL^{10}_t}\big)\\
&\lesssim S(t)\big(S(t)^4+S(t)^{q-1}+\Omega_I(\tilde{u})^4+\Omega_I(\tilde{u})^{q-1}\big),
\end{align*}
and
\begin{align*}
&\big\|\big((v+\tilde{u})|v+\tilde{u}|^{q-1}-\tilde{u}|\tilde{u}|^{q-1}\big)_{xx}\big\|_{L^{5/4}_xL^{10/9}_t([0,t])}\\
&\lesssim\||v+\tilde{u}|^{q-1}v_{xx}\|_{L^{5/4}_xL^{10/9}_t}+\big\||v+\tilde{u}|^{q-2}(v_{x}+2\tilde{u}_x)v_x\big\|_{L^{5/4}_xL^{10/9}_t}\\
&\lesssim \big(\|\tilde{u}\|^{q-1}_{L^{5(q-1)/4}_xL^{5(q-1)/2}_t}+\|v\|^{q-1}_{L^{5(q-1)/4}_xL^{5(q-1)/2}_t}\big)\|v_{xx}\|_{L^{\infty}_xL^{2}_t}\\
&\quad+\|v\|^{q-2}_{L^{5(q-1)/4}_xL^{5(q-1)/2}_t}\|v_x\|_{L^{\infty}_xL^{2}_t}\big(\|v_x+2\tilde{u}_x\|_{L^{5(q-1)/4}_xL^{5(q-1)/2}_t}\big)\\
&\lesssim S(t)\big(S(t)^4+S(t)^{q-1}+\Omega_I(\tilde{u})^4+\Omega_I(\tilde{u})^{q-1}\big).
\end{align*}
Collecting these estimates, we have:
\begin{equation}
\label{C507}
\eta^1_{[0,t]}(\partial_xv_N)\lesssim S(t)\big(S(t)^4+S(t)^{q-1}+\Omega_I(\tilde{u})^4+\Omega_I(\tilde{u})^{q-1}\big)+\epsilon.
\end{equation}

Next, using similar strategy, we have:
\begin{align}
\eta^2_{[0,t]}(v_N)&\lesssim\|e\|_{L^1_xL^2_t([0,t])}+\|(v+\tilde{u})^5-\tilde{u}^5\|_{L^1_xL^2_t([0,t])}\nonumber\\
&\quad+\big\|(v+\tilde{u})|v+\tilde{u}|^{q-1}-\tilde{u}|\tilde{u}|^{q-1}\big\|_{L^1_xL^2_t([0,t])}\nonumber\\
&\lesssim\epsilon+\big(\|\tilde{u}\|^4_{L^5_xL^{10}_t}+\|v\|^4_{L^5_xL^{10}_t}\big)\|v\|_{L^5_xL^{10}_t}\nonumber\\
&\quad+\big(\|v\|^{q-1}_{L^{5(q-1)/4}_xL^{5(q-1)/2}_t}+\|\tilde{u}\|^{q-1}_{L^{5(q-1)/4}_xL^{5(q-1)/2}_t}\big)\|v\|_{L^5_xL^{10}_t}\nonumber\\
&\lesssim S(t)\big(S(t)^4+S(t)^{q-1}+\Omega_I(\tilde{u})^4+\Omega_I(\tilde{u})^{q-1}\big)+\epsilon,
\end{align}
and
\begin{align}
\eta^2_{[0,t]}(\partial_xv_N)&\lesssim\|e_x\|_{L^1_xL^2_t([0,t])}+
\big\|\big((v+\tilde{u})^5-\tilde{u}^5\big)_x\big\|_{L^1_xL^2_t([0,t])}\nonumber\\
&\quad+\big\|\big((v+\tilde{u})|v+\tilde{u}|^{q-1}-\tilde{u}|\tilde{u}|^{q-1}\big)_x\big\|_{L^1_xL^2_t([0,t])}\nonumber\\
&\lesssim\epsilon+\big\|\big((v+\tilde{u})^4-\tilde{u}^4\big)\tilde{u}_x\big\|_{L^1_xL^2_t([0,t])}+\big\|(v+\tilde{u})^4v_x\big\|_{L^1_xL^2_t([0,t])}\nonumber\\
&\quad+\big\|\big(|v+\tilde{u}|^{q-1}-|\tilde{u}|^{q-1}\big)\tilde{u}_x\big\|_{L^1_xL^2_t([0,t])}+\big\||v+\tilde{u}|^{q-1}v_x\big\|_{L^1_xL^2_t([0,t])}\nonumber\\
&\lesssim S(t)\big(S(t)^4+S(t)^{q-1}+\Omega_I(\tilde{u})^4+\Omega_I(\tilde{u})^{q-1}\big)+\epsilon.\label{C508}
\end{align}

Finally, we need to estimate $\eta^3_{[0,t]}(v_N)$. From Lemma 5.1, we have:
\begin{align*}
\eta^3_{[0,t]}(v_N)&\lesssim\|e_x\|_{L^{p'}_xL^{r'}_t([0,t])}+ \|\big((v+\tilde{u})^5-\tilde{u}^5\big)_x\big\|_{L^{p'}_xL^{r'}_t([0,t])}\\
&\quad+\big\|\big((v+\tilde{u})|v+\tilde{u}|^{q-1}-\tilde{u}|\tilde{u}|^{q-1}\big)_x\big\|_{L^{p'}_xL^{r'}_t([0,t])}\\
&\lesssim\epsilon+\big\|\big((v+\tilde{u})^4-\tilde{u}^4\big)\tilde{u}_x\big\|_{L^{p'}_xL^{r'}_t}+\big\|(v+\tilde{u})^4v_x\big\|_{L^{p'}_xL^{r'}_t}\\
&\quad+\big\|\big(|v+\tilde{u}|^{q-1}-|\tilde{u}|^{q-1}\big)\tilde{u}_x\big\|_{L^{p'}_xL^{r'}_t}+\big\||v+\tilde{u}|^{q-1}v_x\big\|_{L^{p'}_xL^{r'}_t}.
\end{align*}
By similar technique we use for \eqref{C508}, we have:
\begin{align*}
&\big\|\big((v+\tilde{u})^4-\tilde{u}^4\big)\tilde{u}_x\big\|_{L^{p'}_xL^{r'}_t}+\big\|(v+\tilde{u})^4v_x\big\|_{L^{p'}_xL^{r'}_t}\\
&\lesssim\big\|(v+\tilde{u})^4-\tilde{u}^4\big\|_{L^{5/4}_xL^{5/2}_t}\|\tilde{u}_x\|_{L^{p_0}_xL^{r_0}_t}+\|(v+\tilde{u})^4\|_{L^{5/4}_xL^{5/2}_t}\|v_x\|_{L^{p_0}_xL^{r_0}_t}\\
&\lesssim \|v_x\|_{L^{p_0}_xL^{r_0}_t}\big(S(t)^4+\Omega_{[0,t]}(\tilde{u})^4\big)+\|\tilde{u}_x\|_{L^{p_0}_xL^{r_0}_t}S(t)\big(S(t)^3+\Omega_{[0,t]}(\tilde{u})^3\big),
\end{align*}
and
\begin{align*}
&\big\|\big(|v+\tilde{u}|^{q-1}-|\tilde{u}|^{q-1}\big)\tilde{u}_x\big\|_{L^{p'}_xL^{r'}_t}+\big\||v+\tilde{u}|^{q-1}v_x\big\|_{L^{p'}_xL^{r'}_t}\\
&\lesssim\|v_x\|_{L^{p_0}_xL^{r_0}_t}\big(S(t)^{q-1}+\Omega_{[0,t]}(\tilde{u})^{q-1}\big)+\|\tilde{u}_x\|_{L^{p_0}_xL^{r_0}_t}S(t)\big(S(t)^{q-2}+\Omega_{[0,t]}(\tilde{u})^{q-2}\big),
\end{align*}
where
$$\frac{1}{p_0}=\frac{1}{10}-\frac{2}{5(q-1)},\quad\frac{1}{r_0}=\frac{3}{10}+\frac{4}{5(q-1)}.$$
By the Gagliardo-Nirenberg's inequality introduced in \cite[Theorem 2.44]{BCD}, we have:
\begin{align*}
\|v_x\|_{L^{p_0}_xL^{r_0}_t}&\lesssim \|D_x^{s_q}v\|_{L^5_xL^{10}_t}^{1-s_q}\|D_x^{s_q}v_x\|_{L^{\infty}_xL^2_t}^{s_q}\\
&\lesssim \Big(\|v\|^{1-s_q}_{L^5_xL^{10}_t}\|v_x\|^{s_q}_{L^5_xL^{10}_t}\Big)^{1-s_q}\Big(\|v_x\|^{1-s_q}_{L^{\infty}_xL^{2}_t}\|v_{xx}\|^{s_q}_{L^{\infty}_xL^{2}_t}\Big)^{s_q}\\
&\lesssim S(t),
\end{align*}
Similarly, we have:
$$\|\tilde{u}_x\|_{L^{p_0}_xL^{r_0}_t}\lesssim\Omega_{[0,t]}(\tilde{u}),$$
hence
\begin{equation}
\label{C509}
\eta^3_{[0,t]}(v_N)\lesssim S(t)\big(S(t)^4+S(t)^{q-1}+\Omega_I(\tilde{u})^4+\Omega_I(\tilde{u})^{q-1}\big)+\epsilon.
\end{equation}
Combining \eqref{C505}--\eqref{C509}, we conclude the proof of \eqref{C503}, hence the proof of Lemma \ref{CL8}.

\bibliographystyle{amsplain}
\bibliography{ref}
\end{document}